\newtheorem{theorem}{Theorem}[subsection]
\newtheorem{definition}[theorem]{Definition}
\newtheorem{definition-lemma}[theorem]{Definition/Lemma}
\newtheorem{definition-explanation}[theorem]{Definition/Explanation}
\newtheorem{explanation-definition}[theorem]{Explanation/Definition}
\newtheorem{definition-fact}[theorem]{Definition/Fact}
\newtheorem{lemma}[theorem]{Lemma}
\newtheorem{lemma-definition}[theorem]{Lemma/Definition}
\newtheorem{proposition}[theorem]{Proposition}
\newtheorem{remark}[theorem]{\it Remark}
\newtheorem{assumption}[theorem]{\it Assumption}
\newtheorem{example}[theorem]{Example}
\newtheorem{example-definition}[theorem]{Example/Definition}
\newtheorem{observation}[theorem]{Observation}
\newtheorem{definition-prototype}[theorem]{Definition-Prototype}
\newtheorem{question}[theorem]{Question}
\numberwithin{equation}{subsection}
\newtheorem{sdefinition-lemma}[stheorem]{Definition/Lemma}
\newtheorem{sdefinition-explanation}[stheorem]{Definition/Explanation}
\newtheorem{sexplanation-definition}[stheorem]{Explanation/Definition}
\newtheorem{sdefinition-fact}[stheorem]{Definition/Fact}
\newtheorem{slemma-definition}[stheorem]{Lemma/Definition}
\newtheorem{sexample-definition}[stheorem]{Example/Definition}
\newtheorem{sdefinition-prototype}[stheorem]{Definition-Prototype}
\newtheorem{sstheorem}{Theorem}[subsubsection]
\newtheorem{ssdefinition-lemma}[sstheorem]{Definition/Lemma}
\newtheorem{ssdefinition-explanation}[sstheorem]{Definition/Explanation}
\newtheorem{ssexplanation-definition}[sstheorem]{Explanation/Definition}
\newtheorem{sslemma-definition}[sstheorem]{Lemma/Definition}
\newtheorem{ssconjecture}[sstheorem]{Conjecture}
\newtheorem{ssexample-definition}[sstheorem]{Example/Definition}
\newtheorem{ssdefinition-prototype}[sstheorem]{Definition-Prototype}
\newtheorem{ssquestion}[sstheorem]{Question}
\newcommand{\Arg}{\mbox{\it Arg}\,}
\newcommand{\scriptsizecan}{\mbox{\scriptsize\it can}\,}
\newcommand{\Endsheaf}{\mbox{\it ${\cal E}\!$nd}\,}
\newcommand{\GL}{\mbox{\it GL}}
\newcommand{\Graph}{\mbox{\it Graph}\,}
\newcommand{\Homsheaf}{\mbox{\it ${\cal H}$om}\,}
\newcommand{\Id}{\mbox{\it Id}}
\newcommand{\Imaginary}{\mbox{\it Im}\,}
\newcommand{\Image}{\mbox{\it Im}\,}
\newcommand{\indexx}{\mbox{\it index}\,}
\newcommand{\MStiny}{\mbox{\tiny\it MS}}
\newcommand{\Quot}{\mbox{\it Quot}}
\newcommand{\Real}{\mbox{\it Re}\,}
\newcommand{\Rep}{\mbox{\it Rep}\,}
 \newcommand{\scriptsizesLag}{\mbox{\scriptsize\it sLag}\,}
\newcommand{\Span}{\mbox{\it Span}\,}
\newcommand{\SU}{\mbox{\it SU}}
\newcommand{\Sym}{\mbox{\it Sym}}
\newcommand{\WPscriptsize}{\mbox{\scriptsize \it WP}\,}
\newcommand{\complexscriptsize}{\mbox{\scriptsize\it complex}}
\newcommand{\dimm}{\mbox{\it dim}\,}
\newcommand{\length}{\mbox{\it length}\,}
\newcommand{\pr}{\mbox{\it pr}}
 \newcommand{\prscriptsize}{\mbox{\scriptsize\it pr}}
\newcommand{\singularscriptsize}{\mbox{\scriptsize\it singular}\,}
\newcommand{\smoothscriptsize}{\mbox{\scriptsize\it smooth}\,}
\newcommand{\vol}{\mbox{\it vol}\,}
\begin{document}

\enlargethispage{24cm}

\begin{titlepage}

$ $

\vspace{-1.5cm} % Re: -1.5cm for PC; -2.5cm for UT-Math-system

\noindent\hspace{-1cm}
\parbox{6cm}{\small September 2010}\
   \hspace{8cm}\
   \parbox[t]{5cm}{yymm.nnnn [math.SG]\\ D(7): A, D3, sa-flow.}

%\vspace{2cm}
\vspace{1.2cm}

%title
\centerline{\large\bf
 D-branes of A-type, their deformations, and Morse cobordism}
\vspace{1ex}
\centerline{\large\bf
 of A-branes on Calabi-Yau 3-folds under a split attractor flow:}
\vspace{1ex}
\centerline{\large\bf
 Donaldson/Alexander-Hilden-Lozano-Montesinos-Thurston/}
\vspace{1ex}
\centerline{\large\bf
  Hurwitz/Denef-Joyce meeting Polchinski-Grothendieck}
% end-title

\bigskip
\bigskip
%\vspace{3em}

%authors-'n-addresses
\centerline{\large
  Chien-Hao Liu
  \hspace{1ex} and \hspace{1ex}
  Shing-Tung Yau
}

%\vspace{3em}
%\bigskip
\bigskip

%abstract%
\begin{quotation}
\centerline{\bf Abstract}

\vspace{0.3cm}

\baselineskip 12pt  %13pt for [12pt] style
{\small
 In [L-Y5] (D(6): arXiv:1003.1178 [math.SG])
  we introduced the notion of
   Azumaya $C^{\infty}$-manifolds with a fundamental module
   and morphisms therefrom to a complex manifold.
 In the current sequel,
 we use this notion to give a prototypical definition of
  supersymmetric D-branes of A-type (i.e. A-branes)
   -- in an appropriate region of the Wilson's theory-space
      of string theory --
  as special Lagrangian morphisms
   with a unitary, minimally flat connection-with-singularity.
 This merges Donaldson's picture of special Lagrangian submanifolds
   (1999) and the Polchinski-Grothendieck Ansatz for D-branes
   in a Calabi-Yau space
  (Sec.~2.1).
 The Higgsing/un-Higgsing and the large- vs.\ small-brane wrapping
  of A-branes in string theory can be achieved
  via deformations of such morphisms
  (Sec.~2.2 and Sec.~2.3).
 {For} the case of Calabi-Yau $3$-folds,
  classical results of
    Alexander (1920), Hilden (1974) and Montesinos (1976),
    Thurston (1982), and Hilden-Lozano-Montesinos (1983)
   on $3$-manifolds branched-covering $S^3$
  implies that
   any embedded special Lagrangian submanifold
    with a complex vector bundle with a unitary flat connection
    on a Calabi-Yau $3$-fold
   is the image of a special Lagrangian morphism from
     an Azumaya $3$-sphere with a fundamental module,
     with a unitary minimally flat connection.
 This suggests a genus-like expansion of the path-integral of D3-branes
   in type IIB string theory compactified on Calabi-Yau $3$-folds
  that resembles the genus expansion of the path-integral of strings
  (Sec.~2.4.2).
 Similarly,
  for the path-integral of D2-branes and M2-branes respectively.
 In Sec.~3,
  we use the technical results of Joyce (2002-2003)
    on desingularizations of special Lagrangian submanifolds
    with conical singularities
   to explain
  how supersymmetric D3-branes thus defined can be driven
   and re-assemble under a reverse split attractor flow
   at a point on the wall of marginal stability
   in Type IIB superstring theory compactified
      on varying Calabi-Yau $3$-folds, studied by Denef (2001).
 This last section is to be read alongside the works
  [De3] (arXiv:hep-th/0107152) of Denef  and
  [Joy3: V] (arXiv:math.DG/0303272) of Joyce.
 {To} cover the basic type of deformations of morphisms
  from Azumaya spaces in this note and its sequel,
 we discuss in Sec.~1
  Morse cobordisms of manifolds and
  their promotion to Morse cobordisms
   of Azumaya manifolds with a fundamental module, and
   of morphisms from Azumaya manifolds to complex manifolds.
 The notion of cone of special Lagrangian cycles
  in a Calabi-Yau manifold is brought out in Sec.~2.4.1
  for further study.
 A summary of the needed results from Joyce is given in the appendix.
} % endsmall
\end{quotation}

\bigskip
%\vspace{10em}

\baselineskip 12pt
{\footnotesize
\noindent
{\bf Key words:} \parbox[t]{14cm}{
 connected sum, Morse cobordism;
 D-brane of A-type, Polchinski-Grothendieck Ansatz;
 Azumaya manifold, special Lagrangian morphism,
  unitary minimally flat connection;
 Higgsing/un-Higgsing, brane-wrapping;
 cone of special Lagrangian cycles, genus-like expansion;
 special Lagrangian submanifold, smoothing;
 assembling/disassembling of D-branes,
  central charge, split attractor flow.
}} %end-footnotesize

\bigskip

\noindent {\small MSC number 2010:
 53C38, 14A22; 15A54, 81T30, 81T75.
} % end-small

\bigskip

\baselineskip 10pt
% Re: 11pt for [11pt] style; 12pt for [12pt] style
{\scriptsize
\noindent{\bf Acknowledgements.}
We thank
 Andrew Strominger, Cumrun Vafa
  for consultations on A-branes and some literature guide;
 Miranda Chih-Ning Cheng for lecture and discussions.
C.-H.L.\ thanks in addition
 Frederik Denef
  for discussions and literature guide on [De3];
 Mark Gross, Yng-Ing Lee, Wenxuan Lu
  for discussions related to special Lagrangian cycles and A-branes;
 Jiun-Cheng Chen, Pei-Ming Ho, Sean Keel, Hui-Wen Lin, Chin-Lung Wang
  for discussions on technical issues beyond;
 Yu-jong Tzeng
  for topic course, fall 2010, and discussions;
 Daniel Freed, Shinobu Hosono, Sen Hu for conversations;
 and Ling-Miao Chou for moral support.
 The project is supported by NSF grants DMS-9803347 and DMS-0074329.
} %endscriptsize

\end{titlepage}

\newpage
\enlargethispage{25cm}

\begin{titlepage}

$ $

\vspace{1em}

\centerline{\small\it
 Chien-Hao Liu
  dedicates this work to his early teachers (time-ordered)}
\centerline{\small\it }
\centerline{\small\it
 Ms.\ Chun-Cha Tsei,}
\centerline{\small\it
 Ms.\ Hisayo Ohashi,}
\centerline{\small\it
 Ms.\ Sui-fang Wu,}
\centerline{\small\it }
\centerline{\small\it\hspace{.6ex}
 Mr.\ Kwun-Zhu Lan,$^{\,\ast}$}
\centerline{\small\it
 Mr.\ Shao-Chia Wu,}
\centerline{\small\it }
\centerline{\small\it
 Mr.\ \& Mrs.\ Randall \& Margaret Dicus,}
\centerline{\small\it
 Ms.\ Karolyn Rice,}
\centerline{\small\it\hspace{1.2ex}
 Hr.\ Horst R.\ Gebhardt,$^{\ast\ast}$}
\centerline{\small\it }
\centerline{\small\it
 Ms.\ Ann L.\ Willman,}
\centerline{\small\it\hspace{1.8ex}
 Mr.\ Jenn-Sheng Wang,$^{\ast\ast\ast}$}
\centerline{\small\it\hspace{1.2ex}
 Ms.\ Betty Beckon,$^{\ast\ast}$}
\centerline{\small\it\hspace{1.8ex}
 Principal Mao-Ting Duan,$^{\ast\ast\ast}$}
\centerline{\small\it }
\centerline{\small\it\hspace{1.2ex}
 Fr.\ Ingrid Bj$\o$rlykskar,$^{\ast\ast}$}
\centerline{\small\it\hspace{1.2ex}
 M.\ Andre Deltour, S.J.,$^{\ast\ast}$}
\centerline{\small\it
 Ms.\ Virginia Taylor,}
\centerline{\small\it }
\centerline{\small\it
 Ms.\ Tsei-Chu Chiang,}
\centerline{\small\it
 Mr.\ Min-Nan Huang, }
\centerline{\small\it
 Ms.\ Maryline Campbell,}
\centerline{\small\it }
\centerline{\small\it
  for shaping him collectively and giving him a view of the world
  before college years.}

\vspace{1em}

{\scriptsize
 $$
 \xymatrix{ &&&&&&& &&& **[r]\circ \\
            &&&&&&& &&& **[r]\circ \ar @/_/[llld]     \\
            &&&&&&& **[l] \circ \ar @/^3em/ [rrrdd]   \\
   \mbox{$\begin{array}{c}\circ\\ \uparrow\\ \bullet \end{array}$}
      \ar @/^3em/[rrrrrrrrrruu]
            &&&&&&& & & **[l] \circ \ar @/^/ [ruuu] & \\
            &&&&&&& & & & **[r]\circ \ar @/_/[lu]
 }
 $$
} %endscriptsize

\bigskip

\baselineskip 11pt
{\footnotesize
\noindent
$^{\ast}$(From C.-H.L.)
 Special tribute to {\it Mr.\ Lan},
  who led me to the beauty of mathematics.
 His constant reminder in classes:
  ``{\it
    If you cannot solve a problem,
    then there must be something basic
       you haven't yet really understood,}"
  still rings in my memory.
 $^{\ast\ast}$And
  to these missionaries,
   who spent decades of the best of their lives to serve in a place
    far away from their home.
 They set noble examples  and
  taught me the most difficult lesson on the meaning of life.
 $^{\ast\ast\ast}$And to
  {\it Mr.~Wang} and {\it Principal Duan}
   for granting me a special year to pursue and answer a call
    that influences me forever.
 It's their tolerance to a student and
   exceptional decision against all the rules and odds
  that turned an almost high-school dropout to a Princeton graduate
   several years later.
} % end-footnotesize

\end{titlepage}

%paper
\newpage
$ $

\vspace{-3em}
 % \vspace{-3.6em}  % Re: -4cm for PC; -6cm for UT-Math-system

%short heading
\centerline{\sc A-Branes, Deformations, and
                Morse Cobordism under Split Attractor Flow}

\vspace{2em}

% \baselineskip 14pt  %Re: 14pt for [11pt] style
                      %Re: 15pt for [12pt] style.

\begin{flushleft}
{\Large\bf 0. Introduction and outline.}
\end{flushleft}
In [L-Y5] (D(6)), we
 \begin{itemize}
  \item[$\cdot$]
   \makebox[8em][l]{([L-Y5: Sec.~2.4])}
   reviewed --
    based on the previous parts
     [L-Y1] (D(1)), [L-L-Y-S] (D(2)), [L-Y2] (D(3)),
     [L-Y3] (D(4)), and [L-Y4] (D(5))
    of the project --
   how D-branes in string theory can be re-understood
    via the following {\it ansatz}
     %%%%%%%%%%%%%
     % that gives rise to the notion of
     %  {\it  Azumaya schemes with a fundamental module},
     %%%%%%%%%%%%%
    and a string/D-brane-theory-oriented, Grothendieck-motivated notion
    of {\it morphisms} from Azumaya-type noncommuatative spaces
     whose local functions rings are modeled in the ansatz:
  \end{itemize}
  {\bf Polchinski-Grothendieck Ansatz
  [Azumaya-type noncommutativity on D-brane].} {\it
  The world-volume of a D-brane carries a noncommutative structure
  locally associated to a function ring of the form $M_r(R)$,
  where $r\in {\Bbb Z}_{\ge 1}$ and
    $M_r(R)$ is the $r\times r$ matrix ring over $R$.$\,$\footnote{{\it
      Mathematical and String-Theoretical Remark.}
      (Partial history and
       conceptual overview from earlier parts of the project.)
      It should be noted that
      this ansatz already appeared explicitly in the work
       of {\it Pei-Ming Ho} and {\it Yong-Shi Wu}, [H-W] (1996),
        arXiv:hep-th/9611233,
       as part of their definition of `{\it D-branes as quantum space}'
       ([H-W: Sec.~5]).
      The significance of their setting seems overlooked
       by the stringy community and also by us in the brewing years.
      Ten years later, in early spring 2007,
       with a better understanding of Grothendieck's work
         on modern algebraic geometry,
       an intense re-thinking/understanding
         Polchinski's TASI 1996 lectures and textbook [Po] (1998),
       and numerous mixed/entangled influences from string theorists
        (cf.~[L-Y1: footnote~1 and footnote~19] (D(1)))
        and their works
        (cf.~[L-Y1: references] (D(1)) and [L-Y5: references] (D(6))),
      we unexpectedly re-picked up this thread and realized
        the possible variations and potentially far-reaching consequences
        of this ansatz.
      There are numerous themes yet to be understood along the line.

      In retrospect, the main obstruction
       on the mathematical side
        to reveal the power of this ansatz is the lack of
        a ``correct" notion of ``morphisms" from spaces
        with such a structure sheaf.
      The standard noncommutative geometers' approach,
       either by assigning a topological space to such type of algebras
       or by studying their category of modules via Morita equivalence,
       obscures the richness of this geometry
       and block its ability to describe D-branes correctly.
      On the string-theory side, the reason of its being overlooked
       as a fundamental nature of D-branes
       (already at the classical, nonsupersymmetric,
        not necessary stable situation)
       could be that its appearance (explicitly or hiddenly) in literature
       is constantly accompanied by something else, notably
       under a supersymmetry setting, as a quantum space,
       appearance of another type of noncommutativity structure,
       and/or with a nontrivial $B$-field background.
      That makes its role in its own right hidden behind a thick veil
       since all these additional structures are themselves important,
       much-more-discussed-and-favored subjects.
      {To} make things even worse,
      whether it's the space-time that gets noncommutatized
       matrically or the D-brane itself when D-branes are stacked
       divide the literature
       and many highly cited stringy works favor the former.
      {To} our best understanding, these two seemingly dual aspects
       are not tradable to each other.
      If there is any (world-volume)-vs.-(space-time) type duality
       between the two, it can only be at best partial
        and only in some reduced situations.
      Furthermore,
       \begin{itemize}
        \item[$\cdot$]
         mathematically: even if the space-time itself does get
         noncommutatized somehow,  only a noncommutative probe
         can detect that
         (after a correct notion of `morphism' is defined; see below),

        \item[$\cdot$]
         physically: when the space-time is treated
          as an emerged/derived notion
         and one sees some change (in particular, noncommutativity)
          to ``it",
         the first question one should ask is {\it not}
          what happens to the ``space-time", but rather,
          what happens to the probe!
       \end{itemize}
      This makes the emergence of Azumaya-type noncommutativity
       on (stacked) D-branes an unavoidable path to take.
      Different string theorists working in D-branes
       may have come across this ansatz in/on their own way/path
       with or without our (or even their own) knowing.

      With the significance of this ansatz
        being pointed out and emphasized,
      now comes the immediate technical issue:
      one needs also a
       {\it matching notion of ``maps" from D-branes to a space-time}.
      This re-started project began with the re-attempt to understand
       D-branes by working out one such notion
       that can match stringy behavior of D-branes
       as ``seen" by open strings while incorporating this ansatz into it.
      {From} our current point of view,
       it is the notion of:

       \begin{itemize}
        \item[$\cdot$]
         a ``{\it morphism}" as an
         {\it equivalence classes of gluing systems of ring-homomorphisms}
          -- as in Grothendieck's theory of schemes --
         but {\it without} assigning an underlying topological space
             nor maps between topological spaces
                 even if the latter can be assigned contravariantly
                 functorially in the commutative specialization
       \end{itemize}
       that unlocks unexpectedly, mathematically unorthodoxically,
        and yet stringy correctly
        the power of this ansatz,
       rendering many D-brane phenomena as its consequences.
      In particular,
       the main body of an Azumaya space $(X,{\cal O}_X^{A\!z})$
       is the fuzzy noncommutative cloud ${\cal O}_X^{A\!z}$,
       {\it not} $X$.
      The latter should be taken as only auxiliary.
      (The role of $X$ is only to compensate the limitedness of
        what localizations of (unital, associative) noncommutative rings
        can provide in the current situation.)
      And it is only through a morphism
        $\varphi:(X,{\cal O}_X^{A\!z})\rightarrow (Y,{\cal O}_Y)$
        defined by
        $\varphi^{\sharp}:{\cal O}_Y\rightarrow {\cal O}_X^{A\!z}$
        in the above sense,
        {\it without} a continuous map $X\rightarrow Y$,
       that the Azumaya cloud ${\cal O}_X^{A\!z}$ may ``condense" into
        an image-object that is more familiar-looking
        in the target-space(-time) geometry $Y$,
        particularly when $Y$ is commutative.

      The name of the ansatz given in this project reflects
        how it is derived ([L-Y1: Sec.~2] D(1))
       and is meant to bring it to the forefront due to its importance
        as a fundamental source of the master nature of D-branes,
        cf.\ [L-L-S-Y] (D(2)), [L-Y2] (D(3)), [L-Y3] (D(4)),
             [L-Y4] (D(5)), and [L-Y5] (D(6)).
      It indicates how it naturally arises via the fusion of two thoughts
       - one from the string-theory side
         and the other from the algebraic geometry side,
         with both re-writing/revolutionizing their own field -.
      An additional hidden reason
        we chose this name when writing [L-Y1] (D(1)), spring 2007,
       is that
      we already foresaw at that time
       that several pivotal existing stringy works on D-branes
       can be re-done via this ansatz
       -- not surprising at all for a project on D-branes
          that had to take a decade just to make the first step --  and
      the name would help us assign more easily a subtitle to its sequels,
       e.g.\ `Douglas-Moore vs.\ Polchinski-Grothendieck'
        for [L-Y2] D(3) and the subtitle of the current note,
       for comparison/contrast with and fusion of existing works.
      On the other hand, our naming is destined not to be perfect,
       (cf.\ [Joh: Introduction, 2nd paragraph, p.~xx],
             which is also applicable to the mathematical side).
      Indeed, despite our fixing to this name
       after consulting another string-theorist
        who himself has also contributed a lot to many topics
         in string theory including branes before and after 1995
        (cf.~[L-Y2: footnote~1] (D(3))),
       one may as well, if one wishes, name/call it directly
       {\it Azumaya-Type Noncommutativity Ansatz for D-Branes}
       since this is exactly what it says.
      (One may also attempt a thorough survey of related history
         on both the mathematics and the string-theory side
        of independent works
         that either exhibit or hint strongly at this ansatz
        to produce a complete name:
  {\it
  ??-Gelfand-??-Grothendieck-??-??-Ho-??-Polchinski-??-Witten-??-Wu-??
  Ansatz}.
       {For} the moment, we
        choose to focus on works that remain ahead and
        leave the naming issue to future/other historians/researchers
         in this field.)
      As this ansatz is solely a consequence of open strings (alone!),
      we believe that everything related to D-branes
       in a geometric phase always has to do with this ansatz and
       morphisms from such noncommutative spaces one way or another.

      Having said all this, as already pointed out in [L-Y1] (D(1)),
      while this ansatz (and the correct notion of morphisms)
       give us an alternative gateway
        to access D-branes in their own right,
      {\it it says only a beginning, lowest level structure thereon}
       (albeit a very rich one)
       {\it and provides
         \begin{itemize}
          \item[$(1)$]
           a ground (sheaf of local) function ring(s) with all other
           fields thereupon realized as (local sections of) its modules
           and

          \item[$(2)$]
           a basis to define the notion of morphisms from a D-brane
           (world-volume) as one would for a particle (world-line)
           and a string (world-sheet)
           in an (either commutative or noncommutative) space-time.
         \end{itemize}}
      \noindent
      The full structure on and complication of D-branes
       go much beyond this.
      {For} us, D-branes in various contexts,
       including those in a geometric phase -- as in this project --
        and other in a non-geometric phase
         -- e.g.\ as boundary conditions/states
         in a $d=2$ boundary conformal field theory --
       remain a largely mysterious yet so amazing object
       that we have still a lot to learn about.

      We thank Pei-Ming for explaining the related points in [H-W]
          with comments on our work and
       for discussions on and illuminations of D-branes and M-branes
        in late spring, 2010.
      } % end-footnote
   } % end-ansatz

 \begin{itemize}
  \item[$\cdot$]
   \makebox[8em][l]{([L-Y5: Sec.~2.2])}
   discussed the four aspects of morphisms
    from Azumaya schemes with a fundamental module
    to a (commutative) projective scheme,  and

  \item[$\cdot$]
   \makebox[8em][l]{([L-Y5: Sec.~3.1])}
   introduced the notion of {\it Azumaya $C^{\infty}$-manifolds}
   and {\it morphisms} therefrom to a complex manifold.
 \end{itemize}
(See loc.\ cit.\ for more details.)
In the current sequel,
 we use the latter notion to give
 \begin{itemize}
  \item[$\cdot$]
   a prototypical definition of
    supersymmetric D-branes of A-type (i.e.\ {\it A-branes})
     -- in an appropriate region of the Wilson's theory-space
        of string theory --
    as {\it special Lagrangian morphisms
    with a unitary minimally flat connection-with-singularity}.
 \end{itemize}
This merges
  Donaldson's picture of special Lagrangian submanifolds (1999)
   as a special class of maps  and
  the Polchinski-Grothendieck Ansatz for D-branes
   on a Calabi-Yau space
 (Sec.~2.1).
The phenomena of
 \begin{itemize}
  \item[$\cdot$]
   {\it Higgsing/un-Higgsing} of Chan-Paton sheaves on A-branes and
    of the {\it large- vs.\ small-brane wrapping} of A-branes on a cycle
    in a Calabi-Yau space in superstring theory
 \end{itemize}
 can be achieved/realized
 via deformations of such special Lagrangian morphisms
 (Sec.~2.2 and Sec.~2.3).

{For} the case of Calabi-Yau $3$-folds,
 a classical result on $3$-manifolds branched-covering $S^3$,
   beginning with James Alexander (1920),
   completed and refined
    by Hugh Hilden (1974) and Jos\'{e} Montesinos (1976)
       in the form with a universal degree-bound $3$ and
    by William Thurston (1982) and
       Hilden, Mar\'{i}a Lozano and Montesinos (1983)
       in the form of universal links and universal knots respectively
  implies then, in particular, that
 \begin{itemize}
  \item[$\cdot$] {\it
   any embedded special Lagrangian submanifold (without singularity)
     with a complex vector bundle with a unitary flat connection
     in a Calabi-Yau $3$-fold
    is the image of a special Lagrangian morphism $\varphi$ from
      an Azumaya $3$-sphere $(S^{3,A\!z},{\cal E})$
       with a fundamental module,
      with a unitary minimally flat connection $\nabla$}.
 \end{itemize}
This singles out
  special Lagrangian morphisms from Azumaya $3$-spheres
   $(S^{3,A\!z},{\cal E})$ with a fundamental module,
   with a minimally flat connection-with-singularity,
 as the most basic, seed-like D3-branes of A-type
  in a Calabi-Yau $3$-fold.
Thus,
 replacing maps from strings moving along in space-time
  in string theory
 by maps from Azumaya $3$-spheres $S^{3,A\!z}$
 suggests
 \begin{itemize}
  \item[$\cdot$]
   a {\it genus-like expansion of the path-integral of D3-branes}
    in type IIB string theory compactified on Calabi-Yau $3$-folds
    that resembles the genus expansion
    of the path-integral of strings
 \end{itemize}
 (Sec.~2.4.2).
Similarly, for
 the path-integral of D2-branes in type IIA string theory
  compactified on Calabi-Yau $3$-folds and
 the path-integral of M2-branes in M-theory
  compactified on Joyce/$G_2$ $7$-manifolds.

In Sec.~3,
 we use the technical results of Dominic Joyce (2002-2003)
   on desingularizations of special Lagrangian submanifolds
   with conical singularities
  to explain
 \begin{itemize}
  \item[$\cdot$]
   {\it how supersymmetric D3-branes thus defined can be driven
    and re-assemble under a reverse split attractor flow
    at a point on the wall of marginal stability
    in Type IIB superstring theory compactified
     on varying Calabi-Yau $3$-folds},
 \end{itemize}
 studied by Frederik Denef (2001).
This last section is to be read alongside the works
 [De3] (arXiv:hep-th/0107152) of Denef  and
 [Joy3: V] (arXiv:math.DG/0303272) of Joyce.

{To} cover the basic type of deformations of morphisms
 from Azumaya spaces in this note and its sequel,
we discuss in Sec.~1
 Morse cobordisms of manifolds,
 their promotion to {\it Morse cobordisms} of Azumaya manifolds
   with a fundamental module, and
   {\it of morphisms} therefrom to complex manifolds.
The notion of {\it cone of special Lagrangian cycles}
   of a Calabi-Yau manifold --
  as a special-Lagrangian analogue to Mori cone of curves
   of a smooth projective variety --
 is also brought out in Sec.~2.4.1 for further study.
A summary of the needed results of Joyce is given in the appendix A.~1.

\bigskip

Readers are suggested to go through
  [De3] (resp.\ [Joy3: V]; [L-Y5] (D(6))) first
 to get a feel of
  split attractor flow
  (resp.\ desingularization of a special Lagrangian submanifold
           with conical singularities in a Calabi-Yau manifold;
          Azumaya geometry and morphisms from an Azumaya space)
 before reading the current note.

%%%%%%%%%%%%%%%%%%%%%%%%%%%%
% \bigskip
%
% \begin{flushleft}
% {\bf Split attractor flows and complex-deformation-driven degenerations
%      of special Lagrangian cycles in Calabi-Yau spaces.}
% \end{flushleft}
% %
% ????????????????????.
%
% \bigskip
%
% \noindent $\bullet$
%  ????????????????????.
%
% \bigskip
%
%
% \begin{flushleft}
% {\bf Strong string-coupling-constant dual
%      $d=4$ supersymmetric black hole picture.}
% \end{flushleft}
% %
% ???????????????.
% Cf.~[S-Y], [???].
%
% \bigskip
%
%%%%%%%%%%%%%%%%%
% Finally, it should be noted that
%  studying what happens
%   at the wall/when crossing the wall of marginal stability
%   in a phase space/configuration space/Wilson's theory-space
%  has become a main direction in recent years
%  on both the string theory (cf.\ [D-M], [Moo2], [Va])
%  and the mathematical side (cf.\ [Ko-S]);
% see the very recent work [Lu] and its references.
%%%%%%%%%%%%%%%%%

\bigskip

\bigskip

\noindent
{\bf Convention.}
 Standard notations\footnote{{\it Apology}:
         With a project that incorporates/merges many things
          from various well-established mathematical and stringy
           disciplines  and
         also to take into account notations
          from earlier parts of the project,
         we find it more and more difficult to keep
          the terminology/notations/symbols distinct for different objects.
         However, what a terminology/notation/symbol means
           is usually immediately clear
          either from the context
          or from the additional label/supscript/subscript
           to that symbol.
         Listed in Convention are a few essential ones used in this note,
          each of which is almost already carved into a stone
          in its own field.
         We decide that
          it is better/more meaningful to get used to them
          rather than to try to make any change
           merely for the consistency of notations in a note.},
  terminology, operations, and facts in
  (1) physics aspects of D-branes;
  (2) (commutative) algebraic geometry/stacks;
  (3) complex geometry;
  (4) symplectic/calibrated geometry;
  (5) sheaves on manifolds;
  (6) Hodge theory;
  (7) surgery and topological cobordism theory
  can be found respectively in
  $\,$(1) [Po], [Joh];
  $\,$(2) [Hart]$\,/\,$[L-MB];
  $\,$(3) [G-H];
  $\,$(4) [McD-S]$\,/\,$[Ha-L], [Harv], [McL];
  $\,$(5) [K-S], [Dim];
  $\,$(6) [Vo];
  $\,$(7) [Mi1], [Hir].
 \begin{itemize}
  \item[$\cdot$]
   A {\it real} manifold of dimension $n$
    is called an {\it $n$-manifold}
    while a {\it complex} manifold of complex dimension $n$
    is called an {\it $n$-fold}.

  \item[$\cdot$]
   {For} a smooth/$C^{\infty}$-manifold $X$,
    \begin{itemize}
     \item[-]
      ${\cal O}_X=$ the sheaf of $C^{\infty}$-functions on $X$,

     \item[-]
      ${\cal O}_{X,{\Bbb C}}
       = {\cal O}_X^{\infty}\otimes{\Bbb C}=$
       the sheaf of complex-valued $C^{\infty}$-functions on $X$.
    \end{itemize}

  \item[$\cdot$]
   {For} a complex manifold $Y$,
    \begin{itemize}
     \item[-]
      ${\cal O}_Y=$ the sheaf of holomorphic functions on $Y$,

     \item[-]
      ${\cal O}_Y^{\infty}=$
       the sheaf of real-valued $C^{\infty}$-functions on $Y$,

     \item[-]
      ${\cal O}_{Y,{\Bbb C}}^{\infty}
       = {\cal O}_Y^{\infty}\otimes{\Bbb C}=$
       the sheaf of complex-valued $C^{\infty}$-functions on $Y$.
    \end{itemize}

  \item[$\cdot$]
   {\it $n$-fold} $M$ as an $n$-{\it dimension}al complex manifold
   vs.\ {\it $n$-fold} (branched-)cover(ing space) $L\rightarrow N$
         as a (branched-)covering map of {\it degree} $n$.

  \item[$\cdot$]
   A (real) `{\it singular $C^{\infty}$-manifold}'
     or a  (real) `{\it $C^{\infty}$-manifold with singularities}' $M$
   means a topological space that can be stratified locally finitely
    into a union of (real) $C^{\infty}$-manifolds
    in such a way that it contains an open dense
     (possibly disconnected) stratum
    that is a (possibly disconnected) $C^{\infty}$-manifold
     (of uniform dimension if disconnected).
   {For} such $M$, let $U$ be the interior of the intersection
    of maximal open $C^{\infty}$-manifold-subsets of $M$.
   Then,
    ${\cal O}_M=$ the sheaf of continuous functions on $M$
     that is $C^{\infty}$ on $U$ and
    ${\cal O}_{M,{\infty}}:= {\cal O}_M\otimes{\Bbb C}$
     its complexification.

  \item[$\cdot$]
   $D^n =$ $n$-dimensional (closed) disk/ball,
   $S^n =$ $n$-dimensional sphere, and
   $T^n$ or ${\Bbb T}^n =$ $n$-dimensional torus;
    all as real smooth manifolds.
   Particularly, $3$-{\it disk} $D^3$ vs.\ D$3$-{\it brane} $X$.

  \item[$\cdot$]
   A {\it Calabi-Yau $n$-fold} $\,Y$
     (with a specified holomorphic $n$-form)
    is denoted in full by $(Y,J,\omega,\Omega)$,
    where
     $J$ is the complex structure on $Y$,
     $\omega$ is the K\"{a}hler class
      of the underlying Ricci-flat metric on $Y$,
     $\Omega$ is a holomorphic $n$-form on $Y$
    that satisfies the identity
     $$
      \omega^n/n!\;=\;
        (-1)^{n(n-1)/2}(i/2)^n\Omega\wedge\bar{\Omega}\;\;
        (\,=\; \vol_M\,)\,.
     $$
   Thus, $\Omega$ is uniquely determined by $(J,\omega)$,
    up to a phase factor.

  \item[$\cdot$]
   {\it D}(irichlet)-brane
    vs.\ {\it d}isk
    vs.\ ${\cal D}$-module.
    % vs.\ {\it d}eck transformation.

  \item[$\cdot$]
   The word `{\it relative}' has two different meanings:
   (1) with respect to a subset,
       e.g.\ the {\it relative} cohomology $H^{\ast}(M,S;{\Bbb R})$,
       vs.\
   (2) with respect to the base of a family,
       e.g.\ the {\it relative} cotangent sheaf $\Omega_{X/S}$.
   {For} the current work, it is (2) that most often appears.

  \item[$\cdot$]
   {\it Graph} $\Gamma$
    vs.\ the {\it global-section functor} $\Gamma (\,\cdot\,)$.

  \item[$\cdot$]
   The {\it sheaf} $\Omega^p_Y$ of holomorphic $p$-forms
     on a complex manifold $Y$
   vs.\ a {\it holomorphic $n$-form} $\Omega$
                              on a Calabi-Yau $n$-fold $Y$
   vs.\ a (smooth or holomorphic) {\it section} $\Omega$
    on a holomorphic line bundle ${\cal H}^{n,0}$
    on the moduli space ${\cal M}$ of complex deformations of $Y$.

  \item[$\cdot$]
   (1)
    $i=\sqrt{-1}$
    vs.\ $i$ as an {\it index}
    vs.\ $i$ as an {\it inclusion/embedding};$\;$
   (2)
    $\alpha$ as a {\it phase} or a {\it phase function}
    vs.\ $\alpha$ as an {\it index}.

  \item[$\cdot$]
   The {\it canonical line bundle} $K_Y$ of a complex manifold $Y$
   vs.\ a (local) {\it K\"{a}hler potential} $K$
        of a K\"{a}hler manifold $Y$
        (particularly in SUSY QFT and stringy literatures,
          in which the latter $K$ is also related to the {\it kinetic term}
          in a supersymmetric action/Lagrangian density).

  \item[$\cdot$]
   A {\it K\"{a}hler metric} $g$ on a complex manifold $Y$ is usually
    also denoted by its associated {\it K\"{a}hler $2$-form} $\omega$
    on $Y$.

  \item[$\cdot$]
   $N$ that counts the {\it number} of sets of
    minimal {\it supersymmetries} in each space-time dimension
    (e.g.\ $d=10,\,N=2\,\Rightarrow\, 32$ supercharges)
   vs.\ $N$ as a {\it manifold}
   vs.\ $N$ as a {\it tubular neighborhood}
   vs.\ $N_{Z/Y}$ as a {\it normal bundle} of $Z$ in $Y$
   (vs.\ $N$ as an unspecified (usually large) number (usually integer)
        as in the `large $N$ limit').

  \item[$\cdot$]
   {\it rank} of a locally-free sheaf/vector bundle
   vs.\ {\it rank} of an algebra
   vs.\ {\it rank} of a Lie group.

  \item[$\cdot$]
   $Z$ as a {\it subscheme/submanifold/cycle/chain}
   vs.\ $Z$ as a {\it central charge}
   (vs.\ $Z$ as a {\it partition function} in QFT and string theory.)
 \end{itemize}

\bigskip

\bigskip
%\newpage

\begin{flushleft}
{\bf Outline.}
\end{flushleft}
{\small
\baselineskip 12pt  %13pt
\begin{itemize}
 \item[0.]
  Introduction.
  %%%%%%%%%%
  % %
  % \vspace{-.6ex}
  % \begin{itemize}
  %  \item[$\cdot$]
  %   ??????????????????.
  % \end{itemize}
  %%%%%%%%%%

 \item[1.]
  Morphisms from Azumaya manifolds with a fundamental module
  in a Morse family.
  \vspace{-.6ex}
  \begin{itemize}
   \item[1.1]
    Morse family of manifolds with singularities.

   \item[1.2]
    Morphisms from Azumaya manifolds with a fundamental module
    in a Morse family.
  \end{itemize}

 \item[2.]
  Supersymmetric D-branes of A-type and their deformations:\\
  Donaldson meeting Polchinski-Grothendieck.
  \vspace{-.6ex}
  \begin{itemize}
   \item[2.1]
    Supersymmetric D-branes of A-type
    as morphisms from Azumaya manifolds\\ with a fundamental module.

   \item[2.2]
    Higgsing/un-Higgsing of A-branes via deformations of morphisms.

   \item[2.3]
    Large- vs.\ small-brane wrapping via deformations of morphisms.

   \item[2.4]
    Remarks/Questions/Conjectures.
    \begin{itemize}
     \item[2.4.1]
      Cones of special Lagrangian cycles.

     \item[2.4.2]
      A genus-like expansion of the path-integral
       of lower-dimensional branes:
      Alexander-Hilden-Lozano-Montesinos-Thurston/Hurwitz
       meeting Polchinski-Grothendieck.
    \end{itemize}
  \end{itemize}

 \item[3.]
  Morse cobordisms of A-branes on Calabi-Yau $3$-folds
  under a reverse split attractor flow\\
  at a wall of marginal stability:
  Denef-Joyce meeting Polchinski-Grothendieck.
  \vspace{-.6ex}
  \begin{itemize}
   \item[3.1]
    Evolution of {\it Im}$\,(e^{-i\alpha_{\Gamma}}\Omega)$
    along a $\Gamma$-attractor flow trajectory \`{a} la Denef.
   %%%%%%%%%%%%%%%%%%%%
   % Attractor flows, marginal stability, and
   % split attractor flows on ${\cal M}_{\complexscriptsize}$.
   %
   % \item[3.2]
   % The local moduli space of A-branes in a family of
   % marked Calabi-Yau $3$-folds.
   %%%%%%%%%%%%%%%%%%%%%

   \item[3.2]
    Morse cobordisms of A-branes on Calabi-Yau $3$-folds
    under a reverse split attractor flow\\
    at a wall of marginal stability.
  \end{itemize}

 %%%%%%%%%%%%%%%%%%%%%%%%%%%%%%
 % \item[?.]
 %  %
 %  \vspace{-.6ex}
 % \begin{itemize}
 %  \item[?.1]
 %   ??????????????????.
 %
 %  \item[?.2]
 %   ?????????????????.
 %   %
 %   \begin{itemize}
 %    \item[?.2.1]
 %     ???????????????.
 %   \end{itemize}
 %
 %  \item[?.3]
 %   ???????????????.
 % \end{itemize}
 %%%%%%%%%%%%%%%%%%%%%%%%%%%%%%%

 \item[]\hspace{-1.7em}
  Appendix.
  \vspace{-.6ex}
  \begin{itemize}
   \item[A.1]
    Desingularizations of immersed special Lagrangian submanifolds
    with transverse intersections and their moduli space \`{a} la Joyce.
   %%%%%%%%%%
   %
   % \item[A.2]
   % Evolution of {\it Im}$\,(e^{-i\alpha_{\Gamma}}\Omega)$
   % along a $\Gamma$-attractor flow trajectory \`{a} la Denef.
   %%%%%%%%%%
 \end{itemize}
\end{itemize}
} %endsmall

\newpage

\section{Morphisms from Azumaya manifolds with a fundamental\\ module
         in a Morse family.}

In studying deformation problems in algebraic geometry,
 there is the notion of {\it flatness}
  that characterizes a ``good family" of objects in question.
In the category $C^{\infty}$-manifolds(-with-singularity),
 one would very much like to have such a notion as well.
{For} this note, we take a Morse family from manifold/cobordism theory
 to play the role of a flat family in algebraic geometry.
The issue of deformations of A-branes studied
  in Sec.~2 and Sec.~3 of this note
 will be based on such families.
In Sec.~1.1, we extend the standard notion of
  a Morse family over an interval in ${\Bbb R}$
 to one over a general base $S\subset {\Bbb R}^l$ for some $l$.
In Sec.~1.2, we
 give/review the definition of morphisms from
  Azumaya manifolds to a complex manifold based on [L-Y5: 3.1] (D(6))
 and extend it to the case of a Morse family of Azumaya manifolds.
%%%%%%%%%
% This section serves also as part of the background for [L-Y6].
%%%%%%%%%

\bigskip

\subsection{Morse family of manifolds with singularities.}

\begin{definition-fact}
{\bf [handlebody decomposition, Mores function, singularity of
Morse type].} {\rm ([G-St], [Ki1], and [Mi].)} {\rm
 Associated to a {\it handlebody decomposition}
  $$
   M_{(0)}=D^{n+1}(=D^0\times D^{n+1})\,
    \subset\, M_{(1)}\, \subset \,\cdots\,
    \subset\, M_{(k-1)}\, \subset\, M_{(k)}=M
  $$
  of a smooth $(n+1)$-manifold $M$,
  where $M_{(i)}$ is obtained from $M_{(i-1)}$ by
   attaching a $k_i$-handle $D^{k_i}\times D^{(n+1)-k_i}$
    via a smooth embedding
    $f_i:\partial D^{k_i}\times D^{(n+1)-k_i}\rightarrow \partial M_{(i)}$
   plus a smoothing after gluing via $f_i$,
 (in notation,
   $M_{(i)}=M_{(i-1)}\cup_{f_i}(D^{k_i}\times D^{(n+1)-k_i})$),
 there is a {\it Morse function}
  $$
   h:M\rightarrow {\Bbb R}
  $$
  with critical values $t_0<t_1<\,\cdots\,<t_{k-1}<t_k$ and
       non-degenerate critical points
       $p_i\in M$ with $h(p_i)=t_i$ and $\indexx(p_i)=k_i$.
 $p_i$ corresponds the center
   $(0,0)\in D^{k_i}\times D^{(n+1)-k_i}$,
  with $D^{k_i}\times\{0\}$ the descending manifold and
       $\{0\}\times D^{(n+1)-k_i}$ the ascending manifold of $h$.
 The singular $n$-manifold $M_{t_i}:=h^{-1}(t_i)$
  arises from degenerating a smooth $n$-manifold
   $M_t:=h^{-1}(t) \simeq \partial M_{(i-1)}$, $t\in (t_{i-1}, t_i)$,
    and then deform to another smooth $n$-manifold
   $M_{t^{\prime}}:= h^{-1}(t^{\prime})
     \simeq \partial M_{(i)}$, $t^{\prime}\in (t_i,t_{i+1})$,
  by:
  \begin{itemize}
   \item[$\cdot$] For $k_i=0$,
    $M_{t_i} \simeq M_t\amalg\{p_i\}$,
     and then deform to $M_{t^{\prime}}\simeq M_t\amalg S^n$.

   \item[$\cdot$] For $k_i=1$,
    $M_{t_i} \simeq M_t$
             with two points $\{p_-, p_+\}$ identified (to $p_i$),
     and then deformed to
      $M_{t^{\prime}} \simeq$
      the (self-){\it connected sum} of $M_t$ at $\{p_-, p_+\}$.

   \item[$\cdot$] For $2\le k_i\le n-1$,
    $M_{t_i} \simeq M_t$ with an embedded $S^{k_i-1}$
     (whose tubular neighborhood
       $\nu_{M_t}(S^{k_i-1})\simeq S^{k_i-1}\times D^{n-(k_i-1)}$)
     shrunk to a point (i.e.\ $p_i$),
     and then deform to $M_{t^{\prime}}$ via evolving $p_i$
      to an $S^{n-k_i}$.
    This corresponds to a {\it surgery} of $M_t$ along $S^{k_i-1}$
     by removing $\nu_{M_t}(S^{k_i-1})$ and then filling in
     $D^{k_i}\times S^{n-k_i}$
     via the isomorphisms
      $\partial(\nu_{M_t}(S^{k_i-1}))
        \simeq S^{k_i-1}\times S^{n-k_i}
        \simeq \partial(D^{k_i}\times S^{n-k_i})$.

   \item[$\cdot$] For $k_i=n$,
    $M_{t_i}\simeq M_t$
     with a two-sided embedded $S^{n-1}$ shrunk to a point
      (i.e.\ $p_i$),
     and then deform to $M_{t^{\prime}}$ by compactifying
      $M_{t_i}-\{p_i\}$ via filling in points $p_-, p_+$.

   \item[$\cdot$] For $k_i=n+1$,
    $M_t\simeq N\amalg S^n$, $M_{t_i}\simeq N\amalg\{p_i\}$,
     and $M_{t^{\prime}}\simeq N$ for a smooth $n$-manifold $N$.
  \end{itemize}
 Situations $k_i=l$ and $k_i=(n+1)-l$ are reverse to each other,  and
 Situations $k_i=0,\, 1,\, n\,, n+1$ are the only ones
  that may change the number of connected components of $M_{t_i}$.
 {For} convenience, we will call these singular manifolds $M_{t_i}$
  that appear as a singular fiber of a Morse function $h$
  a {\it manifold with Morse-type singularities}.
}\end{definition-fact}

{For} the purpose of this note, we define the following version
 of the notion of a Morse family
 that extends the notion of Morse function slightly:

\begin{definition}
{\bf [Morse family].} {\rm
 Let $S$ be an open domain in some ${\Bbb R}^l$.
 A smooth map $\pi:X\rightarrow S$ from a smooth manifold $X$ to $S$
  is said to be
  a {\it Morse family of manifolds with singularities over $S$}
  (in short, a {\it Morse family over $S$})
  if the following conditions are satisfied:
  \begin{itemize}
   \item[$\cdot$]
    $\pi$ is a surjective;

   \item[$\cdot$]
    for all $s\in S$,
     there exists a smooth embedded curve
      $\gamma:(-\varepsilon, \varepsilon)\rightarrow S$, $\varepsilon>0$,
      with $\gamma(0)=s$
    such that
    \begin{itemize}
     \item[(1)]
      the total space
       $\gamma^{\ast}X\,
        :=\,     (-\varepsilon, \varepsilon)\times_{S}X\,
        \simeq\, \pi^{-1}(\gamma((-\varepsilon,\varepsilon)))\,$
      of the pull-back family is a smooth manifold,

     \item[(2)]
     the pull-back map
     $\gamma^{\ast}\pi :
        \gamma^{\ast}X
          \rightarrow (-\varepsilon, \varepsilon)$
     is a Morse function on $\gamma^{\ast}X$.
    \end{itemize}
  \end{itemize}
}\end{definition}

\begin{remark}
{\it $[$topologists' definition$]$.} {\rm
 A complete general definition
  of higher dimensional Morse families requires a study
  of the classification of stable singularities of smooth maps
  that allows enforced merging of simple nondegenerate singularities
  on fibers of $\pi:X\rightarrow S$ when $\dimm_{\Bbb R} S>1$.
 The very restrictive definition we use here
  is tailored to the situation of the current note.
 It corresponds to the case when no such merging occurs.
 Note also that it is important that
  {\it in the above definition,
  the fiber $X_s:=\pi^{-1}(s)$ of $\pi$ over $s\in S$
   is allowed to be disconnected}.
}\end{remark}

\begin{example}
{\bf [from a manifold with Morse-type singularities].}
{\rm
 Let $M_0$ be a $n$-manifold with Morse-type singularities
   $\{p_1,\,\ldots\,,\,p_k\}$
  via pinching a smooth $n$-manifold $M_{-}$
   along a disjoint collection of spheres $S^{\,l_i}$, $0\le l_i\le n-1$,
   with tubular neighborhood $\nu_M(S^{\,l_i})$
    a trivial $D^{n-l_i}$-bundle, $i=1,\,\ldots\,,\,k$.
 Topologically,
  a tubular neighborhood $\nu_{M_0}(p_i)$ of $p_i$ in $M_0$
   is homeomorphic to a (real) cone over $S^{\,l_i}\times S^{n-l_i-1}$.
 Denote the interval $(-\varepsilon, \varepsilon)$ by $I_{\varepsilon}$.
 Shrinking the disk bundles if necessary,
  we may assume that $\nu_M(S^{\,l_i})$, $i=1,\,\ldots\,,\,k$,
  are disjoint from each other.
 Then $M_0$ is realizable as $h^{-1}(0)$ of a Morse function
  $$
    h\, :\, X\;  \longrightarrow\;  I_{\varepsilon}
  $$
  with
   $h^{-1}(s)\simeq M_-$, for $s\in (-\varepsilon,0)$,  and
   $h^{-1}(s)\simeq M_+$, for $s\in (0,\varepsilon)$,
    where $M_+$ is a $n$-manifold obtained from $M_-$ by a surgery
     $$\mbox{$
      \left(M_--\coprod_{i=1}^k\nu_{M_-}(S^{\,l_i})\right)
        \bigcup_{\coprod_{i=1}^k g_i}
          \coprod_{i=1}^k (D^{l_i+1}\times S^{n-l_i-1})\,,
     $}$$
    with
     $g_i:\partial (D^{l_i+1}\times S^{n-l_i-1})
          \stackrel{\sim}{\rightarrow}
          \partial_iM_-
            :=\partial(\nu_{M_-}(S^{\,l_i}))
              \simeq S^{\,l_i}\times S^{n-l_i-1}$
    (with orientations taken into account).
 There is a special Morse family
  $\pi[k]:M_0[k]\rightarrow I_{\varepsilon}[k]:= I_{\varepsilon}^{\,k}$
   associated to $M_0$
  that arises from a ``prolonged/expanded realization of $h$"
  as follows:
 \begin{itemize}
  \item[(1)]
   By construction, one has an embedding over $I_{\varepsilon}$:
    $$
     \xymatrix{
      I_{\varepsilon}
        \times \left(M_--\coprod_{i=1}^k\nu_{M_-}(S^{\,l_i})\right)
       \ar @{^{(}->}[rrr] \ar[rd]_{\prscriptsize_1}  &&& X \ar[lld]^h \\
      & I_{\varepsilon}
     }\,,
    $$
    where $\pr_1$ is the projection map to the first factor.
   Let
    $$
     X\,-\,
        I_{\varepsilon}
         \times \left(M_--\coprod_{i=1}^k\nu_{M_-}(S^{\,l_i})\right)\;
     =\; \coprod_{i=1}^k\,K_i\,,
    $$
    where $K_i$ is the connected component
      that contains $p_i\in M_0=h^{-1}(0)\subset X$.
   By construction, $K_i$ is a manifold over $I_{\varepsilon}$
    with boundary
    $(I_{\varepsilon}\times (S^{\,l_i}\times S^{n-l_i-1}))
                                               /I_{\varepsilon}$.

  \item[(2)]
   Let
    $H_i[k]=I_{\varepsilon}^{\,i-1}
             \times \{0\}\times I_{\varepsilon}^{\,n-i}
           \subset I_{\varepsilon}[k]$
    be the $i$-th coordinate hyperplane of $I_{\varepsilon}[k]$.
   Consider the manifold with boundary
    $$
     I_{\varepsilon}[k]\times M_-\,
      -\,\coprod_{i=1}^k\,
           H_i[k]\times I_{\varepsilon} \times \nu_{M_-}(S^{\,l_i})
    $$
    over $I_{\varepsilon}[k]$  and
    the filling
    $$
     M_0[k]\;
      =\; \left(
            I_{\varepsilon}[k]\times M_-\,
             -\,\coprod_{i=1}^k\,
                  H_i[k]\times I_{\varepsilon}\times \nu_{M_-}(S^{\,l_i})
          \right)
          \;\bigcup_{\coprod_{i=1}^kf_i}\;
          \coprod_{i=1}^k\,\left(H_i[k]\times K_i \right)\,.
    $$
    where $f_i$ is the built-in isomorphism
    $$
     \xymatrix{
      H_i[k]\times \partial K_i \ar[rr]^-{f_i} \ar[rd]
       &&  H_i[k] \times I_{\varepsilon}
                  \times \partial \nu_{M_-}(S^{\,l_i}) \ar[ld] \\
      & I_{\varepsilon}[k]
     }\,.
    $$
   Here, as manifolds over $I_{\varepsilon}[k]$,
    the $I_{\varepsilon}$-factor of
     $H_i[k] \times I_{\varepsilon} \times \nu_{M_-}(S^{\,l_i})$
     is mapped to the $i$-th $I_{\varepsilon}$-factor
      of $I_{\varepsilon}[k]$ by the identity map,  and
    the $K_i$-factor of $H_i[k]\times K_i$ is mapped to
     the $i$-th $I_{\varepsilon}$-factor of $I_{\varepsilon}[k]$
     by the restriction $h|_{K_i}$.
   Then,
   since $M_0[k]$ is obtained from
     manifolds and gluing morphisms over $I_{\varepsilon}[k]$,
    there is a built-in morphism of smooth manifolds
    $$
     \pi[k]\, :\, M_0[k]\; \longrightarrow\; I_{\varepsilon}[k]\,.
    $$
   Furthermore, since
     $I_{\varepsilon}[k]\times M_-\,
         -\,\coprod_{i=1}^k\,
             H_i[k]\times I_{\varepsilon} \times \nu_{M_-}(S^{\,l_i})
        \rightarrow I_{\varepsilon}[k]$
      is a submersion and
     $H_i[k]\times K_i\rightarrow I_{\varepsilon}[k]$
      is a Morse family,
   $\pi[k]$ defines a Morse family.
   By construction,
    the set of critical points of $\pi[k]$ is given by
    $\coprod_{i=1}^k(H_i[k]\times \{p_i\})$,
     which is contained in $\coprod_{i=1}^k(H_i[k]\times K_i)$.
 \end{itemize}
\noindent\hspace{15.7cm}$\square$
}\end{example}

\begin{remark}
{\it $[$relative handlebody attachment$]$.}
{\rm
 The above construction of
   $\pi[k]: M_0[k]\rightarrow I_{\varepsilon}[k]$
  is equivalent to
   a step-by-step relative-handlebody attachment,
   beginning with $M_-\times (-\varepsilon,0]$,
  that avoids the singularities in the fibers of the previous family
  at each step.
}\end{remark}

{For} the conceptual appeal and convenience,
 we will borrow a terminology from complex algebraic geometry
 to define:

\begin{definition}
{\bf [expanded deformation space].}
{\rm
 The $M_0[k]$ constructed in Example~1.1.4
         % Example [from a manifold with Morse-type singularities]
  will be called an {\it expanded Morse family of smoothings}
  of the manifold $M_0$ with Morse-type singularities and
 the base $I_{\varepsilon}[k]$
  an {\it expanded deformation space} of $M_0$.
} \end{definition}

\begin{example}
{\bf [expanded deformation space for connected sum].} {\rm
 In particular, when all $l_i=0$, $i=1,\,\ldots\,,\, k$,
  in Example~1.1.4,
   % Example [from a manifold with Morse-type singularities]
 $M_0$ is the singular manifold obtained from identifying
  each pairs of points $\{p_{i-}, p_{i+}\}\,(\simeq S^0=\partial D^1)$,
   $i=1,\,\ldots\,,k$,
  in a possibly disconnected $n$-manifold $M_-$,  and
 $M_+$ is the (self-)connected sum of $M_-$
  at each $\{p_{i-},p_{i+}\}$.
 The construction gives then an expanded deformation space
  $S=I_{\varepsilon}[k]$ for $M_0$
  {\it with an expanded Morse family}
   $\pi[k]: M_0[k]\rightarrow I_{\varepsilon}[k]$
  of smoothings.
 %%%%%%%%%%%%%%%
 % Cf.~{\sc Figure}~???.
 %%%%%%%%%%%%%%%
}\end{example}

\begin{remark}
{\it $[$mixed real-complex version$]$.}
{\rm
 In the passing, we remark that
 one may extend Definition~1.1.2 slightly by requiring instead:
              % Definition [Morse family]
  %
  \begin{itemize}
   \item[$\cdot$]
    for all $s\in S$,
     there exists
     {\it either} a smooth embedded curve
      $\gamma_1:(-\varepsilon, \varepsilon)\rightarrow S$,
       $\varepsilon>0$, with $\gamma_1(0)=s$
      such that
      \begin{itemize}
       \item[(1)]
        the total space
         $\gamma_1^{\ast}X\,
          :=\,     (-\varepsilon, \varepsilon)\times_{S}X\,
          \simeq\, \pi^{-1}(\gamma_1((-\varepsilon,\varepsilon)))\,$
        of the pull-back family is a smooth manifold  and

       \item[(2)]
       the pull-back map
       $\gamma_1^{\ast}\pi :
          \gamma_1^{\ast}X
            \rightarrow (-\varepsilon, \varepsilon)$
       is a Morse function on $\gamma_1^{\ast}X$,
      \end{itemize}
     {\it or} a smooth embedded $2$-disk
      $\gamma_2: \Delta^2_{\varepsilon}=\{z\in {\Bbb C}:|z|<\varepsilon\}$,
       $\varepsilon>0$, with $\gamma_2(0)=s$
      such that
      \begin{itemize}
       \item[(1$^{\prime}$)]
        the total space
         $\gamma_2^{\ast}X\, :=\, \Delta^2_{\varepsilon}\times_{S}X\,
          \simeq\, \pi^{-1}(\gamma_2(\Delta^2_{\varepsilon}))\,$
        of the pull-back family is a smooth manifold  and

       \item[(2$^{\prime}$)]
       the pull-back map
       $\gamma_2^{\ast}\pi :
          \gamma_2^{\ast}X \rightarrow \Delta^2_{\varepsilon}$
       is locally modelled on a complex Morse function
       on $\gamma_2^{\ast}X$.
      \end{itemize}
  \end{itemize}
 Examples of such families include
  families of nodal curves in complex algebraic geometry,
  families of complex surfaces with $A_1$-singularities, and
  conifold degenerations of Calabi-Yau $3$-folds.
}\end{remark}

\bigskip

\subsection{Morphisms from Azumaya manifolds with a fundamental module\\
            in a Morse family.}

\bigskip

\begin{flushleft}
{\bf Morphisms from Azumaya manifolds with a fundamental module
      to a complex manifold.}
\end{flushleft}
(Cf.\
 [L-Y1: Sec.~1, Sec.~2] (D(1));
 [L-L-S-Y: Sec.~1, Sec.~2.1, Sec.~2.2] (D(2));
 [L-Y5: Sec.~2.1, Sec.~2.2, Sec.~3.1] (D(6)).)
Given
  a ($C^{\infty}$-)manifold $X$,
  a locally free ${\cal O}_{X,{\Bbb C}}$-module ${\cal E}$, and
  a complex manifold $Y$,
Let $\Endsheaf_{{\cal O}_{X,{\Bbb C}}}({\cal E})$
 be the sheaf of ${\cal O}_{X,{\Bbb C}}$-module endomorphisms
 of ${\cal E}$.
It is an Azumaya algebra over ${\cal O}_{X,{\Bbb C}}$.
A {\it morphism}
 $$
  \varphi\;:\;
   (X^{A\!z},{\cal E})\,:=\,
    (X,
     {\cal O}_X^{A\!z}:= \Endsheaf_{{\cal O}_{X,{\Bbb C}}}({\cal E}),
    {\cal E})\;
   \longrightarrow\; Y
 $$
 from the {\it Azumaya manifold with a fundamental module}
  $(X^{A\!z},{\cal E})$
 to $Y$ is by definition
 an {\it equivalence class},
  in notation
   $$
    \varphi^{\sharp}\;:\; {\cal O}_{Y,{\Bbb C}}^{\infty}\;
      \longrightarrow\; {\cal O}_X^{A\!z}
   $$
  {\it of ${\Bbb C}$-algebra homomorphisms}
   from a gluing system of ${\Bbb C}$-algebras
     associated to ${\cal O}_{Y,{\Bbb C}}^{\infty}$
   to a fine-enough (with respect to the $C^{\infty}$-topology on $X$)
    gluing system of Azumaya algebras over ${\cal O}_{X,{\Bbb C}}$
    associated to ${\cal O}_X^{A\!z}$.$\,$\footnote{{\it
                    String-Theoretical Remark.}
                    This fundamental picture is
                     what makes our definition of morphisms
                     from an Azumaya space with a fundamental module
                     linked with D-branes in string theory.
                    It retraces how the
               Polchinski-Grothendieck/Azumaya-Type Noncommutativity Ansatz
                     for D-branes appears; cf.\ [L-Y1: Sec.~2.2] (D(1)).}
$\;$
In general, there is {\it no} map from $X$ to $Y$ directly.
However, the ${\cal O}_{X,{\Bbb C}}$-algebra ${\cal A}_{\varphi}$
 generated by the image ${\Bbb C}$-algebras of $\varphi^{\sharp}$
  under ${\cal O}_{X,{\Bbb C}}$
 defines a $C^{\infty}$-manifold-with-singularity $X_{\varphi}$
 -- the {\it surrogate of $X^{A\!z}$ associated to $\varphi$} --
 with structure sheaf ${\cal A}_{\varphi}$ and
 with the underlying topology $X_{\varphi}$
  canonically embedded in $X\times Y$.
By construction, ${\cal E}$ has a tautological
 ${\cal A}_{\varphi}$-module structure,
 denoted by $_{{\cal A}_{\varphi}}{\cal E}=:{\cal E}_{\varphi}$.
In summary and in a re-packaged form:

\begin{definition}
{\bf [morphism from Azumaya manifold].} {\rm
 Given an Azumaya manifold with a fundamental module
  $(X^{A\!z},{\cal E})$ and a complex manifold $Y$,
 a {\it morphism} $\varphi:(X^{A\!z},{\cal E})\rightarrow Y$
  is given by the following data:
  $$
   \xymatrix{
    {\cal E}_{\varphi}\ar@{.>}[rd] \\
     & (X_{\varphi},{\cal A}_{\varphi})
        \ar[rr]^-{f_{\varphi}} \ar[d]_{\pi_{\varphi}}
     && Y\;,\\
     & X
   }
  $$
  where
  \begin{itemize}
   \item[$\cdot$]
    $X_{\varphi}$ is $C^{\infty}$-manifold-with singularity
    that contains an open dense manifold-subset $V_{\varphi}$
     such that
     $\pi_{\varphi}|_{V_{\varphi}}:
      V_{\varphi}\rightarrow V:=\pi_{}(V_{\varphi})$
     is a covering map of finite order;

   \item[$\cdot$]
    ${\cal A}_{\varphi}$ is an ${\cal O}_{X_{\varphi},{\Bbb C}}$-algebra
     and
    $(\pi_{\varphi},f_{\varphi}):(X_{\varphi},{\cal A}_{\varphi})
                                                  \rightarrow X\times Y$
     is an embedding
      as a map of ringed topological spaces over $X$,
       with $X\times Y$
         as a smooth manifold fibered over $X$
       with an analytic structure along fibers $Y$;

   \item[$\cdot$]
    as an ${\cal A}_{\varphi}$-module,
    the support of ${\cal E}_{\varphi}$
     is $(X_{\varphi},{\cal A}_{\varphi})\;$
    (i.e.\ there exists no local section $a$ of ${\cal A}_{\varphi}$
      on some open set $U$ of $X_{\varphi}$ such that
      $a\cdot({\cal E}_{\varphi}|_U)=0$);

   \item[$\cdot$]
    ${\pi_{\varphi}}_{\ast}{{\cal E}_{\varphi}}={\cal E}$.
  \end{itemize}
 A {\it morphism} (or {\it arrow}) $\varphi_1\rightarrow \varphi_2$
  is the data $(h,\tilde{h}, \tilde{\tilde{h}})$,
  where
   $h:X_1\rightarrow X_2$ is a diffeomorphism,
   $\tilde{h}:(X_{1,\varphi_1},{\cal A}_{\varphi_1})
     \rightarrow (X_{2,\varphi_2},{\cal A}_{\varphi_2})$
    is an isomorphism of ringed topological space that lifts $h$, and
   $\tilde{\tilde{h}}: \tilde{h}^{\ast}{\cal E}_{\varphi_2}
                       \rightarrow {\cal E}_{\varphi_1}$
    is an ${\cal A}_{\varphi_1}$-module isomorphism
  such that the following diagram commutes:
  $$
   \xymatrix{
    {\cal E}_{\varphi_1}\ar@{.>}[dd]       \\
    & {\cal E}_{\varphi_2} \ar@{.>}[dd]\\
    (X_{1,\varphi_1},{\cal A}_{\varphi_1})
        \ar'[r][rrrrr]^-{f_{\varphi_1}}
        \ar[d]_{\pi_{\varphi_1}}
        \ar[dr]^{\tilde{h}}      & \hspace{2em} &&&& Y\;.\\
    X_1\ar[dr]_-{h}
     &  (X_{2,\varphi_2},{\cal A}_{\varphi_2})
           \ar[urrrr]_-{f_{\varphi_2}} \ar[d]^{\pi_{\varphi_2}}\\
    & X_2
   }
  $$
}\end{definition}

\begin{remark}
{$[\,$other aspects of morphisms from Azumaya manifolds
      with a fundamental module$\,]$.}
{\rm
 The way
  in which
   an arrow $\varphi_1\rightarrow \varphi_2$ between two morphisms
   is defined
  says that
 a morphism $\varphi:(X,{\cal E})\rightarrow Y$,
   with ${\cal E}$ of rank $r$,
  is the same as a morphism
  $$
   \phi\; :\; X\;
              \longrightarrow\; {\frak M}^{\,0^{A\!z^f}}_{\,r}\!\!(Y)\,,
  $$
  where ${\frak M}^{\,0^{A\!z^f}}_{\,r}\!\!(Y)$
   is the stack of $0$-dimensional ${\cal O}_Y$-modules of length $r$.
 ${\frak M}^{\,0^{A\!z^f}}_{\,r}\!\!(Y)$
  admits a representation-theoretical atlas
  from the Douady space of $0$-dimensional, length $r$ quotients
   of ${\cal O}_Y^{\oplus r}\,$:
  $$
   \Quot_{\mbox{\scriptsize\it Douady}}^{\,H^0}
                                 ({\cal O}_Y^{\oplus r},r)\;
    :=\;
    \{\, {\cal O}_Y^{\oplus r}
           \rightarrow \widetilde{\cal E}\rightarrow 0\,,\;
         \length\widetilde{\cal E}=r\,,\;
         H^0({\cal O}_Y^{\oplus r})
           \rightarrow H^0(\widetilde{\cal E})\rightarrow 0\,
       \}
  $$
 This is a $\GL_r({\Bbb C})$-space.
 In terms of this,
  $\phi$, and hence $\varphi$, is encoded in a map
   $$
    \tilde{\phi}\; :\; P_X\;  \longrightarrow\;
                \Quot_{\mbox{\scriptsize\it Douady}}^{\,H^0}
                                          ({\cal O}_Y^{\oplus r},r)
   $$
   of $\GL_r({\Bbb C})$-spaces,
  where $P_X$ is a principal $\GL_r({\Bbb C})$-bundle over $X$.
 See [L-Y5: Sec.~2.2] D(6) for the analogue in the realm of
  projective algebraic geometry,
  in which Douady spaces are replaced by Grothendieck's Quot-schemes.
}\end{remark}

\bigskip

\begin{flushleft}
{\bf Morphisms from a Morse family of Azumaya manifolds
      with a fundamental module.}
\end{flushleft}

\begin{definition}
{\bf [Morse family of morphisms].} {\rm
 Let
  $X_S$ be a Morse family over $S$,
  ${\cal E}_S$ be a locally free ${\cal O}_{X_S}$-module of finite rank,
    and
  $Y_S$ be an $S$-family of complex manifolds over $S$,
  $S\subset {\Bbb R}^l$ for some $l$.
 Denote the fiber of $(X_S,{\cal E}_S, Y_S)$ at an $s\in S$ by
  $(X_s,{\cal E}_s,Y_s)$.
 Then,
  a morphism
   $\varphi_S:(X_S^{A\!z},{\cal E}_S)\rightarrow Y_S$
   in the sense of Definition~1.2.1
                 % Definition [morphism from Azumaya manifold]
   that takes $(X_s^{A\!z},{\cal E}_s)$ to $Y_s$
  is called a {\it Morse family of morphisms}
   (from Azumaya manifolds with Morse type singularities
    with a fundamental module to complex manifolds)
   {\it over $S$}.
}\end{definition}

\bigskip

\section{Supersymmetric D-branes of A-type and their deformations:\\
         Donaldson meeting Polchinski-Grothendieck.}

The notion of morphisms from an Azumaya manifold with a fundamental module
 to a target space(-time) gives a basic tool/language to study D-branes
 mathematically in their geometric phase.
{For} D-branes in the space(-time) that preserve
  part of the supersymmetry in
   either the related $d=2$ field theory
    on the open superstring world-sheet(-with-boundary)
   (cf.\ [H-I-V], [O-O-Y]; see also [A-L-Z], [L-Z], and [M-P-R])
  or the ($d=10$ or a lower-dimensional effective) supergravity theory
   with branes (cf.\ [B-B-St] and [M-M-M-S]),
 there are constraints on
  the morphisms and the gauge field on the fundamental module
  one needs to add to the notion of morphisms above.
Depending on
 what supersymmetry remains,
 what regime/location
  in the Wilson's theory-space of string theory we are in/at, and
 what other fields either on the background space-time and
      on the D-branes world-volume itself are brought into play,
these additional constraints may take different mathematical forms.
{For} the current note,
 we address D-branes of A-type in the sense of [O-O-Y] and [B-B-St]
 in the regime where
  the string coupling constant $g_s$ is small,
  the energy scale on the D-branes field theory and
   the related ambient supergravity theory is low,  and
  the background $B$-field is set to zero.\footnote{We
                           thank Andrew Strominger and Cumrun Vafa
                           for consultation on the absoluteness/variation
                           of the definition of A-branes.
                           It remains to us a challenging question
                            as how (the working mathematical definition
                             for) A-branes should vary/interpolate
                             when one moves around
                             in the Wilson's theory-space of string theory.}
The definition of A-branes in Sec.~2.1
  along the line of the Polchinski-Grothendieck Ansatz,
  with the above considerations taken into account,
 can be thought of as an extension of Donaldson's
  viewpoint on special Lagrangian submanifolds in Calabi-Yau spaces
   -- as a subspace of maps in a space of all maps [Don] --
 by promoting the domain of maps from
  a(n ordinary) manifold to an Azumaya manifold with a fundamental module.
(Cf.\ [L-Y5: Sec.~3] (D(6)).)
Through deformations of such morphisms,
 the most basic phenomena of D-branes,
  Higgsing/un-Higgsing and large- vs.\ small-brane wrapping,
 can be reproduced;
Sec.~2.2 and Sec.~2.3.
Two immediate themes are listed in Sec.~2.4 as guiding questions
 for further study.

\bigskip

\subsection{Supersymmetric D-branes of A-type as morphisms
            from Azumaya\\ manifolds with a fundamental module.}

It's well-known that
 an isomorphism class of complex vector bundles-with-flat-connection
  $(E,\nabla)$ of rank $r$ over a (real) smooth manifold $M$
  is given by a conjugacy class of group representations
  $\rho:\pi_1(M)\rightarrow U(r)$.
However,
 from the lesson of D-branes of B-type (e.g.\ [G-Sh] and [D-K-S]) and
 from the aspects of morphisms $\varphi:(X^{A\!z},{\cal E})\rightarrow Y$
  from an Azumaya manifold to a commutative target-space,
 the surrogate $X_{\varphi}$ of $\varphi$ in general
  has a scheme-like structure
  that contains nilpotent elements in its sheaf of local function rings.
See also [L-Y5: Remark~4.2.5] (D(6))
 for how this may be encoded in symplectic geometry.
Thus, to understand A-brane in full,
 we begin with the notion of flat connections
  on a coherent sheaf on a scheme and a $C^{\infty}$ version of this,
  and then
 give a prototypical definition of A-branes
  guided particularly by [B-B-St], [O-O-Y], and [H-I-V].

\bigskip

\begin{flushleft}
{\bf Connections on a quasi-coherent sheaf on a scheme and its flatness.}
\end{flushleft}
(Cf.\ [Be], [Bj], [Ka], and [Ko] (but without assuming smoothness);
      and [Ei], [Mat].)

\begin{definition}
{\bf [connection, curvature, and flatness].} {\rm
 Let
  $Z$ be a scheme over a base $T$  and
  ${\cal F}$ be a quasi-coherent sheaf of ${\cal O}_Z$-modules.
 Recall the canonical $T$-differential
  $d: {\cal O}_Z\rightarrow \Omega_{Z/T}$.
 An {\it $T$-connection $\nabla$} on ${\cal F}$
  is a homomorphism of ${\cal O}_T$-modules
  $$
   \nabla\; :\;  {\cal F}\;
   \longrightarrow\; \Omega_{Z/T}\otimes_{{\cal O}_Z}{\cal F}
  $$
  such that
  $$
   \nabla(fs)\;=\; df\otimes s + f\nabla s\,,
  $$
   for functions $f$ of $Z$ and sections $s$ of ${\cal F}$
    on the same open subset of $Z$.
 $\nabla$ extends to a homomorphism of ${\cal O}_T$-modules
  $$
   \nabla\;:\;
    \Omega_{Z/T}^{\,i}\otimes_{{\cal O}_Z}{\cal F}\;
    \longrightarrow\;
    \Omega_{Z/T}^{\,i+1}\otimes_{{\cal O}_Z}{\cal F}
  $$
  by
  $$
   \nabla(\omega\otimes s)\;
    =\; d\omega\otimes s + (-1)^i\omega\wedge \nabla s\,.
  $$
 In particular, one has
  $$
   \xymatrix{
    {\cal F} \ar[r]^-{\nabla}
    & \Omega_{Z/T}\otimes_{{\cal O}_Z}{\cal F} \ar[r]^-{\nabla}
    & \Omega_{Z/T}^{\,2}\otimes_{{\cal O}_Z}{\cal F}\,.
   }
  $$
  The ${\cal O}_T$-module homomorphism from the composition
   turns out to be an ${\cal O}_Z$-module homomorphism  and, hence,
  defines an $\Endsheaf_{{\cal O}_Z}({\cal F})$-valued $2$-form
   $R\in \Gamma(
           \Endsheaf_{{\cal O}_Z}({\cal F})
               \otimes_{{\cal O}_Z} \Omega_{Z/T}^{\,2})$
   on $Z$.
 $R$ is called the {\it curvature $2$-form} of $\nabla$.
 We say that $\nabla$ is {\it flat} if $R=0$.
}\end{definition}

\begin{remark}
{$[\,$existence/non-existence of (flat) connection$\,]$.} {\rm
 In general,
  a coherent or quasi-coherent ${\cal O}_Z$-module ${\cal F}$
  on a scheme $Z$ may not admit a connection.\footnote{Even
                      for a coherent sheaf ${\cal F}$
                       on a $0$-dimensional/punctual scheme $Z/{\Bbb C}$,
                      the existence of a connection already puts
                       a highly nontrivial constraint on ${\cal F}$,
                      let alone a flat connection.}
 When it does, it may not admit a flat connection.
 The difference $\nabla_1-\nabla_2$
  of two connections $\nabla_1$ and $\nabla_2$ on ${\cal F}$
  is an ${\cal O}_Z$-module homomorphism
  ${\cal F}\rightarrow \Omega_Z\otimes_{{\cal O}_Z}{\cal F}$.
 In particular,
  the structure sheaf ${\cal O}_Z$ of $Z$ (over ${\Bbb C}$)
  admits a connection, given by the canonical differential
   $d:{\cal O}_Z\rightarrow \Omega_Z$, which is also flat.
 A connection on ${\cal O}_Z$ is thus of the form $d+h$,
  where $h:{\cal O}_Z\rightarrow \Omega_Z$ is an arbitrary
   ${\cal O}_Z$-module homomorphism (i.e.\ $h\in \Gamma(\Omega_Z)$).
}\end{remark}

\begin{remark}
{$[\,$flat connection and ${\cal D}$-module$\,]$.}
{\rm ([Be].)
 Let $\Theta_{Z/T}$ ($\simeq \Omega_{Z/T}^{\vee}$ canonically)
  be the sheaf of $T$-derivations on ${\cal O}_Z$  and
 ${\cal D}_{Z/T}$ be
  be the ${\cal O}_Z$-algebra of differential operators on $Z/T$
  generated by $\Theta_{Z/T}$.
 When $Z$ is smooth over $T$,
 a flat connection $\nabla$ on ${\cal F}$ realizes
  ${\cal F}$ as a ${\cal D}_{Z/T}$-module via
  $$
   (\partial_1\,\cdots\,\partial_l)s\;
    :=\; \nabla_{\!\partial_1}\,\cdots\,\nabla_{\!\partial_l}s\,,
  $$
  where
   $s$ is a local section of ${\cal F}$ and
   $\partial_1\,,\,\cdots\,,\,\partial_l$
    are commuting local sections of $\Theta_{Z/T}$,
   all on the same open set of $Z$.
}\end{remark}

%%%%%%%%%%%%%%%%%%%%%%%
% \begin{example}
% {\bf [flat connection on a module on a punctual scheme].} {\rm
%  ??????????????.
% }\end{example}
%%%%%%%%%%%%%%%%%%%%%%%

\begin{remark}
{$[$lifting and descent of flat connection$]$.} {\rm
 Given a finite morphism $\pi: Z_1\rightarrow Z_2$
   with a coherent ${\cal O}_{Z_1}$-module ${\cal F}_1$ on $Z_1$,
  let ${\cal F}_2:= \pi_{\ast}{\cal F}_1$.
 Then, in general,
  a flat connection on ${\cal F}_2$
   does not lift to a flat connection on ${\cal F}_1$
   (even if we assume in addition that ${\cal F}_1$ is flat over $Z_2$);
  nor is a flat connection on ${\cal F}_1$ descend to a flat connection
   on ${\cal F}_2$.
}\end{remark}

\noindent
However, such lifting and descent do exist in a special case
 in the analytic category:

\begin{lemma-definition}
{\bf [lifting and descent of flat connection
      under proper \'{e}tale morphism].}
{\rm
 {\it Let
  $\pi: Z_1\rightarrow Z_2$ be a proper \'{e}tale morphism
    (of schemes of finite type over ${\Bbb C}$),
  ${\cal F}_i$, $i=1,\,2\,$, be
    a locally-free coherent ${\cal O}_{Z_i}$-module
   with ${\cal F}_2=\pi_{\ast}{\cal F}_1$.
 Then
  a flat connection on ${\cal F}_1$
   descends to a flat connection on ${\cal F}_2$.}
 $\pi$ determines a direct-sum decomposition
   ${\cal F}_2|_U=\oplus_j{\cal F}_{2,U}^{(j)}$
    on small enough open sets $U$ in the analytic topology
    such that $\pi: \pi^{-1}(U)\rightarrow U$ is a disjoint union
     of biholomorphic maps.
 %%%%%%%%%%%%%%%%%%%%%%%%%%%
 % Via the post-composition with the projection map into a direct summand,
 %  a connection $\nabla_U$ on ${\cal F}_2|_U$ determines
 %  a connection $\nabla_U^{(j)}$ on ${\cal F}_{2,U}^{(j)}$.
 %%%%%%%%%%%%%%%%%%%%%%%%%%%
 $\nabla$ is said to be {\it $\pi$-admissible}
  if %%%%%%%%%%%%%
     % $\nabla_U^{(j)}$ from $\nabla_U:=\nabla|_U$ are flat
     % (which is equivalent to saying that
     %%%%%%%%%%%%%
      ${\cal F}_{2,U}^{(j)}$ is invariant under $\nabla|_U$  %)
      for all such $(U,j)$.
 In terms of this, {\it
  a $\pi$-admissible flat connection on ${\cal F}_2$
   lifts to a flat connection on ${\cal F}_1$,
  analytically locally via the canonical isomorphism
   ${\cal F}_1|_{\pi^{-1}(U)}\simeq \oplus_j{\cal F}_{2,U}^{(j)}$
   as ${\cal O}_U$-modules.}
}\end{lemma-definition}

%%%%%%%%%%%%%%%%%%%%
% This is an immediate consequence of the fact
%  for a locally-free coherent ${\cal O}_X$-module ${\cal F}$
%   (say, of rank $r$)
%    on a smooth variety $X/{\Bbb C}$ with the analytic topology,
%  assigning a flat connection on ${\cal F}$ is the same
%   as assigning a local trivialization of ${\cal F}$
%   with (constant) transition functions in $\GL_r({\Bbb C})$.
%%%%%%%%%%%%%%%%%%%%

\bigskip

\begin{flushleft}
{\bf Connections on a coherent sheaf on a scheme and its flatness
     - $C^{\infty}$ version.}
\end{flushleft}
We now give a $C^{\infty}$-version of the previous theme.\footnote{The
                           language in this note remains
                           manifold-'n'-scheme direct.
                          It is particularly tailored to fit the situation
                           of the (commutative) surrogate of a morphism
                           from an Azumaya manifold to a commutative
                           target-space.
                          One should finally bring in the notion of
                           {\it $C^{\infty}$-schemes}
                           (cf.\ [Joy4] and references therein)
                           for the completeness of language
                           to study A-branes in the current setting.}
Let
 $M$ be a ($C^{\infty}$-)manifold,
 ${\cal F}$ be a sheaf of finitely presentable
  ${\cal O}_{M,{\Bbb C}}$-modules on $M$, and
 ${\cal A}$ be a sheaf of commutative ${\cal O}_{M,{\Bbb C}}$-algebra
  that
   is finitely generated as an ${\cal O}_{M,{\Bbb C}}$-module and
   acts on ${\cal F}$, rendering ${\cal F}$ an ${\cal A}$-module as well.

\begin{definition}
{\bf [differential and derivation on ${\cal A}$,
      the sheaf $\Omega_{\cal A}$ and $\Theta_{\cal A}$].}
{\rm
 The sheaf of {\it differentials} on ${\cal A}$ (over ${\Bbb C}$)
  is the sheaf of ${\cal O}_{M,{\Bbb C}}$-modules on $M$
  that is associated to the presheaf
  $$
   U\;\longmapsto\;
    \Omega_{{\cal A}(U)}\,
     :=\, \Span_{{\cal A}(U)}\{df:f\in {\cal A}(U)\}/\!\sim,,
  $$
  where
   $\Span_{{\cal A}(U)}\{df:f\in {\cal A}(U)\}$
    is the ${\cal A}(U)$-module generated by the set
     $\{df:f\in {\cal A}(U)\}$  and
   $\sim$ is the equivalence relation on
    $\Span_{{\cal A}(U)}\{df:f\in {\cal A}(U)\}$ generated by relators:
    $$
     \begin{array}{lllcll}
      \mbox{(${\Bbb C}$-linearity)}
       &&& d(af+bf^{\prime})\,
           -\, a\,df\,-\,b\,df^{\prime}
       && \mbox{for $\;a,b\,\in\,{\Bbb C}\;$
                and $\;f,f^{\prime}\,\in\,{\cal A}(U)$}\,,  \\[.2ex]
      \mbox{(Leibniz rule)}
       &&& d(ff^{\prime})\,
           - f^{\prime}(df)\,-\, f\,df^{\prime}
       && \mbox{for $\;f,f^{\prime}\,\in\,{\cal A}(U)$}\,,  \\[.2ex]
      &&& d(f+f^{\prime})\,-\,dg
       && \mbox{whenever $\;f+f^{\prime}=g\;$ in $\;{\cal A}(U)$}\,,
                                                            \\[.2ex]
      &&& d(ff^{\prime})\,-\,dg
       && \mbox{whenever $\;\:\:\,ff^{\prime}=g\;\:\:\,$
                                              in $\;{\cal A}(U)$}\,.
    \end{array}
    $$
 $\Omega_{\cal A}$ is tautologically a sheaf of ${\cal A}$-modules.
 The sheaf $\Theta_{\cal A}$ of {\it ${\Bbb C}$-derivations} on ${\cal A}$
  is defined to be the dual sheaf
  $\Homsheaf_{\cal A}(\Omega_{\cal A}, {\cal A})$ of $\Omega_{\cal A}$
  as an ${\cal A}$-module.
 By taking the anti-symmetric tensor products over ${\cal A}$,
  one has also $\Omega_{\cal A}^{i} := \bigwedge^i\Omega_{\cal A}$,
   $i\in {\Bbb Z}_{\ge 0}$,
   with $\Omega_{\cal A}^0:= {\cal A}$ and
        $\Omega_{\cal A}^1=\Omega_{\cal A}$.
}\end{definition}

\begin{definition}
{\bf [connection on ${\cal F}$ as ${\cal A}$-module, curvature,
      and minimal flatness].} {\rm
 Denote by $_{\cal A}{\cal F}$
  the sheaf ${\cal F}$ as an ${\cal A}$-module.
 A (${\Bbb C}$-){\it connection} $\nabla$ on $_{\cal A}{\cal F}$
  is a homomorphism of ${\Bbb C}$-modules
  $$
   \nabla\; :\; _{\cal A}{\cal F}\;
   \longrightarrow\; \Omega_{\cal A}\otimes_{\cal A}{\cal F}
  $$
  such that
  $$
   \nabla(fs)\;=\; df\otimes s + f\nabla s
  $$
  for sections $f$ of ${\cal A}$ and sections $s$ of ${\cal F}$
  on the same open subset of $M$.
 $\nabla$ extends to a homomorphism of ${\cal A}$-modules
  $$
   \nabla\;:\;
    \Omega_{\cal A}^{\,i}\otimes_{\cal A}{\cal F}\;
    \longrightarrow\;
    \Omega_{\cal A}^{\,i+1}\otimes_{\cal A}{\cal F}
  $$
  by
  $$
   \nabla(\omega\otimes s)\;
    =\; d\omega\otimes s + (-1)^i\omega\wedge \nabla s\,.
  $$
 In particular, one has
  $$
   \xymatrix{
    {\cal F} \ar[r]^-{\nabla}
    & \Omega_{\cal A}\otimes_{\cal A}{\cal F} \ar[r]^-{\nabla}
    & \Omega_{\cal A}^{\,2}\otimes_{\cal A}{\cal F}\,.
   }
  $$
  The ${\Bbb C}$-module homomorphism from the composition
   turns out to be an ${\cal A}$-module homomorphism  and, hence,
  defines an $\Endsheaf_{\cal A}({\cal F})$-valued $2$-form
   $R\in \Gamma(
           \Endsheaf_{\cal A}({\cal F})
               \otimes_{\cal A} \Omega_{\cal A}^{\,2})$
   on ${\cal A}$.
 $R$ is called the {\it curvature $2$-form} of $\nabla$.
 We say that $\nabla$ is {\it flat} if $R=0$.
 Suppose that there is a ($C^{\infty}$-)manifold $M^{\prime}$ over $M$
  with an ${\cal O}_{M,{\Bbb C}}$-algebra homomorphism
   $\jmath^{\sharp}: {\cal A}\rightarrow {\cal O}_{M^{\prime},{\Bbb C}}$.
 This defines an ${\cal A}$-module homomorphism
  $j^{\ast}:
    \Omega_{\cal A}\otimes_{\cal A}{\cal O}_{M^{\prime},{\Bbb C}}
     \rightarrow \Omega_{M^{\prime},{\Bbb C}}$.
 We say that $\nabla$ is {\it flat along $(M^{\prime},\jmath^{\sharp})$}
   (or along $M^{\prime}$ when $\jmath^{\sharp}$ is understood),
   in notation $R|_{(M^{\prime},\jmath)}=0$ (or $R|_{M^{\prime}}=0$),
  if $j^{\ast}R=0$.
 Here,
  $\Omega_{M^{\prime},{\Bbb C}}
    :=\Omega_{{\cal O}_{M^{\prime},{\Bbb C}}}
     =\Omega_{M^{\prime}}\otimes{\Bbb C}$
    is the complexified cotangent sheaf of $M^{\prime}$.
 {For} convenience, we'll call such $\nabla$ also
  a {\it minimally flat connection},
  with $(M^{\prime},\jmath^{\sharp})$ being kept implicit.
}\end{definition}

\begin{remark}
{$[\,$meaning of these structures -- why we set them as above$\,]$.}
{\rm
 With the notation and the situation in Definition~2.1.7,
        % Definition [connection on ${\cal F}$ as ${\cal A}$-module,
        %             curvature, and flatness]
 when $j^{\sharp}$ is a ${\cal O}_{M,{\Bbb C}}$-algebra quotient,
 one should think of ${\cal A}$ as the manifold $M^{\prime}$
  with an extension of its standard (complexified) structure sheaf
   ${\cal O}_{M^{\prime},{\Bbb C}}$, as a $C^{\infty}$-manifold,
   extended to ${\cal A}$ by nilpotents elements.
 {For} our application to D-branes,
  such nilpotent structure is meant to encode an infinitesimal data
  of how a collection of D-branes of A-type in a space(-time) collide
  to form a single D-brane supported on $M^{\prime}$.\footnote{See
                       [G-Sh] of G\'{o}mez and Sharpe  and
                       [D-K-S] of Donagi, Katz, and Sharpe
                        for a related discussion in the case of B-branes.
                       The same behavior should also happen for A-branes
                         from the viewpoint of
                         morphisms from Azumaya manifolds
                        since the Azumaya structure sheaf contains
                         nilpotent elements.}
 In case there is also an ${\cal O}_{M,{\Bbb C}}$-algebra homomorphism
  ${\cal O}_{M^{\prime},{\Bbb C}}\rightarrow {\cal A}$
  such that the composition
   ${\cal O}_{M^{\prime},{\Bbb C}}\rightarrow {\cal A}
             \stackrel{\jmath^{\sharp}}{\rightarrow}
                                       {\cal O}_{M^{\prime},{\Bbb C}}$
   is the identity map,
  $_{\cal A}{\cal F}$ is pushed forward to $M^{\prime}$,
    becoming an ${\cal O}_{M^{\prime},{\Bbb C}}$-module.
 The notion of a connection on the ${\cal A}$-module $_{\cal A}{\cal F}$
  is then meant to be a connection on ${\cal F}$,
   as a sheaf on $M^{\prime}$,
  that commutes with these nilpotent linear operators on ${\cal F}$
   since these nilpotent elements $f$ is meant to correspond to
   infinitesimal transverse directions to $M^{\prime}$ of various order
   (and hence the evaluation of $df$ from such nilpotent $f$
     on $\Theta_{M^{\prime},{\Bbb C}}$ is meant to be zero).
 In view of this,
  it is very natural to consider connections $\nabla$
   on $_{\cal A}{\cal F}$ that are flat only along $M^{\prime}$,
   rather than all over ${\cal A}$,
  since these are flat connections on ${\cal F}$ on $M^{\prime}$
   that are compatible with the nilpotent linear operators on ${\cal F}$
   encoded in ${\cal A}$.
}\end{remark}

Similar existence/non-existence of lifting and descent statements
 as in the previous theme hold in the current category.
In particular,

%%%%%%%%%%%%%%%%%%%%%%%%%%%
%
% \begin{definition}
% {\bf [proper covering map].} {\rm
%  A covering map $\pi: M_1\rightarrow M_2$ of $C^{\infty}$-manifolds
%   is said to be {\it proper}
%  if the following condition holds:
%   %
%   \begin{itemize}
%    \item[$\cdot$]
%    $\pi: \pi^{-1}(U)\rightarrow U$
%     is a disjoint union of diffeomorphisms for any contractible open\\
%     set $U\subset M_2$ in the $C^{\infty}$-topology.
%   \end{itemize}
% }\end{definition}
%
% % REMARK: This ``evenly covered" property is already included
%           in the standard definition of a covering map/space
%
%%%%%%%%%%%%%%%%%%%%%%%%%%%

\begin{lemma-definition}
{\bf [lifting and descent of flat connection under covering map].}
{\rm
 {\it Let
  $\pi: M_1\rightarrow M_2$ be a finite covering\footnote{Note
               that
               in algebraic geometry an {\it \'{e}tale morphism}
                may not be proper;  however
               in algebraic topology
                a {\it covering map} is by definition always proper:
                 for all $p\in M_2$, there exists an open neighborhood
                  $U\ni p$ in $M_2$ such that
                  $\pi: \pi^{-1}(U)\rightarrow U$
                  is a disjoint union of diffeomorphisms
                  (in the $C^{\infty}$ category).
               Cf.\ [Hart] vs.\ [Sp].}
   map of $C^{\infty}$-manifolds,
  ${\cal F}_i$, $i=1,\,2\,$, be locally-free
   ${\cal O}_{M_i,{\Bbb C}}$-module of finite rank on $M_i$
   with ${\cal F}_2=\pi_{\ast}{\cal F}_1$.
 Then
  a flat connection on ${\cal F}_1$
   descends to a flat connection on ${\cal F}_2$.}
 $\pi$ determines a direct-sum decomposition
   ${\cal F}_2|_U=\oplus_j{\cal F}_{2,U}^{(j)}$
    on contractible open sets $U\subset M_2$.
 A connection $\nabla$ on ${\cal F}_2$
   is said to be {\it $\pi$-admissible}
  if ${\cal F}_{2,U}^{(j)}$ is invariant under $\nabla|_U$
   for all such $(U,j)$.
 In terms of this, {\it
  a $\pi$-admissible flat connection on ${\cal F}_2$
   lifts to a flat connection on ${\cal F}_1$,
   via the canonical ${\cal O}_U$-module isomorphism
   ${\cal F}_1|_{\pi^{-1}(U)}\simeq \oplus_j{\cal F}_{2,U}^{(j)}$}.
}\end{lemma-definition}

%%%%%%%%%%%%%%%%%%%
% \bigskip
%
% \begin{flushleft}
% {\bf Reduction of the structure group of the fundamental module.}
% \end{flushleft}
% %
% ???????????????????????.
%
%%%%%%%%%%%%%%%%%%%

\bigskip

\begin{flushleft}
{\bf $G$-reduced flat connections with respect to a covering.}
\end{flushleft}
Let
 $c:\hat{M}\rightarrow M$ be a covering map of finite degree $\hat{d}$
  between two (not-necessarily-compact) manifolds,
 $\hat{\cal E}$ be a locally free ${\cal O}_{\hat{M},{\Bbb C}}$-module
  of finite rank $\hat{r}$,  and
 ${\cal E}:=c_{\ast}\hat{\cal E}$,
  which is a locally free ${\cal O}_{M,{\Bbb C}}$-module
  of rank $r=\hat{r}\hat{d}$.

\begin{definition}
{\bf [$G$-reduced flat connection].} {\rm
 Let $G\subset \GL_r({\Bbb C})$
  be a subgroup of the complex general linear group.
 A flat connection $\hat{\nabla}$ on $\hat{\cal E}$
   is said to be {\it $G$-reduced with respect to $c$}
  if its descent $\underline{\nabla}$ on ${\cal E}$
   (cf.\ Lemma/Definition~2.1.9)
       % Lemma/Definition
       % [lifting and descent of flat connection under covering map]
   has holonomy in $G$
  (i.e.\ the holonomy group of $\underline{\nabla}$ on ${\cal E}$
      at each $p\in M$ is a conjugate of $G$ in $\GL_r({\Bbb C})$).
}\end{definition}

\begin{example}
{\bf [$U(r)$-reduced flat connection w.r.t.\ $c$].} {\rm
 %%%%%%%%%%%%%
 % For simplicity of presentation,
 % we assume below that $\hat{M}$ (and hence as well as $M$) is connected
 %  since, otherwise, one apply the discussion component by component.
 %%%%%%%%%%%%%
 Take
  $\hat{\cal E}$ to be the sheaf of ($C^{\infty}$-)sections of
  a complex Hermitian vector bundle $\hat{E}$ on $\hat{M}$
   of rank $\hat{r}$
  with a compatible flat connection $\hat{\nabla}$
   (so that the parallel transports are isometries
       of the Hermitian fibers of $\hat{E}$).
 I.e.\ $(\hat{E},\hat{\nabla})$ is the descent, via a representation
  $$
   \hat{\rho}\;:\; \pi_1(\hat{M})\; \longrightarrow\;  U(\hat{r})
  $$
  (after a base-point $\hat{\ast}\in \hat{M}$ is specified implicitly
    for each connected component of $\hat{M}$
    so that $\ast:=c(\hat{\ast})\in M$ are all identical),
  of a trivialized trivial Hermitian ${\Bbb C}^{\hat{r}}$-bundle
   on the universal covering space $\widetilde{\hat{M}}$ of $\hat{M}$.
 The push-forward ${\cal E}= c_{\ast}\hat{\cal E}$ is now
  the sheaf of sections of a vector bundle $E$ of rank $r$ on $M$
   with a Hermitian structure and a flat connection $\underline{\nabla}$
   canonically induced from from $(\hat{E},\hat{\nabla})$ via $c$.
 Furthermore, by construction,
  the parallel transports determined by $\underline{\nabla}$
   are isometries of fibers of $E$.
 It follows that
  $\underline{\nabla}$ is a flat $U(r)$-connection on $E$.
 %%%%%%%%%%%%%%
 % To be more specific,
 % let $D:=\Aut(\hat{M}/M)$
 %  be the group of deck transformations of $c$
 %  (with a specified embedding
 %    $D \subset \Sym_{\hat{d}}\,$ over $\ast:=c(\hat{\ast})\in M$).
 % Then
 %  ???????????.
 %
 % \bigskip
 %
 % \noindent $\bullet$
 % Construction of $G$-reduced flat connection.
 % ?????????????.
 %
 %%%%%%%%%%%%%%
}\end{example}

\bigskip

\begin{flushleft}
{\bf Supersymmetric D-branes of A-type (i.e.\ A-branes).}
\end{flushleft}

\begin{definition}
{\bf [special Lagrangian submanifold with a phase (factor)].} {\rm
 Let
  $Y=(Y,J,\omega,\Omega)$ be a Calabi-Yau manifold  and
  $\theta\in [0,2\pi)$ (or $(-\pi,\pi]$ by convention) be a constant.
 Then
 a special Lagrangian submanifold $L$
  with respect to the calibration $\Real(e^{-\theta}\Omega)$ is called
  a {\it special Lagrangian submanifold with a phase
         factor}\footnote{{\it Terminology.}
                          We will also call this factor directly
                           a {\it phase} to synchronize with
                           some stringy literatures,
                           though it is also standard
                          to leave the latter term for $\theta$ alone.}
  $e^{i\theta}$ in $Y$.
 In particular,
  $\Omega|_L=e^{i\theta}\vol_L$ on $L$,
   where $\vol_L$ is the volume-form on $L$
    induced by the K\"{a}hler metric $\omega$.
}\end{definition}

\begin{definition}
{\bf [connection-with-singularity/singular connection].} {\rm
 Given a ($C^{\infty}$-)manifold-with-singularity $M$ and
  a finitely presented ${\cal O}_{M,{\Bbb C}}$-module ${\cal F}$.
 By a {\it connection-with-singularity} (or {\it singular connection}),
  we mean a connection $\nabla$ on ${\cal F}|_U$
   for some open dense manifold-subset $U\subset M$.
 Flatness and holonomy of $\nabla$ are defined as flatness and
  holonomy of $\nabla$ on ${\cal F}|_U$.
}\end{definition}

\begin{definition-prototype}
{\bf [A-brane (with unitary minimally flat singular connection)
      on Calabi-Yau space].} {\rm
 Let $Y=(Y,J,\omega,\Omega)$ be a Calabi-Yau $n$-fold.
 A {\it D-brane of A-type} (i.e.\ {\it A-brane})
  {\it with a phase factor} $e^{i\theta}$ on $Y$
  (in the regime of the Wilson's theory-space of string theory
   specified at the beginning of this section)
 is a morphism
  $$
   \varphi\; :\; (X^{A\!z},{\cal E})\; \longrightarrow\;  Y
  $$
  together with constraints and data encoded in the following diagram:
  $$
   \xymatrix{
    ({\cal E}_{\varphi},\nabla)\ar@{.>}[rd] \\
     & (X_{\varphi},{\cal A}_{\varphi})
        \ar@{->>}[rr]^-{f_{\varphi}}\ar[d]_{\pi_{\varphi}}
     && L\;\subset \; Y\;,\\
     & X
   }
  $$
  where the following Properties (1) -- (5) hold:
  \begin{itemize}
   \item[(1)]
    $$
     \xymatrix{
      {\cal E}_{\varphi}\ar@{.>}[rd] \\
       & (X_{\varphi},{\cal A}_{\varphi})
         \ar@{->>}[rr]^-{f_{\varphi}}\ar[d]_{\pi_{\varphi}}
       && Y\\
       & X
     }
    $$
    is the surrogate and the related maps and sheaves
     associated to $\varphi\,$.

   \item[(2)]
    The underlying $n$-manifold $X$ in $(X^{A\!z},{\cal E})$
     is oriented;
    the underlying singular $n$-manifold
     $X_{\varphi}$ in $(X_{\varphi},{\cal A}_{\varphi})$
     is equipped with the induced orientation from that of $X$
     via $\pi_{\varphi}\,$.

   \item[(3)]
    $L=\Image\varphi$
     is a special Lagrangian singular submanifold
     with a phase factor $e^{i\theta}$ in $Y\,$.

   \item[(4)]
    There exists an open dense submanifold
     $V_{\varphi}\subset X_{\varphi}$ such that
     \begin{itemize}
      \item[(4.1)]
       $V:= \pi_{\varphi}(V_{\varphi})$
        is an open dense submanifold of $X\,$;\\
       $\pi_{\varphi|_{V_{\varphi}}}:V_{\varphi} \rightarrow V$
        is a covering map,

      \item[(4.2)]
       $f_{\varphi}|_{V_{\varphi}}:
        V_{\varphi}\rightarrow Y$ is an immersion, and

      \item[(4.3)]
       $f_{\varphi}|_{V_{\varphi}}:V_{\varphi}\rightarrow L$
       is orientation-preserving.
     \end{itemize}
    Note that there are then
     built-in ${\cal O}_{V_{\varphi},{\Bbb C}}$-algebra homomorphisms
     $$
      {\cal O}_{V_{\varphi},{\Bbb C}}\;
               \longrightarrow {\cal A}_{\varphi}|_{V_{\varphi}}\;
               \longrightarrow {\cal O}_{V_{\varphi},{\Bbb C}}
     $$
     with the composition being the identity map.

   \item[(5)]
    Let $r$ be the rank of ${\cal E}$;
    $\nabla$ is a {\it singular} connection
      on $_{{\cal A}_{\varphi}}{\cal E}_{\varphi}$
     that is defined on and is flat along an open dense subset of
      $V_{\varphi}\subset X_{\varphi}$ in Item (4)
      with holonomy, when descends to $V\subset X$,
      in a subgroup of $\GL_r({\Bbb C})$
       that is isomorphic to the unitary group $U(r)$.
  \end{itemize}
 On the mathematical side, we will call the above data
  a {\it special Lagrangian morphism
         with a unitary minimally flat
          connection(-with-singularity)}$^{\,}$\footnote{{\it
             String-Theoretical Remark
              $[\,$pure D-brane vs.\ D-brane smearing$\,]$.}
             The gauge field $A$ (i.e.\ connection $\nabla$)
              on (the Chan-Paton module/sheaf of) a D-brane
              is the most fundamental field thereupon besides
              fields that govern the deformations of the brane
              (i.e.\ $\varphi$ in our setting).
             In general, when the gauge field strength $F_A$
               (i.e.\ curvature) of $A$ is nonzero,
              $A$ can couple, via $F_A$, with the Ramond-Ramond fields $C$
               on target the space-time and
              serves as a source/charge for $C$
               as if there are lower-dimensional D-branes
                that are smeared along the D-brane one begins with.
             (See, e.g.,
              [Doug] (1995) for an early discussion,
              [D-M: Sec.~2.1] (2007)
               for the case of supersymmetric D-branes of B-type, and
              [Joh: Chapter~9] (2003)
               for a related highlight/review.)
              {From} this aspect, a D-brane with a gauge field that is flat
               (i.e.\ $F_A=0$) is special
              in the sense that it is {\it pure} without being mixed
               implicitly/effectively with lower-dimensional D-branes.
             In our situation,
             such flat connection $\nabla$ is defined only
              on an open dense manifold-subset of $X_{\varphi}$
              and can have singularity around the singular locus
               $X_{\varphi,\mbox{\tiny\it sing}}$
               of the surrogate $X_{\varphi}$ of $\varphi$.
             $\nabla$ may still have non-trivial holonomy/monodromy
               for a small meridian circle
               around  $X_{\varphi,\mbox{\tiny\it sing}}$.
             In other words,
              the curvature of $\nabla$ now can be
               a Lie-algebra-valued distribution-like $2$-form and
               be supported on $X_{\varphi,\mbox{\tiny\it sing}}$.
             When this happens, it indicates that
              lower-dimensional D-branes are smeared effectively
              along $X_{\varphi,\mbox{\tiny\it sing}}$.}
  and denote it collectively by $(\varphi,\nabla)$.

 A {\it morphism} (or {\it arrow})
  $(\varphi_1,\nabla)\rightarrow (\varphi_2,\nabla)$
  is a morphism/arrow $\varphi_1\rightarrow \varphi_2$,
   encoded by the data $(h,\tilde{h}, \tilde{\tilde{h}})$
   in Definition~1.2.1,
    % Definition [morphism from Azumaya manifold]
  that satisfies the additional condition that
   the ${\cal A}_{\varphi_1}$-module isomorphism
   $\tilde{\tilde{h}}: \tilde{h}^{\ast}{\cal E}_{\varphi_2}
                                \rightarrow {\cal E}_{\varphi_1}$
   takes $\tilde{h}^{\ast}\nabla_2$ to $\nabla_1$
   over an open dense subset of $X_1$.
}\end{definition-prototype}

\begin{remark}
{$[\,$from A-brane to constructible sheaf and perverse sheaf$\,]$.}
{\rm
 The pair $(_{{\cal A}_{\varphi}}{\cal E}_{\varphi},\nabla)$
  reminds one very strongly of constructible sheaves and, hence,
  perverse sheaves on a stratified manifold-with-singularity.
 If would be very interesting if such a link/correspondence
  can truly be built naturally/functorially.
}\end{remark}

\begin{example}
{\bf [simple A-brane (with unitary flat connection-with-singularity)].}
{\rm
 A special class of A-branes in the sense of Definition~2.1.14
                  % Definition [A-brane on Calabi-Yau space]
  with ${\cal A}_{\varphi}|_{V_{\varphi}}={\cal O}_{V_{\varphi},{\Bbb C}}$
  can be constructed from the following data
  $((c,f), (\hat{\cal E},\hat{\nabla}))\,$:
  $$
   \xymatrix{
    (\hat{\cal E},\hat{\nabla})\ar@{.>}[d]   & & & & & \\
    \hat{X} \ar[rd]^{(c,f)} \ar@/^2ex/[rrrrrd]^-{f}
            \ar@/_/[rddd]_-{c}               & &       \\
     & X\times Y \ar[rrrr]_-{pr_2}  \ar[dd]^-{pr_1} & & & & Y\;, \\ \\
     & X
   }
  $$
  where
   \begin{itemize}
    \item[$\cdot$]
     $Y=(Y,J,\omega,\Omega)$ is a Calabi-Yau $n$-fold;

    \item[$\cdot$]
     $X$, $\hat{X}$ are closed oriented $n$-manifold and
     $c:\hat{X}\rightarrow X$ is an orientation-preserving
       branched covering map of finite degree $\hat{d}$
       over a codimension-$2$ submanifold;

    \item[$\cdot$]
     $f:\hat{X}\rightarrow Y$
      is a smooth map that is an immersion on
       an open dense subset $\hat{V}\subset \hat{X}$
      such that
       $f|_{\hat{V}}$ defines a special Lagrangian submanifold of
        phase factor $e^{i\theta}$ on $Y$;
      (thus, $f:\hat{X}\rightarrow Y$
        defines a special Lagrangian submanifold-with-singularity
        with a phase factor $e^{i\theta}$ in $Y$);

    \item[$\cdot$]
     $(\hat{\cal E},\hat{\nabla})$
      is a locally free ${\cal O}_{\hat{X},{\Bbb C}}$-module
       of finite rank $\hat{r}$ with a flat $U(\hat{r})$-connection.
   \end{itemize}
 {From} this data,
 one can recover the underlying special Lagrangian morphism
  with a minimally flat connection-with-singularity
  $\left(\varphi:(X^{A\!z},{\cal E})\rightarrow Y\,,\,\nabla\right)$
  as follows:
  \begin{itemize}
   \item[$\cdot$]
    \makebox[13ex][l]{\it $[\,$domain$\,]$}
    $(X^{A\!z}, {\cal E})
     =(X,
       {\cal O}_{X,{\Bbb C}}^{A\!z}
        =\Endsheaf_{{\cal O}_{X,{\Bbb C}}}({\cal E}),
       {\cal E}:= c_{\ast}\hat{\cal E} )\,$;\\
     \makebox[13ex]{}
    as a ${\cal O}_{X,{\Bbb C}}$-module,
     $\;{\cal E}$ has rank $r=\hat{r}\hat{d}$.

   \item[$\cdot$]
    \makebox[13ex][l]{\it $[\,$surrogate$\,]$}
    $X_{\varphi}= (c,f)(\hat{X})
     = (c\times f)(\hat{X})\subset X\times Y\,$ \\
     \makebox[13ex]{} and
    ${\cal A}_{\varphi}={\cal O}_{X_{\varphi},{\Bbb C}}$
     as a smooth manifold with singularities, which acts\\
     \makebox[13ex]{}
     tautologically on ${\cal E}$
     since $(c,f)_{\ast}\hat{\cal E}
                =_{{\cal O}_{X_{\varphi},{\Bbb C}}}{\cal E}$.

   \item[$\cdot$]
    \makebox[13ex][l]{\it $[\,$maps$\,]$}
    $\pi_{\varphi}=\pr_1|_{X_{\varphi}}:X_{\varphi}\rightarrow X\,$  and
    $\,f_{\varphi}=\pr_2|_{X_{\varphi}}:X_{\varphi}\rightarrow Y\,$.

   \item[$\cdot$]
    \makebox[46ex][l]{\it
     $[\,$minimally flat connection-with-singularity$\,]$}
    {From} the given generic covering/immersion property of $(c,f)$,
    there exists an open dense submanifold
     $V_{\varphi}\subset X_{\varphi}$ such that
     \begin{itemize}
      \item[-]
       $f_{\varphi}|_{V_{\varphi}}: V_{\varphi}\rightarrow Y$
       is a special Lagrangian immersion of phase factor $e^{i\theta}$;

      \item[-]
       $V:= \pi_{\varphi}(V_{\varphi})\subset X$
       (resp.\ $\hat{V}:=(c,f)^{-1}(V_{\varphi})\subset \hat{X}$)
        is an open dense submanifold
        with the property that the three maps,
          $c|_{\hat{V}}$, $(c,f)|_{\hat{V}}$, and
          $\pi_{\varphi}|_{V_{\varphi}}$,
        in the commutative diagram
        $$
         \xymatrix{ %\hspace{3em}
          **[r]\hat{V}\subset \hat{X}
           \ar[rd]^-{(c,f)|_{\hat{V}}} \ar@/_/[rddd]_-{c|_{\hat{V}}} \\
           & **[r]V_{\varphi} \subset X_{\varphi}
             \ar[dd]^-{\pi_{\varphi}|_{V_{\varphi}}} \\  \\
           & **[r]V\subset X
          }
        $$
        are all covering maps;

       \item[-]
        $V_{\varphi}=\pi_{\varphi}^{-1}(V)$.
     \end{itemize}
    In particular,
     $$
      (c,f)\;:\; (c,f)^{-1}(U^{\prime})\; \longrightarrow\; U^{\prime}
       \hspace{1em}\mbox{($\;$resp.
        $\;\pi_{\varphi}\;:\;
                \pi_{\varphi}^{-1}(U)\longrightarrow U\;,\;\;$
        $\;c\;:\; c^{-1}(U)\;\longrightarrow\; U\;$)}
     $$
     is a disjoint unions of diffeomorphisms
     for contractible open sets $U\subset V$
      (resp.\ $U^{\prime}\subset V_{\varphi}$, $U\subset V$).
    It follows from Lemma/Definition~2.1.9 that
                  % Lemma/Definition~[lifting and descent of
                  %       flat connection under covering map]
     %
     \begin{itemize}
      \item[-]
       the flat connection $\hat{\nabla}$ on $\hat{\cal E}$
        descends to a flat connection-with-singularity $\nabla$
        on $_{{\cal O}_{X_{\varphi},{\Bbb C}}}{\cal E}$.
     \end{itemize}
    The holonomy of $\nabla$, when descends to $X$, lies $U(r)$
     by considering $c|_{\hat{V}}$ and Example~2.1.11.
      % Example~[$U(r)$-reduced flat connection w.r.t.\ $c$]
  \end{itemize}
 In this way, we obtain all the data that is needed to describes
  a morphism $\varphi$
  from an Azumaya manifold with a fundamental module to $Y$,
   with a unitary minimally flat (singular) connection $\nabla$.
 In this note, we'll use such {\it simple A-branes} $(\varphi,\nabla)$
  -- in the sense that
     their image $\varphi(X^{A\!z})$ in the Calabi-Yau space $Y$
     has no nilpotent structure along it on an open dense subset --
  to illustrate some well-known D-brane behaviors in string theory.
}\end{example}

\bigskip

\subsection{Higgsing/un-Higgsing of A-branes via deformations of morphisms.}

The Higgsing/un-Higgsing behavior of the gauge symmetry on D-branes
  in string theory
 arises, in the current setting,
  from deformations of morphisms from Azumaya spaces
  with a fundamental module.
We have already seen this
  in [L-Y1: Sec.~4] (D(1)), [L-Y2: Sec.~2 and {\sc Figure}~2-1] (D(3)),
   and [L-Y4: Example~5.1.11] (D(5))
 (cf.\ [L-Y5: {\sc Figure}~2-1-1 and caption] (D(6)))
 for the case of supersymmetric D-branes of B-type (i.e.\ B-branes)
  in various contexts.
{For} A-branes, it follows from the work of McLean [McL]  that
 such a behavior natural occurs
 whenever the target special Lagrangian submanifold $L\subset Y$
  in a Calabi-Yau manifold admits a finite covering
  $\tilde{L}\rightarrow L$ with the first Betti number strictly increased:
  $b^1(\tilde{L})>b^1(L)$.
(Here, $\tilde{L}$ is allowed to be disconnected
       even when $L$ is connected.)
The work of Joyce [Joy3] allows one to extend such result to
 $L$ with isolated conical singularities as well.
The following example on a complex Calabi-Yau torus is motivated by
 the example of Denef in [De3: Sec.~6.1].
It illustrates a Higgsing (resp.\ un-Higgsing) behavior of A-branes
 that involves also an assembling (resp.\ disassembling) of
 a collection of ``small branes" into a ``large brane"
 (resp.\ a ``large brane" into a collection of ``small branes")
 in the process.
Some necessary background from the work of Joyce [Joy3] and notations
 to understand this example are summarized in Appendix~A.1.
%%%%%%%%%%%%%%%
% Other background are Definition~???, ???, and ??? in Sec.~???.
%         % Definition [dual graph of decomposition]
%%%%%%%%%%%%%%%

\begin{example}
{\bf [Higgsing and un-Higgsing of A-branes under a Morse cobordism].}
{\rm
 We explain the construction in five steps.

 \medskip

 \noindent
 $(a)$ {\it A necklace of special Lagrangian submanifolds.}\hspace{1em}
 Let
  ${\Bbb C}^3$ be the complex $3$-space
   with coordinates $(z^1,z^2,z^3)$,
     the standard flat K\"{a}hler structure
      $\frac{i}{2} (dz^1\wedge d\bar{z}^1
           + dz^2\wedge d\bar{z}^2 + dz^3\wedge d\bar{z}^3)$,
     and the holomorphic $3$-form $dz^1\wedge dz^2\wedge dz^3$;
  $C_{\tau}:={\Bbb C}/({\Bbb Z}+{\Bbb Z}\tau)$
   be the complex $1$-torus of modulus $\tau$ with $\Imaginary\tau>0\,$;
   and
  $$
   Y\;=\; C_{\tau_1}\times C_{\tau_2}\times C_{\tau_3}\,,
    \hspace{2em}\mbox{with
      $\;\tau_1\tau_2\tau_3\,\in\, {\Bbb R}_{<0}\;$  and
      $\;(\tau_1-1)(\tau_2-1)(\tau_3-1)\,\in\, {\Bbb R}_{>0}\;$},
  $$
   be a product complex $3$-torus
    equipped with the complex structure $J$,
    the flat metric with the K\"{a}hler form $\omega$,
     and
    the holomorphic $3$-form $\Omega$,
   all from the descent as a quotient space of ${\Bbb C}^3$.
 We will denote $(Y,J,\omega,\Omega)$ also simply by $Y$.
 The quotient $C_{\tau_i}={\Bbb C}/({\Bbb Z}+{\Bbb Z}\tau_i)$
  specifies an isomorphism
   $H_1(C_{\tau_i};{\Bbb Z}) \simeq {\Bbb Z}\oplus {\Bbb Z}\tau$
   as ${\Bbb Z}$-modules.
 Let $(\alpha_i,\beta_i)$ be the basis of $H_1(C_{\tau_i};{\Bbb Z})$
  that corresponds $(1,\tau_i)$; cf.~[De3: Sec.~6.1, Figure~9].
 One has from the K\"{u}nneth formula that
  $$
   H_3(Y;{\Bbb Z})\;
   =\; \oplus_{j_1+j_2+j_3=3}\,
         H_{j_1}(C_{\tau_1};{\Bbb Z})
          \times H_{j_2}(C_{\tau_2};{\Bbb Z})
          \times H_{j_3}(C_{\tau_3};{\Bbb Z})\,.
  $$

 Consider the following
  three embedded special Lagrangian submanifolds in $Y$
   from special Lagrangian $3$-planes in ${\Bbb C}^3$,
  with the orientation specified by the restriction of $\Real\Omega\,$:
  $$
   \mbox{
    \begin{tabular}{ccccc}
     sL in $Y$
      && lifting in ${\Bbb C}^3$
      && $[\,\cdot\,]\in H_3(Y;{\Bbb Z})$  \\ \hline
     \rule{0ex}{3ex}
     $L_1$
      && {\scriptsize $\Real {\Bbb C}^3\;
          =\;{\Bbb R}(1,0,0)+{\Bbb R}(0,1,0)+{\Bbb R}(0,0,1)$}
      && {\scriptsize $\alpha_1\times\alpha_2\times\alpha_3$} \\
     \rule{0ex}{3ex}
     $L_2$
      && {\scriptsize
         $-{\Bbb R}(\tau_1,0,0)-{\Bbb R}(0,\tau_2,0)-{\Bbb R}(0,0,\tau_3)$}
      && {\scriptsize $-\,\beta_1\times\beta_2\times\beta_3\;\;\;\,$} \\
     \rule{0ex}{3ex}
     $L_3$
      && {\scriptsize
          $(\frac{1}{2},\frac{1}{2},\frac{1}{2})\,
           +\, {\Bbb R}(\tau_1-1,0,0)+{\Bbb R}(0,\tau_2-1,0)
              +{\Bbb R}(0,0,\tau_3-1)$}
      && {\scriptsize
          $(\beta_1-\alpha_1)\times(\beta_2-\alpha_2)
           \times(\beta_3-\alpha_3)$}
    \end{tabular}
   }
  $$
 It follows from the constraints
  $\tau_1\tau_2\tau_3 \in {\Bbb R}_{<0}\,$,
  $(\tau_1-1)(\tau_2-1)(\tau_3-1) \in {\Bbb R}_{>0}\,$
  and, hence,
  $\left( \frac{\tau_1-1}{\tau_1} \right)
   \left( \frac{\tau_2-1}{\tau_2} \right)
   \left( \frac{\tau_3-1}{\tau_3} \right) \in {\Bbb R}_{<0}\;$
  (note $\;\Imaginary\tau>0$ implies
             $\Imaginary(\tau-1)>0$ and
             $\Imaginary(\frac{\tau-1}{\tau})>0\;$) that:
  \begin{itemize}
   \item[$\cdot$]
    the sum of the characteristic angles from $L_1$ to $L_2\;$
     (resp. from $L_1$ to $L_3\,$, $\;$from $L_2$ to $L_3\,$)
     is $\pi$ (resp. $2\pi\,$, $\;\pi\,$).
  \end{itemize}
 $L_i$ and $L_j$, $i\ne j$, intersect transversely at exactly one point.
 The oriented intersection numbers of $L_1$, $L_2$, and $L_3$
  at their intersection point are given by
  $$
   L_1\cdot L_2\;=\;-L_2\cdot L_1=+1\,, \hspace{1em}
   L_2\cdot L_3\;=\;-L_3\cdot L_2=+1\,, \hspace{1em}
   L_3\cdot L_1\;=\;-L_1\cdot L_3=+1\,.
  $$
  All other intersection numbers $=0$.
 Note that this is consistent with Remark~3.2.5 of Sec.~3.2.
    % Remark [transverse special Lagrangian intersection in CY^3:
    %         direction = sign]

 \medskip

 \noindent
 $(b)$ {\it Smoothing of $L_1\cup L_2\cup L_3$ in $Y$.}\hspace{1em}
 Let $y_{ij}=L_i\cap L_j\in Y$ for $(ij)=(12)$, $(23)$, $(31)$.
 Consider the (tautological) special Lagrangian immersion
  $$
   f\;:\; L\,:=\, L_1\amalg L_2\amalg L_3\;\longrightarrow\;
      L_1\cup L_2\cup L_3\subset Y
  $$
   with isolated transverse intersections and
  let
   $x_{ij}^+:=f^{-1}(y_{ij})\cap L_i$ and
   $x_{ij}^-:=f^{-1}(y_{ij})\cap L_j$.
 Then, the sum of the characteristic angles from
  $f_{\ast}T_{x_{ij}^+}L$ to $f_{\ast}T_{x_{ij}^-}$ at $y_{ij}$
  is $\pi$.
 Recall [Joy3: V.~Sec.~9.2, Theorem~9.7]
  (cf.~Theorem~A.1.4 in Appendix~A.1)  and
     % Theorem [desingularization in a Calabi-Yau]
 consider the following linear system with constraints:
  $$
   A_{12}\,-\, A_{31}\;=\;0\,,\;\;\;
   A_{23}\,-\, A_{12}\;=\;0\,,\;\;\;
   A_{31}\,-\, A_{23}\;=\;0\,,
   \hspace{2em}\mbox{with}\hspace{2em}
   A_{12}\,,\; A_{23}\,,\; A_{31}\; >\; 0\,.
  $$
 This has solutions:
  $$
   A_{12}\;=\; A_{23}\;=\; A_{31}\; >\; 0\,.
  $$
 It follows that the special Lagrangian submanifold
  $L_1\cup L_2\cup L_3\subset Y$ with conical singularities
  $\{y_{12}\,,\, y_{23}\,,\, y_{31}\}$ is smoothable:
  \begin{itemize}
   \item[$\cdot$]
    There exists a smooth family of special Lagrangian embeddings
     $$
      f^{(t)}\: :\; L^{(t)}\;\longrightarrow\; Y\,,
       \hspace{2em}\mbox{$t\in (0,\varepsilon)\;\;$
                   for some $\;\;\varepsilon>0$}
     $$
     such that
      $L^{(t)}\simeq$
       the (self-)connected sum $N$ of $L$
       at the pairs of points
        $(x_{12}^+\,,\, x_{12}^-)$, $(x_{23}^+\,,\,x_{23}^-)$, and
        $(x_{31}^+\,,\, x_{31}^-)$
      and that $f^{(t)}\rightarrow f$
       in the sense of currents as $t\rightarrow 0$.
  \end{itemize}
 Since
  each $L_i$ is diffeomorphic to the real $3$-torus ${\Bbb T}^3$,
 one has
  $$
   b^1(L_1\amalg L_2\amalg L_3)\; =\; 9
   \hspace{2em}\mbox{and}\hspace{2em}
   b^1(L_1\cup L_2\cup L_3)\;=\; b^1(N)=10\,.
  $$

 \medskip

 \noindent
 $(c)$ {\it A family of A-branes on $Y$ that wrap $f^{(t)}(L^{(t)})$,
            $t\in [0,\varepsilon)$.}\hspace{1em}
 Let
  $\pi_{(-\varepsilon,\varepsilon)}:
    X_{(-\varepsilon, \varepsilon)}
    \rightarrow (-\varepsilon,\varepsilon)$
  be a Morse family of $3$-manifolds with singularities,
  with
  $$
   X_t\; :=\; \pi_{(-\varepsilon,\varepsilon)}^{-1}(t)\;
   \simeq\; \left\{
   \begin{array}{lcl}
     L= L_1\amalg L_2\amalg L_3
      && \mbox{for $\;\;t\in (-\varepsilon,0)\,$,}\\[.6ex]
     L_1\cup L_2\cup L_3
      && \mbox{for $\;\;t=0\,$,}\\[.6ex]
     L^{(t)} && \mbox{for $\;\;t\in (0,\varepsilon)\,$.}
   \end{array} \right.
  $$
 Then, the family $f^{(t)}$, $t\in (0,\varepsilon)$, and $f$ together
  define a continuous map
  $$
   \raisebox{4ex}{\xymatrix{
    X_{(-\varepsilon,\varepsilon)}
      \ar[rd]_-{\pi_{(-\varepsilon,\varepsilon)}}
      \ar[rr]^-{F}
     &&  (-\varepsilon,\varepsilon)\times Y
         \ar[ld]^-{pr_1}\\
    & (-\varepsilon,\varepsilon)
   }}\;,
   \hspace{2em}
   F(t,\,\cdot)\;
   =\; \left\{
   \begin{array}{lcl}
     f(\,\cdot\,)
      && \mbox{for $\;\;t\in (-\varepsilon,0)\,$,}\\[.6ex]
     \Id_{L_1\cup L_2\cup L_3}(\,\cdot\,)
      && \mbox{for $\;\;t=0\,$,}\\[.6ex]
     f^{(t)}(\,\cdot\,) && \mbox{for $\;\;t\in (0,\varepsilon)\,$,}
   \end{array} \right.
  $$
  over $(-\varepsilon,\varepsilon)$ that is smooth on
  $\pi_{(-\varepsilon,\varepsilon)}^{-1}
                   ((-\varepsilon,0)\cup (0,\varepsilon))$.
 Let $c$ in
  $$
   \xymatrix{
    (\hat{\cal E}_{(-\varepsilon,\varepsilon)},
     \hat{\nabla}^{(-\varepsilon,\varepsilon)})\ar@{.>}[rd] \\
    & \hat{X}_{(-\varepsilon,\varepsilon)}
       \ar[rr]^-c \ar[rd]_-{\hat{\pi}_{(-\varepsilon,\varepsilon)}}
      && X_{(-\varepsilon,\varepsilon)}
         \ar[ld]^-{\pi_{(-\varepsilon,\varepsilon)}} \\
    && (-\varepsilon,\varepsilon)
   }
  $$
  be a covering map over $(-\varepsilon,\varepsilon)$
   of (finite) degree $r>1$  and
 $(\hat{\cal E}_{(-\varepsilon,\varepsilon)},
   \hat{\nabla}^{(-\varepsilon,\varepsilon)})$
  be a complex line bundle on $\hat{X}_{(-\varepsilon,\varepsilon)}$,
   with a $U(1)$ flat connection.
 Then one has the following diagram of maps
 $$
  \xymatrix{
   (\hat{\cal E}_{(-\varepsilon,\varepsilon)},
    \hat{\nabla}^{(-\varepsilon,\varepsilon)}) \ar@{.>}[d]
    & & & & & \\
   \hat{X}_{(-\varepsilon,\varepsilon)}
     \ar[rd]^{(c, F\circ c)}
     \ar@/^2ex/[rrrrrd]
      ^-{\hat{\pi}_{(-\varepsilon,\varepsilon)}\times(F\circ c)}
     \ar@/_/[rddd]_-{c}     & & \\
    & X_{(-\varepsilon,\varepsilon)}\times Y
        \ar[rrrr]_-{\pi_{(-\varepsilon,\varepsilon)}\times pr_2}
        \ar[dd]^-{pr_1}
      & & & & (-\varepsilon,\varepsilon)\times Y\;, \\  \\
    & X_{(-\varepsilon,\varepsilon)}
  }
 $$
 in which
  $\Image(c,F\circ c)=\Graph(F)$ with multiplicity $r$.
 Let
  $$
   {\cal E}_{(-\varepsilon,\varepsilon)}\;
     :=\; c_{\ast}\hat{\cal E}_{(-\varepsilon,\varepsilon)}\;
      =\; {\pr_1}_{\ast}\circ(c,F\circ c)_{\ast}
              \hat{\cal E}_{(-\varepsilon,\varepsilon)}\,.
  $$
 The diagram defines a ${\Bbb C}$-algebra homomorphism
  $\varphi_{(-\varepsilon,\varepsilon)}^{\sharp}:
   {\cal O}_{(-\varepsilon,\varepsilon)\times Y,{\Bbb C}} \rightarrow
   \Endsheaf_{{\cal O}_{X_{(-\varepsilon,\varepsilon)},{\Bbb C}}}
    ({\cal E}_{(-\varepsilon,\varepsilon)})$
  and, hence, a morphism
  $\,\varphi_{(-\varepsilon,\varepsilon)}\,:\,
     (X_{(-\varepsilon,\varepsilon)}^{A\!z},
         {\cal E}_{(-\varepsilon,\varepsilon)})\,
    \rightarrow\,(-\varepsilon,\varepsilon)\times Y\,$
  over $(-\varepsilon,\varepsilon)\,$,
  with a unitary minimally flat connection-with-singularity.
 We will think of the latter also interchangeably
  as a family of morphisms
   $\{\varphi_t:(X_t^{A\!z},{\cal E}_t)\rightarrow Y\,|\,
       t\in (-\varepsilon,\varepsilon)\}\,$,
   with a unitary minimally flat connection-with-singularity.

 \bigskip

 \noindent
 {\bf Lemma~2.2.1.(d) [first Betti number].} {\it
  Let
   $\hat{X}_t:= \hat{\pi}_{(-\varepsilon,\varepsilon)}^{-1}(t)$
    for $t\in (-\varepsilon,\varepsilon)$  and
   $\Gamma$ be the dual graph of $\hat{X}_0$
    with the number of vertices $|\Gamma_{(0)}|$  and
         the number of edges $|\Gamma_{(1)}|=3r$.
  Then,
  $b^1(\hat{X}_t)=3\,|\Gamma_{(0)}|\,$, for $t\in (-\varepsilon,0)$, and
  $\;b^1(\hat{X}_t)\,=\, 3r+2|\Gamma_{(0)}|+1\,$,
   for $t\in [0,\varepsilon)$.
  In particular,
  $b^1(\hat{X}_t)\ge 3r+7> b^1(L_1\cup L_2\cup L_3) =b^1(N) (=10)$
   for $t\in [0,\varepsilon)$
   since $r>1$ and $3\le |\Gamma_{(0)}|\le 3r$.
 } % end-lemma

 \bigskip

 \noindent
 Here, recall that
  the dual graph $\Gamma$ of $\hat{X}_0$ has
   one vertex $v_i$ for each manifold-component $M_i\simeq {\Bbb T}^3$
    of $\hat{X}_0$ and
   one edge $e_{ij}$ connecting $v_i$ and $v_j$
    for each intersection point in $M_i\cap M_j$;
  $\Gamma_{(0)}$ is the set of vertices of $\Gamma$ and
   $\Gamma_{(1)}$ is the set of edges of $\Gamma$.

 \bigskip

 \noindent
 {\it Proof.}
  That $b^1(\hat{X}_t)=3|\Gamma_{(0)}|$ for $t\in (-\varepsilon,0)$
   follows from the fact that the only finite-order covering space
   of ${\Bbb T}^3$ is homeomorphic to ${\Bbb T}^3$.
  That $b^1(\hat{X}_t)=b^1(\hat{X}_0)$ for $t\in (0,\varepsilon)$
   follows from the fact that $\hat{X}_0$ is topologically obtained
    from $\hat{X}_t$, $t\in (0,\varepsilon)$, by pinching
    a disjoint union of two-sided embedded $S^2$'s and, hence,
    the fundamental groups $\pi_1(\hat{X}_t)\simeq \pi_1(\hat{X}_0)$
     for $t\in (0,\varepsilon)$.
  It remains to compute $b^1(\hat{X}_0)$.
  Which follows from the following basic facts:
   \begin{itemize}
    \item[$\cdot$]
     the exact sequence of groups
     $$
      1\;\longrightarrow\; \pi_1(\vee_{v_i\in\Gamma_{(0)}}M_i)\;
         \longrightarrow\; \pi_1(\hat{X}_0)\;
         \longrightarrow\; \pi_1(\Gamma)\; \longrightarrow\; 1\,,
     $$
     where $\vee_{v_i\in \Gamma_{(0)}}M_i$ is the bouquet of
      $\{M_i:v_i\in \Gamma_{(0)}\}$
      following a(ny) spanning tree of $\Gamma$,

    \item[$\cdot$]
     $\pi_1(\vee_{v_i\in\Gamma_{(0)}}M_i)$
      is isomorphic to the free product of $|\Gamma_{(0)}|$-many
      copies of\\ $(\pi_1(M_i)\simeq)\pi_1({\Bbb T}^3)\simeq {\Bbb Z}^3$,

    \item[$\cdot$]
     $\pi_1(\Gamma)$ is isomorphic to the free group on
      $1-\chi(\Gamma)=1-|\Gamma_{(0)}|+3r$ generators,  and

    \item[$\cdot$]
     $H_1(\,\bullet\,; {\Bbb Z})$
     is the abelianization of $\pi_1(\,\bullet\,)$.
   \end{itemize}
  This concludes the proof.

  \noindent\hspace{13.7cm}$\square$

 \medskip

 \noindent
 $(e)$ {\it The dimension of deformation spaces and Higgsing/un-Higgsing.}
 \hspace{1em}
 Let
  $\hat{f}_t:= F(\cdot,t)\circ c(\cdot,t): \hat{X}_t\rightarrow Y$
  for $t\in (-\varepsilon,\varepsilon)$.
 {For} $t\in (-\varepsilon,0)$, $\hat{X}_t$ is smooth.
 The deformation space ${\cal M}_t^{\scriptsizesLag}$
  of special Lagrangian immersions from $\hat{X}_t$ to $Y$ is thus
  a manifold of dimension $b^1(\hat{X}_t)=3|\Gamma_{(0)}|$
  around $[\hat{f}_t]$.
 {For} $t=0$, $\hat{X}_t$ is a union of $|\Gamma_{(0)}|$-many
  ${\Bbb T}^3$-components.
 The deformation space ${\cal M}_0^{\scriptsizesLag}$
  of special Lagrangian immersions from $\hat{X}_0$ to $Y$ contains
  thus a manifold of dimension $3|\Gamma_{(0)}|$ around $[\hat{f}_0]$.
 {For} $t\in (0,\varepsilon)$, $\hat{X}_t$ is smooth again.
 The deformation space ${\cal M}_t^{\scriptsizesLag}$
  of special Lagrangian immersions from $\hat{X}_t$ to $Y$ is thus
  a manifold of dimension $b^1(\hat{X}_t)=3r+2|\Gamma_{(0)}|+1$
  around $[\hat{f}_t]$.
 As $b^1(\hat{X}_t)$ is locally constant on
  $(-\varepsilon,0)\cup (0,\varepsilon)$,
 the collection
  $\{{\cal M}_t^{\scriptsizesLag}:t\in (-\varepsilon,\varepsilon)\}$
  forms a topological space ${\cal M}_{(-\varepsilon,\varepsilon)}$
  over $(-\varepsilon,\varepsilon)$,
  with the topology from the topology of
   $\hat{X}_{(-\varepsilon,\varepsilon)}$ and
   the topology on the space of maps in question
   in the sense of currents.
 By construction,
  ${\cal M}_{(-\varepsilon,\varepsilon)}/(-\varepsilon,\varepsilon)$
  contains $\{\hat{f}_t:t\in (-\varepsilon,\varepsilon)\}$
  as a (continuous) section $s$ that is smooth on
  $(-\varepsilon,0)\cup (0,\varepsilon)$.
 Furthermore, there is a submanifold in
  ${\cal M}_{(-\varepsilon,\varepsilon)}$
  of relative dimension $>10= b^1(L_1\cup L_2\cup L_3) = b^1(N)$
  over $(0,\varepsilon)$
  that contains $\{\hat{f}_t:t\in (0,\varepsilon)\}$.
 It follows that one can perturb $s$ to another section $s^{\prime}$
  -- representing a new family
     $\{\hat{f}^{\prime}_t=:\hat{X}_t\rightarrow Y\,|\,
                                    t\in (-\varepsilon,\varepsilon)\}$
     of special Lagrangian maps --
  that remains continuous, is identical to $s$ on $(-\varepsilon,0]$,
   and is smooth on $(-\varepsilon)\cup(0,\varepsilon)$ such that
   the following condition holds:
  \begin{itemize}
   \item[$\cdot$]
    For all $t\in (0,\varepsilon)$,
    in the local parameterization of ${\cal M}_t^{\scriptsizesLag}$
     around $[\hat{f}_t]$ by a (finite-dimensional) submanifold
     in the (infinite-dimensional) Banach manifold of graphs
     in $T^{\ast}\hat{X}_t\simeq {\cal N}_{\hat{f}_t}$
     of closed $1$-forms on $\hat{X}_t$ under a Sobolev norm,
    the closed $1$-form $\hat{\xi}^{\prime}_t$ on $\hat{X}_t$
     associated to $\hat{f}^{\prime}_t$
     is not the pull-back of a closed $1$-form on $X_t$
     under the covering map $c$.
    Here, ${\cal N}_{\hat{f}_t}\subset \hat{f}_t^{\ast}T_{\ast}Y$
     is the normal bundle of $\hat{X}_t$ in $Y$ along $\hat{f}_t$.
  \end{itemize}
 In particular, for example,
  $\hat{f}^{\prime}_t:\hat{X}_t\rightarrow Y$ does not
  factor through a special Lagrangian map from $X_t$ to $Y$
  for $t\in (0,\varepsilon)$.
 As a result, for $t\in (0,\varepsilon)$,
  the overlapped sheets of $c:\hat{X}_t\rightarrow X_t$ under $\hat{f}_t$
   are now separated by $\hat{f}^{\prime}_t$  and
  the image Chan-Paton module ${\varphi^{\prime}_t}_{\ast}{\cal E}_t$
   of the associated new family of morphisms
   $\{\varphi^{\prime}_t:(X_t^{A\!z},{\cal E}_t)\rightarrow Y\,|\,
        t\in (-\varepsilon,\varepsilon)\}$
   from Azumaya spaces with a fundamental module to $Y$
  exhibits now a Higgsing (resp.\ un-Higgsing) phenomenon
   as $t$ moves away from $0$ for $t\in [0, \varepsilon)$
   (resp.\ as $t$ moves to $0$ for $t\in [0, \varepsilon)$).
 Note that the data of the unitary minimally flat
  connection-with-singularity that accompanies the deformed family
  of morphisms $\varphi^{\prime}_t$
  still comes from the $U(1)$ flat connection
  $\hat{\nabla}^{(-\varepsilon,\varepsilon)}$
   on $\hat{\cal E}_{(-\varepsilon,\varepsilon)}$
        over $\hat{X}_{(-\varepsilon,\varepsilon)}$.
 This concludes the example.

 \noindent\hspace{15.7cm}$\square$
 %%%%%%%%%%%%%%%%%%%%%%%%%%%%%%%
 % \bigskip
 %
 % \noindent $\bullet$
 % {\it Outline of the construction}:
 % \begin{itemize}
 %
 %  \item[(7)]
 %   Phase structure on the base $???\subset {\Bbb R}^{???}$
 %    of the Morse family.
 %
 %  \item[(?)]
 %   ??????????????.
 % \end{itemize}
 %%%%%%%%%%%%%%%%%%%%%%%%%%%%%%%%%%%%%%%%%%%%
}\end{example}

\bigskip

In this example, we fix the Calabi-Yau $3$-fold $Y$ in question.
In Sec.~3.2, we will see that
 such Higgsing/un-Higgsing behavior of D-branes
  -- as morphisms from Azumaya spaces with a fundamental module --
  mixed with assembling/disassembling of branes
  can also occur
 when the D-brane is driven to deform alongside with the deformation
  of the complex structures on $Y$ along an attractor flow.

\bigskip

\subsection{Large- vs.\ small-brane wrapping via deformations
            of morphisms.}

The long vs.\ short string wrapping behavior of
  matrix-strings in string theory
  (e.g., [D-V-V], [Joh: Sec.~16.3.3], [M-S])
 generalizes to
  a {\it large- vs.\ small-brane wrapping} behavior of D-branes.
Such phenomenon can be produced in our context
 via morphisms from an Azymaya manifold/scheme
 (with a fundamental module) and their deformations.
We explain
 first two basic local differential topological operations
  for such a purpose in the case of A-branes and
 then how transitions between large-brane wrapping and
  small-brane wrapping can be realized via deformations of morphisms.
A simplest example that distinguishes this nature of A-branes
 and a related question are given in the end.

\bigskip

\begin{flushleft}
{\bf Gluing of manifolds with singularities along codimension-1 loci.}
\end{flushleft}
\begin{definition}
{\bf [irreducible component].} {\rm
 Let $M$ be a manifold with singularities with
  the smooth locus $M_{\smoothscriptsize}$ and
  the singular locus $M_{\singularscriptsize}:=M-M_{\smoothscriptsize}$.
 Then $M_{\smoothscriptsize}$ is a disjoint union $\coprod_iU_i$
  of smooth manifolds $U_i$.
 The closure $\overline{U_i}\subset M$ of each $U_i$ in $M$
  is called an {\it irreducible component} of $M$.
}\end{definition}

Given a (possibly disconnected) oriented manifold $M$ with singularities,
let
 $Z=\coprod_{i=1}^lZ_i \subset M_{\smoothscriptsize}$ be a (disconnected)
  codimension-$1$ compact smooth oriented embedded
  submanifold-with-boundary in $M$ and
 $U=\coprod_{i=1}^lU_i\subset M_{\smoothscriptsize}$
  be a manifold-neighborhood of $Z$ in $M$
  with fixed diffeomorphisms $(U_i, Z_i)\simeq (U_0,Z_0)$
   for $i=1,\,\ldots,\,l\,$  and
   for some smooth submanifold-with-(smooth-)boundary $Z_0$
        in a smooth manifold $U_0$.
Let
 $DZ:=Z^+\cup_{\partial Z} Z^-$ be the doubling of $Z$
  along $\partial Z$,
  (here, $Z^+=Z$ and $Z^-$ is $Z$ with the orientation reversed),
  and
 $\widehat{M}$ be the oriented manifold-with-boundary with singularities,
  obtained from the tautological compactification of $M-Z$ by $DZ$.
By construction,
 $DZ$ constitutes now some boundary components of $\widehat{M}$,
  with the induced orientation,  and
 there is an orientation-reversing involution
  $DZ\rightarrow DZ$
  that leaves $\partial Z\subset DZ$ fixed  and
       descends to the identity map on $Z_0$.
Let $g:Z^+\rightarrow Z^-$ be an orientation-reversing diffeomorphism
 that descends to the identity map on $Z_0$.
(Caution that, in general, this is not a diffeomorphism on $DZ$.)
Let $M^{\prime}$ be the quotient manifold with singularities
 by the equivalence relation generated by
  $z\sim g(z)$ for $z\in DZ\subset \widehat{M}$.
Then it follows from the construction that:
 \begin{itemize}
  \item[(1)]
   The $DZ$-boundary of $\widehat{M}$ is closed up to
    a compact embedded submanifold-with-singularity
     $Z^{\prime}\subset M^{\prime}$
    that consists of only manifold-points in $M^{\prime}$.

  \item[(2)]
   $M^{\prime}$ has a natural smooth structure along $Z^{\prime}$
    in terms of which $Z^{\prime}\subset M^{\prime}_{\smoothscriptsize}$
     and the tautological inclusion $M-Z\hookrightarrow M^{\prime}$
      is a smooth embedding.

  \item[(3)]
   Let $U^{\prime}$ be the open submanifold in $M^{\prime}$
    that arises from $U\subset M$.
   Then the construction defines a branched covering map
    $\pi_0:U^{\prime}\rightarrow U_0$ of degree $l$
    with branch locus given by the codimension-$2$ submanifold
     $\partial Z_0$ in $U_0$.
   Let $\sigma\in\Sym_l$ be the permutation of elements of
    $\{1,\,\cdots\,,l\}$ defined by $g(Z^+_i)=Z^-_{\sigma(i)}$.
   Up to an inner automorphism of $\Sym_l$ on $\Sym_l$,
    the monodrony of $\pi_0$ is given by $\sigma$.
   The number of components in the cyclic decomposition of $\sigma$
    gives then the number of the connected components of $U^{\prime}$.

  \item[(4)]
   Let
    $M=\cup_{j=1}^kM_j$ be the decomposition of $M$
     by its irreducible components.
   Then $g$ induces an equivalence relation on $\{1,\,\cdots\,,k\}$
    by generated by $j\sim j^{\prime}$
     if $g(Z^+_{i})=Z^-_{i^{\prime}}$
      for some $Z^+_{i}\subset M_j$
          with $Z^-_{i^{\prime}}\subset M_{j^{\prime}}$.
   The number of irreducible components $M^{\prime}$
    is then the number of equivalence classes in $\{1,\,\cdots\,,k\}$.
   In general, this is smaller than $k$.
 \end{itemize}
All these are direct generalizations from the case of Riemann surfaces,
 possibly bordered or with nodes.
%%%%%%%%%%%%%%
% Cf.~{\sc Figure}~???.
%%%%%%%%%%%%%%

One can put the above construction also
 into a locally generically constant family of
 manifolds-with-singularities over an interval
 $(-\varepsilon, \varepsilon)$ for some $\varepsilon>0$, as follows:
 (continuing the notation from the previous discussion)
 \begin{itemize}
  \item[$\cdot$]
   Let $\underline{M}:=$ the quotient manifold-with-singularities
     $\,M/\!\sim\,$,
    where $\sim$ is an equivalence relation on $M$,
     generated by $m_1\sim m_2$
     if $m_1,\,m_2\in Z$ and $g(m_1^+)=m_2^-$ or $m_1^-=g(m_2^+)$.
   Here, $m_i^{\pm}$ is the lifting of $m_i$ to $Z^{\pm}$ in $DZ$.
   Let $\underline{Z}:=Z/\!\sim$ with the tautological embedding
    $\underline{Z}\hookrightarrow \underline{M}$.
   Then, there are tautological surjections
    $$
     h_-\;:\; M\;\longrightarrow\; \underline{M}
       \hspace{2em}\mbox{and}\hspace{2em}
     h_+\;:\; M^{\prime}\; \longrightarrow\; \underline{M}
    $$
    that restrict to the built-in identity maps
     $M-Z=\underline{M}-\underline{Z}=M^{\prime}-Z^{\prime}$.

  \item[$\cdot$]
   Define the sought-for family over $(-\varepsilon,\varepsilon)$ by
    $$
     {\cal M}_{(-\varepsilon,\varepsilon)}
      := (M\times (-\varepsilon,0])_{h_-\times\{0\}}\cup\underline{M}
       \cup_{h_+\times\{0\}}(M^{\prime}\times [0,\varepsilon))\,,
    $$
    where
     $h_-\times\{0\}=h_-:M\times\{0\}\rightarrow \underline{M}$ and
     $h_+\times\{0\}=h_+:M^{\prime}\times\{0\}\rightarrow \underline{M}$.
   The built-in projection
    $\pi_{(-\varepsilon,\varepsilon)}:{\cal M}_{(-\varepsilon,\varepsilon)}
               \rightarrow (-\varepsilon,\varepsilon)$
    has ${\cal M}_t:=\pi_{(-\varepsilon,\varepsilon)}(t)
         =M\times\{t\}$ for $t\in (-\varepsilon,0)$;
        $\underline{M}$ for $t=0$; and
        $M^{\prime}\times\{t\}$ for $t\in(0,\varepsilon)$.
 \end{itemize}
Note that, by construction, there is also a built-in map
 ${\cal M}_{(-\varepsilon,\varepsilon)}
         \rightarrow \underline{M}={\cal M}_0$.
%%%%%%%%%%%%%%
% Cf.~{\sc Figure}~???.
%%%%%%%%%%%%%%

\bigskip

\begin{flushleft}
{\bf Creation of codimension-1 incidence loci via isotopies.}
\end{flushleft}
Recall first the following theorem,
 stated with an adaptation to our situation:

\begin{theorem}
{\bf [isotopy of disk].} {\rm [Hir: Sec.~8.3, Theorem~3.1].}
 Let $f_1$ and $f_2:D^s\rightarrow {\Bbb R}^n$, $s\le n$,
  be smooth embeddings.
 When $s=n$, assume also that
  $f_1$ and $f_2$ are both orientation-preserving.
 Note that, as $D^s$ is compact in our notation,
  $f_1(D^s)$ and $f_2(D^s)$ are contained
  in a compact subset of ${\Bbb R}^n$.
 Then, $f_1$ and $f_2$ are isotopic.
 Furthermore, an isotopy between $f_1$ and $f_2$ can be realized
  by a diffeotopy of ${\Bbb R}^n$ with compact support.
\end{theorem}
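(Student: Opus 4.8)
The plan is to prove the statement in two stages: first to construct an isotopy \emph{through embeddings} $D^s\to{\Bbb R}^n$ carrying $f_1$ to $f_2$, and only afterward to promote it to a compactly supported diffeotopy of the ambient ${\Bbb R}^n$ via the isotopy extension theorem. The first stage itself I would break into a normalization at the center $0\in D^s$ followed by an Alexander-type linearization, reducing the whole comparison to a statement about linear maps, which is where the hypotheses on $s$ and orientation enter.

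For the normalization, write $p_i=f_i(0)$. A straight-line isotopy $t\mapsto f_1+t(p_2-p_1)$ arranges $f_1(0)=f_2(0)$, and after translating by $-p_2$ I may assume $f_1(0)=f_2(0)=0$. The derivatives $L_i:=Df_i(0):{\Bbb R}^s\to{\Bbb R}^n$ are injective, i.e.\ $s$-frames in ${\Bbb R}^n$, and the essential point is that $\GL^+(n,{\Bbb R})$ acts transitively on such frames: when $s<n$ there is at least one extra normal direction, so any frame can be completed to a positively oriented basis and one finds $A\in\GL^+(n,{\Bbb R})$ with $A\,L_1=L_2$; when $s=n$ the orientation hypothesis places both $L_i$ in $\GL^+(n,{\Bbb R})$, whence $A=L_2L_1^{-1}\in\GL^+(n,{\Bbb R})$. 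Choosing a path $A_t$ from $\Id$ to $A$ in the connected group $\GL^+(n,{\Bbb R})$, the isotopy $t\mapsto A_t\circ f_1$ keeps $f_1(0)=0$ and achieves $Df_1(0)=Df_2(0)=:L$.

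For the linearization I would use the scaling family
$$
 f_t(x)\;:=\;\tfrac1t\,f(tx)\,,\quad t\in(0,1]\,,\qquad f_0(x):=Df(0)(x)\,.
$$
Since $tD^s\subset D^s$ for $t\le 1$, each $f_t$ is a composite of the scaling $x\mapsto tx$, the embedding $f$, and $y\mapsto\tfrac1t y$, hence an embedding; and Hadamard's lemma $f(y)=Df(0)y+\sum_{i,j}y_iy_j\,g_{ij}(y)$ (valid because $f(0)=0$) gives $f_t(x)=Df(0)x+t\sum_{i,j}x_ix_j\,g_{ij}(tx)$, smooth in $(t,x)$ down to $t=0$. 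Thus each normalized embedding is isotopic to its linear part, which for both is the \emph{same} map $L$; concatenating yields an isotopy of embeddings from $f_1$ to $f_2$ whose total image is the continuous image of the compact $D^s\times[0,1]$, hence contained in a fixed compact set. I would then feed this isotopy into the isotopy extension theorem (as in [Hir]): integrating the associated time-dependent vector field, cut off by a bump function outside a compact neighborhood of that image, produces a diffeotopy $H_t$ of ${\Bbb R}^n$ with $H_0=\Id$, $H_t\circ f_1=f_t$, and $H_1\circ f_1=f_2$, each $H_t$ being the identity outside a compact set.

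The only place the hypotheses are genuinely used is the connectedness/transitivity of the frame space in the normalization: for $s=n$ the two orientation components of $\GL(n,{\Bbb R})$ are not joined by a path, which is exactly why orientation-preservation must be assumed, whereas for $s<n$ the spare normal directions make it automatic. The substantive analytic input, and what I expect to be the main obstacle if one insists on a self-contained argument, is the isotopy extension theorem itself — specifically the control of the support of the flow that yields the ``compactly supported'' clause; since the excerpt permits citing [Hir], I would simply quote it rather than reprove the vector-field integration.
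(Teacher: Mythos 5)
Your proof is correct, and it is essentially the argument of the source the paper relies on: the paper gives no proof of this statement at all — it quotes it verbatim from [Hir: Sec.~8.3, Theorem~3.1] — and Hirsch's proof is exactly your scheme of normalization at the center, linearization via the scaling family $f_t(x)=f(tx)/t$ (justified by Hadamard's lemma), transitivity/connectedness of $\GL^+(n,{\Bbb R})$ (which is precisely where the hypotheses $s\le n$ and orientation-preservation for $s=n$ enter), followed by the isotopy extension theorem to promote the isotopy of embeddings to a compactly supported ambient diffeotopy. Citing [Hir] for the isotopy extension step is entirely consistent with the paper's own treatment, so nothing further is needed.
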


The same proof of Theorem~2.3.2 gives indeed another form
 of Theorem~2.3.2:
  % Theorem [isotopy of disk]

\begin{theorem}
{\bf [creation of codimension-0 incidence locus via confined isotopy].}
 Given two orientation-preserving smooth embeddings
  $f_1$ and $f_2:D^n\rightarrow {\Bbb R}^n$ with $f_1(0)=f_2(0)=0$,
 there exists an isotopy $f_2^{(t)}$, $t\in[0,1]$ , of $f_2=:f_2^{(0)}$
 such that
 \begin{itemize}
  \item[$\cdot$]
   $f_2^{(t)}:D^n\rightarrow {\Bbb R}^n$ are (orientation-preserving)
    smooth embeddings with $f_2^{(t)}(0)=0$ and $f_2^{(t)}=f_2$
     on a neighborhood of $\partial D^n\subset D^n$,
    for all $t\in [0,1]$,

  \item[$\cdot$]
   there exists a small neighborhood $U$ of $0\in D^n$,
     with the closure $\overline{U}$ contained in the interior of $D^n$,
   such that $f_2^{(1)}|_U=f_1|_U$.
 \end{itemize}
 Furthermore, such an isotopy of $f_2$ can be realized
  by a diffeotopy of ${\Bbb R}^n$ with support contained in $f_2(D^n)$.
\end{theorem}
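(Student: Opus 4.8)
The plan is to run the first stage of Hirsch's proof of Theorem~2.3.2, in which the two embeddings are brought into agreement near a single point; the present statement is exactly that local-matching step, repackaged as an origin-fixing isotopy of $f_2$ alone that is the identity near $\partial D^n$.

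First I would recast the conclusion ambiently. Put $\phi:=f_1\circ f_2^{-1}$, a smooth orientation-preserving diffeomorphism from a neighborhood of $0$ in $f_2(D^n)$ onto a neighborhood of $0$ in $f_1(D^n)$, with $\phi(0)=0$. It then suffices to produce a diffeotopy $H_t$ of ${\Bbb R}^n$, $t\in[0,1]$, with $H_0=\Id$, with compact support inside $\mathrm{int}\,f_2(D^n)$, and with $H_1=\phi$ on some neighborhood of $0$; one sets $f_2^{(t)}:=H_t\circ f_2$. Then $f_2^{(t)}=f_2$ wherever $H_t=\Id$, in particular near $\partial D^n$, and the origin condition $f_2^{(t)}(0)=0$ reduces to $H_t(0)=0$.

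Next I would linearize $\phi$ near $0$ by the scaling (Alexander-type) trick. Write $A:=D\phi(0)$, which lies in $\GL^+_n({\Bbb R})$ precisely because $f_1$ and $f_2$ are orientation-preserving. Fixing a small closed ball $\overline U=\overline{B_\rho}$, the rescalings $\phi_s(x):=\tfrac1s\,\phi(sx)$, $s\in(0,1]$, are embeddings of $\overline U$ fixing $0$, and Taylor's theorem gives $\phi_s(x)\to Ax$ and $D\phi_s(x)=D\phi(sx)\to A$ uniformly on $\overline U$ as $s\to 0$; so setting $\phi_0:=A$ yields a smooth isotopy through origin-fixing embeddings from $A|_{\overline U}$ to $\phi|_{\overline U}$. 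Since $\GL^+_n({\Bbb R})$ is connected, I would join $\Id$ to $A$ by a path $L_t$ of linear isomorphisms and concatenate, obtaining a single isotopy $\Phi_t:\overline U\to{\Bbb R}^n$ through orientation-preserving embeddings with $\Phi_0=\mathrm{incl}$, $\Phi_1=\phi|_{\overline U}$, and $\Phi_t(0)=0$. Shrinking $\rho$ if necessary, the compact track $\bigcup_t\Phi_t(\overline U)$ stays inside $\mathrm{int}\,f_2(D^n)$, since the operator norms of $L_t$ and the Lipschitz constant of $\phi$ near $0$ are bounded.

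Finally I would globalize via the isotopy extension theorem [Hir: Ch.~8], applied to the isotopy $\Phi_t$ of the compact disk $\overline U$: this produces an ambient diffeotopy $H_t$ of ${\Bbb R}^n$ with $H_0=\Id$ and $H_t\circ\Phi_0=\Phi_t$, whose support may be confined to any prescribed neighborhood of the track; by the previous paragraph that neighborhood can be taken inside $\mathrm{int}\,f_2(D^n)$. The two side conditions are then automatic: $H_t(0)=H_t(\Phi_0(0))=\Phi_t(0)=0$, and $H_1=\phi$ on $\Phi_0(U)=U$, i.e.\ $f_2^{(1)}|_U=f_1|_U$. I expect the real weight to sit in this last step, namely in the support-controlled isotopy extension theorem, whose proof integrates a time-dependent vector field and cuts it off so as to keep the support inside $\mathrm{int}\,f_2(D^n)$ while preserving both the diffeomorphism property and the fixed point at $0$; by contrast the scaling linearization and the connectedness of $\GL^+_n({\Bbb R})$ are elementary. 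This is exactly the technical core shared with Theorem~2.3.2, which is why the same proof applies.
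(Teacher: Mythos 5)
Your proposal is correct and is essentially the paper's own argument: the paper proves Theorem~2.3.3 simply by observing that ``the same proof of Theorem~2.3.2'' (Hirsch's disk-isotopy theorem) applies, and that proof is precisely the chain you spell out --- linearization of $f_1\circ f_2^{-1}$ at the origin by the scaling trick, connectedness of $\GL_n^+({\Bbb R})$, and the support-controlled isotopy extension theorem of [Hir: Ch.~8]. Your only loose point, smoothness of the scaling family at $s=0$, is repaired by the standard Hadamard-lemma formula $\phi_s(x)=\bigl(\int_0^1 D\phi(tsx)\,dt\bigr)x$, exactly as in Hirsch's proof.
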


{For} a finite collection of orientation-preserving smooth embeddings
  $f_1, \,\cdots\,,f_l:D^n\rightarrow {\Bbb R}^n$ with $f_i(0)=0$,
one can keep $f_1$ fixed and perform the above isotopy to
  $f_2,\,\cdots\,,f_l$ one by one
 so that
 \begin{itemize}
  \item[$\cdot$]
   $f_i^{(t)}:D^n\rightarrow {\Bbb R}^n$
   are (orientation-preserving) smooth embeddings
     with $f_i^{(t)}(0)=0$ and
          $f_i^{(t)}=f_i$ on a neighborhood of $\partial D^n\subset D^n$,
   for $i=1,\,\ldots\,,l$ and $t\in [0,1]$,

  \item[$\cdot$]
   there exists a small neighborhood $U$ of $0\in D^n$,
     with $\overline{U}$ contained in the interior of $D^n$,
   such that $f_1^{(1)}|_U=\,\cdots\,=f_l^{(1)}|_U$.
 \end{itemize}
It follows then:

\begin{theorem}
{\bf [creation of codimension-1 incidence locus via confined isotopy].}
 Given a finite collection of orientation-preserving smooth embeddings
  $f_1, \,\cdots\,,f_l:D^n\rightarrow {\Bbb R}^n$
  with $f_i(0)=0$ for all $i$,
 there exists a neighborhood $U$ of $0\in D^n$,
  with $\overline{U}$ contained in the interior of $D^n$,
  for which the following holds:
  {For} any codimension-$1$ compact embedded submanifold-with-boundary
   $Z\subset U$,
  there exist isotopies
    $f_2^{(t)},\,\cdots\,,f_l^{(t)}$, $t\in [0,1]$, of
    $f_1=:f_2^{(0)},\,\cdots\,,f_l=:f_l^{(0)}$ respectively
   such that
   \begin{itemize}
    \item[$\cdot$]
     $f_i^{(t)}:D^n\rightarrow {\Bbb R}^n$
     are (orientation-preserving) smooth embeddings
     with $f_i^{(t)}(0)=0$ and
          $f_i^{(t)}=f_i$ on a neighborhood of $\partial D^n\subset D^n$,
     for $i=1,\,\ldots\,,l$ and $t\in [0,1]$,

    \item[$\cdot$]
     $f_1^{(1)}|_Z=\,\cdots\,=f_l^{(1)}|_Z$.
   \end{itemize}
\end{theorem}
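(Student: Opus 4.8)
The plan is to deduce Theorem~2.3.4 directly from the iterated-isotopy construction recorded in the paragraph immediately preceding it, which in turn rests entirely on the confined-isotopy statement of Theorem~2.3.3 (and hence, ultimately, on Theorem~2.3.2). First I would fix, once and for all, the neighborhood $U$, \emph{before} any $Z$ is specified. Keeping $f_1$ fixed throughout, apply Theorem~2.3.3 to each pair $(f_1,f_i)$, $i=2,\,\ldots\,,l$, to obtain confined isotopies $f_i^{(t)}$ of $f_i=:f_i^{(0)}$ together with neighborhoods $U_i\ni 0$ on which $f_i^{(1)}=f_1$. Choosing an open $U\ni 0$ with $\overline{U}\subset\bigcap_{i=2}^{l}U_i$ and $\overline{U}$ contained in the interior of $D^n$, and setting $f_1^{(1)}:=f_1$, these isotopies already satisfy $f_1^{(1)}|_U=f_2^{(1)}|_U=\,\cdots\,=f_l^{(1)}|_U$.

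With this $U$ in hand the theorem is essentially immediate: for any codimension-$1$ compact embedded submanifold-with-boundary $Z\subset U$, restricting the identity of maps from $U$ to $Z$ yields $f_1^{(1)}|_Z=\,\cdots\,=f_l^{(1)}|_Z$, while the confinement properties — each $f_i^{(t)}$ is an orientation-preserving embedding with $f_i^{(t)}(0)=0$ and $f_i^{(t)}=f_i$ near $\partial D^n$ — are inherited verbatim from Theorem~2.3.3. Observe that the isotopies I construct do \emph{not} depend on $Z$: one fixed choice works uniformly for every $Z\subset U$, which is at least as strong as (and respects the quantifier order of) the assertion ``$U$ first, then for any $Z$ there exist isotopies.'' The only bookkeeping to watch is that the pairwise applications of Theorem~2.3.3 produce a priori distinct $U_i$, so one passes to a common $U$ inside their intersection and shrinks it to keep $\overline{U}$ interior; since isotoping $f_i$ leaves every $f_j$ ($j\ne i$) untouched, agreements secured in earlier steps are never disturbed, and no ordering subtlety arises.

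Thus the genuine analytic content lives entirely in Theorem~2.3.3 (the codimension-$0$ case), and Theorem~2.3.4 is a formal corollary; there is no real obstacle at this level. The one conceptually delicate point — not needed for the statement as literally phrased, but relevant to the section's declared aim of a \emph{codimension-$1$} incidence locus — is that the construction above forces the sheets to coincide on the open set $f_1(U)$, a codimension-$0$ overlap. To make the incidence exactly the codimension-$1$ image $f_1(Z)$, as is natural for the gluing along codimension-$1$ loci described earlier in this section, I would, after achieving $f_1^{(1)}|_Z=\,\cdots\,=f_l^{(1)}|_Z$, further apply a generic confined isotopy to $f_2^{(1)},\,\ldots\,,f_l^{(1)}$, supported in $f_i(U)$ and fixing a tubular neighborhood of $Z$ pointwise along $Z$, so as to separate the sheets off $Z$ while preserving their common restriction to $Z$. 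This separation step is the only mildly subtle part of the broader picture, but it leaves the conclusion of Theorem~2.3.4 intact.
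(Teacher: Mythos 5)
Your proof is correct and is essentially the paper's own argument: the paper likewise applies Theorem~2.3.3 to $f_2,\,\ldots\,,f_l$ one at a time while keeping $f_1$ fixed, obtains agreement of all the $f_i^{(1)}$ on a common small neighborhood $U$ of $0$ (independent of $Z$), and then notes that the theorem follows by restricting to any $Z\subset U$. Your closing paragraph about separating the sheets off $Z$ to make the incidence locus exactly codimension~$1$ goes beyond what the stated theorem requires, but it does not affect the correctness of your proof.
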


Finally, the following lemma says that the condition $f_i(0)=0$
 can be achieved under a minimally required assumption:
(Which is indeed a special case of Theorem~2.3.2, with $s=0$.)
                                 % Theorem [isotopy of disk]

\begin{lemma}
{\bf [adjustment of center via confined isotopy].}
 Let $f:D^n\rightarrow {\Bbb R}^n$ be a (smooth) embedding
  such that $f(D^n)$ contains $0\in {\Bbb R}^n$ in its interior.
 Then there exists an isotopy $f^{(t)}$, $t\in [0,1]$, of $f=:f^{(0)}$
  that is identical to $f$, for all $t$, on a neighborhood of
  $\partial D^n$ in $D^n$
  and has the property that $f^{(1)}(0)=0$.
\end{lemma}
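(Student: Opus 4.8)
The plan is to realize the required isotopy as an ambient one. I will produce a diffeotopy $\{H_t\}_{t\in[0,1]}$ of ${\Bbb R}^n$ with $H_0=\mbox{\it id}$, admitting a compact set $K\subset\Omega:=\Int f(D^n)$ outside of which every $H_t$ is the identity, and with $H_1(f(0))=0$; I then set $f^{(t)}:=H_t\circ f$. Each $f^{(t)}$ is automatically a smooth embedding (an embedding followed by a diffeomorphism of ${\Bbb R}^n$), the family depends smoothly on $t$, $f^{(0)}=f$, and $f^{(1)}(0)=H_1(f(0))=0$. The one remaining point is that $f^{(t)}$ is unchanged near $\partial D^n$ for all $t$. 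For this I use that $f$ carries the pair $(D^n,\partial D^n)$ homeomorphically onto $(f(D^n),\partial f(D^n))$, with $\Omega=f(\Int D^n)$ open in ${\Bbb R}^n$ by invariance of domain. Hence $f^{-1}(K)$ is a compact subset of the open ball $\Int D^n$, so $f^{-1}(K)\subset\{|x|\le 1-\delta\}$ for some $\delta>0$; equivalently $K$ is disjoint from $f(\{|x|>1-\delta\})$. On this collar neighborhood of $\partial D^n$ every $H_t$ is the identity, whence $f^{(t)}=f$ there for all $t$.

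It remains to build $H_t$, which is precisely Theorem~2.3.2 in the degenerate case $s=0$, performed inside $\Omega$ rather than all of ${\Bbb R}^n$. The set $\Omega=f(\Int D^n)$ is open and connected, being diffeomorphic to the open ball $\Int D^n$ and hence to ${\Bbb R}^n$, and it contains both $p:=f(0)$ and $q:=0$ by hypothesis. I would first join $p$ to $q$ by a smooth regularly embedded arc $\gamma:[0,1]\to\Omega$ (this is trivial for $n=1$, and for $n\ge 2$ is obtained from any smooth path by a general-position perturbation). Next I would take a tubular neighborhood $N$ of $\gamma([0,1])$ with closure $\overline N$ compact and contained in $\Omega$, and construct a smooth vector field $V$ on ${\Bbb R}^n$ supported in $\overline N$ with $V(\gamma(s))=\dot\gamma(s)$ along the arc. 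Being compactly supported, $V$ is complete, so its time-$t$ flow $H_t$ is a diffeotopy of ${\Bbb R}^n$ with $\Supp H_t\subset\overline N\subset\Omega$, $H_0=\mbox{\it id}$, and $H_t(p)=\gamma(t)$, giving $H_1(p)=\gamma(1)=q=0$ as needed. Taking $K=\overline N$ closes the loop with the first paragraph.

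The statement is in essence the standard homogeneity of a connected manifold, so there is no genuine analytic obstacle. The only point demanding care -- and the reason the lemma is phrased as a \emph{confined} isotopy -- is the containment $\Supp H_t\subset\Omega=\Int f(D^n)$, which is exactly what guarantees that the boundary collar is never disturbed and that $f^{(t)}\equiv f$ near $\partial D^n$ for all $t$. This is secured by insisting that the tube $\overline N$ be compact inside $\Omega$; equivalently, one applies Theorem~2.3.2 on $\Omega\cong{\Bbb R}^n$ and extends the resulting compactly supported diffeotopy by the identity to all of ${\Bbb R}^n$. I note finally that no orientation hypothesis is required here: displacing a single point (the case $s=0$) imposes no orientation constraint, which is consistent with the parenthetical identification of this lemma as the $s=0$ specialization of Theorem~2.3.2.
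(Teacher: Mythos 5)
Your proposal is correct and follows essentially the same route as the paper's proof: connect $0$ to $f(0)$ by an embedded arc inside $\Int f(D^n)$, thicken it to a compactly contained neighborhood, build a vector field supported there whose flow drags $f(0)$ along the arc, and compose the resulting ambient diffeotopy with $f$. Your version is in fact slightly tighter on two points the paper leaves implicit -- prescribing $V(\gamma(s))=\dot\gamma(s)$ so that $H_1(f(0))=0$ exactly (the paper only asks $V$ to be parallel and nonzero along $\gamma$ and then says the flow ``can be adjusted''), and the compactness argument via $f^{-1}(K)\subset\{|x|\le 1-\delta\}$ that justifies $f^{(t)}=f$ on a collar of $\partial D^n$.
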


\begin{proof}
 Under the assumption, there exists an embedded smooth path $\gamma$
  that connects $0\in {\Bbb R}^n$ and $f(0)$ and
   is contained in the interior of $f(D^n)$.
 Let $U$ be an (arbitrarily small) neighborhood of $\gamma$
  in ${\Bbb R}^n$ with $\overline{U}\subset$ the interior of $f(D^n)$.
 The flow on ${\Bbb R}^n$ given by a smooth vector field
   that is supported in $U$,
           parallel to the tangent direction of $\gamma$,
           and is nowhere zero along $\gamma$
  can be adjusted to provide such an isotopy of $f_0$
  under the composition of $f_0$ with such a flow on ${\Bbb R}^n$.
 Such a vector field is easily constructed using a partition of unity.

\end{proof}

%%%%%%%%%%%%%%%%%%%%%%%%%%%
%
% \bigskip
%
% \begin{flushleft}
% {\bf Gluing of bundles with a flat connection along codimension-1 loci
%      on disks.}
% \end{flushleft}
% %
% ????????????
%
%
% \bigskip
%
% \noindent $\bullet$
%  ?????????????????????.
%
% \bigskip
%
%%%%%%%%%%%%%%%%%%%%%%%%%%%

\bigskip

\begin{flushleft}
{\bf Transitions between large-brane wrapping and small-brane wrapping\\
     via deformations of morphisms.}
\end{flushleft}
Let
 $Y$ be a Calabi-Yau $n$-fold,
 ${\cal E}$ a complex vector bundle of rank $r$ on $X$,
   and
 $$
  \left(\,
   \varphi\::\:(X^{A\!z}, {\cal E})\: \longrightarrow\: Y\;,\;
   \nabla
  \,\right)
 $$
  be a special Lagrangian morphism
   with a unitary minimally flat connection-with-singularity,
  given by
  $\;\varphi^{\sharp}:{\cal O}_{Y,{\Bbb C}}^{\,\infty}\, \longrightarrow\,
    {\cal O}_X^{A\!z}:=\Endsheaf_{{\cal O}_{X,{\Bbb C}}}({\cal E})\;$
  with the associated surrogate
  $$
   \xymatrix{
    ({\cal E}_{\varphi},\nabla)\ar@{.>}[rd] \\
     & X_{\varphi} \ar@{->>}[rr]^-{f_{\varphi}}\ar[d]_{\pi_{\varphi}}
     && L\;\subset \; Y\\
     & X
   }
  $$
Suppose that:
 \begin{itemize}
  \item[] \hspace{-1em}[{\it Assumption.}]
   There exists an embedding $D^n\hookrightarrow X$
   such that
   \begin{itemize}
    \item[(1)]
     $\pi_{\varphi}^{-1}(D^n)$ contains a disjoint union
      $V=\coprod_{i=1}^lD^n_{(i)}$ of connected components\\ that satisfy:
       $f_{\varphi}|_{D^n_{(i)}}\,$, $i=1,\,\ldots\,,l\,$,
        are (orientation-preserving) embeddings\\
       with $\,\bigcap_{i=1}^l\,f_{\varphi}(D^n_{(i)})\ne \emptyset\,$;

    \item[(2)]\hspace{-1ex}\footnote{Condition (2)
                          can be loosened/generalized
                          by introducing the notion of
                           {\it disks with a multiplicity} and
                           {\it bundles/sheaves with a filter of
                                subbundles/subsheaves}, and
                           {\it allowing the rank of
                                ${{\cal E}_{\varphi}}|_V$ to vary
                                on different connected components of $V$};
                         cf.\ [L-Y5: Sec.~4.2,
                          Theme: `{\it The generically filtered structure
                           on the Chan-Patan bundle
                           over a special Lagrangian cycle on
                           a Calabi-Yau torus.}'] (D(6)).
                         Since this is a separate issue for D-branes,
                          here, to make the presentation simple,
                          we take
                           all the multiplicity of disks to be $1$ and
                           the filter to be trivial.}$\hspace{.6ex}$
     $({\cal E}_{\varphi}|_{V},\nabla_{V})
      \simeq
       ({\cal O}_{V,{\Bbb C}}\otimes {\Bbb C}^{r^{\prime}},\nabla_0)$
     for some $r^{\prime}<r$,
     where $\nabla_0$ is the flat connection associated
      to the built-in trivialization of
      ${\cal O}_{V,{\Bbb C}}\otimes {\Bbb C}^{r^{\prime}}$.
   \end{itemize}
 \end{itemize}
{For} the following construction,
 we will assume that $r^{\prime}=1$ for simplicity of notation.
Once the $r^{\prime}=1$ case is understood,
 one can then recover the $r^{\prime}>1$ case by taking
  $(\,\cdot\,)\otimes{\Bbb C}^{r^{\prime}}$
  to the bundles/sheaves constructed in the $r^{\prime}=1$ case.

Applying the confined isotopy and local gluing discussed
  in the previous two themes
 to $V\subset X_{\varphi}$ over $D^n\subset X$
  with the codimension-$1$ embedded submanifold-with-boundary $Z$ there
  taken to be, for example, a small smoothly embedded $(n-1)$-disk
  in $D^n$,
one obtains
 a $(-\varepsilon,\varepsilon)$-family of local deformations
 of the pair $(\varphi,\nabla)\,$:
 $$
  \xymatrix{
   (\hat{\cal E}_{(-\varepsilon,\varepsilon)},
    \hat{\nabla}^{(-\varepsilon,\varepsilon)}) \ar@{.>}[d]
    & & & & & \\
   \hat{\cal X}_{(-\varepsilon,\varepsilon)}
     \ar[rd]^{(\hat{\pi}_{(-\varepsilon,\varepsilon)},
               \hat{f}_{(-\varepsilon,\varepsilon)})}
     \ar@/^2ex/[rrrrrd]^-{\hat{f}_{(-\varepsilon,\varepsilon)}}
     \ar@/_/[rddd]_-{\hat{\pi}_{(-\varepsilon,\varepsilon)}}   && \\
    & ((-\varepsilon,\varepsilon)\times X)
       \times_{(-\varepsilon,\varepsilon)}
        ((-\varepsilon,\varepsilon) \times Y)
        \ar[rrrr]_-{pr_2}  \ar[dd]^-{pr_1}
      & & & & (-\varepsilon,\varepsilon)\times Y\;, \\  \\
    & (-\varepsilon,\varepsilon)\times X
  }
 $$
 where
  \begin{itemize}
   \item[$\cdot$]
    $\hat{\cal X}_{(-\varepsilon,\varepsilon)}
     \simeq ((-\varepsilon,\varepsilon)\times (X_{\varphi}-V))
       \cup_{\partial}{\cal V}_{(-\varepsilon,\varepsilon)}$
    where
     ${\cal V}_{(-\varepsilon,\varepsilon)}$
     is ${\cal M}_{(-\varepsilon,\varepsilon)}$
      in the construction in the first theme, with $M=V$;

   \item[$\cdot$]
    $(\hat{\cal E}_{(-\varepsilon,\varepsilon)},
      \hat{\nabla}^{(-\varepsilon,\varepsilon)})$
     is the extension of
     $(-\varepsilon,\varepsilon)\times
        ({\cal E}_{\varphi}|_{X_{\varphi-V}},\nabla|_{X_{\varphi}-V})$
     over ${\cal V}_{(-\varepsilon,\varepsilon)}$
     by trivial complex line bundles with a trivial (flat) connection;

   \item[$\cdot$]
    $\hat{\pi}_{(-\varepsilon,\varepsilon)}$ and
    $\hat{f}_{(-\varepsilon,\varepsilon) }$
     are the built-in morphisms over $(-\varepsilon,\varepsilon)$
     in the construction;
    note that
     $\hat{\pi}_{(-\varepsilon,\varepsilon)}|
                              _{{\cal V}_{(-\varepsilon,\varepsilon)}}:
      {\cal V}_{(-\varepsilon,\varepsilon)}
      \rightarrow (-\varepsilon,\varepsilon)\times D^n
                  \subset (-\varepsilon,\varepsilon)\times X$
     has constant degree $l$, counted with multiplicity,
     with
      ${\cal V}_{(-\varepsilon,0)}\rightarrow (-\varepsilon,0)\times D^n$
       a covering map  and
      ${\cal V}_{(0,\varepsilon)}\rightarrow (0,\varepsilon)\times D^n$
       a branched covering map with branch locus
       $(0,\varepsilon)\times \partial Z$.
  \end{itemize}
{From} the above data, one obtains two families
 $$
  \xymatrix{
   ((\hat{\pi},\hat{f})_{\ast}\hat{\cal E}_I,\nabla^I) \ar@{.>}[rd] \\
    & (\hat{\pi},\hat{f})(\hat{\cal X}_I)
       \ar@{->>}[rr]^-{f_I}\ar[d]_{\pi_I}
    && I \times L\;
       \subset \; I \times Y\;,\\
    & I\times X
  }
 $$
 where $I=$ $(-\varepsilon,0)$ or $(0,\varepsilon)$,
 of special Lagrangian morphisms from Azumaya spaces
  with a fundamental module, with a unitary minimally flat connection,
 and a transition between them via the family with
  $I=(-\varepsilon,\varepsilon)$.

\begin{remark}
{\it $[$large- vs.\ small-brane wrapping$]$.} {\rm
 Given
  a Calabi-Yau $n$-fold $Y$ and
  a special Lagrangian morphism
   with a unitary minimally flat connection-with-singularity
   $$
    \left(\,
     \varphi\::\:(X^{A\!z}, {\cal E})\: \longrightarrow\: Y\,,\,
     \nabla\, \right)
   $$
   specified by
   $\;\varphi^{\sharp}:{\cal O}_{Y,{\Bbb C}}^{\,\infty}\, \longrightarrow\,
     {\cal O}_X^{A\!z}:=\Endsheaf_{{\cal O}_{X,{\Bbb C}}}({\cal E})\;$
   with the data on the associated surrogate
   $$
    \xymatrix{
     ({\cal E}_{\varphi},\nabla)\ar@{.>}[rd] \\
      & X_{\varphi} \ar@{->>}[rr]^-{f_{\varphi}}\ar[d]_{\pi_{\varphi}}
      && L\;\subset \; Y\\
      & X
    }
   $$
  that satisfies the beginning assumption in the construction above.
 Then, the above construction deforms $\varphi$
   via locally deforming and gluing different sheets of
      $X_{\varphi}$ over $X$.
 When the starting collection $\pi_{\varphi}^{-1}(D^n)$ contains
  disks in different irreducible components of $X_{\varphi}$,
  the procedure in general lead then to
  $\varphi^{\prime}:(X^{A\!z},{\cal E}^{\prime})\rightarrow Y$
  with $X_{\varphi^{\prime}}$ containing an irreducible component
   $X_{\varphi^{\prime}}^{(0)}$ with larger volume
   such that both
    $f_{\varphi^{\prime}}|_{X_{\varphi^{\prime}}^{(0)}}
       \rightarrow L=\Image(\varphi)=\Image(\varphi^{\prime})$  and
    $\pi_{\varphi^{\prime}}|_{X_{\varphi^{\prime}}^{(0)}}\rightarrow X$
     have larger degree
   since volume (of branes) and degree (of maps)
    add under the construction.
 This gives rise, thus, to the phenomenon of and a transition between
   a ``small-brane wrapping" and a ``large-brane wrapping"
   of a special Lagrangian cycle $L$ in $Y$ in superstring theory.
}\end{remark}

Below is a simplest example
 that illustrates this particularly behavior of D-branes distinctly.
It is also an example that resembles long- vs.\ short-string wrapping
 most directly.

\begin{example}
{\bf [large- vs.\ small-brane wrapping on special Lagrangian 3-sphere].}
{\rm
 Let
  $X=S^3$ be oriented,
  $\pi^{\prime}:X^{\prime}=S^3\rightarrow X$
   be an orientation-preserving branched covering of $S^3$ on itself,
  $f^{\prime}:X^{\prime} \rightarrow L=S^3$
   be an orientation-preserving diffeomorphism,
  ${\cal E}^{\prime}$ be a complex line bundle over $X^{\prime}$.
 Since
   ${\cal E}^{\prime}$ is isomorphic to ${\cal O}_{S^3,{\Bbb C}}$,
  we endow ${\cal E}^{\prime}$ with the trivial connection
   $\nabla^{\prime}=d$ under such an isomorphism.
 It follows that
 $\nabla^{\prime}$ induces a unitary flat connection
   on $\pi^{\prime}_{\ast}{\cal E}^{\prime}$,
   with singularity on the branch locus of $\pi^{\prime}$,
  by endowing ${\cal E}^{\prime}\simeq {\cal O}_{S^3,{\Bbb C}}$
   with the standard Hermitian metric, for which $d$ is $U(1)$-flat.
 The map $(\pi^{\prime},f^{\prime}):X^{\prime}\rightarrow X\times L$
  defines now an embedding.
 Recall that
  the tangent bundle of an orientable close $3$-manifold is always trivial.
 In particular,
  let $\chi_t:L\rightarrow L$, $t\in{\Bbb R}$,
   be a flow on $L$
   generated by a nowhere-zero smooth vector field on $L$;
  for example, a Hopf flow on $L=S^3$.
 Consider the map
  $f:= \coprod_{i=0}^{d-1}\chi_{i\delta}\circ f^{\prime}:
       \amalg^d X^{\prime}\rightarrow L$
  for some $\delta>0$.
 Let
  $(X^{A\!z},{\cal E}_-)
    = (S^{3,A\!z},{\cal E}_-)
   := (S^3,
      {\cal O}_{S^3}^{A\!z}
              =\Endsheaf_{{\cal O}_{S^3,{\Bbb C}}}({\cal E}_-),
      {\cal E}_-=\oplus^d\pi^{\prime}_{\ast}{\cal E}^{\prime})$
   be an Azumaya $3$-sphere with a fundamental module.
 Let $L$ be realized as an embedded special Lagrangian $3$-sphere
  in a Calabi-Yau $3$-fold $Y$.
 Assume that $\delta$ is small enough so that
  $(\pi:=\amalg^d\pi^{\prime}, f):
                 \amalg^d X^{\prime}\rightarrow X\times L$
  is an embedding.
 Then $((\pi,f),({\cal E}^{\prime},\nabla^{\prime}))$
  defines a special Lagrangian morphism
  $\varphi:(X^{A\!z},{\cal E}_-)\rightarrow Y$,
   with $X_{\varphi}=\amalg^d X^{\prime} =\amalg^d S^3$,
        $\pi_{\varphi}=\pi$,
        $f_{\varphi}=f$,
        image $\Image\varphi=L$ with a multiplicity $d$, and
        ${\cal E}_{\varphi}\simeq {\cal O}_{X_{\varphi},{\Bbb C}}$, and
  a unitary minimally flat connection $\nabla$ on ${\cal E}_{\varphi}$
   that is isomorphic to $d$ on ${\cal O}_{X_{\varphi},{\Bbb C}}$.
 In particular, $X_{\varphi}$ has $d$-many components
  with each mapped to $L$ by the degree-$1$ restriction of
   $f_{\varphi}$
  (i.e.\ with each component wrapping $L$ once).

 Now deform $\varphi$ by setting
  $f_t:= \coprod_{i=0}^{d-1}\chi_{-ti\delta} \circ f^{\prime}$,
  $t\in[-1,0]$.
 Then $f_{-1}=f$ and
 for $t\in[-1,0)$,
  the associated special Lagrangian morphism
  $\varphi_t:(X^{A\!z},{\cal E})\rightarrow Y$ has
   $X_{\varphi_t}=\amalg^d X^{\prime}$,
   $\pi_{\varphi_t}=\pi$,
   $f_{\varphi_t}=f_t$, and
   image $\Image\varphi_t=L$ with a multiplicity $d$.
 {For} $t=0$, all components of $X_{\varphi}$ are deformed to coincide
  and become a single-component $X_{\varphi_0}\simeq X^{\prime}$
  with multiplicity $d$ indicated by the rank $d$ of
   $\oplus^d{\cal E}^{\prime}
    \simeq {\cal O}_{X^{\prime},{\Bbb C}}\otimes{\Bbb C}^d$
  on $X^{\prime}$.
 Let
  $Z\simeq D^2$ be an embedded $2$-disc in $X=S^3$  and
   perform a gluing construction in this subsection,
  with $\sigma\in \Sym_d$, say, to be transitive,
 to obtain an orientation-preserving $d$-fold branched covering
  $g:X^{\prime\prime}\simeq S^3\rightarrow X_{\varphi}$.
 The composition $f^{\prime\prime}$ of
  $X^{\prime\prime}\stackrel{g}{\longrightarrow} X_{\varphi_0}
     \stackrel{f_{\varphi_0}}{\longrightarrow}L$
  then is also a $d$-fold orientation-preserving branched covering.
 A ``large brane" (i.e.\ $X^{\prime\prime}$) is thus formed from
   gluing ``small branes" (i.e.\ $X_{\varphi}=\amalg^d S^3$) and
  it wraps $L$ now via $f^{\prime\prime}$ of degree $d$.
 Let
  $\pi^{\prime\prime}:X^{\prime\prime}\rightarrow X$
   be the built-in orientation-preserving branched covering map,
  ${\cal E}^{\prime\prime}\simeq {\cal O}_{X^{\prime\prime},{\Bbb C}}\,$,
   and
  ${\cal E}_+=\pi^{\prime\prime}_+{\cal E}^{\prime\prime}$.
 By deforming $f^{\prime\prime}:X^{\prime\prime}\rightarrow L$,
  for example, via the geodesic flow along $f$
   (e.g.\ from the induced Riemann metric on $L$ as a submanifold in $Y$)
   governed by a smooth section $\xi$ of
   ${f^{\prime\prime}}^{\ast}T_{\ast}L$
   that is non-zero except at the singular locus of $f^{\prime\prime}$,
  one can obtain a family of smooth maps
   $(\pi^{\prime\prime},
      f^{\prime\prime}_t):X^{\prime\prime}\rightarrow X\times L$,
    $t\in(0,1]$,
   with each an orientation-preserving embedding
     on an open dense subset of $X^{\prime\prime}$.
 This defines thus a family of special Lagrangian morphisms
  $\varphi_t:(X^{A\!z},{\cal E}_+)\rightarrow Y$, $t\in (0,1]$,
  whose image remains $L$  but
  which now involve large-brane wrapping
   $f_{\varphi_t}:X_{\varphi_t}\rightarrow L$ on $L$.

 Note that behind the above two-part construction,
   one over $[-1,0)$ and the other over $(0,1]$,
  is a family data over $I=[-1,1]$, as in the beginning of the theme:
  $$
   \xymatrix{
    (\hat{\cal E}_I, \hat{\nabla}^I) \ar@{.>}[d]
     & & & & & \\
    \hat{\cal X}_I
      \ar[rd]^{(\hat{\pi}_I,\hat{f}_I)}
      \ar@/^2ex/[rrrrrd]^-{\hat{f}_I}
      \ar@/_/[rddd]_-{\hat{\pi}_I}   && \\
     & (I\times X)\times_I (I\times Y)
         \ar[rrrr]_-{pr_2}  \ar[dd]^-{pr_1}
       & & & & I\times Y\;, \\  \\
     & I\times X
   }
  $$
  with
   $(\hat{\cal E}_I, \hat{\nabla}^I)
    \simeq ({\cal O}_{\hat{\cal X}_I,{\Bbb C}}\,,\,d)\,$.
}\end{example}

%%%%%%%%%%%%%%%%%%%%%%%%%%%%%
%
% \bigskip
%
% \begin{example}
% {\bf [Large- vs.\ small-brane wrapping via product].}
% {\rm (Continuing Example~???.)}
%                % Example [Higgsing and un-Higgsing of A-branes]
% {\rm
%  A products of examples as illustrated in
%   [L-Y: Sec.~4, {\sc Figure}~4.3.3] gives ??????????.
% }\end{example}
%
%
% \begin{example}
% {\bf [Large- vs.\ small-brane wrapping via suspension].} {\rm
%  (With notations from Example~???.)
%                    % Example [Higgsing and un-Higgsing of A-branes]
%  ??????????????????????.
% }\end{example}
%
%
% \begin{example}
% {\bf [Large- vs.\ small-brane wrapping via fibered product].} {\rm
%  (With notations from Example~???.)
%                    % Example [Higgsing and un-Higgsing of A-branes]
%  ??????????????????????.
% }\end{example}
%
%%%%%%%%%%%%%%%%%%%%%%%%%%%%%%%%%%%%%%%%%5

\bigskip

%%%%%%%%%%%%%%%%%%%%%%%%%
% Before leaving the current theme, we remark that one can
%  -- for example,
%   using a similar gluing method above or
%   considering the pull-back circle bundle of the Hopf fibration
%    $S^3\rightarrow S^2$ via branched coverings of
%    Riemann surfaces $\Sigma_g\rightarrow {\Bbb P}^1$ --
%  construct
%  $f:L\rightarrow S^3$ that is an immersion in codimension $1$,
%  with the topology of $L$ arbitrarily complicated and
%  the dimension of the image of
%   $H^1_{\csscriptsize}(L^{\circ};{\Bbb R})
%                   \rightarrow H^1(L^{\circ};{\Bbb R})$ arbitrarily large.
%%%%%%%%%%%%%%%%%%%%%%%%%
We pose a question here before leaving this subsection:\footnote{We
                          thank Yng-Ing Lee and Wenxuan Lu
                          for some discussions on this problem.}

\begin{definition}
{\bf [immersion in codimension $1$].} {\rm
 Let $M$, $N$ be smooth manifolds.
 A smooth map $f:M\rightarrow N$ is called
  {\it an immersion in codimension $1$}
  if there exists an open dense submanifold $M^{\circ}\subset M$
   with the (Hausdorff) codimension of $Z:= M-M^{\circ}$ $\ge 2$
   such that $f|_{M^{\circ}}:M^{\circ}\rightarrow N$ is an immersion.
 $Z$ is called an {\it exceptional locus} of $f$.
}\end{definition}

\begin{question}
{\bf [de-rigidification via large-brane wrapping].}
 \begin{itemize}
  \item[$\cdot$]
   Does there exist a smooth special Lagrangian map
    $f:L\rightarrow S^3\subset Y$
    that is an immersion (presumably of sufficiently high degree)
    in codimension $1$
   such that there exists a family of deformations of $f=:f_0$
    into smooth special Lagrangian maps $f_t:L\rightarrow Y$,
    $t\in [0,\epsilon)$ for some $\epsilon>0$,
    with $f_{t}(L)\ne S^3$ for $t\in (0,\epsilon)$?
 \end{itemize}
\end{question}
%
%%%%%%%%%%%%%%%%%%%%
% The folklore belief in string-theory literature is {\it no}$\,$
% but the mathematical detail
% (for either an affirmative or a negative answer) is unknown.
%%%%%%%%%%%%%%%%%%%%%%%%%%%%%%%%%%
% however, it can be technical
% either to prove the folklore
% or to construct an example to show
%  that an appropriate wrapping can de-rigidify
%        a rigid special Lagrangian submanifold.
%%%%%%%%%%%%%%%%%%%%%%%%%%%%%%%%%%

\bigskip

\subsection{Remarks/Questions/Conjectures.}

Two themes that immediately arise from the previous discussion
 are given here.
Each deserves a study in its own right.
The first theme is also relevant to Sec.~3.

\bigskip

\subsubsection{Cones of special Lagrangian cycles.}

Given a Calabi-Yau $n$-fold $Y=(Y,J,\omega,\Omega)$,
 let $\alpha=[L]\in H_n(Y;{\Bbb Z})$ be a homology class
  that is representable by a special Lagrangian submanifold $L$.
Then
 $$
   [\Real\Omega]\cdot\alpha\; >\; 0
    \hspace{1em}\mbox{and, hence,}\hspace{1em}
   [\Real\Omega]\cdot(-\alpha)\; <\; 0\,.
 $$
This implies that
 $-\alpha\in H_n(Y;{\Bbb Z})$ cannot be represented
  by any special Lagrangian cycle (or current).
Furthermore,
if $\alpha=[L_1]$ and $\beta=[L_2]$
 are two classes that are representable
  by special Lagrangian submanifolds,
 then $\alpha+\beta$ is representable by the special Lagrangian cycle
  $L_1+L_2$.
This gives a foundation for the following definition:

\begin{ssdefinition-prototype}
{\bf [cone of special Lagrangian cycles].} {\rm
 The following cone
  $$
   C^{sL}(Y)\;:=\;
    \left\{\,
      \sum_{i\in I} a_i[L_i]\,:\,
       \begin{array}{l}
         |I|<\infty\,,\; a_i\in {\Bbb R}_{\ge 0}\,, \\[.2ex]
         \mbox{$L_i$
          a special Lagrangian submanifold-with-singularity}
       \end{array}
     \right\}
  $$
  in $H_n(Y;{\Bbb R})$ is called  the
  {\it cone of special Lagrangian cycles} of the Calabi-Yau $n$-fold $Y$.
 With ${\Bbb R}_{\ge 0}$ replaced by ${\Bbb Q}_{\ge 0}$,
  one can also define $C^{sL}_{\Bbb Q}(Y)\,$.
}\end{ssdefinition-prototype}

\noindent
This is only a prototypical definition as there are various
 enhancements/refinements to it:
 \begin{itemize}
  \item[$\cdot$]
   The above definition is based on the underlying choice of
    {\it equivalence relation of special Lagrangian cycles}:
    $L_1\sim L_2$ if $[L_1]=[L_2]$ in $H_n(Y;{\Bbb Z})$.
   One can use other finer equivalence relations, for example,
    via Lagrangian or special Lagrangian cobordisms.

  \item[$\cdot$]
   One may specify more specifically the {\it singularities} allowed
    in special Lagrangian cycles or currents.
   In particular, let
 \end{itemize}
 \vspace{-2.4ex}
    $$
     C^{sL}(Y)^0\;:=\;
      \left\{
        \sum_{i\in I} a_i[L_i]\,:\,
         \begin{array}{l}
           |I|<\infty\,,\; a_i\in {\Bbb R}_{\ge 0}\,, \\[.2ex]
           \mbox{$L_i$
            an immersed special Lagrangian submanifold}
         \end{array}
       \right\}\; \subset\; C^{sL}(Y)\,.
    $$
 \begin{itemize}
  \item[]
   Then, it follows from the immersed version of Theorem~A.1.2
                                    % Theorem [deformation in family]
   that $C^{sL}(\,\bullet\,)^0$ is locally constant
   in the dual Hodge bundle ${\cal H}^{\vee}$
   over the moduli space ${\cal M}$ of smooth deformations of $Y$.
 \end{itemize}
With cones of special Lagrangian cycles to complex deformations
 as Mori cones ([Ko-M]) to K\"{a}hler deformations in mind,
two major questions are then:

\begin{ssquestion}
{\bf [cone of special Lagrangian cycles].}   \samepage
 \begin{itemize}
  \item[$(1)$] {\bf [structure of cone].}\\
   Structure of $C^{sL}(Y)$ (resp.\ $C^{sL}(Y)^0\,$,
    $\;\overline{C^{sL}(Y)}\,$, $\;\overline{C^{sL}(Y)^0}\,$, ...$\,$),
    existence of extremal rays, ..., etc.?

  \item[$(2)$] {\bf [role in deformation].}
   How does the structure of $C^{sL}(Y)$ (resp.\ $C^{sL}(Y)^0\,$, ...$\,$)
    relate to the vanishing cycle of $\;Y$
    when the complex structure of $\;Y$ is deformed to singularity?
 \end{itemize}
\end{ssquestion}

\bigskip

\subsubsection{A genus-like expansion of the path-integral
    of lower-dimensional branes:
    Alexander-Hilden-Lozano-Montesinos-Thurston/Hurwitz\\
    meeting Polchinski-Grothendieck.}

Not all D3-brane topologies are equal from the viewpoint
 of Azumaya geometry.
This suggests a genus-like expansion of the path-integral
 of D3-branes in type IIB string theory.
Similarly for D2-branes in type IIA string theory and
 for M2-branes in M-theory.

\bigskip

\begin{flushleft}
{\bf Fundamental D$3$-branes from the viewpoint of Azumaya geometry:
     Alexander-Hilden-Lozano-Montesinos-Thurston
     meeting Polchinski-Grothendieck.}
\end{flushleft}
Recall\footnote{{\it Notation.}
           In this subsubsection, we will use: (only here)
            \begin{itemize}
             \item[$\cdot$]
              $L^1$ to denote a {\it link}
               (i.e.\ possibly disconnected, embedded,
                      $1$-dimensional submanifold) of $S^3$;
             \item[$\cdot$]
              $K^1$ to denote a {\it knot}
               (i.e.\  connected, embedded, $1$-dimensional submanifold)
              of $S^3$;  and
             \item[$\cdot$]
              $L^3$ to denote a {\it special Lagrangian}
               ($3$-dimensional) submanifold in a Calabi-Yau $3$-fold.
            \end{itemize}}
 the following classical theorems in the study
 of $3$-manifold topology:

\begin{sstheorem}
{\bf [branched covering].} {\rm (Alexander [Al], 1920.)}
 Any closed, connected, orientable $3$-manifold is realizable
 as a branched covering of $S^3$.
\end{sstheorem}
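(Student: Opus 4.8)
The plan is to follow Alexander's original strategy: reduce everything to the combinatorics of a triangulation and then build the branched covering as a color-respecting simplicial map into a doubled tetrahedron. First I would invoke Moise's theorem to fix a triangulation $K$ of the closed orientable $3$-manifold $M$ and pass to its barycentric subdivision $K'$. The reason for subdividing is that $K'$ carries a canonical $4$-coloring of its vertices: each vertex of $K'$ is the barycenter of a unique simplex of $K$, and I color it by the dimension of that simplex, a value in $\{0,1,2,3\}$. By construction every top-dimensional simplex of $K'$, which corresponds to a maximal flag $v^0 < v^1 < v^2 < v^3$ in $K$, has exactly one vertex of each color.

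Next I would realize the target as the double of a single tetrahedron. Writing $T$ for a standard $3$-simplex whose four vertices are labeled by the colors $0,1,2,3$, the double $S^3 \cong T_+ \cup_{\partial T} T_-$ glues two copies of $T$ along their common boundary sphere $\partial T \cong S^2$. I then define $f : M \to S^3$ simplex by simplex: the color-respecting affine isomorphism carries each top simplex of $K'$ onto $T$, and I route it into $T_+$ or into $T_-$ according to a sign. Here the orientation of $M$ enters decisively. Fixing a global orientation, each top simplex of $K'$ is oriented, and comparing that orientation with the fixed orientation of $T$ under the coloring isomorphism assigns a sign $\pm$; I send the $+$ simplices to $T_+$ and the $-$ simplices to $T_-$. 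Two top simplices meeting along a common $2$-face share three vertices of the same three colors (the fourth color is the one missing from the face), so both coloring maps agree on that face; since adjacent top simplices of an oriented manifold induce opposite orientations on their shared face, their signs are always opposite, and $f$ therefore glues consistently across every $2$-face into the doubling $T_+ \cup_{\partial T} T_-$.

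The remaining work, and the main obstacle, is to verify that $f$ is genuinely a branched covering and to locate its branch locus. Over the open $3$-balls that are the interiors of $T_+$ and $T_-$ the preimages of a generic point are exactly the $+$ simplices, respectively the $-$ simplices, so $f$ is there a trivial covering; over the interior of each $2$-face of $\partial T$ the preimages are the glued adjacent $(+,-)$ pairs, and $f$ is again a local homeomorphism, precisely because of the matched-sign condition established above. Thus $f$ can fail to be a local covering only over the $1$-skeleton of $T$ inside $S^3$, so the branch locus is contained in the edge-graph of the tetrahedron, a $1$-complex of codimension $2$. The delicate points to check are the local normal forms there: (i) along an edge of $T$, where several simplices cyclically share the edge and one must confirm that the meridian monodromy produces the honest branched-covering model rather than a non-manifold singularity; and (ii) at the four vertices of $T$, where one verifies that the link of a preimage vertex maps to the link $S^2$ of the target vertex as a lower-dimensional branched cover, so that $M$ is indeed recovered as a manifold. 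Once these normal forms are confirmed, $f$ is the desired branched covering of $S^3$, of degree equal to the number of $+$ simplices of $K'$.

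Finally I would record that orientability is used exactly once and essentially: it is what makes the $\pm$ assignment globally consistent, since the checkerboard $2$-coloring of the top simplices of $K'$ has no monodromy obstruction precisely when $M$ is oriented, which is why the statement is confined to the orientable case. No control of the branch locus beyond its being a $1$-complex is attempted here; sharpening it to a knot, or the degree down to $3$, is the content of the Hilden--Montesinos refinement quoted alongside this theorem.
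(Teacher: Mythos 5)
Your proposal is correct and is essentially Alexander's original 1920 argument --- which is exactly what the paper relies on, since it states this theorem as a classical citation to [Al] without reproducing a proof. The barycentric 4-coloring, the orientation-induced checkerboard signs sending top simplices to the two halves of the doubled tetrahedron, and the reduction of the remaining verification to the cyclic local model along edges and the link-of-vertex (2-dimensional branched cover) check are precisely the standard proof, and those deferred local checks do go through as you describe.
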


\begin{sstheorem}
{\bf [3-fold enough].} {\rm (Hilden [Hil] and Montesinos [Mon], 1976.)}
 Any closed, connected, orientable $3$-manifold is realizable
 as a $3$-fold (i.e.\ degree-$3$) irregular branched covering of $S^3$
  with the branch locus in $S^3$ a knot.
\end{sstheorem}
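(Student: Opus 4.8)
The plan is to bypass Alexander's unbounded-degree construction and instead exploit the rigid combinatorics of genus-$g$ Heegaard splittings, which is exactly what forces the degree down to $3$. First I would fix a Heegaard splitting $M = H_g \cup_{\phi} H_g'$ of the given closed, connected, orientable $3$-manifold, where $H_g, H_g'$ are handlebodies of genus $g$ and $\phi$ is an orientation-reversing gluing homeomorphism of the common splitting surface $\Sigma_g = \partial H_g$. The essential elementary input is that a handlebody of genus $g$ is itself realizable as a $3$-fold \emph{simple} (i.e.\ transposition-monodromy) branched covering $p:H_g\to B^3$ of the $3$-ball, branched over a trivial tangle of properly embedded unknotted, unlinked arcs; restricting $p$ to the boundary presents $\Sigma_g$ as a $3$-fold simple branched cover $\bar p:\Sigma_g\to S^2$ over a finite set of marked points (the arc-endpoints), whose number $2g+4$ is dictated by Riemann--Hurwitz. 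This is the $3$-sheeted analogue of the classical ``plat'' presentation.

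The core of the argument is to make the two branched-cover structures compatible across the gluing. I would arrange the two copies of $\bar p:\Sigma_g\to S^2$ to agree over a common marked sphere and then realize $\phi$ as the lift of a homeomorphism $\bar\phi$ of $(S^2, \text{marked points})$. The mechanism here is Birman--Hilden theory: the \emph{liftable} elements of $\MCG(\Sigma_g)$ --- those isotopic to fiber-preserving homeomorphisms with respect to $\bar p$ --- form a subgroup, and one must show that after suitable choices (and, if necessary, stabilizing the Heegaard splitting to enlarge the liftable subgroup) the gluing class $[\phi]$ can be taken inside it. Once $\phi$ descends to some $\bar\phi$, the two balls glue to $S^3 = B^3\cup_{\bar\phi}B^3$ while the two trivial tangles glue along $S^2$ into a closed $1$-manifold, i.e.\ a link $L^1\subset S^3$, and the two copies of $p$ assemble into a single $3$-fold simple branched cover $M\to S^3$ branched over $L^1$.

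It then remains to upgrade the link to a knot and to record irregularity. Since the cover is simple, each component of $L^1$ carries a transposition in $\Sym_3$ as its monodromy; connectedness of $M$ forces these transpositions to generate a transitive, hence the full, subgroup $\Sym_3$, so at least two distinct transpositions occur and the monodromy image is non-cyclic --- the covering is therefore automatically \emph{irregular} (degree-$3$ regular covers would require cyclic $C_3$ monodromy). To replace $L^1$ by a knot I would merge its components by band moves along the branch locus chosen to respect the monodromy labeling, using the standard fact that such monodromy-compatible bands can be inserted without altering the covering space $M$; iterating reduces $L^1$ to a single knot $K^1$. The hardest step, and the one I expect to absorb most of the work, is the liftability of the Heegaard gluing $\phi$: controlling precisely which mapping classes of $\Sigma_g$ descend through the fixed $3$-fold cover, and guaranteeing (possibly only after stabilization) that the given splitting falls into the admissible class, is exactly where the delicate $3$-manifold and mapping-class-group input of Hilden and Montesinos is concentrated.
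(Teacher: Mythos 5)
You should first be aware that the paper contains no proof of this statement at all: it is quoted as a classical theorem, with the proof delegated to the cited works [Hil] and [Mon]. So the only meaningful comparison is against those original arguments, and on that score your proposal is essentially a faithful reconstruction of Hilden's proof: Heegaard splitting $M=H_g\cup_{\phi}H_g'$; the genus-$g$ handlebody as a $3$-fold simple cover of $B^3$ branched over $g+2$ trivial arcs (your Riemann--Hurwitz count of $2g+4$ boundary branch points is correct); liftability of the gluing class through the induced cover $\Sigma_g\rightarrow S^2$; assembly over $S^3=B^3\cup_{\bar\phi}B^3$; and a final modification of the branch link into a knot. Your irregularity argument is also exactly right: connectedness forces the transposition monodromies to generate a transitive subgroup of $\Sym_3$, hence all of $\Sym_3$, whose point stabilizers are nontrivial, so the cover cannot be regular. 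Montesinos' proof is an independent variant in the same spirit, so nothing in your outline is off the historical path.

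Two caveats about where the actual content sits. First, on the liftability step (which you correctly flag as the heart of the matter): for the $3$-fold simple cover $\Sigma_g\rightarrow S^2$ the sharp statement, and the one Hilden proves, is that \emph{every} mapping class of $\Sigma_g$ is isotopic to a fiber-preserving homeomorphism -- since liftable classes form a subgroup, it suffices to verify this on Lickorish's twist generators by isotoping the twist curves into position compatible with the covering. In particular no stabilization of the Heegaard splitting is needed; your hedge is harmless but weaker than what is true, and invoking a general ``Birman--Hilden theory'' here hides that the degree-$3$ simple case behaves better than the hyperelliptic case (where liftable classes form a proper subgroup for $g\ge 3$). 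Second, your link-to-knot step is understated: it is \emph{not} a standard fact that any monodromy-compatible band can be inserted without changing the covering manifold -- an arbitrary such band changes the cover by a surgery upstairs. The classical argument uses carefully chosen local bands/clasps, placed between strands whose transpositions are distinct (so that the $3$-fold simple cover of the local ball is a ball both before and after the modification, with the same boundary identification); this requires its own lemma and is where a nontrivial share of [Hil] and [Mon] is spent. With those two points made precise, your outline is a correct proof skeleton.
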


\begin{sstheorem}
{\bf [universal link].} {\rm (Thurston [Thu], 1982.)}
 There exists a ($6$-component) link $L^1$ in $S^3$
  such that any closed, connected, orientable $3$-manifold
   is realizable as a branched covering of $S^3$ that is branched
   only over $L^1$.
\end{sstheorem}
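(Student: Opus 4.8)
The plan is to deduce the universal-link theorem from the preceding theorem of Hilden and Montesinos by the standard device of composing branched coverings, thereby trading the $M$-dependent branch knot for a single fixed link. By the Hilden--Montesinos theorem above, every closed connected orientable $3$-manifold $M$ is realizable as a $3$-fold irregular branched covering $q:M\rightarrow S^3$ whose branch locus is a knot $K=K_M\subset S^3$. The only defect of this presentation is that $K_M$ varies with $M$; the entire theorem therefore reduces to showing that all such knots can be absorbed into the preimage of one fixed link.

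Concretely, the mechanism is the following composition principle. Suppose $p:S^3\rightarrow S^3$ is a branched covering whose branch locus is a fixed link $L$, and suppose the branch knot $K=K_M$ of $q$ can be isotoped so that $K\subseteq p^{-1}(L)$. Since the branch locus of a composite of branched coverings is contained in the union of the lower branch locus with the image of the upper one, the composite $p\circ q:M\rightarrow S^3$ has branch locus inside $L\cup p(K)$; but $K\subseteq p^{-1}(L)$ gives $p(K)\subseteq L$, so $p\circ q$ is branched only over $L$. Thus it suffices to establish the key lemma: there is a fixed link $L\subset S^3$ (six components will turn out to suffice) such that for every knot $K\subset S^3$ there exists a branched covering $p:S^3\rightarrow S^3$ branched over $L$ with $K\subseteq p^{-1}(L)$.

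To prove the key lemma I would first put $K$ into a standard combinatorial form --- a closed braid (by Alexander's classical braid theorem) or equivalently a bridge/plat presentation --- and then design $L$ so that its branched covers $p:S^3\rightarrow S^3$ realize each elementary building block (a crossing, or a braid generator) of this presentation inside the preimage $p^{-1}(L)$. One chooses $L$ to be a highly symmetric link whose complement carries enough covering representations $\rho:\pi_1(S^3\setminus L)\rightarrow \Sigma_d$ to encode an arbitrary braiding pattern; the six components reflect the number of local tangles needed to assemble any braid word while keeping the total space of $p$ equal to $S^3$. The argument is then local-to-global: the local branched-covering moves, matched along their boundaries, glue to a single covering $p$ carrying the prescribed $K$ in $p^{-1}(L)$.

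The hard part will be the key lemma itself --- both the explicit construction of the universal link $L$ and the verification that \emph{every} knot lifts into $p^{-1}(L)$ for a suitable $p$. The two delicate points are (i) guaranteeing that the covering $p$ has total space exactly $S^3$ (and not some other $3$-manifold) while still forcing $K$ to appear over $L$, which sharply constrains the admissible monodromy representations, and (ii) passing from the purely local realization of braid generators to a globally consistent branched covering. Everything else --- the reduction to branch knots and the composition bookkeeping --- is formal once these combinatorial and covering-theoretic facts are in hand.
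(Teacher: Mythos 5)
The paper contains no proof of this statement: it is quoted as a classical theorem with the bare citation [Thu], so your argument can only be judged on its own terms. On those terms, the formal part of your proposal is correct and is indeed the standard device in this corner of the literature. Hilden--Montesinos (Theorem~2.4.2.2 of the paper) gives a degree-$3$ branched covering $q:M\rightarrow S^3$ branched over a knot $K_M$, and your composition bookkeeping is right: if $p:S^3\rightarrow S^3$ is branched over $L$ and $K_M$ is isotoped into $p^{-1}(L)$, then $p\circ q$ is a branched covering of $S^3$ with branch locus contained in $L\cup p(K_M)\subseteq L$. This is essentially how Hilden, Lozano and Montesinos later proved universality of specific links (Whitehead link, Borromean rings, a universal knot), and in spirit how Thurston's preprint proceeds.

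The genuine gap is that your ``key lemma'' is not a lemma: it is the theorem in disguise. The assertion that one fixed link $L$ admits, for \emph{every} knot $K\subset S^3$, a branched covering $p:S^3\rightarrow S^3$ branched over $L$ with $K\subseteq p^{-1}(L)$ carries the entire content of universality, and your sketch of it contains no mathematics that could be checked: no construction of $L$, no monodromy representation $\pi_1(S^3\setminus L)\rightarrow \Sigma_d$, and no argument for either of the two points you yourself flag as delicate. Both are serious obstacles, not technicalities. A branched cover of $S^3$ over a link is generically \emph{not} $S^3$, so the admissible monodromies are severely constrained and must be exhibited explicitly (the known proofs use very particular coverings, e.g.\ ones where the total space can be identified via orbifold structures on $(S^3,L)$ or via explicit handle/bridge decompositions); and ``gluing local branched-covering moves'' is precisely the kind of step that fails unless the local models are made globally consistent, which requires carrying out the standard-position argument (closed braid in an unknotted solid torus, lifting that solid torus into the cover), not merely naming it. The remark that six components ``reflect the number of local tangles needed'' is numerology with no content --- indeed the count is not forced, since universal links with fewer components (and even universal knots) exist. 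As it stands, your proposal establishes only that Thurston's theorem follows from Hilden--Montesinos together with an unproven statement of equivalent strength.
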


\begin{sstheorem}
{\bf [universal knot].} {\rm (Hilden-Lozano-Montesinos [H-L-M], 1985.)}
 There exists a knot $K^1$ in $S^3$
  such that any closed, connected, orientable $3$-manifold
   is realizable as a branched covering of $S^3$ that is branched
   only over $K^1$.
\end{sstheorem}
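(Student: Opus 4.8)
The plan is to bootstrap the desired universal knot from Thurston's universal link (the preceding \emph{universal link} theorem) by composing branched coverings. The key elementary observation is that branched coverings compose well: if $p\colon N\to S^3$ is a branched covering whose branch set is contained in a link $L\subset S^3$, and $q\colon S^3\to S^3$ is a branched covering whose branch set is a knot $K$ with $L\subset q^{-1}(K)$, then the composite $q\circ p\colon N\to S^3$ is again a branched covering and its branch set is contained in $K$. Indeed, over $S^3\setminus K$ the preimage $q^{-1}(S^3\setminus K)$ avoids $L$, so $p$ is an honest covering there and hence so is $q\circ p$; over points of $K$ one composes the local model $z\mapsto z^a$ of $q$ with the (branched or unbranched) local behaviour of $p$ to see that the composite is still locally a branched cover. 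Thus every point where $q\circ p$ fails to be a local homeomorphism lies over $K$, as needed.

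Granting this, the whole theorem reduces to a single geometric input: \textbf{produce one knot $K\subset S^3$ together with a branched covering $q\colon S^3\to S^3$ branched over $K$ whose total preimage $q^{-1}(K)$ contains a copy of some universal link $L$.} Once such a triple $(K,q,L)$ is in hand, universality of $K$ follows formally. Given any closed connected orientable $3$-manifold $M$, Thurston's theorem supplies a branched covering $p\colon M\to S^3$ branched only over $L$; since $q(L)\subset K$, the composite $q\circ p\colon M\to S^3$ is then a branched covering branched only over $K$ by the previous paragraph. This is exactly the assertion to be proved, and it holds uniformly in $M$ because $K$ and $q$ are fixed once and for all.

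To carry out the construction I would fix an explicit universal link $L$ — the Borromean rings or the Whitehead link, both universal by Thurston's theorem — and seek an \emph{irregular} branched covering $q\colon S^3\to S^3$, described by a monodromy representation $\rho\colon\pi_1(S^3\setminus K)\to S_d$ sending meridians of a candidate branch knot $K$ to transpositions, such that the preimage $q^{-1}(K)$ of the branch curve contains a sublink isotopic to $L$. The point of allowing $q$ to be irregular is flexibility: the quotient of a cyclic symmetry of $S^3$ is rigid (by the Smith conjecture its branch curve is unknotted), whereas an irregular cover over a knot can have a genuinely knotted, multi-component branch-curve preimage. Concretely one works with surgery/tangle diagrams, building $K$ and $\rho$ so that upstairs the sheets are permuted in a pattern that reproduces the clasps and linkings of $L$, while simultaneously arranging that the total space is $S^3$ — verifiable, for instance, by exhibiting a Heegaard splitting of the cover of genus at most $2$, or by recognizing $q$ within the well-understood branched coverings of $2$-bridge or Montesinos knots.

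The main obstacle is precisely this last construction; the composition lemma and the reduction above are formal. The difficulty is the simultaneous control of three conditions on the single map $q$: that its branch set downstairs be a genuine \emph{knot} (one component, not a link), that its total space be \emph{$S^3$ on the nose} rather than some other $3$-manifold — a genuinely restrictive constraint on branched covers over knots — and that $q^{-1}(K)$ actually \emph{contain} a universal link. Balancing these forces the delicate diagrammatic analysis of the monodromy $\rho$ that constitutes the technical heart of Hilden-Lozano-Montesinos, and it is here that one must invoke the refined structure theory of branched coverings of $S^3$ (the Hilden-Montesinos \emph{$3$-fold} normal form of the preceding theorem being the prototype) rather than merely their existence.
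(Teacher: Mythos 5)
The paper itself contains no proof of this statement: it is recalled as a classical theorem and attributed to Hilden--Lozano--Montesinos [H-L-M], so there is no internal argument to compare yours against. Judged on its own terms, your reduction is sound and is indeed the strategy behind the actual proof: your composition lemma is correct (the branch set of $q\circ p$ is contained in $K\cup q(L)$, hence in $K$ once $L\subset q^{-1}(K)$), and combined with Thurston's universal-link theorem it reduces everything to producing a single branched covering $q\colon S^3\to S^3$, branched over a knot $K$, whose branch-set preimage $q^{-1}(K)$ contains a universal link $L$. Your remark that $q$ must be irregular is also correct: by the Smith conjecture a cyclic self-covering of $S^3$ branched over a knot has unknotted branch curve upstairs and downstairs, and the unknot is not universal (its branched cyclic covers are lens spaces), so regular cyclic covers can never supply the triple $(K,q,L)$.

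The genuine gap is that you never produce $(K,q,L)$, and its existence is the entire content of the theorem --- this is exactly the question Thurston left open after constructing universal links, so it cannot be treated as a routine afterthought. Your closing sketch names the three constraints (branch set a knot, total space $S^3$, universal link inside $q^{-1}(K)$) but offers no mechanism that satisfies them simultaneously, and nothing in the outline excludes the a priori possibility that every self-covering of $S^3$ branched over a knot has only non-universal links in its branch preimage. What Hilden, Lozano and Montesinos actually supply at this point is an explicit such covering: they first upgrade Thurston's $6$-component link to a small concrete universal link (e.g.\ the Whitehead link or the Borromean rings), and then exhibit, by a concrete construction, a branched covering $S^3\to S^3$ over a specific knot whose branch preimage contains that link. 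Without carrying out (or at least correctly quoting) that construction, your argument establishes only the conditional statement ``if such a triple $(K,q,L)$ exists, then $K$ is a universal knot,'' not the theorem itself.
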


What the fundamental theorems of
 Alexander-Hilden-Lozano-Montesinos-Thurston
 mean to D3-branes along the line of the Polchinski-Grothendieck Ansatz
 is, in particular, that:

\begin{sstheorem}
{\bf [$S^{3,A\!z}$ and fundamental D3-brane].}
 Let
  $Y$ be a Calabi-Yau $3$-fold and
  $L^3$ be a finite union of compact smooth special Lagrangian
  submanifolds in $Y$, each of which is generically an embedding.
 Then there exists a morphism $\varphi:S^{3,A\!z}\rightarrow Y$
  from an Azumaya $3$-sphere such that the image $\varphi(S^{3,A\!z})$
  of $\varphi$ is exactly $L^3$.
 Furthermore, one can require that
  the rank of the fundamental module ${\cal E}$ of $S^{3,A\!z}$
  be $\,3\cdot($number of irreducible components of $L^3)\,$.
 Or one may require that
  $\pi_{\varphi}:S^3_{\varphi}\rightarrow S^3$
  be a branched-covering map
  over a universal knot or a universal link in $S^3$.
\end{sstheorem}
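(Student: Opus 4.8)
The plan is to reduce the statement to the \emph{simple A-brane} construction of Example~2.1.16, whose input is a datum $((c,f),(\hat{\cal E},\hat{\nabla}))$, and to supply the branched-covering part $c$ from the classical theorems of Alexander, Hilden--Montesinos, Thurston, and Hilden--Lozano--Montesinos quoted above. First I would fix the input: write $L^3=\coprod_{i=1}^k L^3_i$ as the abstract disjoint union of its $k$ irreducible components and set $\hat{X}:=L^3$, with $f:\hat{X}\rightarrow Y$ the tautological map. By hypothesis each $L^3_i$ is compact, smooth, and generically embedded, so $f$ restricts to a special Lagrangian immersion on an open dense subset and $f(\hat{X})=L^3$ exactly. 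For the Chan--Paton datum I would take $\hat{\cal E}:={\cal O}_{\hat{X},{\Bbb C}}$ of rank $\hat{r}=1$ together with the trivial flat $U(1)$-connection $\hat{\nabla}=d$. What remains is to manufacture an orientation-preserving branched covering $c:\hat{X}\rightarrow S^3$.

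Here the classical input enters. Each $L^3_i$ is a closed connected smooth $3$-manifold, and it is orientable since the calibrating form $\Real(e^{-i\theta}\Omega)$ restricts to a nowhere-vanishing $3$-form on it; I orient $L^3_i$ by that form. Applying the Hilden--Montesinos theorem to $L^3_i$ produces a degree-$3$ branched covering $c_i:L^3_i\rightarrow S^3$ over a knot $K_i\subset S^3$. Because $S^3$ carries an orientation-reversing self-diffeomorphism, either orientation of a closed $3$-manifold is realizable as an \emph{orientation-preserving} branched cover, so I may take each $c_i$ orientation-preserving for the calibration orientation. Setting $c:=\coprod_{i=1}^k c_i:\hat{X}\rightarrow S^3$ then gives an orientation-preserving branched covering of total degree $3k$, branched over the codimension-$2$ link $\bigcup_{i=1}^k K_i$.

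With $((c,f),(\hat{\cal E},\hat{\nabla}))$ assembled I would run Example~2.1.16 unchanged. Because $f$ is generically an embedding and the components are distinct, the combined map $(c,f):\hat{X}\rightarrow S^3\times Y$ is generically injective, so the surrogate $X_{\varphi}=(c,f)(\hat{X})$ is well defined, $\pi_{\varphi}=\pr_1|_{X_{\varphi}}$ restricts to a finite covering map over an open dense locus of $S^3$, and $f_{\varphi}=\pr_2|_{X_{\varphi}}$ has image exactly $L^3$; this yields $\varphi:(S^{3,A\!z},{\cal E})\rightarrow Y$ with ${\cal E}:=c_{\ast}\hat{\cal E}$ of rank $r=\hat{r}\cdot 3k=3k=3\cdot(\text{number of irreducible components of }L^3)$. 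Over the complement of the branch locus $c$ is an honest finite covering, so Lemma/Definition~2.1.9 descends $\hat{\nabla}=d$ to a flat connection $\nabla$ on $_{{\cal A}_{\varphi}}{\cal E}_{\varphi}$ with singularities precisely along the codimension-$2$ branch locus, and Example~2.1.11 identifies its descended holonomy with a subgroup of $U(3k)$; Properties (1)--(5) of Definition~2.1.14 then follow directly. For the universal variants I would instead invoke the Hilden--Lozano--Montesinos universal-knot theorem (resp.\ Thurston's universal-link theorem), taking every $c_i$ branched over one fixed universal knot $K^1$ (resp.\ link $L^1$); then $\pi_{\varphi}$ is branched only over $K^1$ (resp.\ $L^1$), at the cost that $\rank{\cal E}=\sum_i\deg c_i$ need no longer equal $3k$.

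The main obstacle I anticipate is not any hard estimate but the orientation-and-genericity bookkeeping at the interface of the two structures. One must confirm that the branched covers delivered by the classical theorems can be taken orientation-preserving against the \emph{fixed} calibration orientations of the $L^3_i$ together with a single fixed orientation of the target $S^3$, and that $(c,f)$ genuinely has the generic covering-and-immersion property demanded by Example~2.1.16, so that the surrogate exists and the descended $\nabla$ is the asserted unitary minimally flat connection-with-singularity. Once these purely topological compatibilities are secured, every analytic ingredient is already furnished by Lemma/Definition~2.1.9 and Example~2.1.11, and the theorem is a formal consequence of assembling the classical branched-covering results with the simple-A-brane machinery.
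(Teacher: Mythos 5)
Your proposal is correct and is essentially the paper's own (implicit) argument: the paper states this theorem as an immediate consequence of Theorems~2.4.2.1--2.4.2.4, with the intended construction being exactly the simple A-brane machinery of Example~2.1.16 fed by a branched covering $c=\coprod_i c_i:\hat{X}\rightarrow S^3$ supplied componentwise by Hilden--Montesinos (for the rank-$3k$ clause) or by the universal knot/link theorems (for the alternative clause). Your additional bookkeeping --- orientability of each component via the calibration, orientation-preservation via an orientation-reversing self-diffeomorphism of $S^3$, and the descent of the trivial flat $U(1)$-connection via Lemma/Definition~2.1.9 and Example~2.1.11 --- is exactly the detail the paper leaves implicit.
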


\noindent
This specifies morphisms from $(S^{3,A\!z},{\cal E})$
 as most fundamental D$3$-branes from the viewpoint of Azumaya geometry.

\bigskip

\begin{flushleft}
{\bf A genus-like expansion of the path-integral of D3-branes.}
\end{flushleft}
In understanding the path-integral for higher-dimensional objects,
 the usual sum-over-Feynman-diagrams in the quantum field theory of
 (point-like) particles is replaced by
  \begin{itemize}
   \item[(1)]
    a sum over the space of all the topologies of the
     (Wicked-rotated) D3-brane world-volume
     of the extended objects  and

   \item[(2)]
    an integration over the space of maps from each topology.
  \end{itemize}
Step~(1) has already imposed a very challenging difficulty
 to understanding the path-integral even in a physicist's way.
In general, one needs to specify these topologies by hand.
See, e.g., [B-P] for a recent study.

Recall
 how closed strings interact to give rise to Riemann surfaces
  as (Wick-rotated) closed-string world-sheets
  (cf.\ [G-S-W: Sec.~1.4, Sec.~3.3]) and
 the path-integral of closed strings
  (cf.\ [Po: vol.~I, Sec.~3.1, Sec.~3.2]).
Consider the replacements
  \begin{itemize}
   \item[$\cdot$]
    \parbox{9em}{string}
     $\longrightarrow$\hspace{2em} \parbox{16em}{D3-brane,}

   \item[$\cdot$]
    \parbox{9em}{ordinary maps\\ into space-time}
     $\longrightarrow$\hspace{2em}
     \parbox{16em}{morphisms from Azumaya manifolds\\ into space-time,}
  \end{itemize}
 with the same generic interaction assumption in string theory:
 \begin{itemize}
  \item[$\cdot$]
  $[\,${\sl generic interaction assumption}$\,]$.\hspace{1em}
  {\it Interactions of branes in a space-time happen one at a time
        at a point
       with respect to some local equal-time slicing of space-time.}
 \end{itemize}
Then,
Theorem~2.4.2.5 suggests that
 % Theorem [$S^{3,A\!z}$ and fundamental D3-brane]
 $$
  Z_{D3}(Y)\;
  =\; \sum_{\left\{\!\!\begin{array}{l}
               \mbox{\scriptsize\it fundamental}\\[-.8ex]
               \mbox{\scriptsize\it $4$-manifolds $X$}
                       \end{array}\!\!\right\}}\,
      \int_{\!\!\!\!
            \begin{array}{l}
             \\[-.8ex]
             \left\{\!\!\begin{array}{l}
                \mbox{\scriptsize\it morphisms}\\[-.8ex]
                \mbox{\scriptsize
                      $\varphi:(X^{\!A\!z},{\cal E})\rightarrow Y$}
                        \end{array}\!\!\right\}
            \end{array}}\!\!\!\!{\cal D}\varphi\:
        \int_{\{\cdots\}} {\cal D}\,(\mbox{\it other fields $\cdots$})\,
         e^{-S(\varphi,\,\cdots\,)}\;,
 $$
where
 \begin{itemize}
  \item[$\cdot$]
   $Z_{D3}(Y)$
    is the path-integral of D3-branes on a target-space(-time) $Y$,

  \item[$\cdot$]
   the set $\{$fundamental $4$-manifolds $X$$\}$
   consists of all homeomorphism classes of closed orientable
   $4$-manifolds $X$ that are obtained from (self-)connected sums
   of finite disjoint unions $\amalg^{\bullet}(S^3\times S^1)$
   of $S^3\times S^1$,

  \item[$\cdot$]
   the `{\it other fields $\cdots$}' here includes gauge fields
    on the branes, realized as connections-with-singularities
    on $(X_{\varphi},{\cal E}_{\varphi})$,

  \item[$\cdot$]
   $S(\varphi,\,\cdots\,)$
    is the action of the (Wicked-rotated) D3-brane theory.
 \end{itemize}
The $4$-manifolds $X$ involved here in the path-integral are now
 direct generalization of closed orientable $2$-manifolds
  in the case of strings
 in the sense that closed orientable $2$-manifolds
  can be obtained as direct sums
  of finite disjoint unions of $S^1\times S^1$.
This motivates a conjecture:

\begin{ssconjecture}
{\bf [genus-like expansion of path-integral for D3-branes].}
 There exists a built-in natural genus-like expansion
  for the path-integral $Z_{D3}(Y)$ of a perturbative D3-brane theory.
\end{ssconjecture}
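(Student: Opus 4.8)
The plan is to realize the conjectured expansion as the exact D3-brane analogue of the closed-string genus expansion, using Theorem~2.4.2.5 to pin down the admissible world-volume topologies and then reading off the genus-like grading from their Euler characteristics. First I would attach to each fundamental $4$-manifold $X=\#^{g}(S^3\times S^1)$ occurring in the sum defining $Z_{D3}(Y)$ the invariant $g(X):=b_1(X)$, the number of $S^3\times S^1$ summands. A short Mayer--Vietoris computation gives $\chi(X)=2-2\,g(X)$, exactly the relation $\chi=2-2g$ satisfied by a closed orientable surface $\Sigma_g=\#^{g}(S^1\times S^1)$. This identifies $g(X)$ as the correct ``genus-like'' variable: a single D3-brane propagating once around Euclidean time has world-volume $S^3\times S^1$, the precise analogue of the torus $T^2$ for the closed string, and the generic-interaction assumption builds every higher world-volume from these by (self-)connected sum, just as $\Sigma_g$ is assembled from $g$ copies of $T^2$. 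The role of Theorem~2.4.2.5 is essential here: it guarantees that the spatial slices of every admissible world-volume are modelled on $S^{3,A\!z}$ mapping onto the target special Lagrangian configuration, so the sum really ranges over the stated family and no topology is missed.

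Second, I would isolate in the D3-brane action $S(\varphi,\,\cdots\,)$ a topological coupling whose value on $X$ enters only through $\chi(X)$, so that collecting all $X$ of fixed $g(X)$ produces an overall weight $\lambda^{-\chi(X)}=\lambda^{\,2g-2}$ for a coupling $\lambda$ playing the role of the string coupling $g_s$. Concretely I would seek a dilaton-like term coupling to the Gauss--Bonnet/Euler density of $X$ (the direct lift of the world-sheet term that manufactures $g_s^{\,2g-2}$); granting such a term, the path integral reorganizes as
\[
 Z_{D3}(Y)\;=\;\sum_{g\ge 0}\lambda^{\,2g-2}\,Z_g(Y)\,,
 \qquad
 Z_g(Y)\;=\;
 \sum_{X=\#^{g}(S^3\times S^1)}\;
 \int_{\{\varphi\}}{\cal D}\varphi
 \int_{\{\cdots\}}{\cal D}(\text{other fields})\,
 e^{-S(\varphi,\,\cdots\,)}\,,
\]
which is the asserted genus-like expansion; for possibly disconnected $X$ the grading is still $-\chi(X)$, additive over components.

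Third, I would argue that each $Z_g(Y)$ is well-posed as a formal object by stratifying the integration over morphisms $\varphi:(X^{A\!z},{\cal E})\to Y$ according to the surrogate $X_\varphi$ and its Morse-type degenerations, using the Morse-family/cobordism machinery of Sec.~1 to glue contributions across the connected-sum loci --- the analogue of plumbing and sewing Riemann surfaces. The interaction-one-at-a-time hypothesis should then render the fixed-$g$ sum a locally finite assembly of lower-genus pieces, producing the recursive, sewing-compatible structure expected of a genus expansion.

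The hard part is two-fold, and is precisely what keeps this a conjecture. Analytically, $Z_{D3}(Y)$ has no rigorous definition: the measures ${\cal D}\varphi$ and ${\cal D}(\text{other fields})$ and the convergence of the sum over $g$ are all heuristic, so any genuine argument must either live in the formal-power-series ring in $\lambda$ or await a rigorous construction of the D3-brane partition function. Structurally, the decisive step is establishing the existence (and uniqueness up to normalization) of the topological coupling that yields exactly $\lambda^{-\chi(X)}$; unlike the string case, where the dilaton--Euler coupling is forced by conformal invariance of the $d=2$ world-sheet theory, there is no a priori conformal symmetry on the $4$-dimensional D3-brane world-volume compelling such a term, so locating it (or its correct replacement) in the regime of Wilson's theory-space fixed at the start of Sec.~2 is the real obstacle.
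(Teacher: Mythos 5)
First, a point you could not have known but that frames everything: the paper does \emph{not} prove this statement --- it appears as Conjecture~2.4.2.6 and is left open. What the paper offers in its support is only the motivating analogy: by Theorem~2.4.2.5 (resting on the Alexander--Hilden--Montesinos--Thurston branched-covering theorems), morphisms from Azumaya $3$-spheres $(S^{3,A\!z},{\cal E})$ are the seed D3-branes, so under the generic-interaction assumption the sum over world-volume topologies in $Z_{D3}(Y)$ ranges over closed orientable $4$-manifolds obtained from (self-)connected sums of finite disjoint unions of $S^3\times S^1$, which the paper calls ``direct generalizations'' of the closed orientable surfaces obtained from connected sums of copies of $S^1\times S^1$. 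Your proposal follows exactly this route and sharpens it in a genuinely useful way: the identification $g(X)=b_1(X)$, the computation $\chi(\#^{g}(S^3\times S^1))=2-2g$ (correct, since $\chi(S^3\times S^1)=0$ and each connected-sum operation subtracts $\chi(S^4)=2$), and the proposed reorganization of the sum by the weight $\lambda^{-\chi(X)}$ are all consistent with, and more explicit than, anything stated in the paper. Your third step --- using the Morse-family/cobordism machinery of Sec.~1 as a sewing mechanism across connected-sum loci --- is likewise a plausible refinement of the paper's picture rather than a departure from it.

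But your proposal is not a proof, and you say so yourself; the two obstacles you name in the final paragraph are genuine and are precisely why the statement is a conjecture rather than a theorem. (i) $Z_{D3}(Y)$, the measures ${\cal D}\varphi$ and ${\cal D}(\mbox{other fields})$, and the action $S(\varphi,\,\cdots)$ have no rigorous definition, so there is as yet no object whose expansion could be established even as an identity of formal power series. (ii) The grading by $\lambda^{-\chi(X)}$ requires a coupling to the Euler density of the $4$-dimensional world-volume; in the string case the dilaton--Euler coupling is forced by the structure of the $d=2$ world-sheet theory, but nothing in the $d=4$ regime fixed at the start of Sec.~2 compels such a term, and your step two explicitly assumes it (``granting such a term'') rather than deriving it. So the honest assessment is that your proposal is a correct and somewhat more detailed articulation of the paper's own heuristic --- same decomposition into fundamental $4$-manifolds, same appeal to Theorem~2.4.2.5, plus sound topological bookkeeping --- but it does not close the conjecture; it remains open after your argument exactly where it stood before it.
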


\bigskip

\begin{flushleft}
{\bf D2-branes and M2-branes: Hurwitz meeting Polchinski-Grothendieck.}
\end{flushleft}
The same reasoning suggests a similar conjecture
 for D2-branes and M2-branes respectively.\footnote{We
                    thank Pei-Ming Ho for discussions and
                    for drawing our attention to recent new developments
                    in the study of multiple M$2$-branes.}
Denote by $S^{2,A\!z}$ for some $(S^{2,A\!z},{\cal E})$
 with ${\cal E}$ unspecified.
Recall that any Riemann surface branched-covers $S^2$.

\begin{sstheorem}
{\bf [$S^{2,A\!z}$ and fundamental D2/M2-brane].}
 Let
  $Y$ be a Calabi-Yau $3$-fold or Joyce/$\,G_2$ $7$-manifold and
  $C$ be a finite union of compact complex smooth curves immersed in $Y$,
   each of which is generically an embedding.
 Then there exists a morphism $\varphi:S^{2,A\!z}\rightarrow Y$
  from an Azumaya $2$-sphere such that the image $\varphi(S^{2,A\!z})$
  of $\varphi$ is exactly $C$.
\end{sstheorem}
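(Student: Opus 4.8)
The plan is to mirror the construction behind the $D3$-brane statement, Theorem~2.4.2.5, replacing the deep Alexander--Hilden--Lozano--Montesinos--Thurston input for $3$-manifolds by the elementary classical fact recalled just above, that every compact Riemann surface is a branched covering of $S^2=\CP^1$ (realized, in the complex-analytic case, by any nonconstant meromorphic function, whose existence is guaranteed by Riemann--Roch). First I would decompose $C=\bigcup_{i\in I}C_i$ into its irreducible components and pass to the normalization $\tilde{C}_i$ of each $C_i$. Since each $C_i$ is a compact complex smooth curve that is generically an embedding, each $\tilde{C}_i$ is a smooth compact Riemann surface carrying a tautological map $f_i:\tilde{C}_i\rightarrow Y$ that is generically an injective immersion with image $C_i$. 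Setting $\hat{X}:=\coprod_{i\in I}\tilde{C}_i$ and $f:=\coprod_i f_i:\hat{X}\rightarrow Y$, the image $f(\hat{X})$ is exactly $C$, and $f$ is generically an immersion.

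Next I would manufacture the auxiliary covering of $S^2$. For each $i$ I choose a branched covering $c_i:\tilde{C}_i\rightarrow S^2$ --- a meromorphic function on $\tilde{C}_i$ in the Calabi--Yau case, or any topological branched cover of the orientable surface $\tilde{C}_i$ onto $S^2$ in the $G_2$ case --- and set $c:=\coprod_i c_i:\hat{X}\rightarrow S^2$, a branched covering of finite degree $\hat{d}=\sum_i\deg c_i$. The pair then assembles, exactly as in the simple A-brane construction of Example~2.1.16, into a map $(c,f):\hat{X}\rightarrow S^2\times Y$: the surrogate is $X_{\varphi}:=(c,f)(\hat{X})\subset S^2\times Y$ with $\pi_{\varphi}=\pr_1|_{X_{\varphi}}$ and $f_{\varphi}=\pr_2|_{X_{\varphi}}$, the fundamental module is ${\cal E}:=c_{\ast}{\cal O}_{\hat{X},{\Bbb C}}$ (locally free over ${\cal O}_{S^2,{\Bbb C}}$ of rank $\hat{d}$), and the Azumaya sphere is $(S^{2,A\!z},{\cal E})=(S^2,\Endsheaf_{{\cal O}_{S^2,{\Bbb C}}}({\cal E}),{\cal E})$. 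Because ${\cal E}$ is the pushforward of the structure sheaf along the generically covering map $c$ while $f_{\varphi}$ is generically an immersion, the data $(X_{\varphi},{\cal A}_{\varphi},{\cal E}_{\varphi})$ meets the support, finite-covering, and pushforward conditions of Definition~1.2.1 and therefore defines a morphism $\varphi:S^{2,A\!z}\rightarrow Y$; by construction $\varphi(S^{2,A\!z})=f_{\varphi}(X_{\varphi})=f(\hat{X})=C$, as required.

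The one genuine point of care --- and the step I expect to be the main obstacle --- is to guarantee that $(c,f)$ has the clean generic covering-plus-immersion behavior demanded of a surrogate, in particular that $\pi_{\varphi}$ restricts to an honest finite covering and $f_{\varphi}$ to an immersion on an open dense $V_{\varphi}\subset X_{\varphi}$, even where \emph{distinct} normalized components $\tilde{C}_i,\tilde{C}_j$ meet in $Y$ at the (finitely many) intersection or self-intersection points of $C$. If two such sheets happened to carry the same $S^2$-coordinate there, $(c,f)$ would fail to be generically injective; I would eliminate this by a small perturbation of the auxiliary maps, composing each $c_i$ with a suitable diffeomorphism of $S^2$ --- in the spirit of the Hopf-flow offset $\chi_{i\delta}$ used to keep $(\pi,f)$ an embedding in Example~2.3.8 --- so that the coincidence loci are spread apart in the $S^2$-factor. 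Where image sheets genuinely coincide one simply allows ${\cal A}_{\varphi}$ to carry the resulting nilpotent structure, which is permitted by Definition~1.2.1 and leaves the image unchanged. Finally, the rank and branch-locus refinements available in the $D3$-case have faithful $2$-dimensional shadows: taking all $c_i$ of a fixed degree pins down $\rank{\cal E}$, and since a compact orientable surface of any genus branch-covers $S^2$ with branch locus confined to a prescribed finite point set (three points suffice for large enough degree, in the spirit of Hurwitz's enumeration of covers of the sphere), one may equally route the branching through a fixed finite subset of $S^2$ --- the surface analogue of branching over a universal knot in $S^3$.
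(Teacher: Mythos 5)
Your proposal is correct and takes essentially the paper's own (implicit) route: the paper asserts this theorem by ``the same reasoning'' as the D3-brane case, namely the classical Hurwitz-type fact that every compact Riemann surface branched-covers $S^2$, fed into the simple-A-brane construction of Example~2.1.16 via the pair $(c,f):\hat{X}\rightarrow S^2\times Y$ with surrogate $(c,f)(\hat{X})$ and fundamental module $c_{\ast}{\cal O}_{\hat{X},{\Bbb C}}$ --- exactly your normalization-plus-branched-cover construction. Note only that your ``main obstacle'' is in fact harmless: coincidences of $(c,f)$ over the finitely many intersection/self-intersection points of $C$ merely create singular points of the surrogate $X_{\varphi}$, which Definition~1.2.1 explicitly permits, so the perturbation step (though valid) is not needed for generic injectivity.
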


$3$-manifolds that are obtained from a (self-)connected sum
 of a finite disjoint union $\amalg^{\bullet}(S^2\times S^1)$
 of $S^2\times S^1$ now play a special role.

\begin{ssconjecture}
{\bf [genus-like expansion of path-integral for D2/M2-branes].}
 There exists a built-in natural genus-like expansion
  for the path-integral $Z_{D2}(Y)$ of a perturbative D2-brane theory.
 Similarly, for the path-integral $Z_{M2}(Y)$
  of a perturbative M2-brane theory.
\end{ssconjecture}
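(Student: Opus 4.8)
The plan is to reproduce, step by step, the derivation of the genus expansion of the closed-string path-integral (cf.\ [G-S-W: Sec.~1.4, Sec.~3.3], [Po: vol.~I, Sec.~3.1, Sec.~3.2]) under the dictionary already set up in this subsubsection: replace the spatial string $S^1$ by the spatial brane $S^{2,A\!z}$, ordinary maps into space-time by morphisms from Azumaya manifolds, and closed oriented world-sheets by the closed oriented world-volume $3$-manifolds $X$ obtained as (self-)connected sums of $\amalg^{\bullet}(S^2\times S^1)$. Concretely, I would first write the D2-brane partition function $Z_{D2}(Y)$ in the same shape as the displayed D3-brane formula --- a sum over such $3$-manifolds $X$ of an integral over morphisms $\varphi:(X^{A\!z},{\cal E})\rightarrow Y$ (and over the accompanying connection-with-singularity and the remaining world-volume fields) --- and then exhibit a canonical grading on that sum.

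The grading comes from topology. For $X=\#^{k}(S^2\times S^1)$ one has $b^1(X)=k$, and I would take $g(X):=b^1(X)$, the number of $S^2\times S^1$ summands, as the genus-like invariant, with the genus-$0$ world-volume being $S^3$ and the one-handle/one-loop object being $S^2\times S^1$. This is the exact $3$-dimensional counterpart of writing a closed oriented surface as a connected sum of $g$ copies of $S^1\times S^1$. Invoking the generic-interaction assumption stated above, every contributing world-volume is assembled from $S^3$ by a sequence of elementary connected-sum operations, each performed at a single point and each raising $g$ by one; hence the sum over topologies is filtered by $g$, and I would define the order-$g$ term of the expansion to be the total contribution of those $X$ with $b^1(X)=g$.

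The role of Theorem~2.4.2.7 (and, behind it, the Hurwitz/Riemann-existence fact that every closed Riemann surface branched-covers $S^2$) is to pin down the genus-$0$ seed sector: it guarantees that the image data at this order --- any finite union of compact, generically-embedded, immersed complex curves $C\subset Y$ --- is exhausted by morphisms from the single Azumaya $2$-sphere $S^{2,A\!z}$, so that $S^{2,A\!z}$ is the correct lowest-order building block, just as the sphere is for strings. Higher orders are then produced by gluing in the handles $S^2\times S^1$, and I would weight each such handle by one power of a coupling $g_s$ (the string coupling, in type IIA for D2-branes), so that the $X$-sum reorganises as $\sum_{g\ge 0} g_s^{\,c\,g}(\cdots)$ for a universal constant $c$, the sought genus-like expansion.

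The hard part will be twofold. The deepest obstruction is analytic: giving the measure ${\cal D}\varphi$ on the space of morphisms from Azumaya manifolds (together with the integral over the singular connections and the other fields) enough meaning that the $g$-filtration becomes a genuine asymptotic expansion rather than formal bookkeeping --- the same ill-definedness that plagues the string path-integral, now aggravated by the noncommutative structure sheaf ${\cal O}_X^{A\!z}$ and the permitted connection-with-singularity. The second obstruction is the uniformity and very existence of the expansion parameter: for D2-branes in type IIA one must check that each elementary interaction contributes a fixed power of $g_s$ and that $g(X)$ is additive under connected sum (which it is, since $b^1$ adds), whereas for M2-branes in M-theory there is \emph{no} dimensionless coupling, so the genus parameter must instead be located in a geometric datum --- e.g.\ a regularised world-volume, or the degree/rank data of the branched cover $S^2_\varphi\rightarrow S^2$ furnished by Hurwitz theory --- and shown to behave additively under the handle-attaching connected sums. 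For this reason I would settle the D2 statement first, where the string-coupling analogy is exact, and treat the M2 statement as contingent on identifying the correct expansion parameter on the $G_2$/associative side.
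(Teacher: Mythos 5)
The statement you were asked to prove is not a theorem in the paper: it is Conjecture~2.4.2.8, and the paper offers no proof of it at all. What the paper does supply is exactly the motivating heuristic that you have reconstructed --- the replacement dictionary (string $\rightarrow$ brane, ordinary maps $\rightarrow$ morphisms from Azumaya manifolds with a fundamental module), the generic-interaction assumption, the identification of the fundamental world-volume topologies as (self-)connected sums of $\amalg^{\bullet}(S^2\times S^1)$ in analogy with surfaces as connected sums of copies of $S^1\times S^1$, and Theorem~2.4.2.7 (resting on the Hurwitz/branched-covering fact that every Riemann surface branched-covers $S^2$) singling out $S^{2,A\!z}$ as the seed object. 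Your additional suggestions --- grading the topology sum by $g(X):=b^1(X)$, which is indeed additive under connected sum of closed oriented $3$-manifolds, and weighting handles by powers of $g_s$ --- are sensible refinements of that heuristic, and go slightly beyond what the paper writes down.

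But none of this constitutes a proof, and you candidly say so yourself: the two obstructions you flag in your final paragraph (a mathematically meaningful measure ${\cal D}\varphi$ on the space of morphisms from Azumaya manifolds together with the singular connections and other fields, and, for M2-branes, the absence of any dimensionless coupling in which to organize the expansion) are precisely the reasons the authors state this as a conjecture rather than a theorem. Until the path-integral is given a definition under which the $b^1$-filtration produces an actual (asymptotic) series --- rather than formal bookkeeping on a sum that is not known to exist --- no argument of this shape can close the statement. So the honest assessment is: your proposal faithfully reproduces, and modestly sharpens, the paper's motivation, but it proves nothing the paper does not already assert, and the conjecture remains open on both sides.
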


\bigskip

\section{Morse cobordisms of A-branes on Calabi-Yau 3-folds
          under a reverse split attractor flow
          at a wall of marginal stability:
         Denef-Joyce meeting Polchinski-Grothendieck.}

In this section, we study how A-branes,
 in the sense of Definition-Prototype~2.1.14,
               % Definition-Prototype [A-brane on Calabi-Yau space]
 deform under an attractor flow at the wall of marginal stability
 on a moduli space of complex structures on a Calabi-Yau $3$-fold.
We first set up notations and recap a result of Denef ([De3])
  in Sec.~3.1 and
 then use the results of Joyce ([Joy3]) to deal with the technical issue
 of smoothing a special Lagrangian submanifold
 with transverse self-intersections.
Together,
 this gives us a Morse cobordism of A-branes on Calabi-Yau 3-folds
 under a reverse split attractor flow at a wall of marginal stability.

\bigskip

\subsection{Evolution of {\it Im}$\,(e^{-i\alpha_{\Gamma}}\Omega)$
    along a $\Gamma$-attractor flow trajectory \`{a} la Denef.}

The following discussion follows works [De3] and [De1] of Denef,
 with some background from [Gr], [Str], [Ti], and [To]
  to set up notations and
 with a mild convention change to fit calibrated geometry ([Ha-L]).

\bigskip

\begin{flushleft}
{\bf Basic setup, notations, facts, and identities.}
\end{flushleft}
Let
 $(Y,\omega,J)$ be a (smooth) Calabi-Yau $3$-fold  and
 ${\cal M}:={\cal M}_{\complexscriptsize}^{\smoothscriptsize}$
  be the moduli space of complex deformations of $Y$.
Let $h^{p,q}:= \dimm_{\Bbb C}\,H_{\bar{\partial}}^{p,q}(X;{\Bbb C})$;
 then ${\cal M}$ is a complex manifold of dimension $h^{2,1}$,
 (cf.~[Ti] and [To]).
{For} the purpose of this note,
we will assume
 (by taking either a smaller ${\cal M}$ or a covering of it) that
 an universal Calabi-Yau $3$-fold
 $\pi:{\cal Y}\rightarrow {\cal M}$ over ${\cal M}$ exists,
  with a relative polarization $[\omega]_{{\cal Y}/{\cal M}}$
   from the K\"{a}hler class $[\omega]$ on $Y$.
Let
 ${\cal H}
   :=R^3\pi_{\ast}{\Bbb C}_{\cal Y}\otimes_{\Bbb C}{\cal O}_{\cal M}$
  be the Hodge bundle on ${\cal M}$,
  where ${\Bbb C}_{\cal Y}$ is the constant sheaf on ${\cal Y}$
   associated to ${\Bbb C}$.
${\cal H}$ is holomorphic of rank $\dim_{\Bbb C}H^3(Y,{\Bbb C})$.
The complex symplectic product
 on $\langle\,\,\cdot\,,\,\cdot\,\rangle$ on $H^3(Y,{\Bbb C})$,
 defined by
  $\langle \alpha,\beta \rangle := \int_Y\alpha\wedge\beta$,
 induces a complex symplectic product,
  still denoted by $\langle\,\,\cdot\,,\,\cdot\,\rangle$,
  on fibers of ${\cal H}$.
Let
 ${\cal H} = {\cal H}^{3,0}+{\cal H}^{2,1}+{\cal H}^{1,2}+{\cal H}^{0,3}$
 be the Hodge decomposition of ${\cal H}$.
In general,
 ${\cal H}^{p,q}$ is only a smooth complex vector bundle on ${\cal M}$.
However,
let
 $F^p{\cal H}:= \oplus_{q\ge p}{\cal H}^{q, 3-q}$, $p=0,\,1,\,2,\,3$.
Then $F^p{\cal H}$ are holomorphic on ${\cal M}$,
 and
$F^{\bullet}{\cal H}\, :\,
 {\cal H}=F^0{\cal H} \supset F^1{\cal H}
    \supset F^2{\cal H} \supset F^3{\cal H}={\cal H}^{3,0}$
 gives the Hodge filtration of ${\cal H}$.
In particular,
 ${\cal H}^{3,0}$ is a holomorphic (complex) line bundle on ${\cal M}$
 while ${\cal H}^{0,3}=\overline{{\cal H}^{3,0}}$ is an anti-holomorphic
  line bundle on ${\cal M}$.

The inclusion $\iota: {\Bbb Z}\subset {\Bbb C}$ as abelian groups
 induces an inclusion
 $\iota: R^3\pi_{\ast}{\Bbb Z}_{\cal Y}\subset {\cal H}$,
which induces a flat connection,
 namely the Gauss-Manin connection $\nabla$, on ${\cal H}$.
In terms of the canonical (up to a monodromy effect)
 local trivialization of ${\cal H}$
 from $\iota(R^3\pi_{\ast}{\Bbb Z}_{\cal Y})$,
$\nabla$ is simply
 the differential $d=\partial + \bar{\partial}$
 ($:= \sum_{a=1}^{h^{2,1}} \frac{\partial}{\partial t^a}\otimes dt^a
     + \sum_{a=1}^{h^{2,1}}
         \frac{\partial}{\partial \bar{t}^a}\otimes d\bar{t}^a$)
 with respect to complex local coordinates
  $\mbox{\boldmath $t$}\,= (t^a)_a$,
  (or more completely, $\mbox{\boldmath $t$}\,=(t^a, \bar{t}^a)_a$),
  on ${\cal M}$.
$\nabla$ preserves
 neither the decomposition $\oplus_{p=0}^3{\cal H}^{p,3-p}$
 nor the filtration $F^{\bullet}{\cal H}$ of ${\cal H}$.
However, it has the property that
 $\nabla F^p{\cal H} \subset F^{p-1}{\cal H}\otimes\Omega_{\cal M}^1$,
cf.\ [Gr].

In terms of the Hodge decomposition of ${\cal H}$ on ${\cal M}$,
 the Weil-Petersson metric $\omega_{\WPscriptsize}$ on ${\cal M}$,
  defined through the Ricci flat metric on each fiber
  $Y_{\mbox{\scriptsize\boldmath $t$}}:=\pi^{-1}(\mbox{\boldmath $t$})$
  of $\pi:{\cal Y}\rightarrow {\cal M}$
  determined by $[\omega]_{{\cal Y}/{\cal M}}$ (cf.~[Yau]),
 has a K\"{a}hler potential $K_{\WPscriptsize}$
  that can be expressed locally purely topologically
  as a smooth real-valued function
  $$
   K\;
    =\; K(\mbox{\boldmath $t$})\;
   :=\; K_{\WPscriptsize}(\mbox{\boldmath $t$})\;
    =\; - \log
          \left(i\,\int_{Y_{\mbox{\tiny\boldmath $t$}}}\,
            \Omega^{(0)}(\mbox{\boldmath $t$})
            \wedge\overline{\Omega^{(0)}(\mbox{\boldmath $t$})}
          \right)
          +\, \log\left(\rule{0em}{1em}8\cdot\vol(Y)\right)
  $$
 on ${\cal M}$,
 where $\Omega^{(0)}=\Omega^{(0)}(\mbox{\boldmath $t$})$
  is a local holomorphic section of ${\cal H}^{3,0}$,
(cf.~[Ti] and [To]).
Define
 $$
  \Omega\;
  =\; \Omega(\mbox{\boldmath $t$})\;
  =\; e^{K(\mbox{\scriptsize\boldmath $t$})/2}\,
      \Omega^{(0)}(\mbox{\boldmath $t$})
 $$
 so that $\Omega(\mbox{\boldmath $t$})$,
   as a holomorphic $3$-form on $Y_{\mbox{\scriptsize\boldmath $t$}}$,
  satisfies the normalization condition\footnote{This
                normalization condition
                 fits more naturally
                  into the standard setting of calibrated geometry and
                 is used, for example, in the work of Joyce [Joy3].
               Under this normalization,
                an orientable (real) $3$-dimensional submanifold $L$
                in $Y_{\mbox{\tiny\boldmath $t$}}$
                has volume {\it vol}\,$(L)\ge |Z(L)|$
               and the equality holds
                when $L$ is a special Lagrangian submanifold.
               In particular,
                $\Omega$ here
                 $= \sqrt{8\,\mbox{\it vol}\,(Y)}\,
                     \cdot\,\mbox{($\Omega$ in [De3])}$.}
 $$
  \frac{i}{2^3}\,
    \Omega(\mbox{\boldmath $t$})
    \wedge\overline{ \Omega(\mbox{\boldmath $t$})}\;
   =\; \frac{1}{3!}\,
       ( \omega_{{\cal Y}/{\cal M}}|
                          _{Y_{\mbox{\tiny\boldmath $t$}}} )
       \wedge ( \omega_{{\cal Y}/{\cal M}}|
                          _{Y_{\mbox{\tiny\boldmath $t$}}} )
       \wedge ( \omega_{{\cal Y}/{\cal M}}|
                          _{Y_{\mbox{\tiny\boldmath $t$}}} )\,.
 $$
Note that $\Omega$ is now only a smooth local section of ${\cal H}^{3,0}$.

On a local chart $U\subset {\cal M}$ on which
 ${\cal H}$ is canonically trivialized
   via the Gauss-Manin connection $\nabla$ and
  $\Omega^{(0)}$ and, hence, $\Omega$ are defined,
 one has the following basic identities and notations:
(Below,
  $\mbox{\boldmath $t$}\,=(t_a, \bar{t}_a)_a$
   is the local coordinates on $U$  and
  $\nabla=d$ under the trivialization of ${\cal H}|_U$.)
\begin{itemize}
 \item[(1)]
  $\frac{\partial}{\partial t^a}\Omega
    = \left(-\frac{1}{2}\frac{\partial}{\partial t^a}K\right)\,\Omega
      + \chi_a$
   with $\chi_a$ a section of ${\cal H}^{2,1}$.
  $\chi_a$ satisfies
   $\int_{Y_{\mbox{\tiny\boldmath $t$}}}\chi_a\wedge\overline{\chi_b}
    = 8\,\vol(Y)\cdot i g_{a\bar{b}}$,
   where
    $g_{a\bar{b}}
     = \frac{\partial}{\partial t^a}\frac{\partial}{\partial \bar{t}^b}K$
    is the Weil-Petersson metric on $U$.

 \item[$\cdot$]
  We adopt the notation from special geometry (cf.~[Str]) to denote
  $D_a:= \frac{\partial}{\partial t^a}
         + \frac{1}{2}\frac{\partial}{\partial t^a}K$;
 e.g.,
  the $(2,1)$-component $\chi_a$
   of $\frac{\partial}{\partial t^a}\Omega$
   is then denoted by $D_a\Omega$ and
  $D_aZ_{\Gamma} := \int_{\Gamma}D_a\Omega =\int_{\Gamma}\chi_a$.
 Similarly,
  $\bar{D}_{\bar{a}}
   :=  \frac{\partial}{\partial \bar{t}^a}
       + \frac{1}{2}\frac{\partial}{\partial \bar{t}^a}K$  and
  for $\bar{D}_{\bar{a}}\bar{\Omega}$ and
      $\bar{D}_{\bar{a}}\bar{Z}_{\Gamma}$
   by taking the complex conjugate.

 \item[(2)]
  A (real) harmonic $3$-form $\Xi$ on $Y_{\mbox{\tiny\boldmath $t$}}$
   has a Hodge decomposition given by
  $$
   \Xi\;=\; \frac{1}{8\,\vol(Y)}
            \left(
                 i\,\bar{Z}(\Xi)\,\Omega\,
             -\, i\, g^{a\bar{b}}\,\bar{D}_{\bar{b}}\bar{Z}(\Xi)\,D_a\Xi
             +\, i\, g^{b\bar{a}}\,D_bZ(\Xi)\,\bar{D}_{\bar{a}}\Xi\,
             -\, i\,Z(\Xi)\,\bar{\Omega}
            \right),
  $$
  where
    $Z(\Xi)$  (resp.\ $\bar{Z}(\Xi)$)  denotes
    $\int_{Y_{\mbox{\tiny\boldmath $t$}}}
               \Xi\wedge \Omega(\mbox{\boldmath $t$})$
    (resp.\ $\int_{Y_{\mbox{\tiny\boldmath $t$}}}
               \Xi\wedge \bar{\Omega}(\mbox{\boldmath $t$})$).
\end{itemize}
They follow from direct computations,
 the holomorphicity (resp.\ anti-holomorphicity) of $\Omega^{(0)}$
  (resp.\ $\overline{\Omega^{(0)}}$) on $U$,
 and the variation property of $\Omega^{(0)}$ (e.g.\ [Ti: Lemma 7.2]).

\bigskip

\begin{flushleft}
{\bf Well-definedness of
     $|Z_{\Gamma}|$ and $e^{-i\alpha_{\Gamma}}\Omega$ on ${\cal M}$.}
\end{flushleft}
By construction, $\Omega$ and, hence, $Z_{\Gamma}$
 are locally well-defined only up to
 a same smooth unit-modular complex-valued function.
Thus, both $|Z_{\Gamma}|$ and $e^{-i\alpha_{\Gamma}}\Omega$
  are well-defined locally.
They automatically become globally well-defined on ${\cal M}$
 respectively
 as a positive function and as a smooth section of ${\cal H}^{3,0}$.
It follows that
 $e^{i\alpha_{\Gamma}}\bar{\Omega}$,
 $\Real(e^{-i\alpha_{\Gamma}}\Omega)
  := \frac{1}{2}\left( e^{-i\alpha_{\Gamma}}\Omega
                     + e^{ i\alpha_{\Gamma}}\bar{\Omega} \right)$, and
 $\Imaginary(e^{-i\alpha_{\Gamma}}\Omega)
  := \frac{1}{2i}\left( e^{-i\alpha_{\Gamma}}\Omega
                      - e^{ i\alpha_{\Gamma}}\bar{\Omega} \right)$
 are all well-defined, as smooth sections of ${\cal H}$ on ${\cal M}$.

\bigskip

\begin{flushleft}
{\bf Evolution of {\it Im}$\,(e^{-i\alpha_{\Gamma}}\Omega)$
     along a $\Gamma$-attractor flow trajectory.}
\end{flushleft}
A trajectory of a flow on ${\cal M}$ gives an embedding
 $\gamma: I \hookrightarrow {\cal M}$,
 where $I$ is an interval in ${\Bbb R}_+$.
There exists thus a neighborhood ${\cal N}$
 of $\gamma(I)$ in ${\cal M}$ that is contractible.
The restriction ${\cal H}|_{\cal N}$ of the Hodge bundle to ${\cal N}$
 becomes canonically trivialized
  via the Gauss-Manin connection $\nabla$ on ${\cal H}$,
 through which all fibers of ${\cal H}$ along a trajectory of a flow
  can be identified.
With such an identification and
  with the preparation from the previous two themes,
 one can now express $\Imaginary(e^{-i\alpha_{\Gamma}}\Omega)$
  along a $\Gamma$-attractor flow trajectory explicitly:

\bigskip

\begin{proposition}
{\bf
[$\Imaginary(e^{-i\alpha_{\Gamma}}\Omega)$ along $\Gamma$-attractor flow].}
{\rm ([De3: Sec.~3.1, Eq.~(3.5)].)}
 With the canonical trivialization of the Hodge bundle along  and
  letting $\Gamma^{\vee}$ be the Harmonic $3$-form representing
   the Poincar\'{e} dual $\in H^3(Y;{\Bbb R})$ of $\Gamma$,
 then
 the $3$-form $\Imaginary(e^{-i\alpha_{\Gamma}}\Omega)$
  on Calabi-Yau $3$-folds
  along a trajectory of the attractor flow on ${\cal M}$
  associated to $Z_{\Gamma}$ is given by
 \begin{eqnarray*}
  \Imaginary\left(e^{-i\alpha_{\Gamma}(\mu)}\Omega(\mu)\right)
   & =
   & (-\,4\,\vol(Y)\cdot\mu\,\tau(\mu))\,\Gamma^{\vee}\,
       +\, \frac{\mu}{\mu_0}\,
             \Imaginary\left(
                 e^{-i\alpha_{\Gamma}(\mu_0)}\Omega(\mu_0)
                       \right),   \\[1.2ex]
  && \mbox{with}\;\;\;
  \tau(\mu)\,
   =\, -\int_{\mu_0}^{\mu}
         \frac{d\mu^{\prime}}
              {|Z_{\Gamma}(\mu^{\prime})|\,{\mu^{\prime}}^2}\,,
     \hspace{2em}\mbox{for}\hspace{1em}
     \mu\,,\; \mu_0\;\in\; {\Bbb R}_{>0}\,.
 \end{eqnarray*}
\end{proposition}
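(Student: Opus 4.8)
The plan is to recognize the stated formula as the time-integral of a single first-order linear ODE along the flow, whose inhomogeneous term is the fixed Gauss-Manin--parallel class $\Gamma^{\vee}$, and to produce that ODE from Denef's attractor-flow equations together with the special-geometry identities (1) and (2) recorded above. First I would fix the parametrization. In the $\nabla$-flat trivialization of ${\cal H}$ along $\gamma$, Denef's single-center attractor flow for the charge $\Gamma$ is governed (in the normalization matching our $\Omega$) by
\[
  \dot{U}\;=\;-\,\mu\,|Z_{\Gamma}|\,,\qquad
  \dot{t}^{\,a}\;=\;-\,\mu\,e^{i\alpha_{\Gamma}}\,
     g^{a\bar{b}}\,\bar{D}_{\bar{b}}\bar{Z}_{\Gamma}\,,\qquad
  \mu\,:=\,e^{U}\,,
\]
where the dot is $d/d\tau$ and $U$ is the warp factor. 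The first equation reads $\dot{\mu}=-\mu^2|Z_{\Gamma}|$, which is exactly the ODE inverted by the stated $\tau(\mu)=-\int_{\mu_0}^{\mu}d\mu^{\prime}/(|Z_{\Gamma}(\mu^{\prime})|\,{\mu^{\prime}}^2)$ (note $\tau(\mu_0)=0$). Hence the Proposition is equivalent to establishing, along $\gamma$, the identity
\[
  \frac{d}{d\tau}\Bigl(\tfrac{1}{\mu}\,\Imaginary(e^{-i\alpha_{\Gamma}}\Omega)\Bigr)
   \;=\; -\,4\,\vol(Y)\,\Gamma^{\vee}
   \qquad(\star)\,,
\]
since $\Gamma^{\vee}$ is $\nabla$-parallel (a fixed real harmonic class, constant in the flat trivialization), so $(\star)$ integrates at once and multiplication by $\mu$ reproduces the assertion.

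Second, I would feed identity (2) the real harmonic form $\Xi=\Gamma^{\vee}$. Because $Z(\Gamma^{\vee})=\int_Y\Gamma^{\vee}\wedge\Omega=\int_{\Gamma}\Omega=Z_{\Gamma}$ and likewise $\bar{Z}(\Gamma^{\vee})=\bar{Z}_{\Gamma}$, its $(3,0)+(0,3)$ part, using $e^{-i\alpha_{\Gamma}}=\bar{Z}_{\Gamma}/|Z_{\Gamma}|$ and the normalization $\int_Y\Omega\wedge\bar{\Omega}=-8i\,\vol(Y)$, is
\[
  \Gamma^{\vee}_{(3,0)+(0,3)}
   \;=\; \frac{i}{8\,\vol(Y)}\bigl(\bar{Z}_{\Gamma}\,\Omega-Z_{\Gamma}\,\bar{\Omega}\bigr)
   \;=\; -\,\frac{|Z_{\Gamma}|}{4\,\vol(Y)}\,\Imaginary(e^{-i\alpha_{\Gamma}}\Omega)\,,
\]
while the $(2,1)+(1,2)$ part, with $\chi_a=D_a\Omega$ and $\bar{\chi}_{\bar{a}}=\bar{D}_{\bar{a}}\bar{\Omega}$, is
\[
  \Gamma^{\vee}_{(2,1)+(1,2)}
   \;=\; \frac{i}{8\,\vol(Y)}\bigl(
      -\,g^{a\bar{b}}\,\bar{D}_{\bar{b}}\bar{Z}_{\Gamma}\,\chi_a
      +\,g^{b\bar{a}}\,D_bZ_{\Gamma}\,\bar{\chi}_{\bar{a}}\bigr)\,.
\]

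Third, I would differentiate $\Imaginary(e^{-i\alpha_{\Gamma}}\Omega)$ along $\gamma$, expanding $\dot{\Omega}=\dot{t}^{\,a}\partial_a\Omega+\dot{\bar{t}}^{\,\bar{a}}\partial_{\bar{a}}\Omega$ by identity (1) ($\partial_a\Omega=-\tfrac12(\partial_aK)\Omega+\chi_a$, $\partial_{\bar{a}}\Omega=\tfrac12(\partial_{\bar{a}}K)\Omega$) and inserting the flow equations. Two things must occur. The scalar multiplying the $(3,0)+(0,3)$ piece, $-i\dot{\alpha}_{\Gamma}-\tfrac12\dot{t}^{\,a}\partial_aK+\tfrac12\dot{\bar{t}}^{\,\bar{a}}\partial_{\bar{a}}K$, must vanish, i.e. $\dot{\alpha}_{\Gamma}=-\Imaginary(\dot{t}^{\,a}\partial_aK)$, which I would check directly from $\alpha_{\Gamma}=\arg Z_{\Gamma}$ and the flow. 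The surviving $(2,1)+(1,2)$ piece is $\Imaginary(\dot{t}^{\,a}e^{-i\alpha_{\Gamma}}\chi_a)$; substituting $\dot{t}^{\,a}=-\mu e^{i\alpha_{\Gamma}}g^{a\bar{b}}\bar{D}_{\bar{b}}\bar{Z}_{\Gamma}$ turns it into $-\tfrac{i\mu}{2}\bigl(-g^{a\bar{b}}\bar{D}_{\bar{b}}\bar{Z}_{\Gamma}\chi_a+g^{b\bar{a}}D_bZ_{\Gamma}\bar{\chi}_{\bar{a}}\bigr)=-4\mu\,\vol(Y)\,\Gamma^{\vee}_{(2,1)+(1,2)}$. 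Combining $\tfrac{d}{d\tau}(e^{-U}X)=-\dot{U}e^{-U}X+e^{-U}\dot{X}$ with $\dot{U}=-\mu|Z_{\Gamma}|$ gives $|Z_{\Gamma}|\Imaginary(e^{-i\alpha_{\Gamma}}\Omega)+\tfrac1\mu\dot{X}$; the first term equals $-4\vol(Y)\,\Gamma^{\vee}_{(3,0)+(0,3)}$ by the boxed relation and the second equals $-4\vol(Y)\,\Gamma^{\vee}_{(2,1)+(1,2)}$, so the two reassemble into the full $-4\vol(Y)\,\Gamma^{\vee}$, proving $(\star)$. Integrating $(\star)$ from $\mu_0$ to $\mu$, using that $\Gamma^{\vee}$ is $\tau$-independent and $\tau(\mu_0)=0$, and multiplying by $\mu$ yields the Proposition.

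The hard part will be \emph{Step three}: showing that the $(3,0)+(0,3)$ component of $\tfrac{d}{d\tau}\Imaginary(e^{-i\alpha_{\Gamma}}\Omega)$ cancels exactly and that the residual $(2,1)+(1,2)$ component carries the precise constant $-4\,\vol(Y)$. This hinges on the Kähler-covariant (special-geometry) bookkeeping — the correct value of $\dot{\alpha}_{\Gamma}$, the precise attractor velocity $\dot{t}^{\,a}$, and the matching of every factor of $\mu$, of $8\,\vol(Y)$, and of $2$ — together with confirming that the flow-equation normalization one adopts is the one compatible with the stated $\tau(\mu)$. All of this is routine but error-prone; the conceptual content is entirely in recognizing $(\star)$ and applying identity (2) to $\Gamma^{\vee}$, after which the cited result of Denef [De3, Eq.~(3.5)] (and [De1]) supplies the constants.
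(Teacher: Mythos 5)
Your proposal is correct and is essentially the paper's own proof: the paper likewise differentiates $\Imaginary(e^{-i\alpha_{\Gamma}}\Omega)$ along the flow using identity (1) and the Hodge-decomposition identity (2) applied to $\Gamma^{\vee}$, arriving at the ODE $\mu\frac{d}{d\mu}\Imaginary(e^{-i\alpha_{\Gamma}}\Omega)-\Imaginary(e^{-i\alpha_{\Gamma}}\Omega)=4\,\vol(Y)\,\Gamma^{\vee}/|Z_{\Gamma}|$, which is exactly your $(\star)$ under the change of variable $d\tau=-d\mu/(\mu^{2}|Z_{\Gamma}|)$, and then integrates it (by separation of variables rather than your flow-time reparametrization), adding only a final gluing remark to pass from a local chart to the whole trajectory. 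The one point your Step three should make explicit is that the vanishing of the $(3,0)+(0,3)$ component, i.e.\ $\dot{\alpha}_{\Gamma}=-\Imaginary(\dot{t}^{\,a}\partial_{a}K)$, is not a purely formal bookkeeping check: it needs $\Imaginary\bigl(g^{a\bar{b}}\,D_{a}Z_{\Gamma}\,\bar{D}_{\bar{b}}\bar{Z}_{\Gamma}\bigr)=0$, which the paper extracts from $\int_{Y}\Gamma^{\vee}\wedge\Gamma^{\vee}=0$ via identity (2) applied to $\Gamma^{\vee}$ --- the same input that kills its ``second summand''.
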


\begin{proof}
 With notations from previous themes in this subsection,
 let
  ${\cal U}:= \{U_{\beta}\}_{\beta}$
   be a locally finite good\footnote{I.e.\
           all the intersections
            $U_{\beta_1\,\cdots\,\beta_l}
              := U_{\beta_1}\cap\,\cdots\,\cap U_{\beta_l}$,
              $l\in {\Bbb N}$,
            are contractible.}
  atlas on ${\cal M}$ over which
   ${\cal H}^{3,0}$ becomes trivial (as a holomorphic bundle)  and
   one can introduce a nowhere-zero holomorphic section
    $\Omega^{(0)}_{\beta}$ of ${\cal H}^{3,0}|_{U_{\beta}}$
    on $U_{\beta}$ for each $\beta$.
 Consider a $U_{\beta}=:U$ and
  trivialize ${\cal H}|_U$ via the flat connection $\nabla$ on ${\cal H}$.
 Let $\Omega^{(0)}=\Omega^{(0)}_{\beta}$.
 Recall
  the K\"{a}hler potential $K:=K_{\beta}$ on $U_{\beta}$
   constructed from $\Omega^{(0)}$ and
  the smooth section $\Omega:=\Omega_{\beta}$
   of ${\cal H}^{3,0}|_{U_{\beta}}$
   from a normalization of $\Omega^{(0)}_{\beta}$.
 Let
  $Z = Z(\mbox{\boldmath $t$})
    := Z_{\Gamma}(\mbox{\boldmath $t$})
     = \int_{\Gamma}\Omega(\mbox{\boldmath $t$})
     = \int_{Y_{\mbox{\tiny\boldmath $t$}}}
         \Gamma^{\vee}\wedge\Omega(\mbox{\boldmath $t$})$
   be the central charge function on $U$ associated to $\Gamma$  and
  $\alpha
     = \alpha(\mbox{\boldmath $t$})
    := \alpha_{\Gamma}(\mbox{\boldmath $t$})$
   be the phase function on $U$ associated to $Z$.
 An attractor flow trajectory
  $\gamma(\mu)=\mbox{\boldmath $t$}(\mu)\,$, $\mu\in {\Bbb R}_{>0}\,$,
   on $U$ associated to $Z$
  satisfies the flow equation
   \begin{eqnarray*}
   \mu\frac{\partial}{\partial\mu}\;
    :=\; \gamma_{\ast}\,\left(\mu\frac{\partial}{\partial\mu}\right)
    & = & \left( d\left( \log|Z|^2 \right)\right)^{\sim} \\
    & = & \left( g^{a\bar{b}}\,
                 \frac{\partial}{\partial \bar{t}^b} \log|Z|^2 \right)
           \frac{\partial}{\partial t^a}\,
        +\, \left( g^{b\bar{a}}\,
                   \frac{\partial}{\partial t^b} \log|Z|^2 \right)
           \frac{\partial}{\partial \bar{t}^a}   \\[1.2ex]
    & = & \left( g^{a\bar{b}}\,\frac{\bar{D}_{\bar{b}}\bar{Z}}{\bar{Z}}
          \right) \frac{\partial}{\partial t^a}\,
          +\, \left( g^{b\bar{a}}\,\frac{D_b Z}{Z}
              \right) \frac{\partial}{\partial \bar{t}^a}\,,
        \hspace{1em}\mbox{restricted to the image of $\gamma$}\,,
   \end{eqnarray*}
  where $(\,\cdot\,)^{\sim}$ is the metrical equivalent vector field
   to a $1$-form $(\,\cdot\,)$ on $U$,
   with respect to the Weil-Petersson metric
    $\,g = \sum_{a,b}\,g_{a\bar{b}}\,dt^a\otimes d\bar{t}^b\,$
   (previously denoted by its associated $2$-form
    $\omega_{\WPscriptsize}$).
 On the other hand,
  \begin{eqnarray*}
   d\left( \Imaginary (e^{-i\alpha}\Omega) \right)
    & = & \frac{1}{2i}
          \left( e^{-i\alpha}\, D_a\Omega \,
           -\, \frac{e^{-i\alpha}\,\Omega\,
                        +\, e^{i\alpha}\,\bar{\Omega}}{2}\,
               \frac{D_aZ}{Z}
          \right) dt^a  \\[1.2ex]
    && \hspace{6em}
      +\, \frac{1}{2i}
          \left( -\,e^{i\alpha}\, D_{\bar{a}}\bar{\Omega} \,
           +\, \frac{e^{-i\alpha}\,\Omega\,
                        +\, e^{i\alpha}\,\bar{\Omega}}{2}\,
               \frac{D_{\bar{a}}\bar{Z}}{\bar{Z}}
          \right) d\bar{t}^a\,.
  \end{eqnarray*}
 Thus, along a flow trajectory,
  \begin{eqnarray*}
   && \mu\frac{d}{d\mu}\,
            \Imaginary (e^{-i\alpha}\Omega)\;
       =\; d\left( \Imaginary (e^{-i\alpha}\Omega) \right)
            \left( \mu\frac{\partial}{\partial\mu} \right) \\[1.2ex]
   && =\;
     \frac{1}{2i}\,
     g^{a\bar{b}}\,
     \left( e^{-i\alpha}\, D_a\Omega \,
      -\, \frac{e^{-i\alpha}\,\Omega\,
                   +\, e^{i\alpha}\,\bar{\Omega}}{2}\,
          \frac{D_aZ}{Z}
     \right)
     \frac{\bar{D}_{\bar{b}}\bar{Z}}{\bar{Z}}           \\[1.2ex]
   && \hspace{6em}
    +\, \frac{1}{2i}\,
        g^{b\bar{a}}\,
        \left( -\,e^{i\alpha}\, D_{\bar{a}}\bar{\Omega} \,
         +\, \frac{e^{-i\alpha}\,\Omega\,
                      +\, e^{i\alpha}\,\bar{\Omega}}{2}\,
             \frac{D_{\bar{a}}\bar{Z}}{\bar{Z}}
        \right)
        \frac{D_b Z}{Z}                                  \\[1.2ex]
   && =\;
     \frac{1}{2i|Z|}
     \left(
      g^{a\bar{b}}\, D_a\Omega\, \bar{D}_{\bar{b}}\bar{Z}\,
      -\, g^{b\bar{a}}\, \bar{D}_{\bar{a}}\bar{\Omega}\, D_b Z
     \right)\,
     +\,
     \frac{\Real(e^{-i\alpha}\,\Omega)}{2i|Z|^2}
     \left(
      -\, g^{a\bar{b}}\, D_a Z\, \bar{D}_{\bar{b}}\bar{Z}\,
      +\, g^{b\bar{a}}\, \bar{D}_{\bar{a}}\bar{Z}\, D_b Z
     \right);
  \end{eqnarray*}
  \begin{eqnarray*}
   \mbox{the first summand}
    & = &
       \frac{1}{2|Z|}
       \left(
        -\,i\,g^{a\bar{b}}\, D_a\Omega\, \bar{D}_{\bar{b}}\bar{Z}\,
        +\,i\,g^{b\bar{a}}\, \bar{D}_{\bar{a}}\bar{\Omega}\, D_b Z
       \right)  \\
    & = & \frac{1}{2|Z|}
          \left(
           8\,\vol(Y)\cdot\Gamma^{\vee}\,
           -\, i\,\Omega\bar{Z}\,+\, i\, \bar{\Omega}Z
          \right)\;\;
      =\;\; \frac{4\,\vol(Y)\cdot\Gamma^{\vee}}{|Z|}\,
          +\, \Imaginary\left( e^{-i\alpha}\Omega \right)\,;
  \end{eqnarray*}
  \begin{eqnarray*}
   \mbox{the second summand}
    & = &
       -\,
       \frac{\Real(e^{-i\alpha}\,\Omega)}{|Z|^2}\,
       \Imaginary
       \left(
        g^{a\bar{b}}\, D_a Z\, \bar{D}_{\bar{b}}\bar{Z}
       \right)  \\
    & = &
       -\,
       \frac{\Real(e^{-i\alpha}\,\Omega)}{2|Z|^2}\,
       \left(
        8\,\vol(Y)\cdot
         \int_{Y_{\mbox{\tiny\boldmath $t$}}}
           \Gamma^{\vee}\wedge\Gamma^{\vee}\,
        +\, 2\,\Imaginary|Z|^2
       \right)\;\; =\;\; 0\,, \hspace{4em}
  \end{eqnarray*}
  where the Hodge decomposition identity applied to $\Gamma^{\vee}$
   is used.

 \bigskip

 In full notation,
  one thus obtains an ordinary differential equation (ODE)
  in variable $\mu$:
  $$
   \mu\frac{d}{d\mu}
     \Imaginary\left(e^{-i\alpha(\mu)}\Omega(\mu)\right)\,
            -\, \Imaginary\left( e^{-i\alpha(\mu)}\Omega(\mu) \right)\;
    =\; \frac{4\,\vol(Y)\cdot\Gamma^{\vee}}{|Z(\mu)|}\,.
  $$
 This can be solved
   by the Leibniz' rule and the separation-of-variable technique in ODE
  to give
  $$
   \frac{1}{\mu}\,\Imaginary\left(e^{-i\alpha(\mu)}\Omega(\mu)\right)\;
    =\; \left(-\,4\,\vol(Y)\cdot\tau_{\mu_0}(\mu)\right)\,\Gamma^{\vee}\,
        +\, \frac{1}{\mu_0}\,
             \Imaginary\left(e^{-i\alpha(\mu_0)}\Omega(\mu_0)\right)\,,
  $$
  on $U=U_j$, where
  $$
   \tau_{\mu_0}(\mu)\;
    =\; -\int_{\mu_0}^{\mu}
          \frac{d\mu^{\prime}}{|Z(\mu^{\prime})|\,{\mu^{\prime}}^2}\,.
  $$

 Since
  $$
    \tau_{\mu_0}(\mu_1)\,+\, \tau_{\mu_1}(\mu_2)\;
    =\; \tau_{\mu_0}(\mu_2)
  $$
   and
  the above local solution for
   $\Imaginary\left(e^{-i\alpha(\mu)}\Omega(\mu)\right)$
   from local computations is independent of all the local choices made,
 the expression glues under the transition between local charts
  and remains valid along and throughout an attractor flow trajectory
  in ${\cal M}$.

 This completes the proof.

\end{proof}

\begin{remark}
{\it $[$constant trajectory$]$.} {\rm
 A constant trajectory $\gamma$ of a $\Gamma$-attractor flow
  occurs at $\mbox{\boldmath $t$}\in{\cal M}$
   where $Z(\mbox{\boldmath $t$})\ne 0$ and
         $(D_bZ)(\mbox{\boldmath $t$})
          =(\bar{D}_{\bar{b}}\bar{Z})(\mbox{\boldmath $t$})=0$.
 In this case,
  $\Imaginary(e^{-i\alpha_{\Gamma}(\mu)}\Omega(\mu))$
  takes the constant value
  $-\,\frac{4\,\mbox{\small\it vol}\,(Y)\cdot\Gamma^{\vee}}
           {|Z(\mbox{\small\boldmath $t$})|}\,$
  as $\mu$ runs in ${\Bbb R}_{>0}\,$.
}\end{remark}

\bigskip

\subsection{Morse cobordisms of A-branes on Calabi-Yau 3-folds
            under a reverse split attractor flow
            at a wall of marginal stability.}

We derive first a topological smoothability criterion
 along a $\Gamma$-attractor flow
 for a special Lagrangian submanifold $L$ in the class $\Gamma$,
 with only transverse normal crossing singularities
 in a Calabi-Yau $3$-fold,
 by a fusion of [De3] of Denef and [Joy3: IV and V] of Joyce
and then
 use it as a tool to understand Morse cobordisms of A-branes
 under a (reverse) split attractor flow.
We'll assume in this subsection that
 all the attractor flow trajectories in question are nonconstant.

\bigskip

\begin{flushleft}
{\bf Topological smoothability criterion along attractor flow:
     Denef meeting Joyce.}
\end{flushleft}
The result of Denef [De3], recapped as Proposition~3.1.1 in Sec.~3.1,
            % Proposition [$\Imaginary(e^{-i\alpha_{\Gamma}}\Omega)$
            %              along $\Gamma$-attractor flow]
 gives us a topologically necessary starting point to consider
 deformations of special Lagrangian submanifolds,
 with a phase lined up with that of central charge,
 on Calabi-Yau $3$-folds along an attractor flow:

\begin{lemma}
{\bf [vanishing of topological obstruction along attractor flow].}
 Let
  \begin{itemize}
   \item[$\cdot$]
    ${\cal F}$ be a domain in ${\cal M}$,

   \item[$\cdot$]
    $\{Y^s:=(Y, J^s, \omega^s, \Omega^s):s\in {\cal F}\}$
     be a family of smooth Calabi-Yau $3$-folds with
     the family of underlying complex manifolds $(Y,J^s)$
      specified by ${\cal F}$,
     the K\"{a}hler classes $[\omega^s]\in H^2(Y,{\Bbb R})$ fixed, and
     the normalization
      $\,\frac{1}{3!}\,\omega^s\wedge\omega^s\wedge\omega^s
       =\frac{i}{2^3}\,\Omega^s\wedge\overline{\Omega^s}\,$;

   \item[$\cdot$]
    $\Gamma\in H_3(Y;{\Bbb Z})$,
    $Z_{\Gamma}$ be the central charge function on ${\cal F}$
     associated to $\Gamma$, defined by
     $Z_{\Gamma}(s)=[\Omega^s]\cdot\Gamma=\int_{\Gamma}\Omega^s\,$;
    $\alpha_{\Gamma}=\Arg(Z_{\Gamma}/|Z_{\Gamma}|)$
     be the phase-angle function on ${\cal F}$
     associated to $Z_{\Gamma}$, defined modulo $2\pi\,$;

   \item[$\cdot$]
    $s_0\in {\cal F}$ with $Z_{\Gamma}(s_0)\ne 0$,
    $f:L\rightarrow Y^{s_0}$ be a special Lagrangian submanifold
     (possibly with singularities)
     with phase $e^{i\alpha_{\Gamma}(s_0)}$ in $Y^{s_0}$
     and with $f_{\ast}([L])=\Gamma\,$;

   \item[$\cdot$]
    $\gamma:I\rightarrow {\cal F}$,
      $I$ an interval in ${\Bbb R}_+$,
     be a trajectory of the attractor flow associated to $Z_{\Gamma}$,
     with $\gamma(\mu_0)=s_0\,$.
  \end{itemize}
 Then
  $$
   [\Imaginary( e^{-i\alpha_{\Gamma}(\mu)}\Omega^{s(\mu)})]
    \cdot f_{\ast}([L])\;=\;0\,,
   \hspace{2em}\mbox{for all $\;\mu\in I$}\,.
  $$
\end{lemma}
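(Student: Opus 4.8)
The plan is to reduce the stated cohomological pairing to an integral and to observe that it vanishes tautologically, directly from the definition of the phase-angle $\alpha_\Gamma$. Since $\Omega^{s(\mu)}$ is a holomorphic $3$-form on the fiber $Y^{s(\mu)}$ it is closed; and because $\alpha_\Gamma(\mu)$ is a real constant on each fixed fiber, the real $3$-form $\Imaginary(e^{-i\alpha_\Gamma(\mu)}\Omega^{s(\mu)})$ is closed as well and defines a genuine class in $H^3(Y;{\Bbb R})$. The pairing with $f_{\ast}([L])=\Gamma$ is therefore computed by integration of this form over a cycle representing $\Gamma$. The underlying smooth manifold $Y$ being fixed as $s$ varies, the class $\Gamma\in H_3(Y;{\Bbb Z})$ does not change along the flow, so it suffices to treat each $\mu$ separately.

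First I would record the elementary identity
\[
 [\Imaginary(e^{-i\alpha_\Gamma(\mu)}\Omega^{s(\mu)})]\cdot f_{\ast}([L])
  \;=\; \int_{\Gamma}\Imaginary(e^{-i\alpha_\Gamma(\mu)}\Omega^{s(\mu)})
  \;=\; \Imaginary\!\left(e^{-i\alpha_\Gamma(\mu)}\int_{\Gamma}\Omega^{s(\mu)}\right),
\]
where the constant phase $e^{-i\alpha_\Gamma(\mu)}$ has been pulled out of the ${\Bbb R}$-linear integral. By definition $\int_{\Gamma}\Omega^{s(\mu)}=Z_\Gamma(\mu)$ and $\alpha_\Gamma(\mu)=\Arg(Z_\Gamma(\mu))$, so $Z_\Gamma(\mu)=|Z_\Gamma(\mu)|\,e^{i\alpha_\Gamma(\mu)}$ and hence $e^{-i\alpha_\Gamma(\mu)}Z_\Gamma(\mu)=|Z_\Gamma(\mu)|\in{\Bbb R}$. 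Its imaginary part is zero, which gives the claim for every $\mu\in I$. Here one uses that $Z_\Gamma$ is nonvanishing along the trajectory $\gamma$ (starting from $Z_\Gamma(s_0)\ne 0$), which is exactly the regime in which $\alpha_\Gamma$ is defined and Proposition~3.1.1 applies. It is worth remarking that the special Lagrangian hypothesis on $f$ is not needed for this cohomological statement; it only supplies the stronger \emph{pointwise} vanishing $f^{\ast}\Imaginary(e^{-i\alpha_\Gamma(s_0)}\Omega^{s_0})=0$ on $L$ at the initial time, via $\Omega^{s_0}|_L=e^{i\alpha_\Gamma(s_0)}\vol_L$ from Definition~2.1.12.

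As an independent consistency check I would instead pair Denef's explicit evolution formula (Proposition~3.1.1) with $\Gamma$. Integrating its right-hand side over $\Gamma$, the coefficient of $\Gamma^{\vee}$ contributes $\int_{\Gamma}\Gamma^{\vee}=\langle\Gamma,\Gamma\rangle$, the self-intersection number, which vanishes automatically because the middle-dimensional intersection form on the real $6$-manifold $Y$ is skew-symmetric; the surviving term is $\tfrac{\mu}{\mu_0}\int_{\Gamma}\Imaginary(e^{-i\alpha_\Gamma(\mu_0)}\Omega^{s_0})$, which is $0$ by the initial special Lagrangian condition just noted. This recovers the vanishing for all $\mu$ and confirms the formula of Proposition~3.1.1 is consistent with the tautological computation. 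The content of the lemma being essentially definitional, there is no genuine analytic obstacle; the only two points requiring care are the closedness of the integrand (so that the pairing is well-defined on homology and independent of the cycle representing $\Gamma$) and the observation that $Z_\Gamma$ remains nonzero along $\gamma$, so that $\alpha_\Gamma(\mu)$ is defined throughout $I$.
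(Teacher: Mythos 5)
Your argument is correct, and your primary route is genuinely different from the paper's. The paper proves the lemma by pairing Denef's evolution formula (Proposition~3.1.1) with $\Gamma$: the $\Gamma^{\vee}$-term vanishes because $[\Gamma^{\vee}]\cdot\Gamma=\langle\Gamma,\Gamma\rangle=0$ (skew-symmetry of the intersection form in odd middle dimension), and the initial-time term vanishes by the special Lagrangian condition at $\mu_0$ --- this is exactly your second, ``consistency check'' paragraph. Your first paragraph instead observes that the conclusion is definitional: for any $s\in{\cal F}$ with $Z_{\Gamma}(s)\neq 0$ one has $\int_{\Gamma}\Imaginary(e^{-i\alpha_{\Gamma}(s)}\Omega^{s}) = \Imaginary(e^{-i\alpha_{\Gamma}(s)}Z_{\Gamma}(s))=\Imaginary|Z_{\Gamma}(s)|=0$, with no flow, no Proposition~3.1.1, and no special Lagrangian hypothesis. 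That is a stronger statement (it holds everywhere on ${\cal F}$ where $\alpha_{\Gamma}$ is defined, not merely along $\gamma$) and it correctly identifies which hypotheses are inert for the lemma as stated. What the paper's longer route buys becomes visible only in the sequel: the computation it rehearses is the one actually needed in Definition~3.2.9 and Proposition~3.2.11, where the same class is paired against the \emph{individual component classes} $f_{\ast}[L_j]$; there your tautology breaks down, because the phase appearing is $\alpha_{\Gamma}$ rather than $\alpha_{\Gamma_j}$ (these agree only on the wall of marginal stability, cf.\ Remark~3.2.2), and Proposition~3.1.1 yields the generally nonzero answer $(-4\,\vol(Y^{s_0})\,\mu\,\tau(\mu))\,\langle\Gamma, f_{\ast}[L_j]\rangle$, of which the lemma is precisely the sum over $j$. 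One small caution on your write-up: nonvanishing of $Z_{\Gamma}$ along all of $\gamma$ is not automatic from $Z_{\Gamma}(s_0)\neq 0$; for $\mu\ge\mu_0$ it does follow, since $\mu\frac{d}{d\mu}\log|Z_{\Gamma}|^2=\|d\log|Z_{\Gamma}|^2\|^2\ge 0$ along the gradient trajectory, while for $\mu<\mu_0$ it is implicit in the statement itself, which presupposes that $\alpha_{\Gamma}(\mu)$ is defined on all of $I$.
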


\begin{proof}
 This follows immediately from Proposition~3.1.1
            % Proposition [$\Imaginary(e^{-i\alpha_{\Gamma}}\Omega)$
            %              along $\Gamma$-attractor flow]
 since, under the situation given,
 $$
  [\Gamma^{\vee}]\cdot\Gamma\;
   =\; 0\;
   =\; [\Imaginary( e^{-i\alpha_{\Gamma}(\mu_0)}\Omega^{s(\mu_0)})]
        \cdot f_{\ast}([L])\,.
 $$
\end{proof}

Continuing the situation of Lemma~3.2.1,
   % Lemma [vanishing of topological obstruction along attractor flow]
assume further that
 $$
  f\,=\, f_1\cup\,\cdots\,\cup f_q\;:\;
    L=L_1\cup\,\cdots\,\cup L_q\; \longrightarrow\; Y^{s_0}
 $$
  is an immersed special Lagrangian submanifolds
  (with phase $e^{i\alpha_{\Gamma}(s_0)}$)
  with only transverse intersections and with each $L_j$ connected.
 (In particular, all $L_j$'s are smooth.)
Here, $\cup$ is a disjoint union.

\begin{remark}
{\it $[$location at wall of marginal stability$]$.}
{\rm
 Let
  $\Gamma_j=f_{\ast}([L_j])={f_j}_{\ast}([L_j])$  and
  $Z_{\Gamma_j}$ be the associated complex central-charge function
   on ${\cal F}$,
   defined by $Z_{\Gamma_j}(s):= [\Omega^s]\cdot \Gamma_j
               =\int_{L_j}f_i^{\ast}\Omega^s$.
 Assume that $Z_{\Gamma_j}\ne 0$, for all $j$,  and
  let $e^{i\alpha_{\Gamma_j}}:= Z_{\Gamma_j}/|Z_{\Gamma_j}|$
    be the associated phase function on ${\cal F}$.
 Then since $f_j=f|_{L_j}$,
  $f_j^{\ast}\Imaginary(e^{-i\alpha_{\Gamma}(s_0)}\Omega^{s_0})=0$
    and
  $f_j^{\ast}\Real(e^{-i\alpha_{\Gamma}(s_0)}\Omega^{s_0})$
   coincides with the volume-form on $L_j$
   induced by the metric $f_j^{\ast}\omega^{s_0}$,
  for all $j$.
 It follows that
  $e^{-i\alpha_{\Gamma}(s_0)}\cdot Z_{\Gamma_j}(s_0)>0$, for all $j$,
  and, hence (in case that $q>1$),
  $s_0$ lies in the stratum of the wall $\Pi^{\MStiny}_{\cal F}$
   of marginal stability in ${\cal F}$ on which
    all the phase angles
     $\alpha_{\Gamma_j}(s_0)$, $j=1,\,\ldots\,,q$, and
     $\alpha_{\Gamma}(s_0)$
    are equal.
}\end{remark}

\begin{assumption}
{$[$connectivity$]$.} {\rm
 Without loss of generality and for the simplicity of presentation,
 one may
 {\it assume that $f(L)$ is connected}
 since otherwise one simply
  applies the discussion below component by component  and then
  adjusts the related constants that appear
   in the estimates/inequalities used
   so that all the corresponding constants become equal and
     work simultaneously for all components.
}\end{assumption}

Denote by $\{y_1,\,\cdots\,, y_n\}\subset Y^{s_0}$
 the set of points in $Y^{s_0}$ at which $f$ meets transversely.
Let $f^{-1}(y_j)=\{p_j^+,p_j^-\}$, where the $\pm$ is determined
 by the angle condition that
  the sum of the characteristic angles\footnote{Here
                                we use the convention in
                                [Joy3: V, Sec.~6.4, Example~6.11],
                                cf.~Appendix A.1.}
   from $f_{\ast}T_{p_j^+}L$ to $f_{\ast}T_{p_j^-}L$
   as a pair of special Lagrangian subspaces
   with phase $e^{i\alpha_{\Gamma}(s_0)}$ in
   $T_{y_j}Y^{s_0}:= (T_{y_j}Y,\,
     J^{s_0}|_{y_j},\,\omega^{s_0}|_{y_j},\,\Omega^{s_0}|_{y_j})$
   is equal to $\pi$.

\begin{definition}
{\bf [dual graph of decomposition].} {\rm
 The {\it dual graph} $\Xi_{f=f_1\cup\cdots\cup f_q}$ associated to
  the decomposition $f:L=L_1\cup\,\cdots\,\cup L_q\rightarrow Y^{s_0}$
  is a directed graph with
   \begin{itemize}
    \item[$\cdot$]
     a {\it vertex} $v_j$ for each $L_j$, $j=1,\,\ldots\,,q$, and

    \item[$\cdot$]
     a {\it directed edge} (i.e.\ {\it arrow}) $e_l$
       for each $y_l$, $l=1,\,\ldots\,,n$,
      with {\it initial end-point} (i.e.\ {\it tail}) $v_j$ and
      {\it terminal end-point} (i.e.\ {\it head}) $v_k$
      if $p_l^+\in L_j$ and $p_l^-\in L_k$.
   \end{itemize}
  The set of vertices (resp.\ edges) of $\Xi_{f=f_1\cup\cdots\cup f_q}$
   will be denoted by $\Xi_{f=f_1\cup\cdots\cup f_q}^{(0)}$
   (resp.\ $\Xi_{f=f_1\cup\cdots\cup f_q}^{(1)}$).
}\end{definition}

\begin{remark}
{$[$transverse special Lagrangian intersection in CY$^{\,3}$:
    direction $=$ sign$]$.} {\rm
 In ${\Bbb C}^3$
  with coordinates $(z^1, z^2, z^3)=(x^1+iy^1, x^2+iy^2, x^3+iy^3)$,
  the standard Hermitian metric
   $ds^2 = dz^1\otimes d\bar{z}^1
           + dz^2\otimes d\bar{z}^2 + dz^3\otimes d\bar{z}^3$  and
  holomorphic $3$-form $\Omega=dz^1\wedge dz^2\wedge dz^3$,
 a pair of special Lagrangian submanifold with respect to
  the standard K\"{a}hler form
   $\omega=-\frac{1}{2}\Imaginary(ds^2)
          =\frac{i}{2}(dz^1\wedge d\bar{z}^1
             + dz^2\wedge d\bar{z}^2 + dz^3\wedge d\bar{z}^3)$ and
  the calibration $\Real\Omega$ can be put into the canonical form
  $$
   \Pi^0\;=\; \{(x^1, x^2, x^3)\,:\, x^1,x^2,x^3\in {\Bbb R}\}\,,
    \hspace{1em}
   \Pi^{(\phi_1,\phi_2,\phi_3)}\;
    =\; \{(e^{i\phi_1}x^1, e^{i\phi_2} x^2, e^{i\phi_3}x^3)\,:\,
          x^1,x^2,x^3\in {\Bbb R}\}
  $$
  under the $\SU(3)$-action on $({\Bbb C}^3, ds^2)$,
  with $0< \phi_1,\,\phi_2,\,\phi_3< \pi$,
        $\phi_1+\phi_2+\phi_3=\pi$ or $2\,\pi$;
 (cf.~Appendix A.1.).
 One can check that,
  with the built-in orientation on ${\Bbb C}^3$ and
   the orientation on $\Pi^0$ and $\Pi^{(\phi_1,\phi_2,\phi_3)}$
    determined by $\Real\Omega$ taken into account,
  the (oriented) intersection
  $$
   \Pi^0\cdot\Pi^{(\phi_1,\phi_2,\phi_3)}\;\;
    (\,=\; -\, \Pi^{(\phi_1,\phi_2,\phi_3)}\cdot\Pi^{0}\,)\;\;
    =\;
    \left\{
     \begin{array}{rcl}
      +1 && \mbox{if\;\;$\phi_1+\phi_2+\phi_3=\pi$}\,, \\[.6ex]
      -1 && \mbox{if\;\;$\phi_1+\phi_2+\phi_3=2\pi$}\,.
     \end{array}
    \right.
  $$
 It follows that in Definition~3.2.4,
                  % Definition [dual graph of decomposition]
   $\Xi_{f=f_1\cup\,\cdots\,\cup f_q}$
    has an arrow $e_l$ from $v_j$ to $v_k$
   if and only if {\it the (oriented) intersection $L_j\cdot L_k$
    takes value $+1$ at $y_l$}; and, hence
 one may equivalently take the latter condition
  as the rule to assign arrows in $\Xi_{f=f_1\cup\,\cdots\,\cup f_q}$.

 In particular, $\Xi_{f=f_1\cup\cdots\cup f_q}$,
   which appears naturally in Joyce' setting as defined above
    via characteristic angles
   (cf.~[Joy3: V.~`graphical method' in the end of Sec.~9.2]),
  {\it is directly related to the quiver underlying
  the effective $d=1$, $N=4$ supersymmetric quantum mechanics
  associated to the D-brane configuration in $Y^{s_0}$
  specified in part by $f$},
   which is more naturally defined via the sign at the intersections;
 cf.~[De4: Sec.~3] and [D-M: Sec's.~4.1 and 4.2].
}\end{remark}

{For} the convenience of the application of Joyce' result
 to the case of attractor flows,
we recall
 [Joy3: V, Sec.~9.3, Theorem~9.8] (cf. Appendix A.1, Theorem~A.1.5)
         % Theorem [desingularization in a family of Calabi-Yau's]
 in three steps below,
 with notations and some settings adapted to our situation.

\begin{definition}
{\bf [Joyce' criterion: smoothability by deforming complex structure].}
{\rm
 {For} $A_1,\,\cdots\,, A_n>0$,
 let ${\cal G}_{(A_1,\,\cdots\,,\,A_n)}$
  be the set of $(s,t)\in {\cal F}\times (0,1)$ such that
  $$
   f^{\ast}[\Imaginary(e^{-i\alpha_{\Gamma}(s)}\Omega^s)]\cdot [L_j]\;
    =\;
    t^3 \cdot
     \raisebox{-.5em}{$\left(\rule{0em}{2em}\right.$}\!\!
      \sum_{\mbox{\scriptsize
            $\begin{array}{c}
              k\in\{1,\,\ldots\,, n\},\\
               p_k^+ \in L_j \end{array}$}}\!\!A_k \;
      -\;
      \sum_{\mbox{\scriptsize
            $\begin{array}{c}
              k^{\prime}\in\{1,\,\ldots\,, n\},\\
               p_{k^{\prime}}^- \in L_j \end{array}$}}\!\!A_{k^{\prime}}
     \raisebox{-.5em}{$\left.\rule{0em}{2em}\right)$},
  $$
  for $\;j=1,\,\ldots\,, q\,.$
  When ${\cal G}_{(A_1,\,\cdots\,,\,A_n)}\ne\emptyset$ and
   contains a neighborhood of $(s_0,0)$ in ${\cal F}\times (0,1)$,
  we say that the tuple
   $(A_1,\,\cdots\,,A_n)$ {\it satisfies Joyce' criterion}
    ({\it of smoothing the special Lagrangian submanifold-with-singularity
     $f(L)$ with a phase via deforming complex structures}).
}\end{definition}

\begin{definition}
{\bf [admissible subset].} {\rm
 {For} $(A_1,\,\cdots\,,A_n)$ that satisfies the Joyce' criterion above,
  $\epsilon\in(0,1)$, $\kappa>1$, and $C>0$,
 define
 $$
  {\cal G}_{(A_1,\,\cdots\,,\,A_n)}^{\,\epsilon,\kappa, C}\;
   =\; \{\, (s,t)\in {\cal G}_{(A_1,\,\cdots\,,\,A_n)}\:
                 :\: t\in (0,\epsilon]\,,\;
                     |s-s_0|\,\le\, C\,t^{\kappa+3/2}\, \}
 $$
 and call it an {\it admissible subset} in ${\cal F}\times (0,1)$
  ({\it for smoothing $L$}).
}\end{definition}

\noindent
Here,
 a local coordinate chart is introduced in a neighborhood
  of $s_0\in {\cal F}$  and
 $|s-s_0|$ is defined in terms of this chart
  as a subset in some ${\Bbb R}^{m^{\prime}}$.

\begin{theorem}
{\bf [desingularization in a family of Calabi-Yau's].}
{\rm ([Joy3: V.~Sec.~9.3, Theorem~9.8];
      cf.~Appendix A.~1: Theorem~A.1.5 and Observation~A.1.6.)}
           % Theorem     [desingularization in a family of Calabi-Yau's]
           % Observation [restriction $|s|\le t^{\kappa+m/2}\,$]
 %
 Recall
  the family
   $\{Y^s:=(Y, J^s, \omega^s, \Omega^s):s\in {\cal F}\}$
   of smooth Calabi-Yau $3$-folds  and
  the special Lagrangian immersion
   $$
    f\,=\, f_1\cup\,\cdots\,\cup f_q\;:\;
     L=L_1\cup\,\cdots\,\cup L_q\; \longrightarrow\; Y^{s_0}
   $$
   with phase $e^{i\alpha_{\Gamma}(s_0)}$.
 Continuing the situation under study,
 let $N$ be the (oriented) connected sum of
  (the disjoint union) $L_1\cup\,\cdots\,\cup L_q$ with itself
  at the pairs of points $(p_k^+, p_k^-)$ for $k=1,\,\ldots\,, n\,$.
 Note that under Assumption~3.2.3, $N$ is connected.
               % Assumption [connectivity]
 Note also that since $[\omega^s]\in H^2(Y;{\Bbb R})$ is fixed,
  treating $f$ as a map to $Y$ (and hence to all $Y^s$),
  one has
  $f^{\ast}[\omega^s]=f^{\ast}[\omega^{s_0}]=0$ in $H^2(L;{\Bbb R})$
   for all $s\in {\cal F}$.
 Suppose that the $n$-tuple
  $(A_1,\,\cdots\,,A_n)$ satisfies Joyce' criterion in Definition~3.2.6,
      % Definition
      % [Joyce' criterion: smoothability by deforming complex structure]
 Then,
  there exist
   constants
    $\epsilon\in (0,1)$, $\kappa>1$, and $C>0$, and
   a smooth family of maps
    $$
     \left\{\,
      f^{s,t}\,:\,L^{s,t}:=\tilde{N}^{s,t}\rightarrow Y^s\;
       \left.\rule{0em}{.9em}\right|\; (s,t)\,\in\,
           {\cal G}_{(A_1,\,\cdots\,,\,A_n)}^{\,\epsilon,\kappa, C}
            \,\right\}
    $$
   such that
   \begin{itemize}
    \item[$\cdot$]
     $\tilde{N}^{s,t}$ is a compact smooth manifold diffeomorphic to $N$,
     constructed by gluing a Lawlor neck
     $C^{\,-1/(\kappa+3/2)}\,t\cdot L^{\pm, A_k}$
     into $f(L)$ at $y_k$ for $k=1,\,\ldots\,, n$;

    \item[$\cdot$]
     $f^{s,t}:L^{s,t}\rightarrow Y^s$ is an embedded
       special Lagrangian submanifold with phase
       $e^{i\alpha_{\Gamma}(s)}$;

    \item[$\cdot$]
     in the sense of currents,
     $f^{s,t}\rightarrow f$ as $(s,t)\rightarrow (s_0,0)$.
   \end{itemize}
 Furthermore, $\kappa>1$ can be chosen to be arbitrarily close to $1$.
\end{theorem}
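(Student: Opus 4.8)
The plan is to deduce the statement directly from Joyce's desingularization theorem in a family, recalled as Theorem~A.1.5 together with Observation~A.1.6 in Appendix~A.1, after checking that the attractor-flow setting meets Joyce's hypotheses and then translating his output into the present notation. First I would recall the precise shape of Joyce's result: given a compact special Lagrangian immersion $f:L\rightarrow Y^{s_0}$ with only transverse self-intersections and a common phase, together with an $n$-tuple of positive neck parameters $(A_1,\ldots,A_n)$, Joyce glues a scaled Lawlor neck into each intersection point $y_k$ and produces, under a first-order cohomological matching condition, a smooth family of embedded special Lagrangian submanifolds converging to $f(L)$. Observation~A.1.6 promotes this to the case where the ambient Calabi-Yau structure is allowed to vary in a family $\{Y^s\}_{s\in{\cal F}}$, at the cost of shrinking the parameter region to an admissible set of the form $|s-s_0|\le C\,t^{\kappa+3/2}$.

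Next I would verify that our data fit this framework. The family $\{Y^s=(Y,J^s,\omega^s,\Omega^s)\}$ consists of genuine Calabi-Yau $3$-folds, hence in particular almost Calabi-Yau in Joyce's sense, and the normalization $\frac{1}{3!}\,\omega^s\wedge\omega^s\wedge\omega^s=\frac{i}{2^3}\,\Omega^s\wedge\overline{\Omega^s}$ is exactly the calibrated-geometry normalization Joyce adopts (cf.\ the normalization footnote in Sec.~3.1). Because the K\"{a}hler class $[\omega^s]\in H^2(Y;{\Bbb R})$ is held fixed along ${\cal F}$, one has $f^{\ast}[\omega^s]=f^{\ast}[\omega^{s_0}]=0$ in $H^2(L;{\Bbb R})$ for all $s$, so the Lagrangian (zero-symplectic-area) constraint needed for the construction persists throughout the family. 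The combinatorial bookkeeping of which sheet is $p_k^+$ versus $p_k^-$ at each $y_k$ is fixed by the characteristic-angle convention, which by Remark~3.2.5 agrees with the orientation/sign rule underlying the dual graph $\Xi_{f=f_1\cup\cdots\cup f_q}$, so that Joyce's ``graphical method'' and the flow-in/flow-out expression of Definition~3.2.6 coincide.

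With $(A_1,\ldots,A_n)$ assumed to satisfy Joyce' criterion, the cohomological matching hypothesis holds on a neighborhood of $(s_0,0)$; a useful consistency check is that summing the per-component identities of Definition~3.2.6 over $j=1,\ldots,q$ telescopes each $A_k$ with one $+$ and one $-$, giving $f^{\ast}[\Imaginary(e^{-i\alpha_\Gamma(s)}\Omega^s)]\cdot f_\ast([L])=0$, which is precisely the total vanishing guaranteed along the attractor flow by Lemma~3.2.1. Applying Theorem~A.1.5 and Observation~A.1.6 then yields constants $\epsilon\in(0,1)$, $\kappa>1$, $C>0$ and the asserted smooth family $\{f^{s,t}:L^{s,t}=\tilde{N}^{s,t}\rightarrow Y^s\}$ over the admissible subset ${\cal G}^{\,\epsilon,\kappa,C}_{(A_1,\ldots,A_n)}$, where $\tilde{N}^{s,t}$ is built by gluing the scaled Lawlor neck $C^{-1/(\kappa+3/2)}t\cdot L^{\pm,A_k}$ into $f(L)$ at each $y_k$; each $f^{s,t}$ is an embedded special Lagrangian of phase $e^{i\alpha_\Gamma(s)}$, and $f^{s,t}\rightarrow f$ as currents. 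Since Assumption~3.2.3 makes $f(L)$ connected, the connected sum $N$ is connected and diffeomorphic to each $\tilde{N}^{s,t}$, and the freedom to take $\kappa$ arbitrarily close to $1$ is part of Joyce's statement.

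The main obstacle is not any new analysis but the faithful transcription of conventions: one must confirm that the Lawlor-neck scaling, the admissible window $|s-s_0|\le C\,t^{\kappa+3/2}$, and the sign rule for the edges of $\Xi_{f=f_1\cup\cdots\cup f_q}$ all correspond under Joyce's normalization and ours. The only genuinely nontrivial point is checking that Joyce's hypotheses --- stated for a single almost Calabi-Yau and extended by Observation~A.1.6 --- survive verbatim when $J^s$ varies along a prescribed attractor trajectory rather than in an arbitrary family; this reduces to observing that the attractor flow is a smooth path in ${\cal F}$ which, by Proposition~3.1.1 and Lemma~3.2.1, keeps the relevant cohomology pairing zero, so the trajectory meets the admissible set for all small $t$.
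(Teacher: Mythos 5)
Your proposal is correct and follows essentially the same route as the paper: the paper itself presents this theorem as a direct recollection of Joyce's [Joy3: V, Sec.~9.3, Theorem~9.8] (Theorem~A.1.5) combined with Observation~A.1.6, with the notation translated to the attractor-flow setting (the phase rotation $e^{-i\alpha_{\Gamma}(s)}\Omega^s$, the case $m=3$, Joyce's criterion of Definition~3.2.6, and the admissible subsets of Definition~3.2.7), exactly as you do. Your extra consistency checks (the telescoping sum matching the automatic vanishing of Lemma~3.2.1, and the sign conventions via Remark~3.2.5) are sound additions, though note the remarks in your final paragraph about the attractor trajectory pertain to the subsequent Proposition~3.2.10 rather than to this theorem, which is stated for the general family $\{Y^s : s\in{\cal F}\}$.
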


\bigskip

Let us now specialize to what happens along the trajectory
 $\gamma:I\rightarrow {\cal F}$ of the $\Gamma$-attractor flow
 with $\gamma(\mu_0)=s_0$.
{For} simplicity of notations, we will denote
 $e^{i\alpha_{\Gamma}(\gamma(\mu))}$ by $e^{i\alpha_{\Gamma}(\mu)}$ and
 $s=\gamma(\mu)$ by $s=s(\mu)$.
Recall that
 $\Imaginary\left(e^{-i\alpha_{\Gamma}(\mu)}\Omega^{s(\mu)}\right)
   = (-\,4\,\vol(Y^{\!s_0})\cdot\mu\,\tau(\mu))\,\Gamma^{\vee}\,
       +\, \frac{\mu}{\mu_0}\,
             \Imaginary\left(
                 e^{-i\alpha_{\Gamma}(\mu_0)}\Omega^{s(\mu_0)}
                       \right)$
 along $\gamma$,
 where
  $\tau(\mu)
    = -\int_{\mu_0}^{\mu}
         \frac{d\mu^{\prime}}
              {|Z_{\Gamma}(s(\mu^{\prime}))|\,{\mu^{\prime}}^2}$,
  from Proposition~3.1.1.
     % Proposition [$\Imaginary(e^{-i\alpha_{\Gamma}}\Omega)$
     %              along $\Gamma$-attractor flow]
It follows that
 \begin{eqnarray*}
  \left(
   f^{\ast}
    [\Imaginary(e^{-i\alpha_{\Gamma}(\mu)}\Omega^{s(\mu)})]\cdot[L_j]
  \right)_{j=1}^q
   & =
   & (-\,4\,\vol(Y^{\!s_0})\cdot\mu\,\tau(\mu))\cdot
      \left(\rule{0em}{1em}
        \langle\,\Gamma\,,\,f_{\ast}[L_j]\,\rangle
      \right)_{j=1}^q                                   \\[1.2ex]
  & \in
   & (-\,4\,\vol(Y^{\!s_0})\cdot\mu\,\tau(\mu))\cdot\,{\Bbb Z}^q
 \end{eqnarray*}
since
 $f^{\ast}[\Imaginary(e^{-i\alpha_{\Gamma}(\mu_0)}\Omega^{s_0})]
   \cdot[L_j]=0\,$,
 for $j=1,\,\ldots\,,q\,$.
Here,
 $\langle\,\cdot\,,\,\cdot\,\rangle:
    H_3(Y;{\Bbb Z})\times H_3(Y;{\Bbb Z})\rightarrow {\Bbb Z}$
  is the (symplectic) intersection form on $Y$.
Note that
 $$
  -\,4\,\vol(Y^{\!s_0})\cdot\mu\,\tau(\mu)\;
   \left\{
    \begin{array}{lccc}
     >\;0 && \mbox{for} & \mu\;>\; \mu_0\,, \\[.2ex]
     =\;0 && \mbox{for} & \mu\;=\; \mu_0\,, \\[.2ex]
     <\;0 && \mbox{for} & \mu\;<\; \mu_0\,.
    \end{array}
   \right.
 $$
A comparison of this with Joyce' criterion in Definition~3.2.6
    % Definition
    % [Joyce' criterion: smoothability by deforming complex structure]
 leads one immediately to the following definition:

\begin{definition}
{\bf [topological criterion of smoothing].}
{\rm
 {For} $A_1,\,\cdots\,, A_n>0$,
 we say that the tuple $(A_1,\,\cdots\,,A_n)$
  {\it satisfies the topological criterion of smoothing}
   ({\it the special Lagrangian submanifold-with-singularity
         $f(L)$ with a phase via deforming complex structures})
 if it satisfies
  $$
   \left(\rule{0em}{1em}
     \langle\,\Gamma\,,\,f_{\ast}[L_j]\,\rangle
   \right)_{j=1}^q\;
   =\;
     \raisebox{-.5em}{$\left(\rule{0em}{2em}\right.$}\!\!
      \sum_{\mbox{\scriptsize
            $\begin{array}{c}
              k\in\{1,\,\ldots\,, n\},\\
               p_k^+ \in L_j \end{array}$}}\!\!A_k \;
      -\;
      \sum_{\mbox{\scriptsize
            $\begin{array}{c}
              k^{\prime}\in\{1,\,\ldots\,, n\},\\
               p_{k^{\prime}}^- \in L_j \end{array}$}}\!\!A_{k^{\prime}}
     \raisebox{-.5em}{$\left.\rule{0em}{2em}\right)_{j=1}^q$}\,.
  $$
}\end{definition}

This means that $(A_1,\,\cdots\,,A_n)$ is a positive solution
 to a system of inhomogeneous linear equations
 whose homogeneous/degree-$1$ part comes solely from the dual graph
  $\Xi_{f=f_1\cup\,\cdots\,\cup f_q}$  and
 whose constant/degree-$0$ part is given by
  $(\,\langle\,\Gamma\,,\,f_{\ast}[L_j]\,\rangle\,)_{j=1}^q\,$.
The following lemma is immediate:

\begin{lemma}
{\bf [topological criterion {\boldmath $\Rightarrow$} Joyce' criterion].}
 If $(A_1,\,\cdots\,,A_n)$ with positive entries
  satisfies the topological criterion of smoothing,
 then it satisfies Joyce' criterion.
\end{lemma}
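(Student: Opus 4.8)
The plan is to read the conclusion off the attractor-flow identity displayed immediately before Definition~3.2.9, so that the only genuine work is a single substitution followed by a reparametrization of the flow variable $\mu$ by the neck-scale parameter $t$. First I would abbreviate the right-hand side of the topological criterion by $c_j := \sum_{p_k^+\in L_j} A_k - \sum_{p_{k'}^-\in L_j} A_{k'}$, so that the hypothesis reads $\langle\,\Gamma\,,\,f_{\ast}[L_j]\,\rangle = c_j$ for $j=1,\ldots,q$. Along the $\Gamma$-attractor trajectory $\gamma$ with $\gamma(\mu_0)=s_0$ one has, by Proposition~3.1.1 combined with the vanishing $f^{\ast}[\Imaginary(e^{-i\alpha_{\Gamma}(\mu_0)}\Omega^{s_0})]\cdot[L_j]=0$ of Lemma~3.2.1, the identity
$$
f^{\ast}[\Imaginary(e^{-i\alpha_{\Gamma}(\mu)}\Omega^{s(\mu)})]\cdot[L_j]
 = \big(-\,4\,\vol(Y^{\!s_0})\cdot\mu\,\tau(\mu)\big)\,
   \langle\,\Gamma\,,\,f_{\ast}[L_j]\,\rangle\,,
 \qquad j=1,\ldots,q\,.
$$

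Next I would substitute the hypothesis $\langle\,\Gamma\,,\,f_{\ast}[L_j]\,\rangle=c_j$ and use the sign table recorded before Definition~3.2.9, namely that $-\,4\,\vol(Y^{\!s_0})\cdot\mu\,\tau(\mu)>0$ for $\mu>\mu_0$. This positivity lets me define along the trajectory the neck-scale
$$
t(\mu)\;:=\;\big(-\,4\,\vol(Y^{\!s_0})\cdot\mu\,\tau(\mu)\big)^{1/3}\,,
 \qquad \mu>\mu_0\,,
$$
which is smooth and positive with $t(\mu)\to 0$ as $\mu\to\mu_0^{+}$. With this choice the identity becomes exactly $f^{\ast}[\Imaginary(e^{-i\alpha_{\Gamma}(s(\mu))}\Omega^{s(\mu)})]\cdot[L_j]=t(\mu)^3\,c_j$, which is precisely the system of $q$ equations cutting out ${\cal G}_{(A_1,\ldots,A_n)}$ in Definition~3.2.6. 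Hence the reparametrized trajectory $\mu\mapsto(s(\mu),t(\mu))$, for $\mu$ slightly above $\mu_0$, lies entirely inside ${\cal G}_{(A_1,\ldots,A_n)}$ and tends to $(s_0,0)$; in particular ${\cal G}_{(A_1,\ldots,A_n)}\neq\emptyset$ and $(s_0,0)\in\overline{{\cal G}_{(A_1,\ldots,A_n)}}$, which is the operative content of Joyce' criterion.

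The hard part will be the bookkeeping in matching this with the literal ``contains a neighborhood of $(s_0,0)$'' phrasing: the substitution by itself places only a curve inside ${\cal G}_{(A_1,\ldots,A_n)}$, so what really needs checking is that this curve supplies the admissible subset of Definition~3.2.7 required to feed Theorem~3.2.8. To do this I would estimate $|s(\mu)-s_0|$ against $t(\mu)$: since $\gamma$ is a smooth flow line through the non-degenerate point $s_0$ (which by Remark~3.2.2 sits on the wall of marginal stability) and $\tau(\mu_0)=0$, both $|s(\mu)-s_0|$ and $t(\mu)$ vanish to controlled orders as $\mu\to\mu_0^{+}$, so one may pick $\epsilon\in(0,1)$, $C>0$ and $\kappa>1$ (arbitrarily close to $1$, as Theorem~3.2.8 permits) with $|s(\mu)-s_0|\le C\,t(\mu)^{\kappa+3/2}$ on a one-sided neighborhood of $\mu_0$, making ${\cal G}_{(A_1,\ldots,A_n)}^{\,\epsilon,\kappa,C}$ nonempty. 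The only delicate comparison is thus between the linear-in-$\mu$ rate of the attractor flow and the cube-root rate of $t(\mu)$; granting it, the tuple $(A_1,\ldots,A_n)$ satisfies Joyce' criterion, which completes the argument.
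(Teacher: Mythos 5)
Your proposal is correct and is essentially the paper's own route: the lemma is declared ``immediate'' there precisely on the strength of the displayed identity $\left(f^{\ast}[\Imaginary(e^{-i\alpha_{\Gamma}(\mu)}\Omega^{s(\mu)})]\cdot[L_j]\right)_{j=1}^q=(-\,4\,\vol(Y^{s_0})\cdot\mu\,\tau(\mu))\left(\langle\,\Gamma\,,\,f_{\ast}[L_j]\,\rangle\right)_{j=1}^q$ together with the positivity of $-\,4\,\vol(Y^{s_0})\cdot\mu\,\tau(\mu)$ for $\mu>\mu_0$, and your reparametrization $t(\mu)=(-\,4\,\vol(Y^{s_0})\cdot\mu\,\tau(\mu))^{1/3}$ plus the admissibility estimate $|s(\mu)-s_0|=O(t^3)\le C\,t^{\kappa+3/2}$ with $\kappa+3/2<3$ reproduces exactly what the paper carries out in the proof of Proposition~3.2.11. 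One minor citation slip: the vanishing $f^{\ast}[\Imaginary(e^{-i\alpha_{\Gamma}(\mu_0)}\Omega^{s_0})]\cdot[L_j]=0$ for each individual $j$ comes from the phase alignment of Remark~3.2.2 (each $f_j$ is special Lagrangian with phase $e^{i\alpha_{\Gamma}(s_0)}$), not from Lemma~3.2.1, which controls only the total class $f_{\ast}[L]$.
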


With the above preparations and
as a consequence of
  Proposition~3.1.1 from Denef  and
   % Proposition [$\Imaginary(e^{-i\alpha_{\Gamma}}\Omega)$
   %              along $\Gamma$-attractor flow]
  Theorem~A.1.5 from Joyce,
   % Theorem [desingularization in a family of Calabi-Yau's]
 one can now show that:

\begin{proposition}
{\bf [smoothing along attractor flow].}
 Continuing the situation under study,
 let $(A_1,\,\cdots\,,A_n)$, with positive entries,
  satisfy the topological criterion of smoothing.
 Then,
  there exist
   constants
    $\delta,\,\epsilon \in (0,1)$, $\kappa>1$, and $C>0$, and
   a smooth family of maps
    $$
     \left\{\,
      f^{s,t}\,:\,L^{s,t}:=\tilde{N}^{s,t}\rightarrow Y^s\;
       \left.\rule{0em}{.9em}\right|\; (s,t)\,\in\,
           {\cal G}_{(A_1,\,\cdots\,,\,A_n)}^{\,\epsilon,\kappa, C}
            \,\right\}
    $$
   as in Theorem~A.1.5
       % Theorem [desingularization in a family of Calabi-Yau's]
   such that
    \begin{itemize}
     \item[$\cdot$]
      the restriction
       $\gamma: (\mu_0\,,\,\mu_0+\delta)\subset I \rightarrow {\cal F}$
       of the $\Gamma$-attractor flow trajectory
       $\gamma:I\rightarrow {\cal F}$ lifts to
      $\;\tilde{\gamma}: (\mu_0\,,\,\mu_0+\delta)\rightarrow
         {\cal G}_{(A_1,\,\cdots\,,\,A_n)}^{\,\epsilon,\kappa, C}\,
         \subset\, {\cal F}\times (0,1)$
       with $\pr_1\circ\tilde{\gamma}
             =\gamma|_{(\mu_0\,,\,\mu_0+\delta)}\,$,
      where $\,\pr_1:{\cal F}\times (0,1)\rightarrow {\cal F}\,$
       is the projection map.
    \end{itemize}
   In other words, Joyce' construction smoothes $f(L)\subset Y^{\!s_0}$
    into $Y^{s(\mu)}$
     along the $\Gamma$-attractor flow trajectory $\gamma$
     with ${\mu\,\raisebox{-.6ex}{$\stackrel{>}{\sim}$}\:\mu_0}$.
\end{proposition}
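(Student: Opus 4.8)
The plan is to feed Denef's flow formula (Proposition~3.1.1) into Joyce's smoothing region (Definition~3.2.6), using the topological criterion to reconcile the two. First I would pin down the correct value of the deformation parameter $t$ along the trajectory. Combining the computation that precedes Definition~3.2.9 with the topological criterion (Definition~3.2.9) gives, componentwise in $j$,
$$
 f^{\ast}[\Imaginary(e^{-i\alpha_{\Gamma}(\mu)}\Omega^{s(\mu)})]\cdot[L_j]\;
 =\; \big(-4\,\vol(Y^{\!s_0})\cdot\mu\,\tau(\mu)\big)\,
     \Big(\, \sum_{p_k^+\in L_j} A_k \,-\, \sum_{p_{k^{\prime}}^-\in L_j} A_{k^{\prime}}\,\Big)\,.
$$
Comparing with the defining equation of $\mathcal{G}_{(A_1,\,\cdots\,,\,A_n)}$ in Definition~3.2.6, which carries a factor $t^3$ exactly where $-4\,\vol(Y^{\!s_0})\cdot\mu\,\tau(\mu)$ now stands, I am led to set
$$
 t(\mu)\;:=\;\big(-4\,\vol(Y^{\!s_0})\cdot\mu\,\tau(\mu)\big)^{1/3}\,.
$$
Since $-4\,\vol(Y^{\!s_0})\cdot\mu\,\tau(\mu)>0$ precisely for $\mu>\mu_0$ (as recorded just before Definition~3.2.9), $t(\mu)$ is a well-defined positive real for $\mu$ slightly above $\mu_0$, with $t(\mu)\to 0$ as $\mu\to\mu_0^{+}$, and by construction $(\gamma(\mu),t(\mu))\in\mathcal{G}_{(A_1,\,\cdots\,,\,A_n)}$.

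Next I would invoke Lemma~3.2.10: the topological criterion implies Joyce' criterion, so Theorem~A.1.5 supplies the constants $\epsilon\in(0,1)$, $\kappa>1$, $C>0$ and the smooth family $\{f^{s,t}\}$ over the admissible subset $\mathcal{G}^{\,\epsilon,\kappa,C}_{(A_1,\,\cdots\,,\,A_n)}$. It then remains to check that the curve $\tilde{\gamma}(\mu):=(\gamma(\mu),t(\mu))$ actually enters this admissible subset for $\mu$ near $\mu_0$, i.e.\ satisfies both $t(\mu)\in(0,\epsilon]$ and the cone condition $|s(\mu)-s_0|\le C\,t(\mu)^{\kappa+3/2}$ of Definition~3.2.7; once this holds, restricting the Joyce family along $\tilde{\gamma}$ produces the desired smoothing.

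The heart of the matter is the asymptotic comparison as $\mu\to\mu_0^{+}$. Because $Z_{\Gamma}(s_0)\ne0$, the integrand in $\tau(\mu)=-\int_{\mu_0}^{\mu}\frac{d\mu^{\prime}}{|Z_{\Gamma}(\mu^{\prime})|\,{\mu^{\prime}}^2}$ is bounded away from $0$ and $\infty$ near $\mu_0$, whence $-\mu\,\tau(\mu)\sim \frac{\mu-\mu_0}{|Z_{\Gamma}(s_0)|\,\mu_0}$ and so $t(\mu)\sim c\,(\mu-\mu_0)^{1/3}$ for a constant $c>0$; in particular $t(\mu)\le\epsilon$ for $\mu-\mu_0$ small, and $t(\mu)^{\kappa+3/2}\ge c^{\prime}(\mu-\mu_0)^{(\kappa+3/2)/3}$. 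On the other hand, the trajectory $\gamma$ is a smooth (indeed $C^1$) solution of the flow, so $|s(\mu)-s_0|\le C^{\prime\prime}(\mu-\mu_0)$ near $\mu_0$ (the standing nonconstancy assumption guarantees that the flow genuinely advances). Feeding these in, the cone condition reduces to $C^{\prime\prime}(\mu-\mu_0)\le C\,c^{\prime}(\mu-\mu_0)^{(\kappa+3/2)/3}$, whose right-hand exponent is $<1$ exactly when $\kappa<3/2$. This is where I expect the only real obstacle to lie, and it is dissolved by the last clause of Theorem~A.1.5: $\kappa>1$ may be chosen arbitrarily close to $1$, so fixing any $\kappa\in(1,3/2)$ forces the right-hand side to dominate as $\mu\to\mu_0^{+}$. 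Hence there is a $\delta>0$ with $\tilde{\gamma}\big((\mu_0,\mu_0+\delta)\big)\subset\mathcal{G}^{\,\epsilon,\kappa,C}_{(A_1,\,\cdots\,,\,A_n)}$ and $\pr_1\circ\tilde{\gamma}=\gamma|_{(\mu_0,\,\mu_0+\delta)}$, and the Joyce family $f^{s,t}$ restricted along $\tilde{\gamma}$ smoothes $f(L)\subset Y^{\!s_0}$ into $Y^{s(\mu)}$ for $\mu$ slightly larger than $\mu_0$, as claimed.
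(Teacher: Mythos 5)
Your proposal is correct and follows essentially the same route as the paper's own proof: solve for the gluing parameter via $t(\mu)=\left(-4\,\vol(Y^{s_0})\cdot\mu\,\tau(\mu)\right)^{1/3}$ (well-defined and positive exactly on the $\mu>\mu_0$ side), note $|\mu-\mu_0|=O(t^3)$ from the nonvanishing derivative of $-4\,\vol(Y^{s_0})\cdot\mu\,\tau(\mu)$ at $\mu_0$, and beat the cone condition $|s-s_0|\le C\,t^{\kappa+3/2}$ by invoking Observation~A.1.6 to take $\kappa\in(1,3/2)$ so that $\kappa+3/2<3$. Your explicit asymptotics $t(\mu)\sim c\,(\mu-\mu_0)^{1/3}$ and the Lipschitz bound on $|s(\mu)-s_0|$ merely spell out what the paper compresses into its "translates to $|\mu-\mu_0|\le C''\,t^{\kappa''+3/2}$" step.
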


\begin{proof}
 Since
  $(A_1,\,\cdots\,,A_n)$
   satisfies the topological criterion of smoothing and
  the cubic-root function $(\,\cdot\,)^{1/3}$
   is defined over ${\Bbb R}$,
 the system
  \begin{eqnarray*}
   \left(
    f^{\ast}[\Imaginary(e^{-i\alpha_{\Gamma}(s(\mu))}\Omega^s)]
    \cdot [L_j]
   \right)_{j=1}^q
   & =
   & (-\,4\,\vol(Y^{\!s_0})\cdot\mu\,\tau(\mu))\cdot
      \left(\rule{0em}{1em}
        \langle\,\Gamma\,,\,f_{\ast}[L_j]\,\rangle
      \right)_{j=1}^q \\[1.2ex]
   & =
   & t^3 \cdot
     \raisebox{-.5em}{$\left(\rule{0em}{2em}\right.$}\!\!
      \sum_{\mbox{\scriptsize
            $\begin{array}{c}
              k\in\{1,\,\ldots\,, n\},\\
               p_k^+ \in L_j \end{array}$}}\!\!A_k \;
      -\;
      \sum_{\mbox{\scriptsize
            $\begin{array}{c}
              k^{\prime}\in\{1,\,\ldots\,, n\},\\
               p_{k^{\prime}}^- \in L_j \end{array}$}}\!\!A_{k^{\prime}}
     \raisebox{-.5em}{$\left.\rule{0em}{2em}\right)_{j=1}^q$}
  \end{eqnarray*}
  is always solvable along $\gamma$ for $t\in {\Bbb R}$.
 Furthermore,
 since
  \begin{eqnarray*}
   -\,4\,\vol(Y^{\!s_0})\cdot\mu_0\,\tau(\mu_0)
    & =  & 0 \hspace{7em}\mbox{and}           \\[.6ex]
   \left.\frac{d}{d\mu}\right|_{\mu=\mu_0}
    \left(
     -\,4\,\vol(Y^{\!s_0})\cdot\mu\,\tau(\mu)
    \right)
   & =
   & \left.\frac{d}{d\mu}\right|_{\mu=\mu_0}
        \left(
         4\,\vol(Y^{\!s_0})\cdot\mu\,
         \int_{\mu_0}^{\mu}
          \frac{d\mu^{\prime}}
              {|Z_{\Gamma}(\mu^{\prime})|\,{\mu^{\prime}}^2}
      \right)                                 \\[1.2ex]
   & =
   & \frac{4\,\vol(Y^{\!s_0})}{|Z_{\Gamma}(\mu_0)|\mu_0}\;\;
     >\;\; 0\,,
  \end{eqnarray*}
 the requirement that $t>0$ imposes
  the condition that the system is solvable only
   on the ($\mu>\mu_0$)-side of the flow $\gamma$,
  in which case
   $$
    \mu\;=\;\mu(t) \hspace{3em}\mbox{with}\hspace{1em}
    |\mu-\mu_0|=O(t^3) \hspace{1em}\mbox{and inverse}\hspace{1em}
    t\;=\;t(\mu)
   $$
   for $\,\mu\in (\mu_0,\mu_0+\delta)\subset I\,$
   for some $\,\delta>0\,$.

 The further admissibility condition
  $|s-s_0|\,\le\, C^{\prime}\,t^{\kappa^{\prime}+3/2}$
   with $\kappa^{\prime}>1$ and  $C^{\prime}>0$ in Definition~3.2.7
                                      % Definition [admissible subset]
  translates in the current situation
  to an admissibility condition of the form
  $|\mu-\mu_0|\le C^{\prime\prime}\,t^{\kappa^{\prime\prime}+3/2}$
   with $\kappa^{\prime\prime}>1$ and $C^{\prime\prime}>0$.
 Recall now Observation~A.1.6 that in Theorem~A.1.5,
          % Observation [restriction $|s|\le t^{\kappa+m/2}\,$]
          % Theorem [desingularization in a family of Calabi-Yau's]
    $\kappa>1$ can be chosen to be arbitrarily close to $1$ and
    $C>0$ can be adjusted.
 It follows that
 for $\epsilon>0$, $\kappa>1$, and $C>0$ in Theorem~A.1.5
                % Theorem [desingularization in a family of Calabi-Yau's]
  with $3>\kappa+3/2$, and
  a re-choice of $\delta>0$ to be even smaller if necessary,
 one has
  $$
   (\mu(t), t)\,=\,(\mu,t(\mu))\;\in\;
   {\cal G}_{(A_1,\,\cdots\,,\,A_n)}^{\,\epsilon,\kappa, C}
   \hspace{2em}\mbox{for}\;\mu\in (\mu_0, \mu_0+\delta)\,.
  $$
 The map
  $\;\tilde{\gamma}: (\mu_0\,,\,\mu_0+\delta)\rightarrow
         {\cal G}_{(A_1,\,\cdots\,,\,A_n)}^{\,\epsilon,\kappa, C}\;$
   defined by $\mu \mapsto (\mu,t(\mu))$
  gives then a lifting of the restriction
   $\gamma|_{(\mu_0\,,\,\mu_0+\delta)}$ of $\gamma$.
 This concludes the proof.

\end{proof}

\begin{remark}
{$[$decay of BPS states$]$.}
{\rm
 Recall that
  $A_k>0$, $k=1,\,\ldots\,,n\,$, are the moduli of Lawlor necks
   with fixed asymptotic special Lagrangian planes
  and $t>0$ is the scaling factor,
  both involved in the gluing-and-rectifying construction.
 Their positivity has thus a geometric meaning and, hence,
  the construction of smoothing $f(L)$ is ``directional".
 As special Lagrangian cycles support
   supersymmetric D-brane configurations of A-type,
  reverse of this direction,
     i.e.\ $\mu$ crossing the wall $\Pi^{\MStiny}_{\cal F}$
     from $\mu_0^+\ge\mu_0$ to $\mu_0^-\le\mu_0$,
    in which the underlying special Lagrangian cycles
    morph from a smooth one to one with several components
   indicates then a decay of a BPS state.
 Cf.~Remark~3.2.2.
   % Remark [location at wall of marginal stability].
}\end{remark}

\bigskip
\newpage

\begin{flushleft}
{\bf Morse cobordisms of A-branes under a (reverse) split attractor
     flow:\\ Denef-Joyce meeting Polchinski-Grothendieck.}
\end{flushleft}
Let
 $$
  f\,=\, f_1\cup\,\cdots\,\cup f_q\;:\;
   L=L_1\cup\,\cdots\,\cup L_q\; \longrightarrow\; Y
 $$
 be a special Lagrangian immersion with phase $e^{i\alpha_{\Gamma}}$
 in a Calabi-Yau $3$-fold $Y$,
 where $\Gamma=f_{\ast}[L]\in H_3(Y;{\Bbb Z})$,
 with only isolated transverse intersections
 as in the situation of the previous theme.
Continuing the notations there,
assume that $f$ satisfies the topological criterion of smoothing
  in Definition~3.2.9  and
   % Definition [topological criterion of smoothing]
 let
  $$
   f^{\mu}\; :=\;
    f^{s(\mu), t(\mu)}\; :\; L^{\mu}:=L^{s(\mu),t(\mu)}\;
      \longrightarrow\;  Y^{\mu}:=Y^{s(\mu)}\,,\hspace{1em}
   \mu\;\in\; (\mu_0\,,\, \mu_0+\delta)\; \subset\; {\Bbb R}_{>0}
  $$
  be a smoothing of
   $f=: f^{\mu_0} = f^{s(\mu_0), t(\mu_0)}= f^{s_0,0}$
  along the $\Gamma$-attractor flow $\gamma=\gamma(\mu)$
   with $\gamma(\mu_0)=s_0$,
  following Joyce' construction.
Assume that $\delta<\mu_0\,$.
Let
 $$
  \pi_{(\mu_0-\delta\,,\,\mu_0+\delta)}\; :\;
    X_{(\mu_0-\delta\,,\,\mu_0+\delta)}\; \longrightarrow\;
  (\mu_0-\delta\,,\,\mu_0+\delta)\;\subset\; {\Bbb R}_{>0}
 $$
 be a Morse family, as constructed in the same way
  as in Example~2.2.1(c),
      % Example [Higgsing and un-Higgsing of A-branes
      %          under a Morse cobordism]
 such that
  $$
   \pi_{(\mu_0-\delta\,,\,\mu_0+\delta)}^{-1}(\mu)\;
    =:\; X_{\mu}\;
    =\;
    \left\{
     \begin{array}{cccl}
      \amalg_{i=1}^q\,L_i
       && \mbox{for}
        & \mu\;\in\; (\mu_0-\delta\,,\,\mu_0)\,, \\[.6ex]
      f(L) && \mbox{for}   & \mu\; =\;\mu_0\,,   \\[.6ex]
      L^{\mu}\simeq N && \mbox{for}
         & \mu\;\in\; (\mu_0\,,\,\mu_0+\delta)\,.
     \end{array}
    \right.
  $$
Take a smaller $\delta>0$ if necessary;
let
 $\Gamma_i:= {f_i}_{\ast}[L_i]\in H_3(Y;{\Bbb Z})$  and
 $\gamma_i:(\mu_0-\delta\,,\,\mu_0]\rightarrow {\cal F}$
  be the $\Gamma_i$-attractor flow with $\gamma_i(\mu_0)=s_0$.
Let
 $$
  p_{(\mu_0-\delta\,,\,\mu_0+\delta)}\;:\;
  Y_{(\mu_0-\delta\,,\,\mu_0+\delta)}\; \longrightarrow\;
    (\mu_0-\delta\,,\,\mu_0+\delta)\,,
 $$
 where
 $$
  p_{(\mu_0-\delta\,,\,\mu_0+\delta)}^{-1}(\mu)\;
   =:\; Y_{\mu}\;
   =\;
   \left\{
    \begin{array}{cccl}
     \amalg_{i=1}^q\,Y_i^{\mu}
      && \mbox{for}
       & \mu\;\in\; (\mu_0-\delta\,,\,\mu_0)\,, \\[.6ex]
     Y=Y^{\mu_0}
      && \mbox{for}   & \mu\; =\;\mu_0\,,       \\[.6ex]
     Y^{\mu}
      && \mbox{for}   & \mu\;\in\; (\mu_0\,,\,\mu_0+\delta)\,,
    \end{array}
   \right.
 $$
 be the universal Calabi-Yau $3$-fold
 over the union of attractor flow trajectories\footnote{{\it
                              Mathematical Convention vs.\
                              String-Theoretical Convention.}
                              This union is called a
                               {\it split attractor flow trajectory}
                               in ${\cal F}$.
                              In mathematical convention,
                               flow follows the direction of
                               increasing $\mu$
                               (i.e.\ the direction of the vector field
                                on ${\cal F}$ that defines the flow)
                              while in string-theoretical convention
                               for the attractor, the flow follows
                               the direction of decreasing $\mu$.
                              In this note, we follow the mathematical
                                convention so far
                               as that is more natural for the purpose
                               to address desingularization,
                                instead of bend-and-break,
                               of a special Lagrangian submanifold
                               with singularities.
                              However, the term `{\it split} attractor flow'
                               in stringy literature,
                               as follows the stringy convention,
                               is fixed to correspond to the direction
                                $\Gamma\rightarrow
                                  \Gamma_1+\,\cdots\,+\Gamma_q$.
                              {For} that reason, we have to call
                               the natural direction in our question,
                                $\Gamma_1+\,\cdots\,+\Gamma_q
                                 \rightarrow\Gamma$,
                               a {\it reverse} split attractor flow.}
  $$
   \left(\bigcup_{i=1}^q
        \gamma_i\left((\mu_0-\delta\,,\,\mu_0]\right)\right)\,
    \bigcup\,\gamma\left([\mu_0\,,\,\mu_0+\delta)\right)\;
   \subset\; {\cal F}\,.
  $$
Take even a smaller $\delta>0$ if necessary,
recall Theorem~A.1.2 in Appendix~A.~1,  and
     % Theorem [deformation in family]
let
 $$
  f_i^{\mu}\; :\; L_i^{\mu}\; \longrightarrow\; Y_i^{\mu}\,,
  \hspace{1em} \mu\;\in\;(\mu_0-\delta\,,\,\mu_0)
 $$
 be a smooth family of immersed special Lagrangian submanifolds,
 with $L_i^{\mu}=L_i$, from Theorem~A.1.2.
                          % Theorem [deformation in family]
Denote the inclusion $f(L)\hookrightarrow Y$ by $\underline{f}$.
Then,
one can extend the family $f^{\mu}$, $\mu\in [\mu_0,\mu_0+\delta)$,
 to a family of immersed special Lagrangian submanifolds
 over $(\mu_0-\delta\,,\,\mu_0+\delta)\,$:
 $$
  \xymatrix{
   X_{(\mu_0-\delta\,,\,\mu_0+\delta)}
    \ar[rr]^-{f_{(\mu_0-\delta\,,\,\mu_0+\delta)}}
    \ar[dr]_-{\pi_{(\mu_0-\delta\,,\,\mu_0+\delta)}\hspace{2em}}
    && Y_{(\mu_0-\delta\,,\,\mu_0+\delta)}
       \ar[dl]^-{\hspace{2em}p_{(\mu_0-\delta\,,\,\mu_0+\delta)}} \\
   & (\mu_0-\delta\,,\,\mu_0+\delta)
  }
 $$
 with
 $$
  f_{\mu}:= f_{(\mu_0-\delta\,,\,\mu_0+\delta)}|
                       _{\mu\in (\mu_0-\delta\,,\,\mu_0+\delta)}\,
  :\; X_{\mu}\; \longrightarrow\; Y_{\mu}\,,
 $$
 such that
 $$
  f_{\mu}\;=\;
   \left\{
    \begin{array}{cccl}
     \amalg_{i=1}^q f_i^{\mu}
      && \mbox{for} & \mu\;\in\;(\mu_0-\delta\,,\,\mu_0)\,,  \\[.6ex]
     \underline{f}  && \mbox{for} & \mu\;=\;\mu_0\,,         \\[.6ex]
     f^{\mu}        && \mbox{for} & \mu\;\in\;(\mu_0\,,\,\mu_0+\delta)\,.
    \end{array}
   \right.
 $$
Consider the following data

 $$
  \xymatrix{
   (\hat{\cal E}_{(\mu_0-\delta\,,\,\mu_0+\delta)},
    \hat{\nabla}^{(\mu_0-\delta\,,\,\mu_0+\delta)})
   \ar@{.>}[dd]                                       \\ \\
   \hat{X}_{(\mu_0-\delta\,,\,\mu_0+\delta)}
    \ar[dd]^-{(\hat{c}_{(\mu_0-\delta\,,\,\mu_0+\delta)},
               \hat{f}_{(\mu_0-\delta\,,\,\mu_0+\delta)})}
    \ar@/^2ex/[rrrrdd]^-{\hspace{2em}
                         \hat{f}_{(\mu_0-\delta\,,\,\mu_0+\delta)}}
    \ar@/_24ex/[ddddd]^-{\hspace{1em}\mbox{\raisebox{-4em}{\scriptsize
                         $\hat{c}_{(\mu_0-\delta\,,\,\mu_0+\delta)}$ }} }
                                                      \\ \\
   X_{(\mu_0-\delta\,,\,\mu_0+\delta)}
      \times_{(\mu_0-\delta\,,\,\mu_0+\delta)}
      Y_{(\mu_0-\delta\,,\,\mu_0+\delta)}
      \ar[rrrr]_-{pr_2}  \ar[ddd]^-{pr_1}
     &&&& Y_{(\mu_0-\delta\,,\,\mu_0+\delta)}\;,      \\ \\ \\
   X_{(\mu_0-\delta\,,\,\mu_0+\delta)}
  }
 $$

 \bigskip
 \noindent
 similar to Example~2.1.16,
       % Example [simple A-brane
       %          (with unitary flat connection-with-singularity)]
 where
  \begin{itemize}
   \item[$\cdot$]
    all the maps in the commutative diagram are
     maps over $(\mu_0-\delta\,,\,\mu_0+\delta)\,$;

   \item[$\cdot$]
    $\hat{c}_{(\mu_0-\delta\,,\,\mu_0+\delta)}:
           \hat{X}_{(\mu_0-\delta\,,\,\mu_0+\delta)}
           \rightarrow X_{(\mu_0-\delta\,,\,\mu_0+\delta)}$
    is a covering map of finite degree $\hat{d}\,$;

   \item[$\cdot$]
    $\hat{f}_{(\mu_0-\delta\,,\,\mu_0+\delta)}:
           \hat{X}_{(\mu_0-\delta\,,\,\mu_0+\delta)}
           \rightarrow Y_{(\mu_0-\delta\,,\,\mu_0+\delta)}$
    is the composition
    $f_{(\mu_0-\delta\,,\,\mu_0+\delta)}\circ \hat{c}\,$;

   \item[$\cdot$]
    $\pr_1$ and $\pr_2$ are the built-in projection maps
     from the fibered product;

   \item[$\cdot$]
    $(\hat{\cal E}_{(\mu_0-\delta\,,\,\mu_0+\delta)},
      \hat{\nabla}^{(\mu_0-\delta\,,\,\mu_0+\delta)})$
     is a locally free
      ${\cal O}_{\hat{X}_{(\mu_0-\delta\,,\,\mu_0+\delta)},
                                                  {\Bbb C}}$-module
      of finite rank $\hat{r}$\\
      with a flat $U(\hat{r})$-connection.
  \end{itemize}
 Let
  ${\cal E}_{(\mu_0-\delta\,,\,\mu_0+\delta)}
   := \hat{c}_{(\mu_0-\delta\,,\,\mu_0+\delta)\,\ast}\,
      \hat{\cal E}_{(\mu_0-\delta\,,\,\mu_0+\delta)}$,
  which is a locally free
   ${\cal O}_{X_{(\mu_0-\delta\,,\,\mu_0+\delta)},{\Bbb C}}$-module
   of rank $r=\hat{d}\hat{r}$.
 Then, following the construction of Example~2.1.16,
     % Example [simple A-brane
     %          (with unitary flat connection-with-singularity)]
 one obtains a Morse cobordism family of morphisms
 $$
  \varphi_{(\mu_0-\delta\,,\,\mu_0+\delta)}\;
   :\; (X_{(\mu_0-\delta\,,\,\mu_0+\delta)}^{A\!z},
        {\cal E}_{(\mu_0-\delta\,,\,\mu_0+\delta)})
   \; \longrightarrow\; Y_{(\mu_0-\delta\,,\,\mu_0+\delta)}
 $$
 over $(\mu_0-\delta\,,\,\mu_0+\delta)$,
 with a $U(r)$ minimally flat connection-with-singularity
  $\nabla^{(\mu_0-\delta\,,\,\mu_0+\delta)}$ on the surrogate
  $(X_{\varphi_{(\mu_0-\delta\,,\,\mu_0+\delta)}},
    {\cal E}_{\varphi_{(\mu_0-\delta\,,\,\mu_0+\delta)}})$
  associated to $\varphi_{(\mu_0-\delta\,,\,\mu_0+\delta)}$.
 This gives an example of Morse cobordisms of A-branes
  under the (reverse) split attractor flow
  in the sense of Morse family
   of morphisms from Azumaya manifolds with additional data.
 Once having $\varphi_{(\mu_0-\delta\,,\,\mu_0+\delta)}$,
 one can deform
  $\varphi_{(\mu_0-\delta\,,\,\mu_0+\delta)}$ as in Example~2.2.1
          % Example [Higgsing and un-Higgsing of A-branes
          %          under a Morse cobordism]
  to obtain more Morse cobordisms of A-branes
  under the (reverse) split attractor flow.

\begin{remark}
{$[\,$splitting(/gluing) flat connection
      under (reverse) split attractor flow$\,]$.}\footnote{C.-H.L.\
                              would like to thank Frederik Denef
                              for several discussions on this
                              in spring, 2010.
                             Readers are referred to
                              [Sta], [He], and [Ja]
                              for some of the terminologies
                               used in this theme and
                              for basics on relations between
                               fundamental groups and $3$-manifolds.}
{\rm
 Given
  $$
   \hat{\pi}_{(\mu_0-\delta\,,\,\mu_0+\delta)}\;:\;
    \hat{X}_{(\mu_0-\delta\,,\,\mu_0+\delta)}\;
     \longrightarrow\; (\mu_0-\delta\,,\,\mu_0+\delta)
  $$
  via the composition
  $\pi_{(\mu_0-\delta\,,\,\mu_0+\delta)}
   \circ \hat{c}_{(\mu_0-\delta\,,\,\mu_0+\delta)}$
  in the above discussion,
 let
  $\hat{X}_{\mu} :=
   \hat{\pi}_{(\mu_0-\delta\,,\,\mu_0+\delta)}^{-1}(\mu)$
  be the fiber over $\mu\in (\mu_0-\delta\,,\,\mu_0+\delta)$.
 Then, these fibers are of three homeomorphism types:
  $$
   \hat{X}_{\mu}\; \simeq\;
    \left\{
     \begin{array}{cccl}
      \hat{N}                  && :
       & \mbox{a covering of $N$, the smoothing of $f(L)$}    \\
      &&& \mbox{by a connected sum of $\amalg_{i=1}^qL_i$
                in Theorem~A.1.5}\,, \\[.6ex]
                 % Theorem [desingularization
                 %          in a family of Calabi-Yau's]
      \hat{f(L)}               && :
       & \mbox{a covering of $f(L)$}\,, \\[.6ex]
      \amalg_{i=1}^q\hat{L}_i  &&:
       & \mbox{$\hat{L}_i$ a covering of $L_i$}\,.
     \end{array}
    \right.
  $$
 It follows that
  \begin{itemize}
   \item[$\cdot$]
    $\hat{f(L)}$ (resp.\ $\hat{N}$) is a (self-)bouquet
    (resp.\ (self-)connected sum) of $\amalg_{i=1}^q\hat{L}_i\,$,

   \item[$\cdot$]
    $\hat{f(L)}$ is obtained from $\hat{N}$ by pinching
     a finite collection of two-collared embedded $S^2$'s\\
     in $\hat{N}\,$.
  \end{itemize}
 Let
  \begin{itemize}
   \item[$\cdot$]
    $\Gamma_{\hat{f(L)}}$ be the dual graph of
    the manifold-with-Morse-type-singularity $\hat{f(L)}$,
    with
     one vertex $v_j$ for each connected component,
      denoted by $\hat{L}_{v_j}$,
      of $\amalg_{i=1}^q\hat{L}_i$  and
     one edge $e_{jj^{\prime}}$ connecting $v_j$ and $v_{j^{\prime}}$
      for each intersection of the irreducible components
      in $\hat{f(L)}$ associated to $L_{v_j}$ and $L_{v_{j^{\prime}}}$
      respectively;

   \item[$\cdot$]
    $\Gamma_{\hat{f(L)}}^{(0)}$
    be the set of vertices of $\Gamma_{\hat{f(L)}}\,$;  and

   \item[$\cdot$]
    $\Gamma_{\hat{f(L)}}^{(1)}$
    be the set of edges of $\Gamma_{\hat{f(L)}}\,$.
  \end{itemize}
 Then, with an implicit base point on each connected component chosen,
  $$
   \pi_1(\hat{N})\;=\;\pi_1(\hat{f(L)})
  $$
  (i.e.\ there is a canonical isomorphism between the two
   under the built-in pinching-map $\hat{N}\rightarrow \hat{f(L)}$
   that takes the base-point of the former to that of the latter)
  fits into an exact sequence of groups
  $$
   \begin{array}{ccccccccc}
    1  & \longrightarrow
       & \pi_1\left(\rule{0em}{1em}\right.
           \bigvee_{v_j\in\Gamma_{\hat{f(L)}}^{(0)}}\hat{L}_{v_j}
             \left.\rule{0em}{1.2em}\right)
       & \longrightarrow  & \pi_1(\hat{f(L)})
       & \longrightarrow  & \pi_1(\Gamma_{\hat{f(L)}})
       & \longrightarrow  & 1\,,                       \\[2.4ex]
    && |\wr                                            \\
    && \ast_{v_j\in\Gamma_{\hat{f(L)}}^{(0)}} \pi_1(\hat{L}_{v_j})
   \end{array}
  $$
  where $\bigvee_{ v_j\in\Gamma_{\hat{f(L)}}^{(0)} } \hat{L}_{v_j}$
   is the bouquet of
   $\{\hat{L}_{v_j}:v_j\in \Gamma_{\hat{f(L)}}^{(0)}\}$
   following a(ny) spanning tree of $\Gamma_{\hat{f(L)}}$.
  Its fundamental group is isomorphic to the free-product
   $\ast_{v_j\in\Gamma_{\hat{f(L)}}^{(0)}} \pi_1(\hat{L}_{v_j})$
   of the groups
   $\pi_1(\hat{L}_{v_j})\,$, $v_j\in \Gamma_{\hat{f(L)}}^{(0)}\,$.
 Let $\Rep(\,\bullet\,,\,U(\hat{r}))$
  be the representation variety of the group $\,\bullet\,$
  on $U(\hat{r})$ (without modding out the $U(\hat{r})$-action
  from the $U(\hat{r})$-action on itself by conjugation).
 Then, the above sequence and isomorphisms induce a morphism
  $$
   \Rep(\pi_1(\hat{N}), U(\hat{r}))\;
   \longrightarrow\;
   \times_{v_j\in\Gamma_{\hat{f(L)}}^{(0)}}\,
                       \Rep(\pi_1(L_{v_j}),U(\hat{r}))
  $$
  via pulling back a representation.
 In general, this map is neither injective nor surjective,
  and can have positive-dimensional fibers.
 This implies that
 in the construction of a Morse cobordism of A-branes,
  \begin{itemize}
   \item[$\cdot$]
    the choice of the isomorphic class of
    $(\hat{\cal E}_{(\mu_0,\mu_0+\delta)},\nabla^{(\mu_0,\mu_0+\delta)})$
    determines the isomorphic class of the whole
    $(\hat{\cal E}_{(\mu_0-\delta,\mu_0+\delta)},
      \nabla^{(\mu_0-\delta,\mu_0+\delta)})\,$;

   \item[$\cdot$]
    but
    the choice of the isomorphic class of
    $(\hat{\cal E}_{(\mu_0-\delta,\mu_0)},\nabla^{(\mu_0-\delta,\mu_0)})$
    {\it does not} determine the isomorphic class of the whole
    $(\hat{\cal E}_{(\mu_0-\delta,\mu_0+\delta)},
      \nabla^{(\mu_0-\delta,\mu_0+\delta)})\,$.
  \end{itemize}
 In particular, for the simplest class of A-branes
  that are realized as embedded special Lagrangian submanifolds
   $(L,V,\nabla)$ with a vector bundle and $U(r)$ flat connection
   on the Calabi-Yau $3$-fold $Y$,
  assume that the $[L]$-attractor flow $\gamma$ bends-and-breaks $L$
   to a union $L_1\cup\cdots\cup L_q$ of embedded special Lagrangian
   submanifolds with only isolated transverse intersections.
 Then, (in the $\mu$ decreasing direction, cf.~footnote~18)
  $(V,\nabla)$ can be driven along and bend-and-break accordingly
  to a collection $(L_i, V_i,\nabla_i)$, $i=1,\,\ldots\,,\,q\,$
  of A-branes of smaller volume,
  each of which may continue to flow along its associated
   $[L_i]$-attractor flow.
 However, in the opposite ($\mu$ increasing) direction,
  when one try to assemble a collection $(L_i,V_i,\nabla_i)$,
   $i=1\,\,\ldots\,,\, q$, in this simplest class
   to a single A-brane $(L,V,\nabla)$ with $[L]=[L_1]+\,\cdots\,+[L_q]$,
  then, even if $L$ can be constructed,
   there remains an ambiguity on the choices of $(V,\nabla)$ on $L$.
}\end{remark}

%%%%%%%%%%%%%%%%%%%%%
%
% \bigskip
%
% \begin{example}
% {\bf [??????].} {\rm
%  ????????????????.
% } \end{example}
%
%%%%%%%%%%%%%%%%%%%%%

% \bigskip
% \bigskip
% \vspace{6em}
\newpage

%%%%%%%%%%%%%%%%%%%%%%%%%%%%
% \noindent
% {\bf\large Appendix:}
% \parbox[t]{35.2em}{\bf\large
%  Desingularizations of immersed special Lagrangian submanifolds
%   with transverse intersections and their moduli space
%   \`{a} la Joyce.}
%%%%%%%%%%%%%%%%%%%%%%%%%%%%

\appendix

\begin{flushleft}
{\bf\large Appendix.}
\end{flushleft}
%%%%%%%%%%%
% \section{Appendix.}
%%%%%%%%%%%
\setcounter{section}{1}
\subsection{Desingularizations of
    immersed special Lagrangian submanifolds
    with transverse intersections and their moduli space
    \`{a} la Joyce.}
Calibrated geometry background and results from [Joy3]
 that are needed for the current work are collected here.
Joyce' results work for the more general almost Calabi-Yau $m$-folds
 with $m>2$.
Here, we only state his results in the special case:
 Calabi-Yau $m$-folds, with $m>2$,
 (and with a slight modification of notations to fit
  the main contents of the notes).
See also [Bu], [Ch], [Lee], and [Mar] for related study/results.

\bigskip

\begin{flushleft}
{\bf Generalization of McLean [McL] to a family.}
\end{flushleft}
\begin{definition}
{\bf [family moduli space].}
{\rm ([Joy3: II.~Sec.~2.3, Definition~2.12].)}
{\rm
 Let
  $\{(M,J^s,\omega^s,\Omega^s):s\in {\cal F}\}$
    be a smooth family of deformations of a Calabi-Yau $m$-fold
    $(M,J.\omega,\Omega)$,
   with base space ${\cal F}\subset {\Bbb R}^d$,
   and
  $N$ be a compact special Lagrangian $m$-manifold
   in $(M,J,\omega,\Omega)$.
 Define the
  {\it moduli space ${\cal M}_N^{\cal F}$ of deformations of $N$
       in the family ${\cal F}$}
  to be the set of pairs $(s,\hat{N})$ for which
   $s\in {\cal F}$  and
   $\hat{N}$ is a compact special Lagrangian $m$-manifold
    in $(M,J^s.\omega^s,\Omega^s)$ which is diffeomorphic to $N$
    and isotopic to $N$ in $M$.
 Define a {\it projection}
  $\pi^{\cal F}:{\cal M}^{\cal F}_N\rightarrow {\cal F}$
  by $\pi^{\cal F}(s,\hat{N})=s$.
 Then
  ${\cal M}^{\cal F}_N$ has a natural topology and
  $\pi^{\cal F}$ is continuous.
}\end{definition}

\begin{theorem}
{\bf [deformation in family].}
{\rm ([Joy3: II.~Sec.~2.3, Theorem~2.13] and
      [Mar: Sec.~3.2, Theorem~3.21].)}
 Let
  $\{(M,J^s,\omega^s,\Omega^s):s\in {\cal F}\}$
    be a smooth family of deformations of a Calabi-Yau $m$-fold
    $(M,J.\omega,\Omega)$,
   with base space ${\cal F}\subset {\Bbb R}^d$.
 Suppose
  $N$ is a compact special Lagrangian $m$-manifold
   in $(M,J,\omega,\Omega)$
   with $[\omega^s|_N]=0$ in $H^2(N;{\Bbb R})$ and
        $[\Imaginary\Omega^s|_N]=0$ in $H^m(N;{\Bbb R})$
    for all $s\in {\cal F}$.
 Let
  ${\cal M}_N^{\cal F}$ be the moduli space ${\cal M}_N^{\cal F}$
   of deformations of $N$ in the family ${\cal F}$  and
  $\pi^{\cal F}:{\cal M}^{\cal F}_N\rightarrow {\cal F}$
   the natural projection.

 Then
  ${\cal M}_N^{\cal F}$ is a smooth manifold of dimension
   $d+b^1(N)$ and
  $\pi^{\cal F}:{\cal M}^{\cal F}_N\rightarrow {\cal F}$
   is a smooth submersion.
 {For} small $s\in {\cal F}$,
  the moduli space ${\cal M}_N^s:=(\pi^{\cal F})^{-1}(s)$
  of deformations of $N$ in $(M,J^s,\omega^s,\Omega^s)$
  is a nonempty smooth manifold of dimension $b^1(N)$,
  with ${\cal M}_N^0={\cal M}_N$.
\end{theorem}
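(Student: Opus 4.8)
The plan is to realize ${\cal M}_N^{\cal F}$ as the zero set of a single nonlinear elliptic operator and then read off its structure from the implicit function theorem in Banach spaces, following the strategy of McLean [McL] adapted to a family. First I would fix a Weinstein tubular neighborhood for the symplectic form $\omega^{s_0}$ at the basepoint, i.e.\ a diffeomorphism $\Phi$ from a neighborhood of the zero section in $T^\ast N$ onto a tubular neighborhood of $N$ in $M$ with $\Phi|_N=\mathrm{id}$ and $\Phi^\ast\omega^{s_0}$ the canonical symplectic form. Every compact submanifold $C^1$-close to $N$ is then the image $N_v:=\Phi(\mathrm{graph}\,v)$ of a small $1$-form $v$ on $N$, and deformations across the whole family are governed by the map
\[
 {\cal G}(s,v)\;=\;\bigl(\,\omega^s|_{N_v}\,,\;\Imaginary\,\Omega^s|_{N_v}\,\bigr),
\]
each factor pulled back to $N$ via the diffeomorphism $N\cong N_v$, defined on a neighborhood of $(s_0,0)$ in ${\cal F}\times C^{k,\alpha}(T^\ast N)$. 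The submanifold $N_v$ is special Lagrangian for $(M,J^s,\omega^s,\Omega^s)$ exactly when ${\cal G}(s,v)=0$.

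Second, I would invoke the cohomological hypotheses $[\omega^s|_N]=0$ and $[\Imaginary\,\Omega^s|_N]=0$, together with the fact that $N_v$ is isotopic to $N$, to conclude that both components of ${\cal G}$ land in the spaces of exact forms, so that ${\cal G}$ may be regarded as a map into $\mathrm{d}\,\Omega^1(N)\oplus\mathrm{d}\,\Omega^{m-1}(N)$ in the appropriate Hölder completions. The crucial computation is the partial linearization in $v$ at $(s_0,0)$, which as in McLean is
\[
 L(v)\;=\;\bigl(\,\mathrm{d}v\,,\;-\,\mathrm{d}{\ast}v\,\bigr),
\]
an overdetermined-elliptic operator whose kernel is the space ${\cal H}^1(N)$ of harmonic $1$-forms, of dimension $b^1(N)$. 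A Hodge-decomposition argument writing $v=h+\mathrm{d}a+\delta b$ shows $L$ is surjective onto the product of exact-form spaces: the coexact part controls $\mathrm{d}v$ and can hit any exact $2$-form, while the exact part reduces $-\mathrm{d}{\ast}v$ to the Laplacian on the function $a$, which surjects onto mean-zero (hence exact) $m$-forms, the harmonic part $h$ remaining free.

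Third, with $\partial_v{\cal G}$ surjective and its kernel the finite-dimensional ${\cal H}^1(N)$ admitting a closed complement, the implicit function theorem gives that ${\cal G}^{-1}(0)$ near $(s_0,0)$ is a smooth manifold parametrized by $(s,h)\in{\cal F}\times{\cal H}^1(N)$ via $v=h+w(s,h)$ with $w$ smooth; this yields $\dim{\cal M}_N^{\cal F}=d+b^1(N)$, exhibits $\pi^{\cal F}$ in these coordinates as the projection $(s,h)\mapsto s$ and hence a submersion, and makes each fiber ${\cal M}_N^s$ a nonempty (it contains $v(s,0)$) smooth manifold of dimension $b^1(N)$; at $s=s_0$ this recovers ${\cal M}_N$. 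A concluding elliptic-regularity step promotes the $C^{k,\alpha}$ solutions to genuine smooth special Lagrangians. The main obstacle I anticipate is precisely the family-dependence of the symplectic form: because the Weinstein chart is adapted only to $\omega^{s_0}$, the Lagrangian equation $\omega^s|_{N_v}=0$ for $s\neq s_0$ is no longer the naive closedness condition $\mathrm{d}v=0$, so one must carry the genuinely two-component operator ${\cal G}$ and verify, uniformly in $s$, both the exactness of its image and the surjectivity and ellipticity of $L$ — rather than first normalizing $\omega^s$ by a Moser trick, which is unavailable here since $N$ is not $\omega^s$-Lagrangian when $s\neq s_0$.
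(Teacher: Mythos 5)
The paper itself does not prove this theorem: it is quoted as background from Joyce [Joy3: II, Theorem~2.13] and Marshall [Mar: Sec.~3.2, Theorem~3.21], whose proof is precisely the McLean-with-parameters argument you give --- Lagrangian-neighborhood chart at the basepoint, the two-component operator ${\cal G}(s,v)$ landing in exact forms by the cohomological hypotheses, surjectivity of the linearization $v\mapsto(dv,\,d{\ast}v)$ onto exact $2$-forms $\oplus$ exact $m$-forms via the Hodge decomposition, and the implicit function theorem plus elliptic regularity --- so your reconstruction is essentially correct and follows the same route. The only point worth adding is that the implicit-function-theorem analysis must be run at every point $(s,\hat{N})$ of ${\cal M}_N^{\cal F}$, not only at $(s_0,N)$ (the hypotheses persist there because $\hat{N}$ is isotopic to $N$ in $M$, so $[\omega^{s'}|_{\hat{N}}]$ and $[\Imaginary\Omega^{s'}|_{\hat{N}}]$ still vanish for all $s'$), in order to obtain the global smooth-manifold structure of ${\cal M}_N^{\cal F}$ and the submersion property of $\pi^{\cal F}$ everywhere rather than merely near the basepoint.
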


\bigskip

\begin{flushleft}
{\bf Transverse pair $(\Pi^+,\Pi^-)$ of special Lagrangian $m$-planes
     in ${\Bbb C}^m$.}
\end{flushleft}
([Joy3: V: Sec.~9.1]; see also [Bu: Sec.~1] and [Lee: Sec.~2].)
Let $(\Pi^+,\Pi^-)$
 be a pair of special Lagrangian $m$-planes $\simeq {\Bbb R}^m$
 in ${\Bbb C}^m$
 that intersect transversely, i.e.\ $\Pi^+\cap \Pi^-=\{0\}$.
Then, there exists $B\in \SU(m)$
      and $\phi_1\,,\,\cdots\,,\,\phi_m\in (0,\pi)$
 such that
 $$
  B(\Pi^+)\;=\; \Pi^0
   \hspace{2em}\mbox{and}\hspace{2em}
  B(\Pi^-)\;=\; \Pi^{\mbox{\scriptsize\boldmath $\phi$}}\,,
 $$
 where
 $$
  \Pi^0\;=\;\{(x_1\,,\,\cdots\,,\,x_m)\,:\, x_j\in {\Bbb R}\}
   \hspace{2em}\mbox{and}\hspace{2em}
  \Pi^{\mbox{\scriptsize\boldmath $\phi$}}\;
   =\; \{(e^{i\phi_1}x_1\,,\,\cdots\,,\,e^{i\phi_m}x_m)\,:\,
                                            x_j\in {\Bbb R}\}\,.
 $$
Furthermore,
 $\phi_1\,,\,\cdots\,,\,\phi_m$ are unique up to order
  --  and hence become unique under the assumption
      $\,0<\phi_1\le\cdots\le\phi_m<\pi\,$ --  and
 $\,\phi_1+\cdots+\phi_m = k\pi\,$
  for some $k\in\{1\,,\,\ldots\,,\,m-1\}\,$.

\begin{definition}
{\bf [characteristic angles and type].} {\rm
 The unique angles $0<\phi_1\le\cdots\le\phi_m<\pi$
  is called the {\it characteristic angles} of the pair $(\Pi^+,\Pi^-)$
  and
 $k$ is called the {\it type} of the pair $(\Pi^+,\Pi^-)$
  at their intersection $0$.
}\end{definition}

\noindent Note that if the characteristic angles and type of
$(\Pi^+,\Pi^-)$ at $0$
 are $0<\phi_1\le\cdots\le\phi_m<\pi$ and $k$ respectively,
then the characteristic angles and type of $(\Pi^-,\Pi^+)$ at $0$
 are $0<\pi-\phi_m\le\cdots\le\pi-\phi_1<\pi$ and $m-k$ respectively.

\bigskip

\begin{flushleft}
{\bf Lawlor necks $L^{\mbox{\scriptsize\boldmath $\phi$},A}$
     in ${\Bbb C}^m$.}
\end{flushleft}
([Harv: pp.~139-144] and [La];
 cf.\ [Joy3: V.~Example~6.11] and also [Bu], [Lee], and [Mar].)

\bigskip

\noindent
$(a)$ {\it Lawlor necks
      $L^{\mbox{\scriptsize\boldmath $\phi$},A}$.}\hspace{1em}
Let $m>2$ and $a_1,\,\ldots\,,\,a_m>0$.
Define polynomial $P$ by
 $$
  P(x)\;=\;
   \frac{(1+a_1x^2)\,\cdots\,(1+a_mx^2)-1}{x^2}
 $$
and real numbers $\phi_1,\,\ldots\,,\phi_m$ and $A$ by
 $$
  \phi_k\;
  =\; a_k\int_{-\infty}^{\infty}\frac{dx}{(1+a_k)\sqrt{P(x)}}
  \hspace{1em}\mbox{and}\hspace{1em}
  A\;=\; \omega_m(a_1\,\cdots\,a_m)^{-\frac{1}{2}}\,,
 $$
  where $\omega_m$ is the volume of the unit sphere in ${\Bbb R}^m$.
Note that
 $$
  \phi_k\in (0,\pi)
   \hspace{1em}\mbox{with}\hspace{1em}
  \phi_1+\,\cdots\,+\phi_m\;=\; \pi\,,
 \hspace{2em}
 A>0\,,
 $$
 and that the correspondence
  $(a_1,\,\ldots\,, a_m)\mapsto (\phi_1,\,\ldots\,, \phi_m, A)$,
  with the conditions above, is one-to-one.
Define functions
 $z_k:{\Bbb R}\rightarrow {\Bbb C}$, $k=1,\,\ldots\,m\,$, by
 $$
  z_k(y)\; =\; e^{i\psi_k(y)}\sqrt{a_k^{-1}+y^2}\,,
  \hspace{1em}\mbox{where}\hspace{1em}
  \psi_k(y)\; =\;
   a_k\int_{-\infty}^{y}\frac{dx}{(1+a_kx^2)\sqrt{P(x)}}\,.
 $$
Write {\boldmath $\phi$}$=(\phi_1,\,\ldots,\,\phi_m)$  and
 define a submanifold $L^{\mbox{\scriptsize\boldmath $\phi$},A}$
 in ${\Bbb C}^m$ by
 $$
  L^{\mbox{\scriptsize\boldmath $\phi$},A}\;  =\;
  \{\, (z_1(y)x_1,\,\ldots\,,z_m(y)x_m)\,:\,
   y\in{\Bbb R},\, x_k\in{\Bbb R},\, x_1^2+\,\cdots\,x_m^2=1\, \}\,.
 $$
Then {\it
 $L^{\mbox{\scriptsize\boldmath $\phi$},A}$
 is a closed embedded special Lagrangian submanifold in ${\Bbb C}^m$
 that is diffeomorphic to $S^{m-1}\times {\Bbb R}$.}
It intersects each $\Pi^{\mbox{\scriptsize\boldmath $\psi$}(y)}$,
 $y\in {\Bbb R}$, along an ellipsoid,
 where {\boldmath $\psi$}$(y)=(\psi_1(y),\,\cdots\,\psi_m(y))$.
In terms of [Joy3: I.~Definition~7.1],
 $L^{\mbox{\scriptsize\boldmath $\phi$},A}$
  is {\it asymptotically conical,
  with rate $2-m$ and
  cone the type-$1$ transverse pair
  $\Pi^0\cup \Pi^{\mbox{\scriptsize\boldmath $\phi$}}$
  of special Lagrangian planes}.
Furthermore, for $t>0$,
 let $t:{\Bbb C}^m\rightarrow {\Bbb C}^m$
  be the associated dilation map (i.e.\ multiplication by $t$).
Then
 $t\cdot L^{\mbox{\scriptsize\boldmath $\phi$},A}
  = L^{\mbox{\scriptsize\boldmath $\phi$},t^mA}$.

%%%%%%%%%%%%%%%%%%%%%
%
% \bigskip
%
% \noindent
% $(b)$ {\it (Co)homological meaning of $A$.}\hspace{1em}
% %
% ???????????????.
%
%
% \bigskip
%
% \noindent
% $(c)$ {\it The moduli space of deformations of
%       $L^{\mbox{\scriptsize\boldmath $\phi$},A}$.}\hspace{1em}
% %
% ???????????????.
%
%%%%%%%%%%%%%%%%%%%%%

\bigskip

\begin{flushleft}
{\bf Desingularizations of immersed special Lagrangian submanifolds
     with transverse intersections.}
\end{flushleft}
\begin{theorem}
{\bf [desingularization in a Calabi-Yau].}
{\rm ([Joy3: V.~Sec.~9.2, Theorem~9.7].)}
 Let
  \begin{itemize}
   \item[$\cdot$]
    $(M,J,\omega,\Omega)$ be a Calabi-Yau $m$-fold with $m>2$,

   \item[$\cdot$]
    $\iota:X\rightarrow M$
     be a compact, immersed special Lagrangian $m$-manifold in $M$,

   \item[$\cdot$]
    $x_1,\,\ldots\,, x_n\in M$ be transverse self-intersection
      points of $X$ with type $1$,

   \item[$\cdot$]
    $x_i^{\pm}\in X$ be the pair of points in $\iota^{-1}(x_i)$
     such that the type of the pair
     $(\iota_{\ast}T_{x_i^+}X,\iota_{\ast}T_{x_i^-}X)$ at
      $0\in T_{x_i}M$ is $1$,
    and

   \item[$\cdot$]
    $X_1,\,\ldots\,, X_q$, where $q=b^0(X)$,
    be the connected components of $X$.
  \end{itemize}
 Suppose $A_1,\,\ldots\,, A_n >0$ satisfy
  $$
   \sum_{\mbox{\scriptsize
         $\begin{array}{c}
           i\in\{1,\,\ldots\,, n\},\\
            x_i^+ \in X_k \end{array}$}} A_i \;
    -\,
   \sum_{\mbox{\scriptsize
         $\begin{array}{c}
           i\in\{1,\,\ldots\,, n\},\\
            x_i^- \in X_k \end{array}$}} A_i\;\;
   =\;\; 0
   \hspace{3em}\mbox{for all $\;k=1,\,\ldots\,, q\,.$}
  $$
 Let $N$ be the (oriented multiple self-)connected sum of $X$
  at the pairs of points $\{(x_i^+,x_i^-)\}_{i=1}^n$.
 Suppose that $N$ is connected.
 Then
  \begin{itemize}
   \item[$\cdot$]
    there exist
     an $\epsilon>0$ and
     a smooth family
      $\{\iota^t:\tilde{N}^t\rightarrow M\,|\, t\in (0,\epsilon]\}$
       of compact, immersed special Lagrangian $m$-manifolds
       in  $(M,J,\omega,\Omega)$,
      with $\tilde{N}^t$ diffeomorphic to $N$,
     such that $\iota^t$ is constructed by gluing a Lawlor neck
      $t\cdot L^{\pm,A_i}\,(=L^{\pm,t^mA_i})$ into $\iota$ at $x_i$
      for $i=1\,,\,\ldots\,,\,n\,$.
  \end{itemize}
 In the sense of currents, $\iota^t\rightarrow \iota$ as $t\rightarrow 0$.
 If $x_1\,,\,\cdots\,,\,x_n$ are the only self-intersection points
   of $\iota$,
  then $\iota^t$ is an embedding.
\end{theorem}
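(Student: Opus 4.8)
The plan is to follow the standard analytic gluing scheme for special Lagrangian submanifolds, which is exactly Joyce's method in [Joy3: V]: first build an \emph{approximately} special Lagrangian submanifold $\hat{N}^t$ by desingularizing the immersed $\iota(X)$ with rescaled Lawlor necks at each self-intersection, and then correct $\hat{N}^t$ to an \emph{exact} special Lagrangian submanifold $\tilde{N}^t$ by a perturbation argument. \textbf{Step 1 (approximate submanifold).} At each type-$1$ transverse self-intersection $x_i$, invoke the $\SU(m)$-normal form for a transverse pair recalled just before Definition~A.1.3 to choose a local holomorphic Darboux chart identifying a neighborhood of $x_i$ in $M$ with a neighborhood of $0\in{\Bbb C}^m$ so that $\iota_{\ast}T_{x_i^+}X$ and $\iota_{\ast}T_{x_i^-}X$ become the model planes $\Pi^0$ and $\Pi^{\mbox{\scriptsize\boldmath $\phi$}}$. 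Since the Lawlor neck $L^{\mbox{\scriptsize\boldmath $\phi$},A_i}$ is asymptotically conical with cone $\Pi^0\cup\Pi^{\mbox{\scriptsize\boldmath $\phi$}}$ and decay rate $2-m$, one excises a small ball around each $x_i$ and grafts in the rescaled neck $t\cdot L^{\mbox{\scriptsize\boldmath $\phi$},A_i}=L^{\mbox{\scriptsize\boldmath $\phi$},t^mA_i}$, interpolating with a cutoff in the conical annulus where the neck is $C^{2-m}$-close to its cone. For $t$ small this produces a compact submanifold $\hat{N}^t$ diffeomorphic to the connected sum $N$.

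\textbf{Step 2 (Lagrangian reduction and error estimate).} Perform the grafting inside a Weinstein neighborhood so that $\hat{N}^t$ is Lagrangian and the interpolation is realized by closed (indeed exact) $1$-forms; the special Lagrangian condition then collapses to the single scalar equation $\Imaginary\,\Omega|_{\hat{N}^t}=0$ (the calibration $\Real\,\Omega$ being automatic once $\hat{N}^t$ is Lagrangian and $\Imaginary\,\Omega$ vanishes). Because each $L^{\mbox{\scriptsize\boldmath $\phi$},A_i}$ is exactly special Lagrangian in ${\Bbb C}^m$ and $\iota(X)$ is special Lagrangian away from the $x_i$, the $m$-form $\Imaginary\,\Omega|_{\hat{N}^t}$ is supported only in the cutoff annuli and is small in the weighted H\"{o}lder norm adapted to the two length scales ($O(1)$ on the body of $X$, $O(t)$ on the necks), with size $O(t^{\lambda})$ for some $\lambda>0$.

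\textbf{Step 3 (balancing obstruction and implicit function theorem).} The obstruction to killing $\Imaginary\,\Omega|_{\hat{N}^t}$ by a normal graph lies in the image of the period/flux map, and the hypothesis $\sum_{x_i^+\in X_k}A_i-\sum_{x_i^-\in X_k}A_i=0$ for each original component $X_k$ is precisely the condition that the net flux the grafted necks contribute to $[\Imaginary\,\Omega|_{\hat{N}^t}]$ cancels component-by-component, so the leading error lands in the range of the linearization. Parametrizing nearby Lagrangians by functions $u$ through $u\mapsto\text{graph}(du)$, set up the nonlinear map $F(u)=\Imaginary\,\Omega|_{\text{graph}(du)}$ on weighted Banach spaces; its linearization at $0$ is a weighted $(d+d^{\ast})$-type operator. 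Uniform (in $t$) invertibility is obtained by splicing the model inverse on the asymptotically conical Lawlor neck — where the critical rates near $2-m$ are controlled and no eigenvalues collapse as $t\to0$ — with McLean's unobstructed deformation theory on the compact pieces $X_k$ (Theorem~A.1.2). Combining this with the Step-2 estimate and running a contraction-mapping/Newton iteration yields, for each small $t$, an exact special Lagrangian $\tilde{N}^t$ near $\hat{N}^t$ depending smoothly on $t$, with $\iota^t\to\iota$ in the sense of currents as $t\to0$ (the necks shrink to the points $x_i$ while the bulk converges to $\iota(X)$); embeddedness when the $x_i$ are the only self-intersections follows because those are exactly the loci that have been desingularized.

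The hard part will be the uniform invertibility of the linearized operator across the two scales in Step~3. This demands weighted elliptic analysis on the Lawlor neck — pinning down the indicial roots of the Laplacian on the cone $\Pi^0\cup\Pi^{\mbox{\scriptsize\boldmath $\phi$}}$ for rate $2-m$ and excluding collapsing small eigenvalues as $t\to0$ — and a careful matching of this model with the kernel and cokernel of the operator on the compact components $X_k$, where the balancing condition is what removes the single low-order obstruction. Controlling the norm of the resulting inverse independently of $t$ is the technical core of [Joy3: V] and is exactly what makes the Newton scheme converge.
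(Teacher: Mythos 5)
First, a point of reference: the paper does not prove this statement. Theorem~A.1.4 is quoted (with notational adjustments) from Joyce, [Joy3: V, Sec.~9.2, Theorem~9.7], and Appendix~A.1 presents it explicitly as background; the only piece of Joyce's analysis the paper actually reproduces is the list of error-norm conditions in its proof of Observation~A.1.6. So your proposal has to be measured against Joyce's proof, and your overall architecture --- graft rescaled Lawlor necks in Darboux/Weinstein charts, reduce the special Lagrangian condition to the scalar equation $\Imaginary\Omega|_{\mathrm{graph}(du)}=0$, estimate the error on the two length scales, then perturb --- is indeed his.

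There is, however, a genuine gap in your Step~3: the claim that the linearization is uniformly invertible in $t$ because ``no eigenvalues collapse as $t\to 0$'' is false precisely in the only case where the balancing hypothesis has any content. When $q>1$, the glued manifold $\tilde N^t$ is a connected sum of the $q$ components $X_k$ through necks of scale $t$, and the Laplacian on $\tilde N^t$ acquires $q-1$ nontrivial eigenvalues tending to $0$ as $t\to 0$, with approximate eigenfunctions the locally-constant functions on the pieces $X_k$; the inverse of the linearized operator therefore blows up and a naive contraction mapping fails. (When $q=1$ the hypothesis $\sum_{x_i^+\in X_k}A_i-\sum_{x_i^-\in X_k}A_i=0$ is vacuous --- both sums run over all $i$ --- and uniform invertibility does hold, so your argument as written only proves the case in which the hypothesis is empty.) Joyce's actual mechanism, which the paper displays in the proof of Observation~A.1.6, is to introduce a finite-dimensional space $W^{s,t}$ spanned by approximate small-eigenvalue eigenfunctions and to require, as a separate hypothesis of his analytic existence theorem [Joy3: III, Theorem~5.3], that the projection of the error onto it be of higher order, $\|\pi_{W^{s,t}}(\varepsilon^{s,t})\|_{L^1}\le A_2\,t^{\kappa+m-1}$. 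This projection bound --- not invertibility --- is what the balancing condition buys: the pairing of the neck-supported error with the locally-constant function taking value $c_k$ on $X_k$ is, at leading order, $t^m\sum_k c_k\bigl(\sum_{x_i^+\in X_k}A_i-\sum_{x_i^-\in X_k}A_i\bigr)$, which vanishes by hypothesis. Your first sentence of Step~3 gestures at exactly this, but your subsequent analytic scheme (uniform invertibility plus Newton iteration) contradicts it; the proof cannot be completed without reinstating Joyce's near-kernel bookkeeping, i.e.\ solving modulo $W^{s,t}$ and using the balancing condition to kill the component of the error along $W^{s,t}$.
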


In general, it can happen that
 there is no smoothing for a compact immersed special
  Lagrangian submanifold at its transverse self-intersection point
  for a fixed Calabi-Yau manifold.
In this case, Joyce proves another result:

\begin{theorem}
{\bf [desingularization in a family of Calabi-Yau's].}
{\rm ([Joy3: V.~Sec.~9.3, Theorem~9.8].)}
 Let
  \begin{itemize}
   \item[$\cdot$]
    $(M,J,\omega,\Omega)$ be a Calabi-Yau $m$-fold with $m>2$,

   \item[$\cdot$]
    $\iota:X\rightarrow M$
     be a compact, immersed special Lagrangian $m$-manifold in $M$,

   \item[$\cdot$]
    $x_1,\,\ldots\,, x_n\in M$ be transverse self-intersection
      points of $X$ with type $1$,

   \item[$\cdot$]
    $x_i^{\pm}\in X$ be the pair of points in $\iota^{-1}(x_i)$
     such that the type of the pair
     $(\iota_{\ast}T_{x_i^+}X,\iota_{\ast}T_{x_i^-}X)$ at
      $0\in T_{x_i}M$ is $1$,
    and

   \item[$\cdot$]
    $X_1,\,\ldots\,, X_q$, where $q=b^0(X)$,
    be the connected components of $X$.
  \end{itemize}
 Let $N$ be the (oriented multiple self-)connected sum of $X$
  at the pairs of points $\{(x_i^+,x_i^-)\}_{i=1}^n$.
 Suppose that $N$ is connected.

 Suppose $\{(M,J^s,\omega^s,\Omega^s)\,:\, s\in {\cal F}\}$
  is a smooth family of deformations of $(M,J,\omega,\Omega)$
  with $\iota^{\ast}([\omega^s])=0$ in $H^2(X;{\Bbb R})$
  for all $s\in {\cal F}$.
 Let $A_1,\,\cdots\,, A_n>0$.
 Define ${\cal G}\subset {\cal F}\times (0,1)$ to be the subset of
  $(s,t)\in {\cal F}\times (0,1)$ with
  $$
   [\Imaginary\Omega^s]\cdot [X_k]\; =\;
    t^m \cdot
     \raisebox{-.5em}{$\left(\rule{0em}{2em}\right.$}
      \sum_{\mbox{\scriptsize
            $\begin{array}{c}
              i\in\{1,\,\ldots\,, n\},\\
               x_i^+ \in X_k \end{array}$}} A_i \;
      -\;
      \sum_{\mbox{\scriptsize
            $\begin{array}{c}
              i\in\{1,\,\ldots\,, n\},\\
               x_i^- \in X_k \end{array}$}} A_i
     \;\raisebox{-.5em}{$\left.\rule{0em}{2em}\right)$}
   \hspace{3em}\mbox{for all $\;k=1,\,\ldots\,, q\,.$}
  $$
 Then there exist
  $\epsilon\in (0,1)$, $\kappa>1$  and
  a smooth family
   $$
    \left\{\,
     \iota^{s,t}:\tilde{N}^{s,t}\rightarrow M   \,|\,
       (s,t)\in {\cal G}\,,\;   t\in (0,\epsilon]\,,\;
       |s|\le t^{\kappa+m/2}
    \,\right\}
   $$
  such that
  \begin{itemize}
   \item[$\cdot$]
    $\iota^{s,t}:\tilde{N}^{s,t}
                 \rightarrow (M, J^s,\omega^s,\Omega^s)$
     is a compact , nonsingular special Lagrangian
     $m$-manifold in $(M, J^s,\omega^s,\Omega^s)$,
     with $\tilde{N}^{s,t}$ diffeomorphic to $N$,
    constructed by gluing a Lawlor neck
     $t\cdot L^{\pm, A_i}\, (= L^{\pm, t^mA_i})$
     into $\iota$ at $x_i$ for $i=1,\,\ldots\,, n$.
  \end{itemize}
  In the sense of currents,
   $\iota^{s,t}\rightarrow \iota$ as $s, t\rightarrow 0$.
  If $x_1,\,\cdots\,,\, x_n$
    are the only self-intersection points of $\iota$,
   then $\iota^{s,t}$ is embedded.
\end{theorem}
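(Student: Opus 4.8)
The plan is to follow Joyce's gluing-and-correction scheme, now placing the complex-structure parameter $s$ on the same footing as the neck-scaling parameter $t$. The single-Calabi-Yau result (Theorem~A.1.4) supplies the template: at each transverse type-$1$ self-intersection point $x_i$ the tangent model is the pair $\Pi^0\cup\Pi^{\mbox{\scriptsize\boldmath $\phi$}}$, desingularized by the asymptotically conical Lawlor neck $L^{\mbox{\scriptsize\boldmath $\phi$},A_i}$; its rescaling $t\cdot L^{\pm,A_i}=L^{\pm,t^mA_i}$ is asymptotic to the same cone but has neck size $O(t)$. First I would excise from $\iota(X)$ a ball of radius $O(t^{\alpha})$, $\alpha\in(0,1)$, around each $x_i$ and graft in the rescaled neck, interpolating with cut-off functions in the conical region where both pieces are graphs of mutually close $1$-forms over the cone. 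This yields, for each $(s,t)$, a compact Lagrangian submanifold $\tilde{N}^{s,t}$ diffeomorphic to the connected sum $N$ that is special Lagrangian to high order, carrying a defect supported in the gluing annuli whose weighted norm I would bound by a fixed power of $t$.

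Next I would set up the special Lagrangian condition as a nonlinear elliptic system for a normal perturbation of $\tilde{N}^{s,t}$. Via McLean's correspondence (the infinitesimal content behind Theorem~A.1.2), nearby Lagrangian deformations are graphs of $1$-forms $\xi$, and the equations $\omega^s|_{\tilde{N}^{s,t}}=0$, $\Imaginary\Omega^s|_{\tilde{N}^{s,t}}=0$ reduce to leading order to $d\xi=0$ and $d^{\ast}\xi=(\mbox{error})$, i.e.\ a small nonlinear perturbation of $d+d^{\ast}$ between weighted H\"older spaces $C^{k,a}_{\lambda}$ adapted to the neck. The decisive point is that the obstruction to solving $\Imaginary\Omega^s|_{\tilde{N}^{s,t}}=0$ is cohomological: its pairing with $[X_k]$ must equal the period $[\Imaginary\Omega^s]\cdot[X_k]$, while the grafted necks contribute exactly $t^m\bigl(\sum_{x_i^+\in X_k}A_i-\sum_{x_i^-\in X_k}A_i\bigr)$ to this pairing. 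This is precisely why the admissible locus is $\mathcal{G}$: restricting $(s,t)$ to $\mathcal{G}$ kills the top-degree obstruction component by component, leaving only the unobstructed $b^1(N)$-dimensional cokernel that the deformation freedom of Theorem~A.1.2 absorbs.

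Then I would run a quantitative inverse function theorem. The task is to establish uniform bounds---uniform in $t$, with $|s|\le t^{\kappa+m/2}$---for a right inverse of the linearized operator in the weighted norms, splitting $\tilde{N}^{s,t}$ into its compact body (where the estimate follows from nondegeneracy of the smooth $N$) and its Lawlor-neck ends (where one uses the explicit linear theory on $L^{\mbox{\scriptsize\boldmath $\phi$},A}$ together with its scaling behaviour). Given such a uniform bound, the nonlinear terms form a contraction on a small ball once $t<\epsilon$, producing a unique small $\xi=\xi^{s,t}$ whose graph is the sought embedded special Lagrangian $\tilde{N}^{s,t}$; convergence $\iota^{s,t}\to\iota$ in the sense of currents then follows from the $O(t)$ neck size. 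The exponent $\kappa>1$ is fixed by the weighted-norm bookkeeping and, as in Observation~A.1.6, can be taken arbitrarily close to $1$ by sharpening the decay rate on the conical ends.

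The hard part will be the uniform invertibility of the linearized operator as $t\to 0$: the manifolds $\tilde{N}^{s,t}$ degenerate as the necks pinch, so the associated Laplacian develops small eigenvalues that only weighted estimates can control, and the two parameters must be threaded through the constraint $|s|\le t^{\kappa+m/2}$ so that the $s$-variation of $\Omega^s$ never swamps the $t^m$-scaling that transports the balancing data $(A_i)$. Matching these scales---and checking that moving $s$ within $\mathcal{G}$ genuinely realizes the prescribed period while keeping $\xi^{s,t}$ inside the contraction regime---is the technical core of the argument.
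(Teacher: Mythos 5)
The paper gives no proof of this statement: Theorem~A.1.5 is quoted verbatim from Joyce ([Joy3: V, Sec.~9.3, Theorem~9.8]), and the only thing the appendix proves is the supplementary Observation~A.1.6 on replacing $|s|\le t^{\kappa+m/2}$ by $|s|\le C\,t^{\kappa+m/2}$ and pushing $\kappa$ toward $1$. Measured against Joyce's argument as it is reflected there --- the error function $\varepsilon^{s,t}$ with its $L^{2m/(m+2)}$, $C^0$, $L^{2m}$ and projected-$L^1$ bounds, the approximate-kernel space $W^{s,t}\simeq\mathbb{R}^q$, and the system of inequalities fixing $\kappa$ --- your outline is essentially that proof: graft rescaled Lawlor necks to build an approximate solution, recast the special Lagrangian equation as a perturbation problem in weighted spaces, identify the $q$ equations defining $\mathcal{G}$ with the component-by-component matching of $[\Imaginary\Omega^s]\cdot[X_k]$ against the $t^m$-scaled neck contributions, and close a contraction under uniform linear estimates. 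The strategy and the role you assign to $\mathcal{G}$, $\epsilon$, $\kappa$ are the right ones.

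Two corrections to the fine structure. First, the finite-dimensional space that threatens uniform invertibility as the necks pinch is not a ``$b^1(N)$-dimensional cokernel'': Joyce works with potential functions rather than $1$-forms, and the dangerous space is $W^{s,t}\simeq\mathbb{R}^q$, spanned by small-eigenvalue eigenfunctions that are approximately locally constant on the $q$ bodies $X_k$. The number $b^1(N)$ is the kernel, i.e.\ the deformation freedom of the resulting smooth special Lagrangian (McLean, Theorem~A.1.2); it is harmless non-uniqueness, not an obstruction to be absorbed. Second, restricting $(s,t)$ to $\mathcal{G}$ does not kill the obstruction exactly; since $W^{s,t}$ only approximates the true small-eigenvalue span, the condition buys the bound $\|\pi_{W^{s,t}}(\varepsilon^{s,t})\|_{L^1}\le A_2\,t^{\kappa+m-1}$, i.e.\ the projected error is merely small enough for the iteration to close. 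This is exactly why the theorem intersects $\mathcal{G}$ with $t\in(0,\epsilon]$ and $|s|\le t^{\kappa+m/2}$ rather than using $\mathcal{G}$ alone, and it is where the exponent $\kappa+m/2$ that your ``bookkeeping'' remark leaves implicit actually comes from.
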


The following observation is needed to apply Joyce' theorem
 to our situation:

\begin{observation}
{\bf [restriction $|s|\le t^{\kappa+m/2}\,$].}
{\rm (Cf.\ [Joy3: IV.\ Remark after Theorem~7.9].)}
 In Theorem~A.1.5, the restriction $|s|\le t^{\kappa+m/2}$
          % Theorem [desingularization in a family of Calabi-Yau's]
  can be replaced by any restriction of the form:
  $$
   |s|\; \le\; C\,t^{\kappa+m/2}
    \hspace{2em}\mbox{for some constant $\,C>0$}
  $$
  with the glued Lawlor neck $t\cdot L^{\pm,A_i}=L^{\pm, t^mA_i}$
   replaced by $C^{\,-1/(\kappa+m/2)}\,t\cdot L^{\pm, A_i}$
   in the construction.
 Furthermore, $\kappa>1$ can be chosen to be arbitrarily close to $1$.
\end{observation}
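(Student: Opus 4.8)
The plan is to deduce the Observation from Theorem~A.1.5 by a pure scaling/reparametrization argument, using the homogeneity of the Lawlor necks recorded in part $(a)$ above. First I would recall the dilation identity $t\cdot L^{\pm,A_i}=L^{\pm,t^mA_i}$ together with the elementary fact that a real positive dilation $\lambda:{\Bbb C}^m\rightarrow{\Bbb C}^m$ multiplies $\omega$ by $\lambda^2$ and $\Omega$ by $\lambda^m>0$, hence carries special Lagrangian submanifolds of a fixed phase $e^{i\theta}$ to special Lagrangian submanifolds of the same phase. Consequently the entire one-parameter family of model necks $\{\,t\cdot L^{\pm,A_i}\,\}_{t>0}$ is generated from a single member by the ${\Bbb R}_{>0}$-action, and the gluing datum used in Theorem~A.1.5 depends on each neck only through its scale.

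Then I would introduce a rescaled neck parameter $\tilde t$, related to $t$ by a power of $C$, and apply Theorem~A.1.5 \emph{verbatim} with $\tilde t$ in place of $t$. Choosing $\tilde t=C^{-1/(\kappa+m/2)}\,t$ makes the glued neck $\tilde t\cdot L^{\pm,A_i}=C^{-1/(\kappa+m/2)}\,t\cdot L^{\pm,A_i}$ coincide with the replacement neck asserted in the Observation, while rewriting the region of validity $|s|\le\tilde t^{\,\kappa+m/2}$ in terms of $t$ turns it into $|s|\le C^{\mp1}\,t^{\kappa+m/2}$; as $C>0$ ranges over all positive constants this is exactly the family of restrictions $|s|\le C\,t^{\kappa+m/2}$ claimed, the sign of the exponent being merely a matter of how one normalizes the pairing between $C$ and the neck scale. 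All the remaining conclusions of Theorem~A.1.5 --- that $\iota^{s,t}$ is a compact special Lagrangian $m$-fold diffeomorphic to $N$, that it is embedded when the $x_i$ are the only self-intersection points, and that $\iota^{s,t}\rightarrow\iota$ in the sense of currents --- are statements about the individual members of the family and are therefore preserved verbatim under the relabelling of the parameter.

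For the final clause, that $\kappa>1$ may be taken arbitrarily close to $1$, I would simply invoke Joyce's own analysis: the exponent $\kappa$ originates as a weight in the weighted H\"{o}lder/Sobolev estimates underlying the gluing (cf.\ [Joy3: IV.\ Remark after Theorem~7.9]), where it is already observed that $\kappa$ may be chosen in an interval $(1,1+\eta)$ at the cost of shrinking $\epsilon$. Hence this part requires no new argument beyond citing the remark.

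The only genuine point to verify --- and the step I expect to be the main, though mild, obstacle --- is that Joyce's existence statement is exactly equivariant under the dilation $t\mapsto\tilde t$: one must check that the weighted norms controlling the perturbation term in the gluing are homogeneous of the appropriate degree under simultaneous rescaling of the neck and of the allowed complex-structure displacement, so that the constants $\epsilon$ and $C$ transform as claimed rather than degenerating to $0$ or $\infty$. Since this homogeneity is precisely what is recorded in the cited remark of Joyce, the verification reduces to bookkeeping with the scaling weights rather than to any new analytic estimate.
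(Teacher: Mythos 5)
Your treatment of the first statement is correct and is essentially the paper's own argument: the paper disposes of the constant $C$ in one sentence, as ``a reparameterization of the parameter space with coordinates $(s,t)$,'' which is exactly your substitution $\tilde{t}=C^{-1/(\kappa+m/2)}\,t$; and the quibble about whether the admissible region then reads $C$ or $C^{-1}$ times $t^{\kappa+m/2}$ is indeed only a relabelling of the arbitrary positive constant, a point on which the paper is equally cavalier.

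The gap is in the final clause. You assert that ``$\kappa>1$ can be chosen arbitrarily close to $1$'' is already observed in the cited remark of Joyce and therefore ``requires no new argument beyond citing the remark.'' That is not so, and it is precisely where the paper does its real work. In the paper's proof, $\kappa$ is not a freely adjustable weight declared tunable by Joyce; it enters as a solution of the explicit system of inequalities [Joy3: IV, Sec.~7.3, Eq'ns (99)--(102)] coupling $\kappa$ to the interior gluing parameter $\tau$, the cone rates $\mu_i$ of $X$ at the singular points $x_i$, and the rates $\lambda_i$ of the asymptotically conical Lawlor necks, this system being a sufficient condition for the weighted bounds on $\varepsilon^{s,t}$ that drive the gluing. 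Whether it admits solutions with $\kappa$ arbitrarily near $1$ is a property of the specific geometric situation: the paper must use that for transverse intersection points the rates $\mu_i\in(2,3)$ may be taken arbitrarily close to $2$ (by [Joy3: I, Sec.~5.3, Theorem~5.5]) and that $\lambda_i=2-m$ for Lawlor necks, then substitute $\tau=1-\delta_0$, $\mu_i=2+\delta_i$ and verify that the resulting constraints
$$
 \delta_i\;>\;\left(1+\frac{m}{2}\right)\frac{\delta_0}{1-\delta_0}\,,
 \hspace{2em}
 0\;<\;\delta_0\;<\;\frac{1}{m+1}
$$
admit solutions arbitrarily close to $0$, whence $\kappa$ may be taken in an explicit interval $\left(1,\,1+\min\{\cdots\}\right]$ shrinking to $1$. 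Your proposal contains none of this; had the rates been different (e.g.\ $\mu_i$ bounded away from $2$, or necks of larger rate), the same remark of Joyce would be equally citable but the conclusion would fail. So the last clause of the Observation needs the inequality analysis, not a citation.
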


\bigskip

\begin{proof}
 The first statement follows from a reparameterization of the parameter
  space with coordinates $(s,t)$ in the procedure of constructing
  a family of special Lagrangian submanifolds in Calabi-Yau manifolds,
  parameterized by $(s,t)$.
 {For} the second statement,
  recall (cf.~[Joy3: IV, proof of Theorem~7.9])
  that the constraint $|s|\le t^{\kappa+m/2}$ with $\kappa>1$
   becomes part of a sufficient condition for bounds\footnote{Here,
                 $\varepsilon^{s,t}$ is a smooth function on
                   a Lagrangian submanifold $N^{s,t}$ constructed
                   via gluing Lawlor necks to $X$
                   in ways parameterized by $(s,t)$ .
                  It measures how close $N^{s,t}$ is
                   to being special Lagrangian.
                 $W^{s,t}\simeq {\Bbb R}^q$
                   is a finite dimensional subspace of
                   $C^{\infty}(N^{s,t})$
                   that approximates the ${\Bbb R}$-span of
                   eigenfunctions with small eigenvalues
                   of a second-order elliptic operator
                   arising from the special Lagrangian condition, and
                 $\pi_{W^{s,t}}:L^2(N^{s,t})\rightarrow W^{s,t}$
                   is the projection onto using the $L^2$-inner product.
                 As we won't need the precise expression of these
                   for this note  and
                  to give a complete definition requires
                   three pages of related definitions,
                  we refer readers directly to
                   [Joy3: IV, Sec.~7.1, Definitions~7.1 and 7.2]
                   (resp.\ [Joy3: IV, Sec.~7.1, Definition~7.3];
                           [Joy3: III, Sec.~5.2, Theorem~5.3];
                           [Joy3: I, Sec.~2.2])
                   for details of $\varepsilon^{s,t}$
                   (resp.\
                    $W^{s,t}$ and $\pi_{W^{s,t}}$;
                    constant $A_2$;
                    the various Banach/Sobolev spaces $C^k$, $L^p$, and
                     norms $\|\cdot\|_{C^k}$, $\|\cdot\|_{L^p}$).}
    \begin{eqnarray*}
    & \|\varepsilon^{s,t}\|_{L^{2m/(m+2)}}\;
       \le\; A_2\,t^{\kappa+m/2}\,,\hspace{1em}
      \|\varepsilon^{s,t}\|_{C^0}\;
       \le\; A_2\,t^{\kappa-1}\,,\hspace{1em}
      \|d\,\varepsilon^{s,t}\|_{L^{2m}}\;
       \le\; A_2\,t^{\kappa-3/2}\,, & \\[.6ex]
    &  \|\pi_{W^{s,t}}(\varepsilon^{s,t})\|_{L^1}\;
        \le\; A_2\,t^{\kappa+m-1} &
    \end{eqnarray*}
   to hold
  if $\kappa>1$ is a solution to the following system of
     inequalities\footnote{Here,
                       specified to the current situation we need,
                       $\mu_i$
                        (resp.\ $\lambda_i$) is the rate
                        of the conical special Lagrangian submanifold $X$
                        at $x_i$
                        (resp.\ Lawlor neck $L^{\pm, A_i}$)  and
                        $\tau$ is a parameter that governs in part
                         how the Lawlor necks $L^{\pm, A_i}$ are glued
                         into $X^{\prime}:=X-\{x_1,\,\cdots,\,x_n\}$
                         in the construction of $N^{s,t}$.
                       See [Joy3: I, Sec.~3.3, Definition~3.6]
                        (resp.\
                         [Joy3: I, Sec.~7, Definition~7.1];
                         [Joy3: IV, Sec.~7.1, Definition~7.2])
                        for details.}
     ([Joy3: IV: Sec.~7.3, Eq'ns (99), (100), (101), (102)]):
   $$
    \begin{array}{rcl}
     -\tau(1+m/2)+\tau(\mu_i-2)\,\ge\,\kappa+m/2\,,\hspace{.7em}
      && \tau(1+m/2)+(1-\tau)(2-\lambda_i)\,\ge\,\kappa+m/2\,,\\[.6ex]
     \tau(\mu_i-2)\,\ge\,\kappa-1\,,\hspace{2.1em}
      &&\hspace{6.2em}
        (1-\tau)(2-\lambda_i)\,\ge\,\kappa-1\,,\\[.6ex]
     -\tau/2+\tau(\mu_i-2)\,\ge\,\kappa-3/2\,,\hspace{1.1em}
      &&\hspace{2.6em}
        -\tau/2+(1-\tau)(2-\lambda_i)\,\ge\,\kappa-3/2\,,\\[.6ex]
     (m+1)\tau\,\ge\,\kappa+m-1\,,
      && \hspace{6em}i\,=\,1\,,\,\ldots\,,\,n\,,
    \end{array}
   $$
   where
    $\mu_i\in (2,3)$ can be chosen to be arbitrarily close to $2$
     (cf.~[Joy3: I, Sec.~5.3, Theorem~5.5]),
    $\lambda_i=2-m$ in the current situation, and
    $0< \max_{i=1,\ldots,n}
        \{ \frac{m}{m+1}\,,\,\frac{m+2}{2\mu_i+m-2} \}
        <\tau < 1$.

 Since $\mu_i$ can be chosen to be arbitrarily close to $2$,
  $\tau$ can be chosen to be arbitrarily close to $1$ as well.
 Let $\tau=1-\delta_0$, $\mu_i=2+\delta_i$, and  note that
  $\lambda_i<\frac{1}{2}(2-m)$.
 Then, the above system of inequalities has nonempty solution
   for $\kappa>1$
  if and only if
   $\delta_0$, $\delta_i$, $i=1,\,\ldots\,,\,n$, satisfy inequalities
   $$
    \delta_i\;>\; (1+\frac{m}{2})\,\frac{\delta_0}{1-\delta_0}\,,
     \hspace{1em}\mbox{and}\hspace{1em}
    0\;<\; \delta_0\;<\; \frac{1}{m+1}\,.
   $$
 Note that
  the latter system does have nonempty positive solutions
   for $\delta_0,\,\delta_1,\,\ldots\,,\delta_n$  and that
  these solutions can be chosen to be arbitrarily close to $0$.
 {For} any such solution $(\delta_0,\,\delta_1,\,\ldots\,,\delta_n)$,
  any $\kappa$ that satisfies
  $$
   1\;<\;\kappa\;
      \le\; 1\, +\,
            \min_{i=1,\,\ldots\,,\,n}\left\{
            (1-\delta_0)\delta_i-\delta_0(1+\frac{m}{2})\,,\,
            \delta_0(\frac{m}{2}-1)\,,\,
            1-\delta_0(m+1)         \right\}
  $$
  lies in the solution set of the above system of inequality
  for $\kappa$.
 In particular, $\kappa>1$ can be chosen to be arbitrarily close to $1$.

\end{proof}

%%%%%%%%%%%%%%%%%%%%%%%%
%
% \begin{theorem}
% {\bf [at wall of marginal stability].}
% {\rm ([Joy3: V.~Sec.~9.3, Theorem~9.10].)}
%  ???????????????
% \end{theorem}
%
% \bigskip
%
%
%
% \bigskip
%
% \begin{flushleft}
% {\bf Local moduli space of immersed special Lagrangian submanifolds
%      with transverse intersections and its stratification.}
% \end{flushleft}
% %
% ????????????????.
% \bigskip
%
%
%
% \noindent $\bullet$
%  ??????????????.
%
%%%%%%%%%%%%%%%%%%%%%%%%%%%

\bigskip

\begin{flushleft}
{\bf The immersed version of Joyce' results.}
\end{flushleft}
([Joy3: I.~Sec.~1 and II.~Sec.~8.1].)
Nearly all the results of Joyce [Joy3] generalize immediately to
 immersed special Lagrangian submanifolds with conical singularities,
 with only superficial change,
 due to the following version of Lagrangian Neighborhood Theorem:

\bigskip

\begin{theorem}
{\bf [immersed Lagrangian].}
 Let $(M,\omega)$ be a symplectic manifold and
  $f:L\rightarrow M$ be a compact immersed Lagrangian submanifold.
 Then there exist
  a neighborhood $U\subset T^{\ast}L$ of the zero-section  and
  an immersion $\phi:U\rightarrow M$
  such that
   $\phi|_L=f$ and
   $\phi^{\ast}\omega=\omega_{\scriptsizecan}$,
    where $\omega_{\scriptsizecan}$
     is the canonical symplectic form on $T^{\ast}L$.
\end{theorem}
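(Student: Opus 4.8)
The plan is to adapt the proof of Weinstein's Lagrangian neighborhood theorem, the only modification being to replace the tubular-neighborhood \emph{embedding} of an embedded Lagrangian by a tubular-neighborhood \emph{immersion}. Every subsequent step will take place intrinsically on $T^{\ast}L$ and never sees the self-intersections of $f$ in $M$, which is exactly why the generalization is only superficial.

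First I would fix an almost complex structure $J$ on $M$ compatible with $\omega$, so that $g(\cdot,\cdot):=\omega(\cdot,J\cdot)$ is a Riemannian metric. Pulling back along $f$ and using that each $df_p(T_pL)\subset (T_{f(p)}M,\omega)$ is Lagrangian, the subspace $J\,df_p(T_pL)$ is a complementary Lagrangian on which $\omega$ pairs nondegenerately with $df_p(T_pL)$; this furnishes a canonical bundle isomorphism $\Psi:T^{\ast}L\stackrel{\sim}{\to}\nu_f:=f^{\ast}TM/df(TL)$, realizing the cotangent bundle as the normal bundle of the immersion. Next, using the exponential map of $g$ together with a lift of normal vectors to $f^{\ast}TM$, I would build the generalized tubular-neighborhood map $E:\nu_f\to M$, $E(p,v)=\exp_{f(p)}(\tilde v)$. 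Along the zero section its differential at $(p,0)$ is the map $(u,w)\mapsto df_p(u)+\tilde w$, an isomorphism $T_pL\oplus\nu_{f,p}\stackrel{\sim}{\to}T_{f(p)}M$; hence $E$ is a local diffeomorphism near each zero-section point, and by compactness of $L$ it is an immersion on a uniform neighborhood $U$ of the zero section. Setting $\phi_0:=E\circ\Psi$ gives an immersion of a neighborhood $U^{\prime}\subset T^{\ast}L$ with $\phi_0|_L=f$, and the choice of the $J$-Lagrangian complement guarantees that the two closed $2$-forms $\omega_1:=\phi_0^{\ast}\omega$ and $\omega_0:=\omega_{\scriptsizecan}$ agree as bilinear forms at every point of the zero section.

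The final step is Moser's trick, carried out entirely on $T^{\ast}L$. Since $\omega_1-\omega_0$ is closed and vanishes along the zero section, the relative Poincar\'{e} lemma produces a $1$-form $\alpha$ with $\omega_1-\omega_0=d\alpha$ and $\alpha$ vanishing on the zero section. Because $\omega_t:=\omega_0+t(\omega_1-\omega_0)$ is nondegenerate on a (possibly smaller) neighborhood for all $t\in[0,1]$, I can solve $\iota_{X_t}\omega_t=-\alpha$ for a time-dependent vector field $X_t$ that vanishes on the zero section, integrate it to an isotopy $\psi_t$ fixing $L$ pointwise, and obtain $\psi_1^{\ast}\omega_1=\omega_0$. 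Then $\phi:=\phi_0\circ\psi_1$ is the required immersion: one has $\phi|_L=f$ and $\phi^{\ast}\omega=\psi_1^{\ast}\phi_0^{\ast}\omega=\psi_1^{\ast}\omega_1=\omega_0=\omega_{\scriptsizecan}$.

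The main obstacle to watch is the construction and uniformity of the tubular-neighborhood immersion $E$: one must check that the shrinking needed to make $E$ an immersion can be done over all of $L$ at once (guaranteed here by compactness) and that $\phi_0$ is a genuine immersion on a full neighborhood rather than merely an immersion along $L$, so that $\phi_0^{\ast}\omega$ is a bona fide symplectic form near the zero section. Once $E$ is in hand, the remaining work --- the $J$-compatible identification of $\nu_f$ with $T^{\ast}L$, the relative Poincar\'{e} lemma, and Moser's method --- is literally the embedded proof with $T^{\ast}L$ in place of a tubular neighborhood of $L$, so no new difficulty arises from the self-intersections of $f$.
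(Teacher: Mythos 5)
Your proof is correct, and it is exactly the argument the paper has in mind: the paper itself gives no proof of this theorem but quotes it from Joyce ([Joy3: I.\ Sec.~1 and II.\ Sec.~8.1]) with the remark that the immersed case differs from Weinstein's embedded Lagrangian Neighborhood Theorem ``with only superficial change.'' Your write-up supplies precisely that superficial change --- replacing the tubular-neighborhood embedding by a tubular-neighborhood immersion (uniform over $L$ by compactness and openness of the immersion condition), after which the $J$-compatible identification $T^{\ast}L \simeq \nu_f$, the relative Poincar\'{e} lemma, and the Moser isotopy run verbatim on $T^{\ast}L$, never seeing the self-intersections of $f$.
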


\newpage
\baselineskip 13pt
%references
{\footnotesize

}%endfootnotesize

\end{document}